\newcommand{\realR}{\mathbb{R}}
\newcommand{\intZ}{\mathbb{Z}}
\newcommand{\Unitary}{\mathrm{U}}
\newcommand{\ie}{i.e.}
\newcommand{\cf}{cf.}
\newcommand{\eg}{e.g.}
\newcommand{\etal}{et al.}
\newcommand{\resp}{resp.}
\newcommand{\normal}{\mathcal{N}}
\newcommand{\hvec}{\mathbf{h}}
\newcommand{\xvec}{\mathbf{x}}
\newcommand{\yvec}{\mathbf{y}}
\newcommand{\wvec}{\mathbf{w}}
\newcommand{\Pineiro}{Pi\~{n}eiro}
\newcommand{\Prahofer}{Pr\"{a}hofer}
\newcommand{\Peche}{P\'{e}ch\'{e}}
\newcommand{\id}{\mathds{1}}
\newcommand{\PartI}{\textbf{Part I}}
\newcommand{\PartII}{\textbf{Part II}}
\newcommand{\iid}{i.i.d.}
\newcommand{\Prob}{\mathbb{P}}
\newcommand{\lHopital}{l'H\^{o}pital}
\newcommand{\bigO}{\mathcal{O}}
\newcommand{\compC}{\mathbb{C}}
\newcommand{\U}{\mathrm{U}}
\DeclareMathOperator{\diag}{diag}
\DeclareMathOperator*{\res}{res}
\DeclareMathOperator*{\GUE}{GUE}
\DeclareMathOperator*{\FF}{F-F}
\DeclareMathOperator*{\JN}{J-N}
\DeclareMathOperator{\ess}{ess}
\DeclareMathOperator{\remainder}{remainder}
\DeclareMathOperator{\prelim}{prelim}
\DeclareMathOperator{\out}{out}
\DeclareMathOperator{\midd}{mid}
\DeclareMathOperator{\quadrant}{quad}
\newtheorem{thm}{Theorem}
\newtheorem{lem}{Lemma}
\newtheorem{prop}{Proposition}
\newtheorem{cor}{Corollary}
\theoremstyle{remark}
\newtheorem{rmk}{Remark}
\author{Mark Adler\thanks{
 Department of Mathematics, Brandeis University,
Waltham, Mass 02454, USA. E-mail: adler@brandeis.edu.
The support of a National Science Foundation grant \#
DMS-07-00782 is gratefully acknowledged.}~%~~~~~Kurt Johansson\thanks{Department of Mathematics,
%Royal Institute of Technology (KTH), Stockholm, Sweden. E-mail: kurtj@kth.se} ~~~~
~ Pierre
van Moerbeke\thanks{ Department of Mathematics,
Universit\'e de Louvain, 1348 Louvain-la-Neuve, Belgium
and Brandeis University, Waltham, MA 02454, USA. E-mail: pierre.vanmoerbeke@UCLouvain.be and
vanmoerbeke@brandeis.edu. The support of a National Science
Foundation grant \# DMS-07-00782, FNRS, PAI grants is
gratefully acknowledged.} ~~Dong Wang\thanks{Department of Mathematics, National University of Singapore, Singapore, 119076. E-mail: matwd@nus.edu.sg. The support of an NUS start-up grant \# R-146-000-164-133 is gratefully acknowledged.}}
\title{Random matrix minor processes related to percolation theory}
\begin{document}

\maketitle

\begin{abstract}
  This paper studies a number of matrix models of size $n$ and the
  associated Markov chains for the eigenvalues of the models for
  consecutive $n$'s. They are consecutive principal minors for two of the
  models, GUE with external source and the multiple Laguerre matrix model,
  and merely properly defined consecutive matrices for the third one, the
  Jacobi-Pi\~neiro model; nevertheless the eigenvalues of the consecutive
  models all interlace. We show: (i) For each of those finite models, we
  give the transition probability of the associated Markov chain and the
  joint distribution of the entire interlacing set of eigenvalues; we show
  this is a determinantal point process whose extended kernels share many
  common features. (ii) To each of these models and their set of
  eigenvalues, we associate a last-passage percolation model, either 
  finite percolation or percolation along an infinite strip of
  finite width, yielding a precise relationship between the last passage
  times and the eigenvalues. (iii) Finally it is shown that for
  appropriate choices of exponential distribution on the percolation, with
  very small means, the rescaled last passage times lead to the Pearcey
  process; this should connect the Pearcey statistics with random directed
  polymers.
\end{abstract}

%\tableofcontents

\section{Introduction}

In this paper we are concerned with the (generalized) minor processes associated to random matrix models that are related to very classical orthogonal polynomials. We also show their relation to last-passage percolation models. This project was partially motivated by the following open problem: finding a continuum random directed polymer interpretation for the Pearcey process, and a corresponding stochastic heat equation, in the same way that the Airy process has a KPZ / stochastic heat equation / random polymer interpretation; see the work of Amir, Corwin and Quastel \cite{Amir-Corwin-Quastel11}. A first step in that direction is to show that the Pearcey process appears as a limit of a last-passage percolation model; this is done in the present work.

The first minor process arising from random matrix is the Gaussian Unitary Ensemble (GUE) minor process defined and analyzed in \cite{Johansson-Nordenstam06} by Johansson and Nordenstam. Following this result, other minor processes of classical random matrices are obtained \cite{Forrester-Nagao11}. The minor process of Laguerre Unitary Ensemble, aka complex white Wishart ensemble, leads to the generalized Wishart ensemble that was conjectured in \cite{Borodin-Peche08} and solved in \cite{Dieker-Warren09}. See also \cite{Forrester-Nordenstam09}, \cite{Ferrari-Frings10}, \cite{Adler-Nordenstam-van_Moerbeke10} and \cite{Adler-Nordenstam-van_Moerbeke10a} for other minor processes related to random matrices.

Analogous to complex Wishart ensemble (aka Laguerre Unitary Ensemble with external source) that is a generalization of the classical Laguerre Unitary Ensemble (LUE), the GUE with external source is a generalization to the classical Gaussian Unitary Ensemble. In this paper we consider the minor process associated to this matrix model.

In the same spirit as the complex Wishart ensemble and GUE with external source, there is a new matrix model that generalizes the classical Jacobi Unitary Ensemble (JUE) that we denote as Jacobi-\Pineiro\ ensemble, and another generalization of the LUE that we denote as multiple Laguerre ensemble (to be distinguished with the complex Wishart ensemble).

We describe the minor processes in a systematic way as follows. In all cases below, $W_n,X_n,Y_n$ denote the matrix of the first $n$ columns of the $M\times N$-matrix $W,X$ (\resp\ the $M' \times N$-matrix $Y$) for $1\leq n\leq N$, where $M, M', N$ are positive integers and we assume $M \geq N$ and $M' \geq N$. For square matrices $Z_0$ and $A$ of size $N$, $(Z_0+A)_n$ denotes the $n$-th principal minor of the square matrix $Z_0+A$, also for $1\leq n\leq N$. For the definition of $X$, we need parameters $\alpha_i$ that are nonnegative integers satisfying
 \begin{equation}
   1 + \alpha_1 \leq 2 + \alpha_2 \leq \dots \leq N + \alpha_N.
 \end{equation}

\paragraph{The four (generalized) minor processes}

Consider the spectra $\lambda^{(1)}, \dotsc, \lambda^{(N)}$ of the following consecutive matrices $S_n$, where $\lambda^{(n)} = (\lambda^{(n)}_1, \dots, \lambda^{(n)}_n)$ and $\lambda^{(n)}_i$ are in increasing order.

\begin{enumerate}
   \item
     (GUE with external source)
 % \noindent 
  %The spectra $\lambda^{(n)}$ %,\dots,\lambda^{N}$ 
   %of 
     The $n\times n$ consecutive matrices\footnote{GUE$(N)$ is standardly defined as Hermitian matrices with independent normal entries, with $\Re (Z_{0})_{ij}={\mathcal N}(0,\frac 12 ), ~\Im (Z_{0})_{ij}={\mathcal N}(0,\frac 12 )$ and $(Z_{0})_{ii}={\mathcal N}(0,1 )$.}    (minors of $S_N$)
     \begin{equation} \label{eq:case_1_defn}
       S_n=(Z_0+A)_n%\mbox{   for   }1\leq n\leq N, 
       ~~ \mbox{with} ~~
       \left\{\begin{array}{l}Z_0=\mathrm{GUE}(N), \\
           A=\diag (a_1,\ldots, a_N). 
         \end{array}\right.
     \end{equation}
   
   \medbreak
   
   \item
     (Wishart)
   \noindent    %The spectra $\lambda^{(n)}$ %,\dots,\lambda^{N}$ of t
   The $n\times n$ consecutive matrices (minors of $S_N$)
   \begin{equation} \label{eq:case_2_defn}
     S_n=W_n^\ast W_n \mbox{   with  }
     % \mbox{  and  }
     \left\{\begin{array}{l}W=M\times N  \mbox{  matrix,   }   ~~~\\
         % \mbox{with  }
         \Re W_{ij} ={\mathcal N}(0,-\frac 1{2a_j}), ~~\\
         \Im W_{ij} ={\mathcal N}(0,-\frac 1{2a_j}),
       \end{array}\right.
   \end{equation}
   where $a_1, \dotsc, a_N$ are negative parameters.
   
   \item% \medbreak
     (Multiple Laguerre)
   \noindent  %The spectra $\lambda^{(n)}$ %,\dots,\lambda^{N}$  of t
   The $n\times n$ consecutive matrices (minors  of $S_N$)
   \begin{equation} \label{eq:case_3_defn}
     S_n=X_n^\ast X_n%\mbox{   for  }
     ~ \mbox{    with }~~\left\{\begin{array}{l}\begin{array}{l}
           X=%(x_{ij})=
           M\times N  \mbox{  matrix,   } \\
           \hspace{-.3cm} \left.\begin{array}{l}\Re X_{ij}={\mathcal N}(0,1/2 ) % ~~~1\leq i\leq j+\alpha_j
               \\
               \Im   X_{ij}={\mathcal N}(0, 1/2 )\end{array}\right\}\mbox{  for  } 1\leq i\leq j+\alpha_j, \end{array}\\
         \mbox{and $X_{ij}=0$ otherwise.} \end{array}\right.%,with1\leq n\leq N.
   \end{equation}
   
 \item
   (Jacobi-\Pineiro)
   \noindent    The %spectra $\lambda^{(n)}$ %,\dots,\lambda^{N}$ of the 
   $n\times n$ 
   consecutive matrices 
   \begin{equation} \label{eq:case_4_defn}
     S_n=  \frac{X^{\ast}_n X_n}{X^{\ast}_n X_n+Y^{\ast}_n Y_n}
     \mbox{  with   } \left\{\begin{array}{l}\begin{array}{l}
           X = M\times N  \mbox{  matrix, as before,}\\
           Y=%(y_{ij})=
           M'\times N  \mbox{  matrix,   } \\
           \hspace{-.3cm}  \begin{array}{l}\Re Y_{ij}={\mathcal N}(0,1/2 ) % ~~~1\leq i\leq j+\alpha_j
             ,~~~
             \Im   Y_{ij}={\mathcal N}(0, 1/2 ).\end{array} \end{array}%\\
         % \mbox{and $x_{ij}=0$ otherwise}
       \end{array}\right.
   \end{equation}
   
 \end{enumerate}

\begin{rmk}
  Note that in the definitions of the first three minor processes, $S_{n - 1}$ is a minor of $S_n$, so the name ``minor process'' is assigned. Although in the last process, $S_{n - 1}$ is not a minor of $S_n$, by definition the numerator and denominator of $S_{n - 1}$ are minors of those of $S_n$ respectively, and also in that case the eigenvalues of $S_{n - 1}$ and those of $S_n$ are interlaced as for eigenvalues of a Hermitian matrix and its minor. Thus the name minor process is also justified.
\end{rmk}
\begin{rmk}
  The Whishart minor process is a special case of the so called generalized Wishart random-matrix process \cite{Dieker-Warren09}, and its properties has been already known. We include it in this paper for completeness.
\end{rmk}

Previously studies minor processes are shown to be the continuous limits of special Schur processes and are equivalent to continuous last-passage percolation models with properly chosen parameters and possibly taking limit (see \eg\ \cite{Defosseux10}, \cite{Forrester-Rains06}, \cite{Dieker-Warren09} and \cite{Forrester-Nagao11} for the cases most close to ours). The minor processes considered in our paper also have this property.

\begin{figure}[ht]
  \centering
  \includegraphics{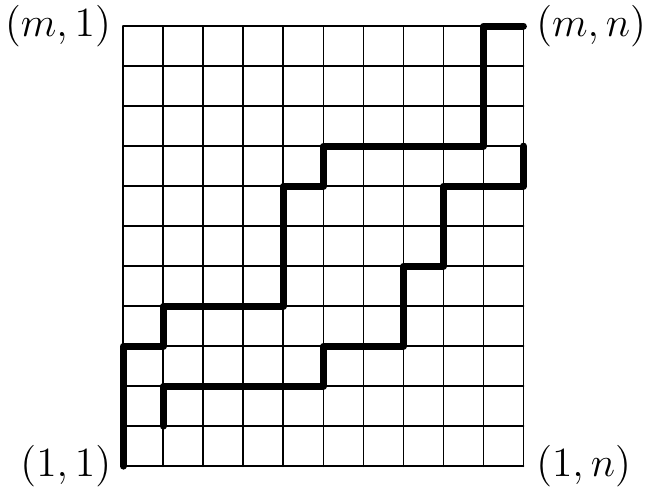}
  \caption{Two up-right paths in $\Pi(m, n)$ that do not intersect.}
  \label{fig:upright_path}
\end{figure}
\bigbreak 
Consider the percolation model on the $\intZ^+ \times \intZ^+$ lattice $\{ (i, j) \mid i, j = 1, 2, \dotsc \}$, where on each site we associate a real weight $x_{ij}$. Let $\Pi(m, n)$ be the set of up-right paths in the rectangle with vertices $(1, 1), (1, n), (m, 1), (m, n)$, see Figure \ref{fig:upright_path}. Then we define the maximum of the total length of $l$ non-intersecting up-right paths in that rectangle to be
\begin{equation} \label{eq:defn_of_L^l(N,n)}
  L^{(l)}(m, n) := \max_{\substack{P_1, \dots, P_l \in \Pi(m,n) \\ P_1, \dots, P_l \text{ non-intersecting}}} \sum^l_{k=1} \sum_{(i,j) \in P_k} x_{ij}.
\end{equation}
In the case $l = 1$, $L^{(l)}(m, n)$ is the length of the longest up-right path from $(1,1)$ to $(m, n)$. Let $p_1, p_2, \dotsc; q_1, q_2, \dotsc$ be two sets of positive parameters. and let the $x_{ij}$'s be independent and exponentially distributed of parameter $\pi_{ij}=p_i + q_j>0$,
\begin{equation} \label{eq:exp_distr_added}
  \Prob(x_{ij}\geq t)=e^{-\pi_{ij} t}.
\end{equation}
Then the $L^{(l)}(m, n)$ become random variables, which for fixed $m, n$ are increasing as $l$ increases. We define the $n$-variate random variables \footnote{In \eqref{eq:defn_mu^mn} the components of the vector $\mu^{(m, n)}$ are weakly decreasing. But later in this paper, some vectors have (weakly) increasing components. Sometimes we say an array of increasing random variables and an array of decreasing random variables have the same distribution, if they have the same distribution after reversing the order of either between them.}
\begin{equation} \label{eq:defn_mu^mn}
  \mu^{(m, n)} = (\mu^{(m, n)}_1, \dotsc, \mu^{(m, n)}_n) \quad \text{where} \quad \sum^l_{i = 1} \mu^{(m, n)}_i = L^{(l)}(m, n).
\end{equation}
% Consider the percolation model on the $\intZ^+ \times \intZ^+$ lattice $\{ (i, j) \mid i, j = 1, 2, \dotsc \}$, where on each site we associate a real weight $x_{ij}$. Let $\Pi(m, n)$ be the set of up-right paths in the rectangle with vertices $(1, 1), (1, n), (m, 1), (m, n)$. Then we define the maximum of the total length of $l$ non-intersecting up-right paths in that rectangle to be
% \begin{equation} \label{eq:defn_of_L^l(N,n)}
%   L^{(l)}(m, n) := \max_{\substack{P_1, \dots, P_l \in \Pi(m,n) \\ P_1 \cap \dots \cap P_l = \emptyset}} \sum^l_{k=1} \sum_{(i,j) \in P_k} x_{ij}.
% \end{equation}
% In the case $l = 1$, $L^{(l)}(m, n)$ is the length of the longest up-right path from $(1,1)$ to $(m, n)$.

% Let $p_1, q_2, \dotsc; q_1, q_2, \dotsc$ be two sets of positive parameters. Below we assume that $x_{ij}$ are independent random variables in exponential distribution with parameter $p_i + q_j$. Then $L^{(l)}(m, n)$ become random variables and for fixed $m, n$, $L^{(l)}(m, n)$ are increasing as $l$ increases. Then we define the $n$-variate random variables
% \begin{equation} \label{eq:defn_mu^mn}
%   \mu^{(m, n)} = (\mu^{(m, n)}_1, \dotsc, \mu^{(m, n)}_n) \quad \text{where} \quad \sum^l_{i = 1} \mu^{(m, n)}_i = L^{(l)}(m, n).
% \end{equation}
%Then we have the following theorem corresponding to Theorem \ref{thm:Markov} and Corollary \ref{cor:jpdf}.
Now we state the distribution of the eigenvalues $\lambda^{(n)}_j$ in the minor processes defined above. Before the statement of the theorem, we first define the notation $\mu \preceq \nu$ of interlacing between an $(n-1)$-variate random variable $\mu = (\mu_1, \dotsc, \mu_{n - 1})$ and an $n$-variate random variable $\nu = (\nu_1, \dotsc, \nu_n)$ such that their components are in increasing order and are in a common domain $I$.

\begin{equation} \label{eq:interlacing_defn}
  \mu \preceq \nu \quad \text{if $\nu_1 \leq \mu_1 \leq \nu_2 \leq \mu_2 \leq \dotsc \leq \mu_{n - 1} \leq \nu_n$, where $\mu_i, \nu_i \in I$}.
\end{equation}

\begin{thm} \label{thm:Markov}
  The interlacing sets of spectra $\lambda^{(1)},\ldots,\lambda^{(n)},\ldots,\lambda^{(N)}$ constitute an inhomogeneous Markov chain with transition probability:
  \begin{equation} \label{eq:transition_general}
    P_{n - 1, n}(\lambda^{(n - 1)}, \lambda^{(n)}) = \frac{1}{C_n} \frac{\Delta_n(\lambda^{(n)}) \prod^n_{i = 1} w_n(\lambda^{(n)}_i)}{\Delta_{n - 1}(\lambda^{(n - 1)}) \prod^{n - 1}_{i = 1} w_n(\lambda^{(n - 1)}_i) \psi(\lambda^{(n - 1)}_i)} 1_{\lambda^{(n - 1)}  \preceq 
      \lambda^{(n)}},
  \end{equation}
  where $\Delta_n(\lambda^{(n)}) = \prod_{1 \leq j < k \leq n} (\lambda^{(n)}_k - \lambda^{(n)}_j)$ is the Vandermonde determinant, the range $I$ of the eigenvalues, the weight $w_n(z)$, the function $\psi(z)$ and the constant $C_n$ are given in Table \ref{table:thm}.
\end{thm}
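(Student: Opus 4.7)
The plan is to compute, in parallel for all four models, the joint density of the full array $(\lambda^{(1)},\dotsc,\lambda^{(N)})$ and observe that it factors in the telescoping form $p_1(\lambda^{(1)}) \prod_{n=2}^N P_{n-1,n}(\lambda^{(n-1)},\lambda^{(n)})$ with $P_{n-1,n}$ as in \eqref{eq:transition_general}; this simultaneously yields the Markov property and identifies the transition kernel. The model-dependence enters only through $w_n$, $\psi$, and $C_n$ from Table \ref{table:thm}. The first ingredient is the marginal density of $\lambda^{(n)}$: for cases 1 and 2 it is the classical external-source density obtained from $e^{-\operatorname{tr}(Z-A)^2/2}$, \resp\ the block structure of $W_n^*W_n$, by integrating over the Haar measure via the Harish-Chandra/Itzykson-Zuber formula; for cases 3 and 4 the truncation $X_{ij}=0$ for $i>j+\alpha_j$ replaces one Vandermonde factor by a determinant $\det[f_j^{(n)}(\lambda^{(n)}_i)]$, producing a multiple orthogonal polynomial (MOP) ensemble with weight $w_n$, and in case 4 one further integrates out $Y_n^*Y_n$ using the Beta-type relation satisfied by $S_n$.

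The second ingredient is the conditional density of $\lambda^{(n-1)}$ given $\lambda^{(n)}$. For cases 1--3, $S_{n-1}$ is a principal $(n-1)$-minor of $S_n$, and I would apply the Baryshnikov/Warren/Defosseux mechanism: write $S_n$ in block form, integrate out the extra row/column and the residual unitary conjugation using HCIZ-type identities, and obtain a density proportional to $\Delta_{n-1}(\lambda^{(n-1)})/\Delta_n(\lambda^{(n)})$ supported on $\{\lambda^{(n-1)}\preceq\lambda^{(n)}\}$, together with a weight factor that accounts for the passage from $w_n$ to $w_{n-1}$ --- this is precisely the role of $\psi$. For case 4, $S_{n-1}$ is not a minor of $S_n$, but its numerator $X_{n-1}^*X_{n-1}$ and sum $X_{n-1}^*X_{n-1}+Y_{n-1}^*Y_{n-1}$ are principal $(n-1)$-minors of the corresponding matrices for $S_n$; the interlacing $\lambda^{(n-1)}\preceq\lambda^{(n)}$ then follows from the Cauchy interlacing theorem for generalized eigenvalues of a positive-definite Hermitian pencil, and the conditional density is derived by changing variables through the rational map and using the Beta factorization of the joint law of $(X_n^*X_n,\, X_n^*X_n+Y_n^*Y_n)$.

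Multiplying the two ingredients and dividing by the marginal of $\lambda^{(n-1)}$ (given by the same formula one step earlier) yields \eqref{eq:transition_general} after the cancellation of $\Delta_n/\Delta_{n-1}$ and the rearrangement $w_{n-1}/w_n = \psi^{-1}$ in the denominator; the constant $C_n$ is fixed by the normalization $\int P_{n-1,n}\,d\lambda^{(n)}=1$, an Andr\'eief-type integral that can be evaluated in closed form. The Markov property is then automatic from the telescoping, since the conditional density of $\lambda^{(n-1)}$ given the full matrix $S_n$ depends on $S_n$ only through its eigenvalues. The main obstacle will be the Jacobi-\Pineiro\ case: the ratio definition of $S_n$ rules out a direct HCIZ integration, so one must work with the generalized eigenvalue problem for a Hermitian pencil and carefully track a Beta-type change of variables so that the resulting weight correction matches the MOP weight $w_n$ dictated by the $\alpha_j$. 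The multiple Laguerre case provides a useful intermediate --- with the MOP structure present but the pencil complication absent --- and I would treat it first.
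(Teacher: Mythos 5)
Your overall plan runs in the opposite direction from the paper's proof, and the points where it diverges are exactly where it breaks. The paper conditions \emph{forward}: Lemmas \ref{lem:GUE_w_ext}, \ref{lem:LUE_w_ext} and \ref{lem:JUE_w_ext} fix the $(n-1)$-level matrix, observe that the appended column is standard complex Gaussian whose law is invariant under the unitary (or QR/SVD) reduction that diagonalizes the fixed block, and then compute the law of $\lambda^{(n)}$ directly from the rational identity \eqref{eq:char_poly_in_GUE_w_ext:1} (residues give the auxiliary variables, a Cauchy determinant gives the Jacobian). This yields \eqref{eq:transition_general} outright, and the Markov property follows because the resulting conditional law depends on $S_{n-1}$ only through its spectrum, made rigorous by the $\epsilon$-box limit \eqref{eq:limiting_identity_for_Markov}. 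You instead propose to build the joint law top-down from the marginal of $\lambda^{(n)}$ and the \emph{backward} conditional of $\lambda^{(n-1)}$ given $\lambda^{(n)}$, integrating out ``the residual unitary conjugation'' by HCIZ. That step presupposes that the conditional eigenvector distribution of $S_n$ given its eigenvalues is explicit (Haar, or an HCIZ-tractable tilt). For GUE with external source the fixed diagonal $A$ breaks unitary invariance, and for the multiple Laguerre and Jacobi-\Pineiro\ models the column-dependent zero pattern of $X$ is not preserved by left multiplication by unitaries at all, so there is no HCIZ-type identity to invoke; the backward conditional is precisely the hard object here, and nothing in your sketch computes it.

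Two further gaps. First, your justification of the Markov property --- ``the conditional density of $\lambda^{(n-1)}$ given the full matrix $S_n$ depends on $S_n$ only through its eigenvalues'' --- is false as stated: given $S_n$, the minor $S_{n-1}$ and hence $\lambda^{(n-1)}$ are deterministic functionals of $S_n$ that depend on its eigenvectors, not just its spectrum. The invariance that actually delivers the Markov property is the forward one (the law of $\lambda^{(n)}$ given $S_{n-1}$ depends only on the eigenvalues of $S_{n-1}$, because the new Gaussian column is independent of $S_{n-1}$ and unitarily invariant in law), and without it your telescoping factorization is unjustified --- indeed it is essentially equivalent to the Markov property you are trying to prove. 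Second, you treat the one-level marginals of the multiple Laguerre and Jacobi-\Pineiro\ models (as MOP ensembles with weights $w_n$) as known inputs; in this paper these are new matrix models, and their eigenvalue densities are \emph{consequences} of Theorem \ref{thm:Markov} (via Corollary \ref{cor:jpdf}), so using them as ingredients is circular unless you prove them first --- which again would require something like the forward bordering argument. A minor additional point: $\psi$ is not the ratio $w_{n-1}/w_n$; it is the extra factor ($1$, $z$, or $z(1-z)$) produced by the Jacobian and the $\chi^2$-type laws of the auxiliary variables, separate from the weight ratio. Your remarks on interlacing for the Jacobi-\Pineiro\ pencil and on fixing $C_n$ by normalization are fine, but the core of the transition density is not reached by the proposed route.
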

\begin{table}[ht]
    \centering
    \begin{tabular}{|l||l|l|l|l|l|l}
      \hline
      \mbox{model}   & $I$ & $w_n(z)$ & $\psi(z)$ & $C_n$ & $p_1(z)$ \\
      \hline
      1. GUE ext source &  $\mathbb R$ & $e^{-\frac{z^2}{2} + a_n z}$ & $1$ & $\sqrt{2 \pi} e^{\frac{a^2_n}{2}}$ & $\frac{1}{\sqrt{2\pi}} e^{-\frac{1}{2} (z - a_1)^2}$ \\  
      2. Wishart &  $[0,\infty)$ & $z^{M-n} e^{a_n z}$ & $z$ & $\frac{(M - n)!}{(-a_n)^M}$ & $\frac{(-a_1)^M}{(M - 1)!} z^{M - 1} e^{a_1 z}$ \\    
      3. Mult. Laguerre & $[0,\infty)$ & $z^{\alpha_n} e^{-z} $ & $z$ & $\alpha_n !$ & $\frac{z^{\alpha_1}}{\alpha_1 !} e^{-z}$ \\ 
      4. Jacobi-\Pineiro & $[0,1]$ & $z^{\alpha_n} (1-z)^{M' - n}$ & $z(1-z)$ & $\frac{\alpha_n ! (M' - n)!}{(\alpha_n + M')!}$ & $\frac{\prod^{M'}_{i = 1} (\alpha_i + i)}{(M' - 1)!} (1 - z)^{M' - 1} z^{\alpha_1}$ \\
      \hline
    \end{tabular}
    \caption{The range $I$, the weight $w_n(z)$, the function $\psi(z)$, the constant $C_n$, and the probability density function $p_1(z)$ of the eigenvalue $\lambda^{(1)}_1$ of $S_1$ for the four minor processes.}
    \label{table:thm}
  \end{table}

As a corollary to Theorem \ref{thm:Markov}, we have the joint distribution of $\lambda^{(n)}_i$ in the minor processes.
\begin{cor} \label{cor:jpdf}
  In each of the four minor processes associated to the GUE with external source, Wishart ensemble, multiple Laguerre ensemble and Jacobi-\Pineiro\ ensemble respectively, the joint distribution of the eigenvalues $\lambda^{(1)} \in I^1, \lambda^{(2)} \in I^2, \dotsc, \lambda^{(N)} \in I^N$ is 
  \begin{equation}
    P(\lambda^{(1)}, \dotsc, \lambda^{(N)}) = \Delta_N(\lambda^{(N)}) \prod^N_{n = 1} \frac{1}{C_n} w_N(\lambda^{(N)}_n) \prod^{N - 1}_{n = 1} \left( \prod^n_{i = 1} \frac{w_n(\lambda^{(n)}_i)}{w_{n + 1}(\lambda^{(n)}_i) \psi(\lambda^{(n)}_i)} \right) \id_{\lambda^{(n)} \preceq \lambda^{(n + 1)}},
  \end{equation}
  or equivalently, (with the $\lambda^{(n)}_{n + 1}$ virtual variables introduced for convenience)
  \begin{equation} \label{eq:eqivalent_jpdf}
    P(\lambda^{(1)}, \dotsc, \lambda^{(N)}) = \Delta_N(\lambda^{(N)}) \prod^N_{n = 1} \frac{1}{C_n} w_N(\lambda^{(N)}_n) \prod^{N - 1}_{n = 1} \det(\phi_n(\lambda^{(n)}_i, \lambda^{(n + 1)}_j))^{n + 1}_{i, j = 1},
  \end{equation}
  where $\phi_n(\lambda^{(n)}_i, \lambda^{(n+1)}_j)$ is given by
  \begin{equation} \label{eq:phi_JUE_2v}
    \phi_n(x,y) = \frac{w_n(x)}{w_{n + 1}(x) \psi(x)} \id_{x < y}, \quad x \neq \lambda^n_{n+1},
  \end{equation}
  and otherwise
  \begin{equation} \label{eq:phi_LUE_1v}
    \phi_n(\lambda^{(n)}_{n+1}, x) = 1.
  \end{equation}
  Here the domain $I$, constant $C_n$ and functions $w_n, \psi$ are defined in Table \ref{table:thm}.
\end{cor}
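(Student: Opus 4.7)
The plan is to derive the joint density straight from the Markov chain of Theorem \ref{thm:Markov} by multiplying the initial density $p_1(\lambda^{(1)}_1)$ against the transition kernels \eqref{eq:transition_general} and collecting terms. The starting observation, checked case-by-case against Table \ref{table:thm}, is the identity $p_1(z) = w_1(z)/C_1$: for instance in the GUE with external source one has $w_1(z)/C_1 = e^{-z^2/2+a_1 z}/(\sqrt{2\pi}\,e^{a_1^2/2}) = (2\pi)^{-1/2}\,e^{-(z-a_1)^2/2}$, and analogous elementary checks dispose of the other three rows of the table.

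With this identity in hand, I would expand $P(\lambda^{(1)},\dots,\lambda^{(N)}) = p_1(\lambda^{(1)}_1)\prod_{n=1}^{N-1}P_{n,n+1}(\lambda^{(n)},\lambda^{(n+1)})$ using \eqref{eq:transition_general} and track four separate pieces. The constants combine via $(1/C_1)\prod_{n=1}^{N-1}(1/C_{n+1}) = \prod_{n=1}^{N}(1/C_n)$; the Vandermondes telescope to $\prod_{n=1}^{N-1}\Delta_{n+1}(\lambda^{(n+1)})/\Delta_n(\lambda^{(n)}) = \Delta_N(\lambda^{(N)})$ since $\Delta_1 \equiv 1$; the interlacing indicators aggregate directly into $\prod_{n=1}^{N-1}\id_{\lambda^{(n)}\preceq\lambda^{(n+1)}}$; and the weight factors, after absorbing the $w_1(\lambda^{(1)}_1)$ from $p_1$, assemble as $\prod_{m=1}^{N}\prod_{i=1}^{m} w_m(\lambda^{(m)}_i)$ in the numerator over $\prod_{n=1}^{N-1}\prod_{i=1}^{n}w_{n+1}(\lambda^{(n)}_i)\psi(\lambda^{(n)}_i)$ in the denominator. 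Splitting off the $m=N$ piece from the numerator recasts this weight block as $\prod_{n=1}^{N}w_N(\lambda^{(N)}_n)\prod_{n=1}^{N-1}\prod_{i=1}^{n} w_n(\lambda^{(n)}_i)/[w_{n+1}(\lambda^{(n)}_i)\psi(\lambda^{(n)}_i)]$, which is the first form claimed.

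For the equivalent determinantal form I would compute $\det(\phi_n(\lambda^{(n)}_i,\lambda^{(n+1)}_j))_{i,j=1}^{n+1}$ by factoring $w_n(\lambda^{(n)}_i)/[w_{n+1}(\lambda^{(n)}_i)\psi(\lambda^{(n)}_i)]$ out of rows $i=1,\dots,n$, reducing to the determinant of a $(n+1)\times(n+1)$ $0/1$ matrix whose $(i,j)$-entry is $\id_{\lambda^{(n)}_i<\lambda^{(n+1)}_j}$ for $i\le n$ and $1$ for the virtual row $i=n+1$ (from \eqref{eq:phi_LUE_1v}). Under the interlacing pattern \eqref{eq:interlacing_defn} the indicator entry equals $\id_{j>i}$, so one cyclic row permutation turns the matrix into upper triangular form with unit diagonal, producing the factor $\id_{\lambda^{(n)}\preceq\lambda^{(n+1)}}$ up to an overall sign; outside the interlacing region one obtains either a zero column or a zero on the diagonal of the resulting triangular matrix, forcing the determinant to vanish exactly where the naive indicator in the first form does. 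The whole argument is bookkeeping, and the main obstacle—if any—is the uniform verification of $p_1 = w_1/C_1$ across the four ensembles together with the careful sign accounting in the virtual-variable determinant so that the two displayed forms of the joint density agree with the standard sign convention for staircase determinants used, e.g., in \cite{Johansson-Nordenstam06}.
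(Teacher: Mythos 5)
Your proposal is correct and follows essentially the same route as the paper: the paper's own ``proof'' is just the remark that one applies the transition probability \eqref{eq:transition_general} inductively starting from the initial density $p_1 = w_1/C_1$ and then checks that the determinant with the virtual variable encodes the interlacing, which is exactly your telescoping computation plus the triangularization of the $0/1$ matrix. The sign bookkeeping you flag (moving the all-ones virtual row to the top costs $(-1)^n$ per level) is precisely what the paper glosses over with ``one can check,'' and it is harmless downstream because Lemma \ref{lem:BFPS} is stated for signed measures and normalizes by $Z_N$, so the correlation kernel is unaffected.
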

To prove Corollary \ref{cor:jpdf}, we use the Markovian property of $\lambda^{(n)}$, apply \eqref{eq:transition_general} inductively, and note the initial condition that the probability density function of $\lambda^{(1)}_1$ in the four minor processes is given by $p_1(z)$ in Table \ref{table:thm}. One can check that the determinant in \eqref{eq:eqivalent_jpdf} encodes the interlacing property.

The next theorem shows the equivalence between the minor processes and the last-passage percolation models. 
\begin{thm} \label{thm:Schur_corrsp}
  To the exponentially distributed percolation model $\Pi_{M, N}$ with  parameter 
  \begin{equation}
    \mbox{$\pi_{ij}:=p_i+q_j$ for $1\leq i\leq M$ and $1\leq j\leq N$, with $N \leq M$,}
  \end{equation}
  we associate the variables $\mu_i^{(M, n)} , ~1\leq i\leq n$ as in \eqref{eq:defn_mu^mn}. Then the following holds:
  \begin{enumerate}
  \item
    For $\pi_{ij}=1-a_j/\sqrt{M}$, the scaling limit $\nu^{(  1)},  \ldots,\nu^{(  N)}$ of the percolation variables $\mu^{(  M,1)},  \ldots,\mu^{( M, N)}$, 
    \begin{equation} \label{eq:nu_sequence_added}
      \nu_i^{(n)}:=\lim_{M\to \infty}\frac{\mu^{(M, n)}_i - M}{\sqrt{M}},~~~~~~1\leq i\leq n \leq N,
    \end{equation}
    % the $\nu^{(  1)},  \ldots,\nu^{(  N)}$ 
    has the same joint distribution as $\lambda^{(1)},\ldots,\lambda^{(N)}$ in the GUE with external source minor process defined in \eqref{eq:case_1_defn}.%for the external source GUE-minor process.
  \item (A special case of \cite[Theorem 1.1]{Dieker-Warren09})
    For $\pi_{ij}= -a_j $, the percolation variables $\mu^{( M, 1)},  \ldots,\mu^{( M, N)}$ have the same joint distribution as $\lambda^{(1)},\ldots,\lambda^{(N)}$ in the Wishart minor process defined in \eqref{eq:case_2_defn}.
  \item%  \medbreak
    For $\pi_{ij}=i+\alpha_j $, the scaling limit $\nu^{(  1)},  \ldots,\nu^{(  N)}$ of the percolation variables $\mu^{(  M,1)},  \ldots,\mu^{( M, N)}$,
    \begin{equation}
      \nu^{(  n)}_i := \lim_{M\to \infty}  M e^{-\mu^{(M, n)}_i},~~~~~~1\leq i\leq n \leq N,
    \end{equation}
    has the same joint distribution as the $\lambda^{(1)},\ldots,\lambda^{(N)}$ in the multiple Laguerre minor process defined in \eqref{eq:case_3_defn}.%Laguerre-minor process .
  \item
    For $\pi_{ij}=i+\alpha_j $, the variables $\nu^{(1)},  \ldots,\nu^{(N)}$ obtained by exponentiating the percolation variables $\mu^{( M', 1)},  \ldots,\mu^{( M', N)}$,
    \begin{equation}
      \nu^{(n)}_i :=  e^{-\mu^{(M', n)}_i},~~~~~~1\leq i\leq n \leq N\leq M',
    \end{equation}
    have the same joint distribution as the $\lambda^{(1)},\ldots,\lambda^{(N)}$ in the Jacobi-\Pineiro\ minor process defined in \eqref{eq:case_4_defn}.
  \end{enumerate}
\end{thm}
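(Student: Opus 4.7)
The plan is to treat all four cases by a single strategy: invoke the general joint-distribution identity of \cite[Theorem 1.1]{Dieker-Warren09} for exponential LPP with rates $\pi_{ij} = p_i + q_j$, then specialize the $p_i, q_j$ and, when required, take a scaling limit, in each case matching the result against Corollary \ref{cor:jpdf} with the weights of Table \ref{table:thm}. Part 2 is then by definition a special case of Dieker-Warren, obtained with $p_i \equiv 0$ and $q_j = -a_j > 0$: their product-of-determinants density is already in the shape of \eqref{eq:eqivalent_jpdf} with $w_n(z) = z^{M-n} e^{a_n z}$ on $[0, \infty)$, $\psi(z) = z$, and $C_n = (M-n)!/(-a_n)^M$, and no further work is required.

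For Parts 3 and 4, set $p_i = i$ and $q_j = \alpha_j$. For Part 4, with the $M' \times N$ rectangle, the bijection $\nu = e^{-\mu}$ maps $[0,\infty)^n$ onto $(0,1]^n$; its Jacobian $\prod_i \nu_i^{-1}$ combined with the $(1 - \nu_i)$ factors arising from the interlacing denominators produces exactly $\psi(\nu) = \nu(1 - \nu)$, the Dieker-Warren exponential weight $e^{-(i + \alpha_n - n)\mu}$ transforms into $\nu^{\alpha_n}(1 - \nu)^{M' - n}$ on $[0, 1]$, and the Beta normalization $C_n = \alpha_n!(M' - n)!/(\alpha_n + M')!$ drops out at finite $M'$. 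For Part 3 use the $M \times N$ rectangle, substitute $\mu = \log(M/\nu)$, and let $M \to \infty$: $(1 - \nu/M)^{M - n} \to e^{-\nu}$ recovers the multiple-Laguerre weight $\nu^{\alpha_n} e^{-\nu}$ on $[0, \infty)$, $\psi(\nu/M) \sim \nu/M$ yields $\psi(\nu) = \nu$ after absorbing the $M$ into the normalization, and the Beta constants degenerate to $\alpha_n!$.

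Part 1 is the most delicate. Set $p_i \equiv 1$ and $q_j = -a_j/\sqrt{M}$ and substitute $\mu_i^{(n)} = M + \sqrt{M}\,\nu_i^{(n)}$ into the Dieker-Warren density. The key asymptotic is
\[
\bigl(1 + \nu/\sqrt{M}\bigr)^{M} = \exp\bigl(\sqrt{M}\,\nu - \tfrac{1}{2}\nu^{2} + o(1)\bigr),
\]
so that $z^{M - n} e^{(a_n/\sqrt{M} - 1) z}$ evaluated at $z = M + \sqrt{M}\,\nu$ collapses, once all $M$-dependent prefactors are absorbed into the normalization, to $e^{-\nu^{2}/2 + a_{n}\nu}$ on $\mathbb{R}$. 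The Jacobian $M^{n/2}$ from the substitution, the Vandermonde rescaling $\Delta_{n}(M + \sqrt{M}\,\nu) = M^{n(n-1)/4}\Delta_{n}(\nu)$, the divergent factors $M^{M - n}$ and $e^{\sqrt{M}\,a_{n}}$, and the Dieker-Warren normalizing constants must all conspire to cancel, leaving precisely the GUE-with-source joint density; the interlacing constraint and the trivial $\psi \equiv 1$ pass to the limit verbatim.

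The main obstacle is Part 1: tracking simultaneously these several divergent quantities and verifying that they cancel exactly is the longest and most error-prone calculation. Parts 3 and 4 are conceptually similar but far simpler because they require only a one-parameter logarithmic change of variables with no competing large-$M$ cancellations in the weight, and Part 2 is immediate from \cite{Dieker-Warren09}.
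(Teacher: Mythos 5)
Your Parts 1 and 2 are essentially sound and coincide with the paper's route: Part 2 is indeed just a citation of \cite{Dieker-Warren09}, and Part 1 is the classical Laguerre-to-Hermite scaling $\mu = M + \sqrt{M}\,\nu$ applied to the Wishart case; the paper carries out the same cancellation at the level of the Markov transition probabilities \eqref{eq:transition_general} and the initial density $p_1$ rather than the full joint density, which keeps the divergent constants confined to a single level, but your density-level version can be made to work.

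The genuine gap is in Parts 3 and 4. With $p_i = i$ the row rates are distinct, so \cite[Theorem 1.1]{Dieker-Warren09} does not hand you a joint density in the product form of Corollary \ref{cor:jpdf} with a per-level weight $w_n$; it identifies the LPP array in law with a generalized Wishart minor process with variances $1/(i+\alpha_j)$, whose law carries a nontrivial row-parameter factor, and that is not yet the multiple Laguerre or Jacobi-\Pineiro\ process. Moreover, the mechanism you give for the Jacobi weight is wrong: under $\nu = e^{-\mu}$ a pure exponential $e^{-c\mu}$ becomes $\nu^{c}$ and can never produce a power of $(1-\nu)$, the quantity ``$e^{-(i+\alpha_n-n)\mu}$'' is not even a function of the array variables alone, and the interlacing indicators contribute no $(1-\nu_i)$ factors whatsoever. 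The factor $(1-\nu)^{M'-n}$ is exactly the output of the row-parameter analysis that your sketch omits: the paper returns to the geometric Schur process with $s_i t_j = t^{\,i+\alpha_j}$, evaluates the Schur polynomials by the principal specialization $s_\kappa(1,t,\dots,t^{M'-1}) = \Delta_{M'}(t^{\kappa_1+M'-1},\dots,t^{\kappa_{M'}})/\Delta_{M'}(t^{M'-1},\dots,1)$, and in the $t=e^{-1/L}\to 1$ limit the $M'-n$ frozen coordinates $t^{M'-i}\to 1$ generate precisely the $(1-e^{-\mu_i})^{M'-n}$ factors in \eqref{eq:transition_prob_JUE}--\eqref{eq:asymp_of_Jacobi_process}; only after this does $\nu = e^{-\mu}$ give $\nu^{\alpha_n}(1-\nu)^{M'-n}$, and only then does your Part 3 limit $(1-\nu/M')^{M'-n}\to e^{-\nu}$ make sense, since it presupposes the Jacobi weight you have not derived. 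To repair the proof you must supply this computation (equivalently, expand the row-parameter determinant $\det(e^{-i\mu_j})$, which is a Vandermonde in the variables $e^{-\mu_j}$, including the $M'-n$ frozen rows); as written, Parts 3 and 4 assert the target weights rather than obtain them.
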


% \begin{thm} \label{thm:Markov}
%   In each of the four (generalized) minor processes associated to the GUE with external source, Wishart ensemble, multiple Laguerre ensemble and Jacobi-\Pineiro\ ensemble respectively, as $n$ runs from $1$ to $N$, the eigenvalues of $S^n$, expressed as the $n$-variate random variables $\lambda^{(n)} = (\lambda^{(n)}_1, \dots, \lambda^{(n)}_n)$ whose components are increasing and in the domain $I$ specific for each minor process, constitute an inhomogeneous Markov chain, with the transition probability
%   \begin{equation} \label{eq:transition_general}
%     P_{n - 1, n}(\lambda^{(n - 1)}, \lambda^{(n)}) = \frac{1}{C_n} \frac{\Delta_n(\lambda^{(n)}) \prod^n_{i = 1} w_n(\lambda^{(n)}_i)}{\Delta_{n - 1}(\lambda^{(n - 1)}) \prod^{n - 1}_{i = 1} w_n(\lambda^{(n - 1)}_i) \psi(\lambda^{(n - 1)}_i)} \id_{\lambda^{(n - 1)} \preceq \lambda^{(n)}}.
%   \end{equation}
%   Here the domain $I$, constant $C_n$ and functions $w_n, \psi$ are defined in \eqref{eq:defn_for_GUE}, \eqref{eq:defn_for_Wishart} \eqref{eq:defn_for_Laguerre} and \eqref{eq:defn_for_Jacobi}.
% \end{thm}

Furthermore, we show that the correlation function of $\lambda^{(n)}_i$ in each minor process has a determinantal formula, and the correlation kernel has a double contour integral formula.
\begin{thm} \label{thm:kernel}
  In each of the four minor processes associated to the GUE with external source, Wishart ensemble, multiple Laguerre ensemble and Jacobi-\Pineiro\ ensemble respectively, the correlation kernel of eigenvalues in $S_{n_1}$ and $S_{n_2}$ is given by
  \begin{enumerate}[label*=(\alph*)]
  \item \label{enu:thm:kernel_GUE}
    GUE with external source:
    \begin{multline} \label{eq:compact_integral_formula_of_K}
      K(n_1,x; n_2,y) =  \frac{-1}{2\pi i} \oint_{\Gamma_a} \frac{e^{(y-x)w}}{\prod^{n_2}_{l = n_1+1} (w - a_l)} dw  \id_{x<y} \id_{n_1 < n_2} \\
      + \frac{1}{(2\pi i)^2} \int_{\Sigma} dz \oint_{\Gamma_a} dw e^{\frac{z^2}{2} - \frac{w^2}{2} -xz + yw} \frac{\prod^{n_1}_{k = 1} (z - a_k)}{\prod^{n_2}_{l = 1} (w - a_l)} \frac{1}{z-w},
    \end{multline}
    where the contour $\Gamma_a$ encloses all poles of the form $a_i$ in the intergrand, and $\Sigma = C + i\realR \uparrow$ lies to the right of $\Gamma_a$.
  \item \label{enu:thm:kernel_Wishart}
    Wishart (a special case of \cite[Formula (15)]{Borodin-Peche08}): 
    \begin{multline} \label{eq:correlation_Wishart}
      K(n_1,x; x_2,y) =  \frac{-1}{2\pi i} \oint_{\Gamma_a} \frac{e^{(y-x)w}}{\prod^{n_2}_{l = n_1+1} (w - a_l)} dw  \id_{x<y} \id_{n_1 < n_2} \\
      + \frac{1}{(2\pi i)^2} \int_{\Sigma} dz \oint_{\Gamma_a} dw e^{-xz + yw} \left( \frac{w}{z} \right)^M \frac{\prod^{n_1}_{k = 1} (z - a_k)}{\prod^{n_2}_{l = 1} (w - a_l)} \frac{1}{w-z},
    \end{multline}
    where the contour $\Gamma_a$ encloses all poles of the form $a_i$ in the intergrand, and $\Sigma$ encloses $0$, and does not cross or contain $\Gamma_a$.
  \item \label{enu:thm:kernel_Laguerre}
    Multiple Laguerre:
    \begin{multline} \label{eq:compact_integral_formula_of_K_LUE}
      K(n_1,x;n_2,y) = \frac{-1}{2\pi i} \oint_{\Gamma_{\alpha}} \frac{x^{-w - 1}y^w}{\prod^{n_2}_{l=n_1+1} (w-\alpha_l)} dw \id_{x < y} \id_{n_1 < n_2} \\
      + \frac{1}{(2\pi i)^2}  \oint_{\Sigma}dz \oint_{\Gamma_{\alpha}}dw \frac{x^{-z-1}y^w \Gamma(z+1)}{(z-w) \Gamma(w+1)} \frac{\prod^{n_1}_{k=1} (z-\alpha_k)}{\prod^{n_2}_{l=1} (w-\alpha_l)},
    \end{multline}
    where $\Gamma_{\alpha}$ is a contour enclosing $\alpha_1, \dots, \alpha_{n_2}$, and $\Sigma$ is a deformed Hankel contour going counterclockwise from $-\infty$ to $-\infty$ that encloses poles $z = -1, -2, \dots$ (see Figure \ref{fig:Hankel}) and the contour $\Gamma_{\alpha}$.
  \item \label{enu:thm:kernel_Jacobi}
    Jacobi-\Pineiro:
    \begin{multline} \label{eq:compact_integral_formula_of_K_JUE}
      K(n_1, x; n_2, y) = \frac{-1}{2\pi i} \oint_{\Gamma_{\alpha}} \frac{x^{-w-1} y^w}{\prod^{n_2}_{l = n_1+1} (w-\alpha_l)} dw \id_{x < y} \id_{n_1 < n_2} \\
      + \frac{1}{(2\pi i)^2} \oint_{\Sigma}dz \oint_{\Gamma_{\alpha}}dw \frac{x^{-z-1}y^w}{z-w} \frac{\Gamma(w + M' + 1) \Gamma(z + 1)}{\Gamma(z + M' + 1) \Gamma(w + 1)} \frac{\prod^{n_1}_{k=1} (z-\alpha_k)}{\prod^{n_2}_{l=1} (w-\alpha_l)},
    \end{multline}
    where the contour $\Gamma_{\alpha}$ is a contour inclosing $\alpha_1, \dots, \alpha_{n_2}$, and $\Sigma$ is a contour going counterclockwise enclosing $-1, -2, \dotsc, -M'$ and the contour $\Gamma_{\alpha}$.
  \end{enumerate}
\end{thm}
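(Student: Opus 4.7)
The strategy is to exploit the Eynard--Mehta/Borodin--Ferrari determinantal structure that is already built into the joint density formula \eqref{eq:eqivalent_jpdf} of Corollary \ref{cor:jpdf}. Rewriting the Vandermonde as $\Delta_N(\lambda^{(N)}) = \det[(\lambda^{(N)}_j)^{k-1}]_{j,k=1}^{N}$, the joint density takes the standard chain-of-determinants shape to which the Borodin--Ferrari kernel formula applies, yielding
\begin{equation*}
K(n_1,x;n_2,y) = -\phi^{(n_1,n_2)}(x,y)\,\id_{n_1<n_2} + \sum_{k=1}^{N} \Psi^{n_2}_k(y)\,\Phi^{n_1}_k(x),
\end{equation*}
where $\phi^{(n_1,n_2)} = \phi_{n_1}\ast\phi_{n_1+1}\ast\dotsm\ast\phi_{n_2-1}$ is the iterated convolution of the transitions from \eqref{eq:phi_JUE_2v}, and $\{\Psi^n_k,\Phi^n_k\}$ is the biorthogonal system inverting the Gram matrix at level $n=N$. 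My proof would open by verifying this rewriting and invoking the BF identity.

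The first computational step would be to evaluate $\phi^{(n_1,n_2)}(x,y)$. Because $\phi_n(x,y) = \frac{w_n(x)}{w_{n+1}(x)\psi(x)}\id_{x<y}$, the product $\prod_{j=n_1}^{n_2-1}\phi_j(z_j,z_{j+1})$ telescopes to a ratio of weights, and the simplex integral over $x<z_{n_1+1}<\dotsm<z_{n_2-1}<y$ is a standard inverse-Laplace/Mellin computation. For case 1 it gives $\frac{1}{2\pi i}\oint_{\Gamma_a}\frac{e^{(y-x)w}}{\prod_{l=n_1+1}^{n_2}(w-a_l)}dw$, matching the first line of \eqref{eq:compact_integral_formula_of_K}; after the changes of variables $z\mapsto -\log z$ (or its variants) that convert the additive Hermite set-up into the multiplicative Laguerre/Jacobi ones, the same calculation produces the single-contour terms in \eqref{eq:correlation_Wishart}, \eqref{eq:compact_integral_formula_of_K_LUE}, and \eqref{eq:compact_integral_formula_of_K_JUE}.

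Next I would identify $\Phi^n_k$ and $\Psi^n_k$ with the multiple orthogonal polynomials naturally attached to each model---multiple Hermite for case 1, a Meixner-type family for case 2, multiple Laguerre for case 3 and multiple Jacobi--Pi\~neiro for case 4---together with their dual/Cauchy-transform partners. Each of these functions admits a single-contour integral representation in which $\Phi^{n_1}_k(x)$ acquires the factor $\prod_{k=1}^{n_1}(z-a_k)$ (resp.\ $\prod_{k=1}^{n_1}(z-\alpha_k)$) from its $z$-integral, while $\Psi^{n_2}_k(y)$ acquires $\prod_{l=1}^{n_2}(w-a_l)^{-1}$ from its $w$-integral, with the remaining Gaussian, $x^{-z-1}y^w$, or $\Gamma$-function factors supplied by the weights $w_n$. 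The sum $\sum_k \Psi^{n_2}_k(y)\Phi^{n_1}_k(x)$ would then be collapsed by inserting the geometric identity $\sum_{k\geq 0} w^k z^{-k-1} = 1/(z-w)$ (valid once $\Sigma$ is placed outside $\Gamma_a$ or $\Gamma_\alpha$, as prescribed), producing the $1/(z-w)$ factor in the stated double integrals.

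The main obstacle is this last step: producing clean contour representations of the biorthogonal functions and verifying that their summation reproduces exactly the stated integrand, in particular in the Jacobi--Pi\~neiro case, where the ratio $\Gamma(w+M'+1)\Gamma(z+1)/[\Gamma(z+M'+1)\Gamma(w+1)]$ must be generated by correctly pairing residues of $w_n$ at $\alpha_1,\dotsc,\alpha_n$ with residues of the dual weight at $-1,\dotsc,-M'$, and checking that the contour $\Sigma$ enclosing these negative integers together with $\Gamma_\alpha$ indeed recovers the biorthogonal sum. Once this residue/contour bookkeeping is completed for case 1 (GUE with external source), cases 2--4 follow by the same method together with the aforementioned multiplicative change of variables, modulo the model-specific orthogonal polynomial data.
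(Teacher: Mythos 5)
Your overall architecture is the same as the paper's: rewrite the joint density \eqref{eq:eqivalent_jpdf} in the chain-of-determinants form, invoke the Borodin--Ferrari--Pr\"ahofer--Sasamoto lemma (the paper's Lemma \ref{lem:BFPS}), compute $\phi^{(n_1,n_2)}$ by iterated convolution, identify the biorthogonal functions with multiple Hermite / multiple Laguerre (first kind) / Jacobi--\Pineiro\ polynomials of types I and II via the contour formulas of Appendix \ref{sec:appendix_MOPS}, and then collapse the biorthogonal sum into a double contour integral. (The Wishart case is not proved in the paper either; it is quoted from Borodin--\Peche, so your ``Meixner-type family'' remark is moot, though the relevant family is multiple Laguerre of the second kind, not Meixner.)

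The genuine gap is in the step you yourself flag as the main obstacle: the mechanism you propose for producing the $1/(z-w)$ factor, namely the geometric identity $\sum_{k\ge 0} w^k z^{-k-1}=1/(z-w)$, does not apply here. The functions being summed are indexed by the nested parameter sets $(a_{n},\dotsc,a_{n-j+1})$ (resp.\ $(\alpha_n,\dotsc,\alpha_{n-j})$), so the $k$-th term of $\sum_{k=1}^{n_2}\Psi^{n_1}_{n_1-k}(x)\Phi^{n_2}_{n_2-k}(y)$ contributes a ratio of the form $\prod_{j=k+1}^{N}(z-a_j)\big/\prod_{l=k}^{N}(w-a_l)$ inside the double integral, not a power $w^k z^{-k-1}$; your identity would only cover the degenerate case where all $a_k$ (or $\alpha_k$) coincide. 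What is needed instead is the parameter-dependent telescoping identity \eqref{eq:telescope_1}--\eqref{eq:telescope_identity} (and its Jacobi--\Pineiro\ analogue with the extra factors $(z+M'-j'+1)/(w+M'-l'+1)$), which writes $(z-w)$ times each summand as a difference of consecutive products; summing leaves $\frac{1}{z-w}\prod_{k=1}^{N}\frac{z-a_k}{w-a_k}$ minus a residual term $\frac{1}{z-w}\prod_{k=n_2+1}^{N}\frac{z-a_k}{w-a_k}$, and one must then show the residual term vanishes because its $w$-integrand is analytic inside $\Gamma_a$ (resp.\ $\Gamma_\alpha$), as in \eqref{eq:the_2nd_part_in_telescope_vanishes_Laguerre} and \eqref{eq:partII_n1>=n_2_JUE}. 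Without this replacement your collapse step fails for distinct parameters, which is precisely the regime the theorem addresses (the general case is then recovered by analytic continuation). Two smaller points: the BFPS kernel sum runs only up to $n_2$ (not $N$), and its validity requires checking Assumption A \eqref{eq:formula_of_Assumption_A}, i.e.\ that $\phi_n(\lambda^{(n)}_{n+1},\cdot)$ is proportional to $\Phi^{(n+1)}_0$; both are easy here but should be stated. Finally, the single-contour term and the $\Psi$, $\Phi$ representations come with conjugation prefactors such as $e^{a_{n_1}x-a_{n_2}y}$ or $x^{\alpha_{n_1}}/y^{\alpha_{n_2}}$, which must be conjugated out at the end, as the paper does.
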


\begin{figure}[ht]
  \centering
  \includegraphics{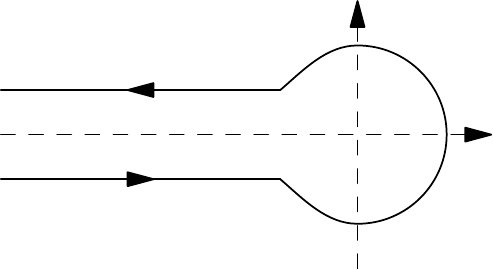}
  \caption{The deformed Hankel contour. It comes from $-\infty$, ends at $-\infty$ and enclose $(-\infty, 0)$ counterclockwise. It keeps a constant distance to the real line at $-\infty$.}
  \label{fig:Hankel}
\end{figure}

\begin{rmk}
  As a special case of the GUE with external source minor process, if all $a_i = 0$, our model becomes the well known GUE minor model. In \cite[Formula (21)]{Ferrari-Frings10}, the correlation kernel of the GUE minor model was obtained as the sum of two contour integrals. Due to a different choice of scaling convention, the kernel in \cite[Formula (21)]{Ferrari-Frings10} is related to our kernel in \eqref{eq:compact_integral_formula_of_K} by
  \begin{equation}
    K^{\GUE}_{\FF}(\xi_1, n_1; \xi_2, n_2) d\xi_2 = \left. 2^{(n_2 - n_1)/2} \sqrt{2} K(n_1, x; n_2, y) dy \right\rvert_{x = \sqrt{2}\xi, y = \sqrt{2}\eta}.
  \end{equation}
  The correlation kernel of the GUE minor model was first discovered in \cite[Definition 1.2]{Johansson-Nordenstam06}, (see \cite[Definition 1.1]{Johansson-Nordenstam06a} for erratum) by Johansson and Nordenstam in terms of Hermite polynomials. Their kernel is related to ours by
  \begin{equation}
    K^{\GUE}_{\JN}(r, \xi; s, \eta) d\xi = \left. e^{\frac{x^2 - y^2}{4}} K(n_1, x; n_2, y) dy \right\rvert_{\substack{n_1 = s, n_2 = r \\ x = \sqrt{2}\eta, y = \sqrt{2}\xi}}.
  \end{equation}
  Although the formula of the correlation kernel in our paper is slightly different from those in previous literature, they define the same correlation function of eigenvalues of minors, since the difference is simply a conjugation and change of variables. Hence Theorem \ref{thm:Markov} is a generalization of previous results. The recent preprint \cite{Ferrari-Frings12} by Ferrari and Frings that appeared shortly before the first preprint version of this paper obtained essentially the same result of Theorem \ref{thm:kernel}\ref{enu:thm:kernel_GUE}. Their kernel \cite[Formula (4)]{Ferrari-Frings12} with $t = 1$ is related to our kernel \eqref{eq:compact_integral_formula_of_K} by
  \begin{equation}
    K_{1, \FF}((x, n), (x', n')) dx' = \left. (-1)^{n_2 - n_1} K(n_1, x; n_2, y) dy \right\rvert_{\substack{n_1 = n, n_2 = n' \\ x = x, y = x'}}.
  \end{equation}
\cite{Ferrari-Frings12} also relates the GUE with external source minor process to Warren's process with drifts \cite{Warren07} and an interacting particle system \cite{Borodin-Ferrari08}, \cite{Borodin-Ferrari08a}.
\end{rmk}
\begin{rmk}
  If we let all $a_n = 0$ in the GUE with external source minor process or all $\alpha_n = M - n$ in the multiple Laguerre minor process and the Jacobi-\Pineiro\ minor process, they are reduced to the GUE, LUE and JUE minor processes respectively, and we can check that the correlation functions in the special cases agree with those obtained in \cite{Forrester-Nagao11} for these three minor processes. 
\end{rmk}

The minor processes studied in this paper are also related to directed polymers. Specialize the exponentially distributed percolation model $\Pi_{M, n}$, for all $1 \leq n \leq N$, as in \eqref{eq:exp_distr_added}, to
\begin{equation}
  \pi_{ij} =
  \begin{cases}
    1 - \frac{\kappa_1}{\sqrt{M}} & \text{for $1 \leq j \leq r_1$}, \\
    1 - \frac{\kappa_2}{\sqrt{M}} & \text{for $r_1 + 1 \leq j \leq n$},
  \end{cases}
\end{equation}
with $L^{(l)}(M, n)$ as in \eqref{eq:defn_of_L^l(N,n)}. As a shorthand notation, we set
\begin{equation}
  \kappa_* =
  \begin{cases}
    \kappa_1 & \text{in the region where $j \in [1, r_1]$}, \\
    \kappa_2 & \text{in the region where $j \in [r_1 + 1, n]$}.
  \end{cases}
\end{equation}
Then from Theorem \ref{thm:Schur_corrsp} it follows that
\begin{cor} \label{cor:polymer_added}
  Given $n$ independent standard Brownian motions $B_j(t)$ run along the vertical lines $j = 1, \dotsc, n$, the first component $\nu^{(n)}_1$ of the vector $\nu^{(n)} = (\nu^{(n)}_1 \geq \dotsb \geq \nu^{(n)}_n)$ as in \eqref{eq:nu_sequence_added}, can be expressed as a directed polymer problem (for continuous times $t_i \in \realR_+$)
  \begin{equation}
    \nu^{(n)}_1 = \lim_{M \to \infty} \frac{L^{(1)}(M, n) - M}{\sqrt{M}} = \sup_{0 = t_0 < t_1 < \dotsb < t_n = 1} \sum^n_{j = 1} [B_j(t_j) - B_j(t_{j - 1}) + \kappa_*(t_j - t_{j - 1})].
  \end{equation}
  More generally, consider non-intersecting right-upper paths $\pi_k$, $1 \leq k \leq l$, leaving from the left-most adjacent points at $t = 0$ and going to the right-most adjacent points at $t = 1$; see Figure \ref{fig:polymer}. The time $0 = t^{(k)}_1 < t^{(k)}_1 < \dotsb < t^{(k)}_{n - l + 1} = 1$ are the associated instants of jump for each path $\pi_k$, with the necessary interlacing of the instants $t^{(k)}_i$ in order to respect the non-intersecting nature of the paths. Also consider independent standard Brownian motions $B^{(k)}_i(t)$, associated with each path $\pi_k$ and each vertical line $k \leq i \leq n - l + k$. Then the other components $v^{(n)}_l$ of the vector $v^{(n)}$ have an interpretation in terms of directed percolation, namely for $1 \leq l \leq n$ one has
  \begin{equation} \label{eq:nu_added}
    \begin{split}
      \sum^l_{i = 1} \nu^{(n)}_i = {}& \lim_{M \to \infty} \frac{L^{(l)}(M, n) - lM}{\sqrt{M}} \\
      = {}& \sup_{\pi_1, \dotsc, \pi_l \in \Pi_n} \sum^l_{k = 1} \sum^{n - l + 1}_{j = 1} \left[ B^{(k)}_{j + k - 1}(t^{(k)}_j) - B^{(k)}_{j + k - 1}(t^{(k)}_{j - 1}) + \kappa_*(t^{(k)}_j - t^{(k)}_{j - 1}) \right],
    \end{split}
  \end{equation}
  where the paths $\pi_1, \dotsc, \pi_l \in \Pi_n$ are non-intersecting right-upper paths.
\end{cor}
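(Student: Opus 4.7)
The first equality in \eqref{eq:nu_added} is immediate from Theorem~\ref{thm:Schur_corrsp} part 1 together with \eqref{eq:defn_mu^mn}: since $\sum_{i=1}^{l}\mu^{(M,n)}_{i}=L^{(l)}(M,n)$ and each $\nu^{(n)}_{i}=\lim_{M\to\infty}(\mu^{(M,n)}_{i}-M)/\sqrt{M}$ in distribution, summation produces $\sum_{i=1}^{l}\nu^{(n)}_{i}=\lim_{M\to\infty}(L^{(l)}(M,n)-lM)/\sqrt{M}$. Hence the substantive content is the second equality, which is a Glynn--Whitt type reduction from exponential last-passage percolation to Brownian last-passage percolation with a piecewise-constant drift.

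The plan is to apply Donsker's invariance principle row-by-row. First, parametrize each of the $l$ non-intersecting up-right paths in $\{1,\dotsc,M\}\times\{1,\dotsc,n\}$ by the column indices $k^{(\kappa)}_{j}$ at which path $\pi_{\kappa}$ jumps from one row to the next; the non-intersection condition becomes an integer interlacing of these jump columns across $\kappa=1,\dotsc,l$. Rescaling $t^{(\kappa)}_{j}:=k^{(\kappa)}_{j}/M$ converts this into the continuous-time interlacing constraint on $[0,1]$ that appears in the statement of the corollary, and each path $\pi_{\kappa}$ traverses the $n-l+1$ rows $\kappa,\kappa+1,\dotsc,n-l+\kappa$.

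Next, for each row $j$ form the centered partial sums $S_{j}(k)=\sum_{i=1}^{k}(x_{ij}-\mathbb{E}[x_{ij}])$. Since $x_{ij}$ is exponential of rate $\pi_{ij}=1-\kappa_{*}/\sqrt{M}$, one has $\mathbb{E}[x_{ij}]=1+\kappa_{*}/\sqrt{M}+O(1/M)$ and $\mathrm{Var}(x_{ij})=1/\pi_{ij}^{2}\to 1$, so Donsker's theorem (jointly in $j$ by row-independence) gives $B^{(M)}_{j}(t):=S_{j}(\lfloor Mt\rfloor)/\sqrt{M}\Rightarrow B_{j}(t)$ for independent standard Brownian motions $B_{j}$ on $[0,1]$. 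The contribution of a path segment lying in row $j$ from column $k^{(\kappa)}_{j-1}+1$ to column $k^{(\kappa)}_{j}$ splits as $M(t^{(\kappa)}_{j}-t^{(\kappa)}_{j-1})/\pi_{ij}+\sqrt{M}[B^{(M)}_{j}(t^{(\kappa)}_{j})-B^{(M)}_{j}(t^{(\kappa)}_{j-1})]$, and expanding $1/\pi_{ij}=1+\kappa_{*}/\sqrt{M}+O(1/M)$ pulls out $M(t^{(\kappa)}_{j}-t^{(\kappa)}_{j-1})+\sqrt{M}\,\kappa_{*}(t^{(\kappa)}_{j}-t^{(\kappa)}_{j-1})+O(1)$ from the deterministic piece. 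Summing over rows and over the $l$ paths, subtracting $lM$, and dividing by $\sqrt{M}$ reproduces the Brownian polymer expression in \eqref{eq:nu_added}, up to interchanging the maximum with $\lim_{M\to\infty}$.

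The main technical obstacle is that interchange, i.e., pushing the $l$-path non-intersecting maximum through the weak convergence. For $l=1$ this is routine continuous mapping: the functional $(f_{1},\dotsc,f_{n})\mapsto\sup_{0=t_{0}<\dotsb<t_{n}=1}\sum_{j}[f_{j}(t_{j})-f_{j}(t_{j-1})]$ is continuous on $C([0,1])^{n}$ with the uniform topology. For general $l$ the same reasoning applies once one verifies (i) the discrete interlacing constraint on jump columns converges to the continuous interlacing constraint on the $t^{(\kappa)}_{j}$, and (ii) the non-intersecting sup remains continuous on the resulting compact polytope of admissible jump-time configurations; both assertions are standard in the Glynn--Whitt / Baryshnikov / O'Connell circle of ideas, and a joint Skorokhod coupling across rows delivers the required uniformity.
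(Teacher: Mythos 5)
Your proposal is correct and follows essentially the same route as the paper's own (very terse) proof: the paper simply invokes the Glynn--Whitt argument via Donsker's invariance principle together with a central limit lemma stating that centered partial sums of exponentials with mean $(1-\kappa/\sqrt{M})^{-1}$ converge to $B(t)+\kappa t$, which is exactly the mean-variance expansion you carry out when extracting the drift $\kappa_*$ from $1/\pi_{ij}=1+\kappa_*/\sqrt{M}+\bigO(1/M)$. Your additional detail on the jump-column parametrization, the interlacing constraint, and the continuity of the non-intersecting sup functional fills in steps the paper leaves to the cited references, but does not constitute a different method.
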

See also \cite{Baryshnikov01}, \cite{Glynn-Whitt91} and \cite{Gravner-Tracy-Widom02}.
\begin{figure}[ht]
  \centering
  \includegraphics{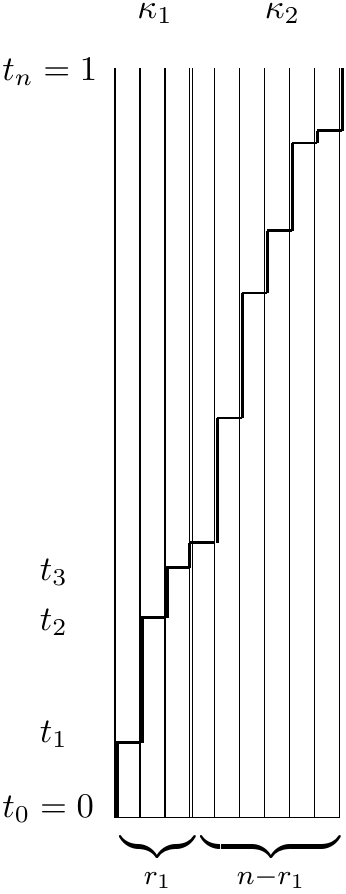} \hspace{0.2\textwidth} \includegraphics{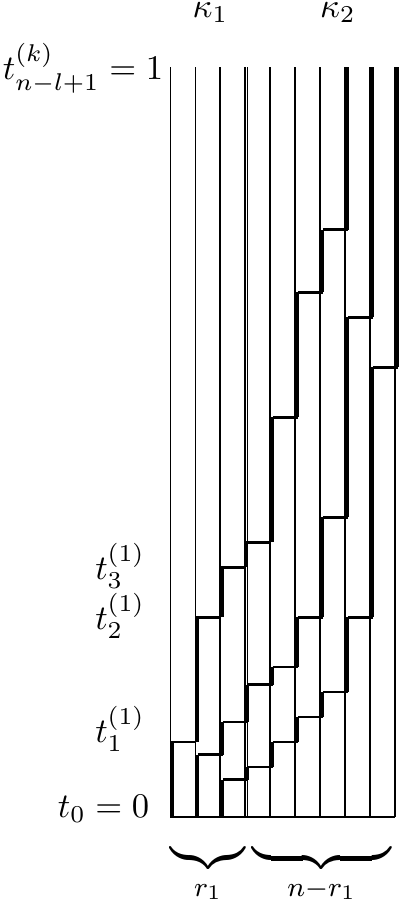}
  \caption{Directed polymers for $l=1$ and $l=3$.}
  \label{fig:polymer}
\end{figure}
  
It is not a coincidence that these four minor processes share so many similarities. They are closely related to the so called very classical multiple orthogonal polynomials, namely the multiple Hermite (to GUE with external source), multiple Laguerre of the second kind (to Wishart), multiple Laguerre of the first kind (to multiple Laguerre) and Jacobi-\Pineiro\ (to Jacobi-\Pineiro) respectively. Here we state without proof the  correlation kernel in the Wishart minor process \eqref{eq:correlation_Wishart} can be written in the form of multiple Laguerre polynomials of the second kind when $n_1 \geq n_2$ (see \eqref{eq:mLaguerre_2nd_I} and \eqref{eq:mLaguerre_2nd_II} for the definition of $P_{(a_{n_1}, a_{n_1 - 1}, \dotsc, a_{n_{k + 1}}; M - n_1)}(x)$ and $Q_{(a_{n_2}, a_{n_2 - 1}, \dotsc, a_{n_k}; M - n_2)}(y)$)
\begin{equation} \label{eq:kernel_Wishart_MOP}
  K(n_1, x; n_2, y) = (-1)^{n_2 - n_1} \frac{x^{M - n_1}}{y^{M - n_2}} \sum^{n_2}_{k = 1} P_{(a_{n_1}, a_{n_1 - 1}, \dotsc, a_{n_{k + 1}}; M - n_1)}(x) Q_{(a_{n_2}, a_{n_2 - 1}, \dotsc, a_{n_k}; M - n_2)}(y).
\end{equation}
When $n_1 < n_2$, the kernel is also related to multiple Laguerre polynomials of the second kind, but the relation is not so simple. Although we do not prove \eqref{eq:kernel_Wishart_MOP}, we indicate that it can be proven based on the joint probability density \eqref{eq:eqivalent_jpdf} and the algebraic result in Lemma \ref{lem:BFPS} of the next section, leading to \eqref{eq:correlation_Wishart}, and we prove similar formulas for the other three minor processes as intermediate steps in the derivation of double contour integral formulas \eqref{eq:compact_integral_formula_of_K}, \eqref{eq:compact_integral_formula_of_K_LUE} and \eqref{eq:compact_integral_formula_of_K_JUE}. 

It is well known that in the Gaussian unitary ensemble, Laguerre unitary ensemble and Jacobi unitary ensemble, if the dimension approaches infinity and we consider the correlation among eigenvalues at the edge of the support of their limiting empirical distribution, then we find the correlation kernel has the limit as the Airy kernel. If we consider the corresponding minor processes, it is shown in \cite{Forrester-Nagao11} that the joint distributions of eigenvalues of consecutive minors around the edge of the limiting empirical distribution has the limit as the extended Airy kernel that defines the Airy process. Since the minor processes discussed here are generalizations to the three minor processes analyzed in \cite{Forrester-Nagao11}, one may expect that they can realize more complicated correlation kernels as their limiting kernels as the dimension of minors approaches infinity and the parameters $a_i$ or $\alpha_i$ are chosen properly. Indeed, in \cite{Borodin-Peche08}, Borodin and \Peche\ shows that the generalized Airy kernel with two sets of parameters can be realized in the limit of the generalized Wishart random-matrix process, which is a generalization of the Wishart minor process in our paper. In this paper, we show that if we choose parameters properly, the Pearcey kernel can be realized as the limit of the correlation kernel of the multiple Laguerre minor process. Similar result should hold for the other three minor processes considered in this paper, but we only analyze the multiple Laguerre case for brevity.

We consider one special case of the multiple Laguerre ensemble, which is the analytically most feasible one (besides the white Wishart ensemble), such that the parameter $\alpha_i$ are of only two values, namely, half of them are $na$ and the other half $nb$. When $\alpha_i$ are chosen in this simplest way, however, the region that the nonzero entries in $X$ occupy is a composition of two trapezoids, aesthetically not of the simplest shape, compared with the composition of two rectangles. In the later case we also observe the Pearcey process when taking limit properly, but we omit the details for brevity.

\paragraph{The trapezoidal multiple Laguerre minor process}

We define the $M \times N$ random matrix $X$ as follows, depending on a large integer $n$ and two parameters $a, b$. Let the left $n$ columns of $X$ be determined by the parameter $a$ and the other $N - n$ columns by $b$, such that in the $i$-th column where $i \leq n$, the top $i + \lfloor na \rfloor$ entries are in \iid\ standard complex normal distribution and the bottom $M - i - \lfloor na \rfloor$ entries are zero, while if $i > n$, the top $i + \lfloor nb \rfloor$ entries are in \iid\ standard normal distribution and the bottom $M - i - \lfloor nb \rfloor$ entries are zero. We shall take the limit $n \to \infty$, and assume that $N$ is large enough, (say, at least greater than $2n$), and $M \geq N$ is large enough so that $M \geq i + na$ and $M \geq i + nb$ in any column. In terms of parameters $\alpha_i$, the $M \times N$ random matrix $X$ is characterized by
\begin{equation} \label{eq:alpha_i_trapezoidal}
  \alpha_1 = \alpha_2 = \dotsb = \alpha_n = \lfloor na \rfloor, \quad \text{and} \quad \alpha_{n + 1} = \alpha_{n + 2} = \dotsb = \lfloor nb \rfloor.
\end{equation}

 For our purpose to analyze the limiting Pearcey process, $a$ and $b$ are chosen in the way that (see Figure \ref{fig:f_and_g})
\begin{equation} \label{eq:defn_of_x_0}
  \begin{gathered}
    \text{the graphs of $f(x) = \frac{1}{(x - a)^2} + \frac{1}{(x - b)^2}$ and $g(x) = \frac{1}{x}$} \\
    \text{intersect at a unique point $x_0$ on the interval $(a, b)$.}
  \end{gathered}
\end{equation}
\begin{figure}[ht]
  \begin{minipage}[t]{0.45\linewidth}
    \centering
    \includegraphics{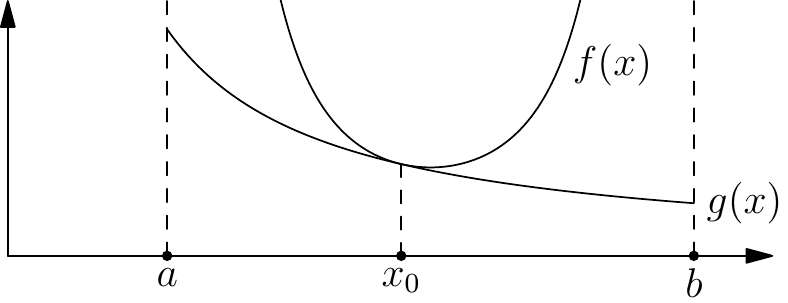}
    \caption{An example of $a$, $b$ and $x_0$, where $a \approx 1.8748$, $b \approx 8.0752$ and $x_0 \approx 4.6305$.}
    \label{fig:f_and_g}
  \end{minipage}
  \hspace{\stretch{1}}
  \begin{minipage}[t]{0.45\linewidth}
    \centering
    \includegraphics{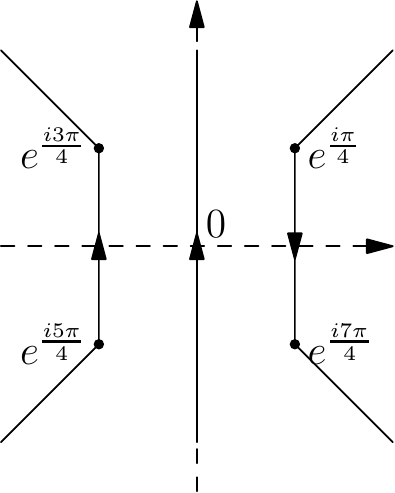}
    \caption{The central part of $\tilde{\Gamma}^{\infty}_a, \tilde{\Sigma}^{\infty}, \tilde{\Gamma}^{\infty}_b$ and $\tilde{\Gamma}_a, \tilde{\Sigma}, \tilde{\Gamma}_b$ around $0$ respectively, where $\tilde{\Gamma}^{\infty} = \tilde{\Gamma}^{\infty}_a \cup \tilde{\Gamma}^{\infty}_b$ (\resp\ $\tilde{\Gamma} = \tilde{\Gamma}_a \cup \tilde{\Gamma}_b$). For $\tilde{\Gamma}^{\infty}_a, \tilde{\Sigma}^{\infty}, \tilde{\Gamma}^{\infty}_b$, the rays go to infinity, while for $\tilde{\Gamma}_a, \tilde{\Sigma}, \tilde{\Gamma}_b$, the contours extend to the ends having magnitude $n^{\frac{1}{4}}$.}
    \label{fig:essential_parts_scaled}
  \end{minipage}
\end{figure}

\begin{thm} \label{thm:Pearcey_kernel}
  Suppose $a$ is large enough and $b$ is determinted by $a$ by \eqref{eq:defn_of_x_0}. Upon the change of scaling
  \begin{equation} \label{eq:rescaling_of_n_1_n_2_x_y_z_w}
    \begin{gathered}
      n_1 = \lfloor n(2 + c_1 n^{-\frac{1}{2}} s) \rfloor, \quad n_2 = \lfloor n(2 + c_1 n^{-\frac{1}{2}} t) \rfloor, \\
      x = cn c^{-c_1 n^{-\frac{1}{2}} s}_2 (1 - c_3 n^{-\frac{3}{4}} u), \quad y = cn c^{-c_1 n^{-\frac{1}{2}} t}_2 (1 - c_3 n^{-\frac{3}{4}} v),
    \end{gathered}
  \end{equation}
  where the constant $c$ is expressed as
  \begin{equation} \label{eq:defn_of_c}
    c = x_0 \exp(\frac{1}{x_0 - a} + \frac{1}{x_0 - b}),
  \end{equation}
  and the constants $c_1, c_2, c_3$ depending on $a, b, x_0$ are defined in \eqref{eq:formulas_of_c_123}, the correlation kernel \eqref{eq:compact_integral_formula_of_K_LUE} of the multiple Laguerre minor process with parameters $\alpha_i$ defined by $a, b$ via \eqref{eq:alpha_i_trapezoidal}, upon conjugation, becomes the extended Pearcey kernel, \ie, for fixed $s, x, t, y$
  \begin{equation}
    \lim_{n \to \infty} \frac{x^{nx_0}((x_0 - b)n)^{-n_1}}{y^{nx_0}((x_0 - b)n)^{-n_2}} K(n_1, x; n_2, y) dy = K^P(s, u; t, v) dv,
  \end{equation}
  with
  \begin{equation} \label{eq:defn_Pearcey_kernel}
    K^P(s, u; t, v) = \frac{1}{(2 \pi i)^2} \oint_{\tilde{\Gamma}^{\infty}} dz \oint_{\tilde{\Sigma}^{\infty}} dw \frac{e^{-\frac{w^4}{4} + \frac{tw^2}{2} - vw}}{e^{-\frac{z^4}{4} + \frac{sz^2}{2} - uz}} \frac{1}{w - z} - \frac{\id_{t > s}}{\sqrt{2 \pi (t - s)}} e^{-\frac{(v - u)^2}{2(t - s)}},
  \end{equation}
  where the contours $\tilde{\Gamma}^{\infty} = \tilde{\Gamma}^{\infty}_a \cup \tilde{\Gamma}^{\infty}_b$ and $\tilde{\Sigma}^{\infty}$ are shown in Figure \ref{fig:essential_parts_scaled}.
\end{thm}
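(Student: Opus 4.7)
The plan is a double saddle-point analysis of the integral formula \eqref{eq:compact_integral_formula_of_K_LUE}. Under the trapezoidal choice \eqref{eq:alpha_i_trapezoidal} the products collapse to $\prod^{n_1}_{k=1}(z-\alpha_k)=(z-\lfloor na\rfloor)^n(z-\lfloor nb\rfloor)^{n_1-n}$ and similarly for the $w$-product and for the denominator of the single integral. Setting $z=n\tilde z,\ w=n\tilde w$ and applying Stirling to $\Gamma(z+1)/\Gamma(w+1)$, the $n\tilde z\log n$ contributions cancel against those produced by $x^{-z-1}$ and $y^{w}$ with $x,y\sim cn$, so the integrand of the double-integral piece takes the form $e^{n(S(\tilde z)-S(\tilde w))+O(\log n)}/(\tilde z-\tilde w)$, where
\[
  S(\tilde z)=\log(\tilde z-a)+\log(\tilde z-b)+\tilde z\log\tilde z-\tilde z-\tilde z\log c.
\]

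A direct computation gives
\[
  S'(\tilde z)=\log\frac{\tilde z}{c}+\frac{1}{\tilde z-a}+\frac{1}{\tilde z-b},\quad S''(\tilde z)=\frac{1}{\tilde z}-\frac{1}{(\tilde z-a)^2}-\frac{1}{(\tilde z-b)^2},\quad S'''(\tilde z)=-\frac{1}{\tilde z^2}+\frac{2}{(\tilde z-a)^3}+\frac{2}{(\tilde z-b)^3}.
\]
The definition \eqref{eq:defn_of_c} of $c$ forces $S'(x_0)=0$; the equality $f(x_0)=g(x_0)$ in \eqref{eq:defn_of_x_0} forces $S''(x_0)=0$; and the tangency $f'(x_0)=g'(x_0)$ implicit in the \emph{uniqueness} of the intersection on $(a,b)$ forces $S'''(x_0)=0$, while $S^{(4)}(x_0)\neq 0$ throughout the admissible range of $a$. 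Hence $x_0$ is a genuine quartic critical point, $S(\tilde z)=S(x_0)+\tfrac{S^{(4)}(x_0)}{24}(\tilde z-x_0)^4+O((\tilde z-x_0)^5)$. The Pearcey substitution $\tilde z=x_0+\rho n^{-1/4}\zeta$ with $\rho$ chosen so that $\tfrac{S^{(4)}(x_0)}{24}\rho^4=-\tfrac14$ converts the quartic into $-\zeta^4/4$, and the constants $c_1,c_2,c_3$ fixed in \eqref{eq:formulas_of_c_123} are exactly those for which the perturbations of $n_1$ and $x$ in \eqref{eq:rescaling_of_n_1_n_2_x_y_z_w} insert precisely $\tfrac{s\zeta^2}{2}-u\zeta$ into the exponent (and symmetrically $\tfrac{t\omega^2}{2}-v\omega$ from the $w$-integral).

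Next the contours $\Sigma$ and $\Gamma_\alpha$ of \eqref{eq:compact_integral_formula_of_K_LUE} are deformed so that inside a small ball around $nx_0$ they coincide with the eight-ray steepest-descent/ascent configuration of Figure \ref{fig:essential_parts_scaled}, while outside the ball they lie in regions where $\Re[S(\tilde z)-S(x_0)]<0$ on $\Sigma$ and $\Re[S(x_0)-S(\tilde w)]<0$ on $\Gamma_\alpha$. The conjugation factor $x^{nx_0}((x_0-b)n)^{-n_1}/[y^{nx_0}((x_0-b)n)^{-n_2}]$ together with the Jacobian $dy=cn\,c_2^{-c_1 n^{-1/2}t}(-c_3 n^{-3/4})\,dv$ absorbs the $e^{\pm nS(x_0)}$ values and the $\log n$-prefactors from Stirling, and dominated convergence then yields the double-contour piece of $K^P(s,u;t,v)$. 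The $\id_{x<y}\id_{n_1<n_2}$ single-integral term in \eqref{eq:compact_integral_formula_of_K_LUE} has denominator $(w-\lfloor nb\rfloor)^{n_2-n_1}$; a Gaussian saddle-point at $w=nx_0$ with $n_2-n_1\sim c_1 n^{1/2}(t-s)$ produces the Gaussian term $-\id_{t>s}\,e^{-(v-u)^2/[2(t-s)]}/\sqrt{2\pi(t-s)}$.

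The main obstacle is the \emph{global} contour deformation. Originally $\Sigma$ is a Hankel contour enclosing all negative integers (poles from $\Gamma(z+1)$) while $\Gamma_\alpha$ encloses only $\lfloor na\rfloor,\lfloor nb\rfloor$; both must be homotoped to pass through $nx_0$ along the Pearcey rays without $\Sigma$ crossing $\lfloor na\rfloor,\lfloor nb\rfloor$ and without $\Gamma_\alpha$ crossing any of the negative integers or the logarithmic branch cuts in $S$. Justifying this requires a careful study of the topology of $\{\Re S=\Re S(x_0)\}$ near $x_0$ (which should produce the expected eight-sector quartic picture), together with Stirling-type estimates uniform along the deformed contours so that the tails away from $nx_0$ are $o(1)$ uniformly on compact subsets of the Pearcey parameters $(s,u,t,v)$.
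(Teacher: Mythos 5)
Your skeleton — the phase function $F(z)=z(\log z-\log c-1)+\log(z-a)+\log(z-b)$, the quartic saddle at $x_0$ forced by \eqref{eq:defn_of_c} together with the tangency implicit in the uniqueness condition \eqref{eq:defn_of_x_0}, the $n^{-1/4}$ Pearcey rescaling and the role of $c_1,c_2,c_3$ — is the same route the paper takes. But there is a genuine gap in your account of where the heat-kernel term comes from. Under the scaling \eqref{eq:rescaling_of_n_1_n_2_x_y_z_w} with $s<t$ and $u,v$ fixed, the fact that $c_2>1$ forces $x>y$ for all large $n$, so the single-integral term of \eqref{eq:compact_integral_formula_of_K_LUE}, which carries the indicator $\id_{x<y}$, vanishes identically in the relevant regime; a saddle-point evaluation of that term alone gives $0$, not the Gaussian. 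The Gaussian only appears after a preliminary rearrangement of the kernel: to reach the interlaced Pearcey configuration, in which the $z$-contour runs between $\lfloor na\rfloor$ and $\lfloor nb\rfloor$, you must pull the Hankel contour $\Sigma$ (which initially encloses all of $\Gamma_\alpha$) across the part of the $w$-contour surrounding $\lfloor nb\rfloor$; the pole $1/(z-w)$ then produces a residue equal to a single integral around $\lfloor nb\rfloor$ \emph{without} the indicator (this is \eqref{eq:kernel_with_Sigma_mid}, leading to the decomposition \eqref{eq:kernel_degenerate_without_scaling_Laguerre}), and it is that term — supported on $x\ge y$, $n_1<n_2$, exactly the regime that survives — whose saddle analysis on a contour pushed through $x_0$ yields $\id_{t>s}\,e^{-(v-u)^2/[2(t-s)]}/\sqrt{2\pi(t-s)}$. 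Your homotopy discussion only guards against $\Sigma$ crossing the poles $\lfloor na\rfloor,\lfloor nb\rfloor$ and the negative integers, not against crossing the $w$-contour itself, so this contribution is lost in your bookkeeping and your limit would miss (or misattribute) the Gaussian part of $K^P$.

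Second, the global contour construction cannot be deferred as a routine "obstacle": it is precisely where the hypothesis that $a$ is large enough enters, and your proposal never uses that hypothesis (you invoke the range of $a$ only for $S^{(4)}(x_0)\neq 0$, which holds for all admissible $a$; one also needs the sign $F^{(4)}(x_0)<0$, which the paper verifies via a rational parametrization, to orient the ascent and descent rays correctly). The delicate piece is the loop $\Gamma_a$ around $\lfloor na\rfloor$: it must avoid $(-\infty,0]$ while keeping $\Re F>\Re F(x_0)$ except at $x_0$, and the paper constructs it from gradient-flow lines in Appendix \ref{sec:constr_of_contours}, showing in Remark \ref{rmk:large_enough_a} that for $a$ close to $0$ no such contour exists because $\Re F<\Re F(x_0)$ on all of $(0,a)$. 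So the "careful study of the topology of $\{\Re S=\Re S(x_0)\}$" that you postpone is the actual content of this part of the proof; without it, and without invoking the largeness of $a$, the claims that the tails of the deformed contours contribute $o(1)$ are unsupported.
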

The condition that $a$ is large enough is technical. See the discussion in Remark \ref{rmk:large_enough_a}.

\paragraph{Pearcey distribution in percolation}

In \cite{Johansson03}, Johansson shows that an Airy process appears by taking an appropriate limit $n$ and $m \to \infty$ of a last passage percolation, with $n$ and $m$ as in Figure \ref{fig:upright_path}. The question remained whether the Pearcey process could be found as a limit of percolation problems and also in the directed polymer context. The next theorem answers this question. For the ease of statement, we only consider the one-time distribution of the Pearcey process.
\begin{thm} \label{thm:Pearcey_added}
  Setting $r_1 = n/2$ and letting $n \to \infty$ and the drifts $\kappa_1 = -\sqrt{n}$ and $\kappa_2 = \sqrt{n}$, we obtain a Pearcey distribution for the percolation problem described in Corollary \ref{cor:polymer_added}, namely
  \begin{equation}
    \lim_{n \to \infty} \Prob \left( \text{all $\displaystyle \nu^{(n)}_l \in \frac{E^c}{3n^{\frac{1}{4}}}$, for $1 \leq l \leq n$} \right) = \det(1 - K^P(0, u; 0, v))_{L^2(E)}
  \end{equation}
  where $K^P(0, u; 0, v)$ is the Pearcey kernel defined in \eqref{eq:defn_Pearcey_kernel} evaluated for $s = t = 0$.
\end{thm}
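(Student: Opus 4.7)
The plan is to identify the percolation problem with an $n\times n$ GUE-with-external-source random matrix via Theorem \ref{thm:Schur_corrsp}(1), express the probability as a Fredholm determinant using Theorem \ref{thm:kernel}\ref{enu:thm:kernel_GUE}, and then carry out a coalescing-saddle steepest-descent analysis in the same spirit as Theorem \ref{thm:Pearcey_kernel}.

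First I would match parameters. Theorem \ref{thm:Schur_corrsp}(1) applied to the weights $\pi_{ij}=1-\kappa_*/\sqrt M$ of Corollary \ref{cor:polymer_added} with $\kappa_1=-\sqrt n$, $\kappa_2=\sqrt n$ and $r_1=n/2$ shows that $\nu^{(n)}=(\nu^{(n)}_1,\dotsc,\nu^{(n)}_n)$ is distributed as the eigenvalues of $\mathrm{GUE}(n)+\diag(a_1,\dotsc,a_n)$ with $a_1=\dotsb=a_{n/2}=-\sqrt n$ and $a_{n/2+1}=\dotsb=a_n=\sqrt n$. Only the $n$-th level enters the statement, so the probability in question equals $\det(1-K(n,\cdot\,;n,\cdot))_{L^2(E/(3n^{1/4}))}$, where, by Theorem \ref{thm:kernel}\ref{enu:thm:kernel_GUE} and the identity $\prod_k(z-a_k)=(z^2-n)^{n/2}$,
$$K(n,x;n,y)=\frac{1}{(2\pi i)^2}\int_\Sigma dz\oint_{\Gamma_a}dw\,e^{\frac{z^2}{2}-\frac{w^2}{2}-xz+yw}\left(\frac{z^2-n}{w^2-n}\right)^{n/2}\frac{1}{z-w}.$$

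Second I would implement the steepest descent. Rescaling $z=\sqrt n\,\tilde z$, $w=\sqrt n\,\tilde w$ makes visible the natural phase $\phi(\tilde z)=\tilde z^2/2+\tfrac12\log(\tilde z^2-1)$, and a direct Taylor expansion about $\tilde z=0$ gives $\phi(\tilde z)=\tfrac{i\pi}{2}-\tfrac{\tilde z^4}{4}+\bigO(\tilde z^6)$, so $\tilde z=0$ is a fully coalesced saddle at which $\phi'$, $\phi''$, $\phi'''$ all vanish and $\phi^{(4)}\neq 0$. This quartic-leading degeneracy is exactly the Pearcey signature. Under the further local rescaling $\tilde z=\zeta/n^{1/4}$, $\tilde w=\eta/n^{1/4}$ and the spatial rescaling $x=u/(3n^{1/4})$, $y=v/(3n^{1/4})$ (the factor $3$ chosen to normalize the quartic coefficient after an appropriate conjugation of the kernel), the exponent becomes, to leading order, $\zeta^4/4-\eta^4/4+u\zeta-v\eta$, and together with the Jacobian $1/(3n^{1/4})$ from $dy=dv/(3n^{1/4})$ and the Cauchy factor $1/(\eta-\zeta)$ from $1/(z-w)$, the integrand collapses to the Pearcey kernel $K^P(0,u;0,v)$ of \eqref{eq:defn_Pearcey_kernel}; the $\id_{t>s}$ Gaussian piece is automatically absent because $s=t=0$.

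To upgrade pointwise convergence to Fredholm-determinant convergence I would deform $\Sigma$ through the origin along the rays $\arg\tilde z=\pm\pi/4$ (where $\Re\phi$ decreases) and split $\Gamma_a$ into two lobes both passing through $0$ along $\arg\tilde z=\pm 3\pi/4$ (where $\Re\phi$ increases) while still enclosing $\pm 1$, analogous to $\tilde\Sigma^\infty,\tilde\Gamma^\infty$ in Figure \ref{fig:essential_parts_scaled}. Verifying that $\Re\phi$ has the correct sign globally along these contours yields a uniform bound of sub-Gaussian type $Ce^{-c(u^4+v^4)}$, sufficient for trace-norm convergence on $L^2(E)$ and hence for convergence of Fredholm determinants. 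The main obstacle is precisely this global contour analysis: the original $\Sigma$ sits far to the right of $\Gamma_a$, and during the deformation one must avoid crossing the pole at $z=w$ (which would introduce the single-contour residue that, for $s=t=0$, must cancel) while simultaneously handling the branch cut of $\log(\tilde z^2-1)$ on $[-1,1]$. Much of this parallels the multiple-Laguerre treatment of Theorem \ref{thm:Pearcey_kernel} and its method should transpose to the symmetric two-point external source considered here, but the non-crossing and global steepest-descent picture require fresh verification.
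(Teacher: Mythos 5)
Your overall strategy is legitimate but it is not the paper's: the paper's proof of Theorem \ref{thm:Pearcey_added} does no new asymptotics at all. After the same first step (Theorem \ref{thm:Schur_corrsp}(1) identifies $\nu^{(1)},\dotsc,\nu^{(n)}$ with the minor spectra of $\GUE(n)+\diag(\kappa_1,\dotsc,\kappa_1,\kappa_2,\dotsc,\kappa_2)$), the paper invokes the already-proven Pearcey limit for $n$ non-intersecting Brownian motions with two endpoints from \cite{Adler-Orantin-van_Moerbeke10}, using the dictionary $a=-b=1$, $p=\tfrac12$, which gives $t_0=\tfrac13$, $x_0=0$, $c_0\mu=\tfrac13$, and the observation that at $t_0=\tfrac13$ the Brownian model coincides with the matrix $M+A$ because $\sqrt{2t/(1-t)}=1$. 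That is where the scale $\tfrac{1}{3n^{1/4}}$ in the statement comes from. Your plan — write the one-time kernel from Theorem \ref{thm:kernel}\ref{enu:thm:kernel_GUE} with $\prod_k(z-a_k)=(z^2-n)^{n/2}$ and do a coalescing-saddle analysis in the spirit of Theorem \ref{thm:Pearcey_kernel} — is a genuinely different, self-contained route, and your identification of the degenerate saddle is right: $\phi'(\tilde z)=\tilde z^3/(\tilde z^2-1)$, so $\phi'$, $\phi''$, $\phi'''$ vanish at $\tilde z=0$ and $\phi(\tilde z)=\tfrac{i\pi}{2}-\tfrac{\tilde z^4}{4}+\bigO(\tilde z^6)$. (Two of your worries are minor: for $n$ even the integrand contains the polynomial $(z^2-n)^{n/2}$, so there is no branch cut; and the residue picked up when $\Sigma$ crosses a lobe of $\Gamma_a$ is $\oint e^{(y-x)w}\,dw=0$ for $n_1=n_2$, since the integrand is entire.)

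The genuine gap is the quantitative step where you claim the exponent "collapses" to the Pearcey phase under the stated scalings; the arithmetic does not close. With $z=\sqrt n\,\tilde z=n^{1/4}\zeta$ the phase gives exactly $-\tfrac{\zeta^4}{4}$ (the quartic coefficient of $n\phi$ is $-\tfrac{1}{4n}$ on the nose), but with $x=u/(3n^{1/4})$ the coupling term is $-xz=-u\zeta/3$, not $-u\zeta$. No choice of local scale repairs this: putting $z=cn^{1/4}\zeta$ gives quartic coefficient $c^4/4$ and linear coefficient $cu/3$, which cannot both be normalized, and a conjugation of the kernel can never change the coupling constant between $u$ and $\zeta$. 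Carried out honestly, your steepest descent (with the paper's normalizations of the GUE weight and of $K^P$ in \eqref{eq:defn_Pearcey_kernel}) produces the Pearcey limit at the microscopic scale $n^{-1/4}$ with constant $1$, i.e.\ $n^{-1/4}K(n,un^{-1/4};n,vn^{-1/4})\to K^P(0,\cdot\,;0,\cdot)$ up to transposition — not at the scale $\tfrac{1}{3n^{1/4}}$ appearing in the statement, which the paper imports from the Brownian-motion normalization of \cite{Adler-Orantin-van_Moerbeke10} ($c_0\mu=\tfrac13$ refers to the particles $x_j(t_0)$, not directly to $\nu^{(n)}_l$). So to complete your proof you must actually derive the scaling constant and reconcile it with the $\tfrac13$ in the statement (or conclude that the constant must be adjusted), rather than asserting it; this, together with the admittedly unverified global contour deformation and the uniform sub-Gaussian bounds needed for trace-norm convergence of the Fredholm determinants, is what is currently missing.
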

From \eqref{eq:nu_added}, it follows that the paths contributing the most will be in the $\kappa_2 = \sqrt{n}$-region, \ie, in the right-region of the model of size $n/2$. When the number of paths increases, they will tend to fill up that region. When the number of paths exceed $n/2$, more and more paths will be forced in the left-half region where $\kappa_1 = -\sqrt{n}$, a much lower value, thus leading to lower values of the polymer supremum in \eqref{eq:nu_added}. That is to say that around $l = n/2$ the successive increases $\nu^{(n)}_l$ of $\sum^l_{i = 1} \nu^{(n)}_i$, when $l$ increases will be considerably less. That is to say a gap will appear around the values of $\nu^{(n)}_l$, with $l = n/2$. It would be interesting to have such a statement for the O'Connell process, that is to say when the temperature is raised; see \cite{O'Connell12}.

\subsection*{Outline of the paper}

In this paper, the joint distribution function of the eigenvalues in the minor processes are derived in Sections \ref{subsec:GUE_Markov}, \ref{subsec:transition_prob_LUE} and \ref{subsec:transition_prob_JUE} using the idea of corank $1$ projection used by Forrester \etal\ in \cite{Forrester-Nagao11} and \cite{Forrester-Rains05}. The derivation of the determinantal kernel in Sections \ref{subsec:reproducing_kernel_mGUE}, \ref{subsec:reproducing_kernel_LUE} and \ref{subsec:reproducing_kernel_JUE} from the joint distribution function is based on Lemma \ref{lem:BFPS} that was obtained by Borodin, Ferrari, \Prahofer\ and Sasamoto in \cite[Lemma 3.4]{Borodin-Ferrari-Prahofer-Sasamoto07}. Then in Section \ref{sec:Pearcey} we do the asymptotic analysis for a special case of the multiple Laguerre minor process to show the occurrence of the Pearcey process as the limit. We also prove Theorem \ref{thm:Pearcey_added} in Section \ref{sec:Pearcey}. The relation between the minor processes and percolation models and Schur processes via RSK correspondence in Section \ref{sec:Schur_process} follows the argument by Forrester \etal\ in similar models, see \cite{Forrester-Nagao11} and \cite[Appendix A]{Forrester-Rains05}. The proof of Corollary \ref{cor:polymer_added} will be given in the end of Section \ref{sec:Schur_process}. In Appendix \ref{sec:appendix_MOPS} we derive new contour integral formulas for the two kinds of very classical multiple orthogonal polynomials, namely the multiple Laguerre polynomials of the first kind and the Jacobi-\Pineiro\ polynomials, which do not appear in literature according to our limited knowledge. The construction of random matrix models related to the two very classical multiple polynomials brings new types of random matrix models, and the Pearcey kernel is seen in the minor processes for the first time.

\paragraph{Acknowledgment}

The authors thank Ivan Corwin for fruitful discussions at an early stage of this work.

\section{Joint distribution of eigenvalues of minors in GUE with external source} \label{sec:GUE_w_ext}

\subsection{Transition probability of the Markov chain $\lambda^{(n)}$} \label{subsec:GUE_Markov}

The results in this subsection depend on the following technical lemma.
\begin{lem} \label{lem:GUE_w_ext}
  Suppose $H'$ is an $(n-1) \times (n-1)$ fixed Hermitian matrix with distinct eigenvalues $\mu = (\mu_1, \dots, \mu_{n-1})$ in increasing order. Let $H$ be the $n \times n$ random Hermitian matrix defined by
  \begin{itemize}
  \item 
    The upper-left $(n-1)$ minor of $H$ is equal to $H'$.
  \item
    Denote the $(n-1)$ dimensional column vector $\hvec := (h_{1,n},
    \dots, h_{n-1,n})^{\perp}$, the last column of $H$ without the last component. The components of $\hvec$ are independent complex random variables in standard normal distribution, \ie, $p(\Re h_{in}) = \normal(0, \frac{1}{2})$ and $p(\Im h_{in}) = \normal(0, \frac{1}{2})$.
  \item
    $h_{nn}$, the lower-right entry of $H$, is a real random variable independent of $h_{in}$ ($i < n$), and it is in normal distribution: $p(h_{nn}) = \normal(a, 1)$, where $a$ is a real constant.
  \end{itemize}
  Then the distribution of eigenvalues of $H$, denoted by $\lambda = (\lambda_1, \dots, \lambda_n)$ with $\lambda_1 \leq \dots \leq \lambda_n$, satisfies the interlacing property $\mu \preceq \lambda$ given by \eqref{eq:interlacing_defn} and their joint distribution is
  \begin{equation} \label{eq:distr_of_lambda_fix_mu_GUE}
    p_{\mu}(\lambda) = C \Delta_n(\lambda) e^{e^{\sum^n_{i=1} -\frac{\lambda^2_i}{2} + a\lambda_i}} \id_{\mu \preceq \lambda}, \quad \textnormal{where} \quad C = \frac{e^{-a^2/2}}{\sqrt{2\pi}} \frac{e^{\sum^{n-1}_{i=1} \frac{\mu^2_i}{2} - a\mu_i}}{\Delta_{n-1}(\mu)}.
 \end{equation}
\end{lem}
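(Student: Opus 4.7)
The plan is to reduce to a rank-one perturbation of a diagonal matrix, then execute an explicit change of variables from the Gaussian parameters $(\hvec, h_{nn})$ to the eigenvalues $\lambda$. Since the joint law of the components of $\hvec$ is unitarily invariant, I first diagonalize $H' = UDU^*$ with $D = \diag(\mu_1, \dotsc, \mu_{n-1})$ and conjugate $H$ by $\diag(U, 1)$. This leaves the eigenvalues and $h_{nn}$ untouched while replacing $\hvec$ by $U^*\hvec$, which has the same law, so I may assume $H'$ itself is diagonal.

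Expanding $\det(\lambda I - H)$ via the block-matrix determinant formula (with top-left block $\lambda I_{n-1} - \diag(\mu)$) gives the secular equation
\begin{equation}
\prod_{k=1}^n (\lambda - \lambda_k) = (\lambda - h_{nn}) \prod_{i=1}^{n-1}(\lambda - \mu_i) - \sum_{i=1}^{n-1} q_i \prod_{j \neq i}(\lambda - \mu_j),
\end{equation}
where $q_i := |h_{in}|^2$. Evaluating at $\lambda = \mu_i$ yields $q_i = -\prod_{k=1}^n(\mu_i - \lambda_k)/\prod_{j \neq i}(\mu_i - \mu_j)$, and the trace comparison gives $h_{nn} = \sum_k \lambda_k - \sum_i \mu_i$. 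The non-negativity $q_i \geq 0$ is equivalent to the interlacing $\mu \preceq \lambda$, outside of which the density vanishes, accounting for the indicator.

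Each $q_i = |h_{in}|^2$ is $\mathrm{Exp}(1)$-distributed (since $\Re h_{in}, \Im h_{in} \sim \normal(0, \tfrac{1}{2})$ independently), and $h_{nn} \sim \normal(a, 1)$, all independent, so the joint density of $(q_1, \dotsc, q_{n-1}, h_{nn})$ is $(2\pi)^{-1/2} \exp(-\sum_i q_i - (h_{nn}-a)^2/2)$. Using $2\sum_i q_i = \mathrm{tr}(H^2) - \sum_i \mu_i^2 - h_{nn}^2 = \sum_k \lambda_k^2 - \sum_i \mu_i^2 - h_{nn}^2$ together with $h_{nn} = \sum_k \lambda_k - \sum_i \mu_i$, this exponent simplifies to
\begin{equation}
\sum_{k=1}^n \left(-\frac{\lambda_k^2}{2} + a\lambda_k\right) + \sum_{i=1}^{n-1} \left(\frac{\mu_i^2}{2} - a\mu_i\right) - \frac{a^2}{2},
\end{equation}
which already displays the exponential factors appearing in the claimed constant $C$.

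It remains to compute the Jacobian $|\det J|$ of the map $(q_1, \dotsc, q_{n-1}, h_{nn}) \mapsto (\lambda_1, \dotsc, \lambda_n)$. Logarithmic differentiation of the formula for $q_i$ gives $\partial q_i/\partial \lambda_k = q_i/(\lambda_k - \mu_i)$, while $\partial h_{nn}/\partial \lambda_k = 1$. Factoring $\prod_i q_i$ from the first $n-1$ rows leaves a Cauchy-type determinant with a last row of ones, which equals $\Delta_{n-1}(\mu) \Delta_n(\lambda)/\prod_{i,k}(\lambda_k - \mu_i)$ (obtained by sending an auxiliary $\mu$-parameter to infinity in the usual Cauchy determinant identity). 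Combined with the identity $\prod_i q_i = \prod_{i,k}|\lambda_k - \mu_i|/\Delta_{n-1}(\mu)^2$ (using $\prod_i \prod_{j \neq i}|\mu_i - \mu_j| = \Delta_{n-1}(\mu)^2$), the product factors cancel and $|\det J| = \Delta_n(\lambda)/\Delta_{n-1}(\mu)$. Multiplying the density above by this Jacobian yields precisely the asserted formula with the stated constant $C$. The main technical hazard is this Jacobian/Cauchy-determinant calculation together with the sign bookkeeping forced by the interlacing; once those are in hand, the constant emerges by direct comparison.
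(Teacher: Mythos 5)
Your proposal is correct and follows essentially the same route as the paper's proof: reduce to diagonal $H'$ by unitary invariance, invert the secular equation to get $|h_{in}|^2$ and $h_{nn}$ as rational/linear functions of $(\lambda,\mu)$ with nonnegativity equivalent to interlacing, and combine the exponential/Gaussian densities with the Cauchy-determinant Jacobian $\Delta_n(\lambda)/\Delta_{n-1}(\mu)$. The only cosmetic difference is that you simplify $\sum_i |h_{in}|^2$ via the trace identity $\mathrm{tr}(H^2)$ rather than via the residue at infinity of $p_H/p_{H'}$, which is an equivalent computation.
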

\begin{rmk}
  The distribution of eigenvalues of $H$ depends only on the eigenvalues of $H'$.
\end{rmk}

\begin{proof}
  The proof of Lemma \ref{lem:GUE_w_ext} is analogous to the proof of \cite[Section 4]{Baryshnikov01}. Suppose $U \in \Unitary(n-1)$ is an $(n-1)$ dimensional unitary matrix depending on $H'$, such that $UH'U^{-1} = \diag(\mu)$. Then the $n \times n$ random Hermitian matrix
  \begin{equation}
    \tilde{H} =
    \begin{pmatrix}
      \diag(\mu) & \tilde{\hvec} \\
      \tilde{\hvec}^{\dagger} & h_{nn}
    \end{pmatrix},
    \quad \textnormal{where} \quad \tilde{\hvec} := (\tilde{h}_1, \dots, \tilde{h}_{n-1}) = U\hvec,
  \end{equation}
  is conjugate to $H$, and they have the same eigenvalues and the same characteristic polynomials $p_H(z) = p_{\tilde{H}}(z)$. Note that components of $\tilde{\hvec}$ are also independent random variables in standard complex normal distribution. By direct computation, we find the relations between the characteristic polynomials $p_H(z) = p_{\tilde{H}}(z) = \prod^n_{i=1} (z - \lambda_i)$ of $H$ and $\tilde{H}$ and  $p_{H'}(z) = \prod^{n-1}_{i=1} (z - \mu_i)$ of $H'$: 
  \begin{align} 
     \frac{p_H(z)}{p_{H'}(z)} = {}& z - h_{nn} - \sum^{n-1}_{i=1} \frac{\lvert \tilde{h}_i \rvert^2}{z  - \mu_i}, \label{eq:char_poly_in_GUE_w_ext:1} \\
     \frac{p_H(z)}{p_{H'}(z)} = {}& z - (\sigma_1(\lambda) - \sigma_1(\mu)) - \frac{1}{z} (\sigma_1(\lambda) \sigma_1(\mu) + \sigma_2(\mu) -\sigma_2(\lambda) - \sigma^2_1(\mu)) + \bigO(\frac{1}{z^2}), \label{eq:char_poly_in_GUE_w_ext:2}
 \end{align}
  where $\sigma_1(\lambda) = \sum_i \lambda_i$, $\sigma_2(\lambda) = \sum_{i<j} \lambda_i \lambda_j$ are elementary symmetric polynomials. If $\lvert \tilde{h}_i \rvert^2$ are all positive, the roots of $p_H(\lambda)$, which are eigenvalues of $\tilde{H}$ and $H$, satisfy the strict interlacing condition $\mu \prec \lambda$, which means
  \begin{equation} \label{eq:interlacing_of_mu_and_lambda}
    \lambda_1 < \mu_1 < \lambda_2 < \dots < \mu_{n-1} < \lambda_n.
  \end{equation}
  To see that, we notice the limiting behavior of $p_H(z) / p_{H'}(z)$
  \begin{equation} \label{eq:limiting_behavior_of_p_tiledeH/p_H'}
    \lim_{\lambda \to \pm\infty} \frac{p_H(\lambda)}{p_{H'}(\lambda)} = \pm\infty, \quad \lim_{\lambda \to (\mu_i)_{\pm}} \frac{p_H(\lambda)}{p_{H'}(\lambda)} = \mp\infty,
  \end{equation}
  and find that there is one root in each of the $n$ intervals with $\mu_i, \pm\infty$ as endpoints.
  
  Given $\lvert \tilde{h}_j \rvert^2 \in \realR_+$ and $h_{nn} \in \realR$, there is a unique $\lambda = (\lambda_1, \dots, \lambda_n)$ satisfying \eqref{eq:interlacing_of_mu_and_lambda}. On the other hand, given $\lambda$ satisfying \eqref{eq:interlacing_of_mu_and_lambda}, there is a unique array of $\lvert \tilde{h}_i \rvert^2 \in \realR^+, h_{nn} \in \realR$ such that $\lambda_1, \dots, \lambda_n$ are roots of $p_H(z)$. To see that, we identify the residues at $z = \mu_j$ of the right-hand side of \eqref{eq:char_poly_in_GUE_w_ext:1} with that of the left-hand side, and obtain
  \begin{equation} \label{eq:expression_of_|h|^2}
    \lvert \tilde{h}_i \rvert^2 = -\frac{p_H(\mu_i)}{p'_{H'}(\mu_i)}, \quad i = 1, \dots, n-1;
  \end{equation}
  then we identify the constant terms in the right-hand sides of \eqref{eq:char_poly_in_GUE_w_ext:1} and \eqref{eq:char_poly_in_GUE_w_ext:2}, and obtain
  \begin{equation} \label{eq:expression_of_h_nn}
    h_{nn} = \sigma_1(\lambda) - \sigma_1(\mu).
  \end{equation}
  The argument above also shows that the probability that some eigenvalues of $H$ coincide with eigenvalues of $H'$ is the same as the probability that some $\tilde{h}_i =0$, which is $0$. Thus in order to find the probability density of $\lambda$, we need only to find the probability density of $\lvert \tilde{h}_j \rvert^2 \in \realR_+$ and $h_{nn} \in \realR$ and the Jacobian determinant of the map from $\lambda$ to $\lvert \tilde{h}_j \rvert^2, h_{nn}$, since
  \begin{equation} \label{eq:relations_between_densities}
    p(\lambda) = p(\lvert \tilde{h}_1 \rvert^2, \dots, \lvert \tilde{h}_{n-1} \rvert^2, h_{nn}) \left\lvert \frac{\partial(\lvert \tilde{h}_1 \rvert^2, \dots, \lvert \tilde{h}_{n-1} \rvert^2, h_{nn})}{\partial(\lambda)} \right\rvert.
  \end{equation}
  By \eqref{eq:expression_of_|h|^2} and \eqref{eq:expression_of_h_nn} we obtain
  \begin{equation} \label{eq:derivatives_of_h_lambda_GUE}
    \frac{\partial \lvert \tilde{h}_j \rvert^2}{\partial \lambda_i} = \frac{\lvert \tilde{h}_j \rvert^2}{\lambda_i - \mu_j}, \quad \frac{\partial h_{nn}}{\partial \lambda_i} = 1,
  \end{equation}
  \begin{equation} \label{Jacobian_in_GUE}
    \frac{\partial(\lvert \tilde{h}_1 \rvert^2, \dots, \lvert \tilde{h}_{n-1} \rvert^2, h_{nn})}{\partial(\lambda)} = \prod^{n-1}_{j=1} \lvert \tilde{h}_j \rvert^2 \det(C) = (-1)^{n-1} \frac{\Delta_n(\lambda)}{\Delta_{n-1}(\mu)},
  \end{equation}
  where
  \begin{equation} \label{eq:defn_of_the_matrix_C}
    C =
    \begin{pmatrix}
      \frac{1}{\lambda_1 - \mu_1} & \cdots & \frac{1}{\lambda_n - \mu_1} \\
      \vdots & \vdots & \vdots \\
      \frac{1}{\lambda_1 - \mu_{n-1}} & \cdots & \frac{1}{\lambda_n - \mu_{n-1}} \\
      1 & \cdots & 1 
    \end{pmatrix}.
  \end{equation}
  Here we use the formula \eqref{eq:expression_of_|h|^2} of $\lvert \tilde{h}_i \rvert^2$ and
  \begin{equation}
    \det(C) = (-1)^{\frac{(n-2)(n-1)}{2}} \frac{\Delta_n(\lambda) \Delta_{n-1}(\mu)}{\prod^n_{i=1} \prod^{n-1}_{j=1} (\lambda_i - \mu_j)}
  \end{equation}
  from the Cauchy determinant formula.
  
  On the other hand, by the distributions of $\lvert \tilde{h}_i \rvert^2$ and $h_{nn}$,
  \begin{equation} \label{eq:joint_pdf_of_h_tilde_terms}
    \begin{split}
      p(\lvert \tilde{h}_1 \rvert^2, \dots, \lvert \tilde{h}_{n-1} \rvert^2, h_{nn}) = {}& \frac{1}{\sqrt{2\pi}} e^{-\sum^{n-1}_{i=1} \lvert \tilde{h}_i \rvert^2 - \frac{(h_{nn} - a)^2}{2}} \\
      = {}& \frac{1}{\sqrt{2\pi}} e^{-\frac{1}{2} \sum^n_{i=1} \lambda^2_i + \frac{1}{2} \sum^{n-1}_{i=1} \mu^2_1 + a(\sum^n_{i=1} \lambda_i - \sum^{n-1}_{i=1} \mu_i) - \frac{a^2}{2}},
    \end{split}
  \end{equation}
  where the second identity follows from identifying the $-\sum^{n-1}_{i=1} \lvert \tilde{h}_i \rvert^2$ with the residue of the right-hand side of \eqref{eq:char_poly_in_GUE_w_ext:1} at $\infty$, and calculating the residue from the right-hand side of \eqref{eq:char_poly_in_GUE_w_ext:2}. Thus Lemma \ref{lem:GUE_w_ext} is proved.
\end{proof}

\begin{proof}[Proof of Theorem \ref{thm:Markov} (GUE with external source)]
  To prove that the random variables $\lambda^{(1)}, \dotsc, \lambda^{(n)}$ constitute the Markov chain with transition density given in \eqref{eq:transition_general}, it suffices to show the identity of limiting conditional probability density of $\lambda^{(n)}$ that
  \begin{equation} \label{eq:limiting_identity_for_Markov}
    \lim_{\epsilon \to \infty} P(\lambda^{(n)} \mid \lambda^{(1)} \in I^1_{\epsilon}(\mu^{(1)}), \dots, \lambda^{(n-1)} \in I^{n-1}_{\epsilon}(\mu^{(n-1)})) = P_{\mu^{(n-1)}}(\lambda^{(n)}),
  \end{equation}
  where $P_{\mu^{(n-1)}}(\lambda^{(n)})$ is defined in \eqref{eq:distr_of_lambda_fix_mu_GUE} and
  \begin{equation} \label{eq:defn_of_box}
    I^{j}_{\epsilon}(\mu^{(j)}) = \{ (x_1, \dotsc, x_j) \in \realR^j \mid x_1 \leq \dotsb \leq x_j \text{ and } \lvert x_i - \mu^{(j)}_i \rvert \leq \epsilon \}.
  \end{equation}
  Without loss of generality, here we assume that for all $j = 1, \dotsc, n-1$, $\mu^{(j)}_1, \dotsc, \mu^{(j)}_j$ are distinct real numbers in increasing order. Under the condition $\lambda^{(j)} \in I^{j}_{\epsilon}(\mu^{(j)})$, $j = 1, \dotsc, n-1$, we assume the conditional distribution of $S_{n-1}$, the $n-1$ minor of the matrix $Z_0 + A$, is given by $f_{\epsilon}(H_{n-1}) dH_{n-1}$. The distribution function $f_{\epsilon}(H_{n-1})$ satisfies a property that $f_{\epsilon}(H_{n-1}) = 0$ if the eigenvalues of $H_{n-1}$, ordered increasingly, is not in $I^{n-1}_{\epsilon}(\mu^{(n-1)})$. This is a direct consequence of $\lambda^{(n-1)} \in I^{n-1}_{\epsilon}(\mu^{(n-1)})$.

  Then using the general formula for conditional probability density functions
  \begin{equation}
    f_X(x \mid Y \in A) = \int f_X(x \mid Y \in A \text{ and } Z = z) f_Z(z \mid Y \in A) dz,
  \end{equation}
  we have
  \begin{equation} \label{eq:new_integral_Markov_GUE}
    P(\lambda^{(n)} \mid \lambda^{(j)} \in I^j_{\epsilon}(\mu^{(j)}), \ j = 1, \dotsc, n-1) = \int P(\lambda^{(n)} \mid S_{n-1} = H_{n-1}) f_{\epsilon}(H_{n-1}) dH_{n-1}.
  \end{equation}
  Here $P(\lambda^{(n)} \mid S_{n-1} = H_{n-1})$ denotes the conditional probability density function of $\lambda^{(n)}$ that are the eigenvalues of $S_n$ (in increasing order), the $n$-minor of $Z_0+A$, where the condition is that $S_{n-1}$ is equal to $H_{n-1}$. The value of the density function $P(\lambda^{(n)} \mid S_{n-1} = H_{n-1})$ depends only on the values of the eigenvalues of $H_{n-1}$, as a consequence of Lemma \ref{lem:GUE_w_ext}. From \eqref{eq:distr_of_lambda_fix_mu_GUE}, we know that $P(\lambda^{(n)} \mid S_{n-1} = H_{n-1})$ depends on  the eigenvalues of $H_{n-1}$ in a continuous way, given that the eigenvalues are distinct. Then we know that for any $\lambda^{(n)}$ and $\delta > 0$, there is an $\epsilon$ such that if the eigenvalues of $H_{n-1}$ is given by $\mu \in I^{n-1}_{\epsilon}(\mu^{(n-1)})$, then
  \begin{equation} \label{eq:continuity_of_conditional_prob_GUE}
    \lvert P(\lambda^{(n)} \mid S_{n-1} = H_{n-1}) - P_{\mu^{(n-1)}}(\lambda^{(n)}) \rvert < \delta.
  \end{equation}
  Then \eqref{eq:new_integral_Markov_GUE} and \eqref{eq:continuity_of_conditional_prob_GUE} imply that if $\epsilon$ is small enough for \eqref{eq:continuity_of_conditional_prob_GUE} to hold, then
  \begin{equation}
    \lvert P(\lambda^{(n)} \mid \lambda^{(j)} \in I^j_{\epsilon}(\mu^{(j)}), \ j = 1, \dotsc, n-1) - P_{\mu^{(n-1)}}(\lambda^{(n)}) \rvert < \delta.
  \end{equation}
  Taking the limit $\epsilon \to 0$, we obtain \eqref{eq:limiting_identity_for_Markov}. Thus we prove Theorem \ref{thm:Markov} in the GUE with external source case.
\end{proof}

\subsection{Correlation kernel of the Markov chain $\lambda^{(n)}$} \label{subsec:reproducing_kernel_mGUE}

For the ease of derivation in this subsection, we re-express the formula \eqref{eq:eqivalent_jpdf}, the joint probability density of $\lambda^{(n)}$:
\begin{equation} \label{eq:alternative_jpdf_mGUE}
   P(\lambda^{(1)}, \dots, \lambda^{(N)}) = \frac{e^{-\frac{1}{2} \sum^N_{n=1} a^2_n}}{(2\pi)^{N/2}}\left( \prod^{N-1}_{n=1} \det(\phi_n(\lambda^{(n)}_i, \lambda^{(n+1)}_j))^{n+1}_{i,j=1} \right) \det(\Psi^N_{N-i}(\lambda^{(N)}_j))^N_{i,j=1},
\end{equation}
where the function $\phi_n(\lambda^{(n)}_i, \lambda^{(n + 1)}_j)$ ($i \neq n + 1$) has explicit formula
  \begin{equation} \label{phi_n:GUE}
    \phi_n(x, y) = e^{(a_n - a_{n + 1})x} \id_{x < y},
  \end{equation}
  and
\begin{equation} \label{eq:Psi_GUE}
  \Psi^N_i(x) = e^{-\frac{x^2}{2} + a_N x} P_{(a_N, a_{N-1}, \dots, a_{N-i+1})}(x) = \frac{e^{a_N x}}{\sqrt{2\pi} i} \int_{C+i\realR \uparrow} e^{\frac{s^2}{2} - xs} \prod^j_{k=1} (s- a_{N-k+1}) ds,
\end{equation}
and $P_{(a_N, a_{N-1}, \dots, a_{N-i+1})}(x)$ is the $i$-th degree (monic) multiple Hermite polynomial of type II (see Appendix \ref{sec:appendix_MOPS}).

In the proof of Theorem \ref{thm:kernel}\ref{enu:thm:kernel_GUE}, we first express the correlation kernel in terms of multiple Hermitian polynomials, and then write it in the form of double contour integral.

\subsubsection{The correlation kernel and multiple Hermite polynomials}

The expression of the correlation kernel in terms of multiple Hermite polynomials, and even the fact that the Markov chain $\lambda^{(n)}$ is determinantal such that its properties are captured by the correlation kernel, is based on the following general lemma of Borodin, Ferrari, \Prahofer\ and Sasamoto \cite{Borodin-Ferrari-Prahofer-Sasamoto07}.

Below we state the lemma. First we fix $I$ as a subset of $\realR$ (in our paper, $I$ is taken as $(-\infty, \infty)$, $[0, \infty)$ or $[0, 1]$). Suppose the functions $a(x, y), b(x, y), c(x)$ are defined on $I \times I$ or $I$. Then the operator $*$ used in the lemma is defined as
\begin{equation} \label{eq:binary_star}
  a*b(x,y) := \int_I a(x,z)b(z,y) dz, \quad  a*c(x) := \int_I a(x,z)c(z) dz, \quad c*a(x) = \int_I c(z)a(z,x) dz.
\end{equation}

\begin{lem}[{\cite[Lemma 3.4]{Borodin-Ferrari-Prahofer-Sasamoto07}}] \label{lem:BFPS} 
  Suppose we have a signed measure on $I \times I^2 \times \dotsb \times I^N$ given in the form
  \begin{equation}
    \frac{1}{Z_N} \prod^{N-1}_{n=1} \det(\phi_n(\lambda^{(n)}_i, \lambda^{(n+1)}_j))^{n+1}_{i,j = 1} \det(\Psi^N_{N-i}(\lambda^{(N)}_j))^N_{i,j = 1}, \quad (\lambda^{(n)}_1, \dotsc, \lambda^{(n)}_n) \in I^n,
  \end{equation}
  where $\phi_n(\lambda^{(n)}_i, \lambda^{(n+1)}_j)$ stands for the value of the two-variable function $\phi_n(x,y)$ at $x = \lambda^{(n)}_i, y = \lambda^{(n+1)}_j$ if $i \leq n$ and otherwise the value of the one-variable function $\phi_n(\lambda^{(n)}_{n+1}, x)$, and $Z_N$ is a normalization constant. If $Z_N \neq 0$, then the correlation functions are determinantal.

    To write down the kernel we let
  \begin{equation} \label{eq:defn_of_phi^n1n2}
    \phi^{(n_1, n_2)}(x,y) :=
    \begin{cases}
      \phi_{n_1}*(\phi_{n_1+1}*(\dots *\phi_{n_2-1}))(x,y) & n_1 < n_2, \\
      0 & n_1 \geq n_2,
    \end{cases}
  \end{equation}
  and for $1 \leq n < N$,
  \begin{equation} \label{eq:defn_of_Psi^n_j}
    \Psi^n_{n-j}(x) := \phi^{(n,N)}*\Psi^N_{N-j}(x), \quad j = 1, 2, \dots, N.
  \end{equation}
  Set $\phi_0(\lambda^{(0)}_1,x) := 1$. Suppose the functions defined on $I$
  \begin{equation} \label{eq:basis_of_V_n}
    \phi_0*\phi^{(1,n)}(\lambda^{(0)}_1,x),\ \dots,\ (\phi_{n-2}*\phi^{(n-1,n)})(\lambda^{(n-2)}_{n-1},x),\ \phi_{n-1}(\lambda^{(n-1)}_n, x),
 \end{equation}
 are well defined, then they are linearly independent and generate an $n$-dimensional space $V_n$. Define a set of functions $\Phi^n_j(x)$ ($j = 0, \dots, n-1$) spanning $V_n$ by the orthogonality relations
  \begin{equation} \label{eq:orthonormality_of_Psi_and_Phi}
    \int_I \Phi^n_i(x)\Psi^n_j(x) = \delta_{ij}, \quad \text{for $0 \leq i,j \leq n-1$.}
  \end{equation}

  Under Assumption A:
  \begin{equation} \label{eq:formula_of_Assumption_A}
    \phi_n(\lambda^{(n)}_{n+1}, x) = c_n \Phi^{(n+1)}_0(x), \quad n = 1, \dots, N-1
  \end{equation}
  for some $c_n \neq 0$, the kernel takes the simple form
  \begin{equation} \label{eq:general_formula_of_kernel}
    K(n_1, x; n_2, y) := -\phi^{(n_1, n_2)}(x,y) + \sum^{n_2}_{k=1}\Psi^{n_1}_{n_1-k}(x) \Phi^{n_2}_{n_2-k}(y).
  \end{equation}
\end{lem}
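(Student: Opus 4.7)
The proof proposal follows the Eynard--Mehta framework for measures with a chain-of-determinants structure, adapted to the ``augmented'' determinants $\det(\phi_n(\lambda^{(n)}_i,\lambda^{(n+1)}_j))^{n+1}_{i,j=1}$ in which row $i = n+1$ corresponds to the virtual variable $\lambda^{(n)}_{n+1}$ and encodes the one-variable function $\phi_n(\lambda^{(n)}_{n+1},\cdot)$. The plan has three main steps: (i) identify the normalizing constant and the natural basis at each level $n$ by iterated application of the Andreief (Cauchy--Binet) identity, (ii) derive the determinantal kernel in a ``raw'' form involving a Gram matrix inverse and convert it into the biorthogonal shape \eqref{eq:general_formula_of_kernel} using the functions $\Phi^n_j$ defined by \eqref{eq:orthonormality_of_Psi_and_Phi}, and (iii) check that Assumption A is exactly what is needed to make the virtual-variable rows compatible with this biorthogonalization so that the summation range $k=1,\dotsc,n_2$ appears as stated.

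For step (i), I would integrate out the variables $\lambda^{(n)}_i$ from the bottom up. At each stage, combining one $\det(\phi_n)^{n+1}_{i,j=1}$ with the accumulated determinant via Andreief produces a new determinant of one higher size whose rows consist of the basis elements in \eqref{eq:basis_of_V_n} at the current level together with $*$-convolved copies of the $\Psi^N_{N-j}$; the convolution structure is precisely what the definitions \eqref{eq:defn_of_phi^n1n2} and \eqref{eq:defn_of_Psi^n_j} were engineered to produce. The hypothesis $Z_N \neq 0$ keeps those rows linearly independent, so the collection \eqref{eq:basis_of_V_n} genuinely spans an $n$-dimensional subspace $V_n$ at each level. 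For step (ii), the generalized Eynard--Mehta theorem produces a kernel of the shape
\begin{equation*}
  K(n_1,x;n_2,y) = -\phi^{(n_1,n_2)}(x,y) + \sum_{i,j}\Psi^{n_1}_{n_1-i}(x)\,[G^{-1}]_{ij}\, g^{n_2}_{j}(y),
\end{equation*}
where $\{g^{n_2}_j\}$ denotes the basis \eqref{eq:basis_of_V_n} at level $n_2$ and $G_{ij} = \int_I \Psi^{n_2}_{n_2-i}(x) g^{n_2}_j(x)\, dx$ is the corresponding Gram matrix. Setting $\Phi^{n_2}_{n_2-k}(y) := \sum_j [G^{-1}]_{kj}\, g^{n_2}_j(y)$, one recognizes the unique element of $V_{n_2}$ satisfying \eqref{eq:orthonormality_of_Psi_and_Phi}, and this collapses the kernel to the form \eqref{eq:general_formula_of_kernel}.

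The subtle point, and what I expect to be the main obstacle, is step (iii). The last basis element $\phi_{n-1}(\lambda^{(n-1)}_n,\cdot)$ in \eqref{eq:basis_of_V_n} arises from the virtual-variable row in the augmented determinant at the previous stage; Assumption A \eqref{eq:formula_of_Assumption_A} asserts that this function is proportional to $\Phi^n_0$, the biorthogonal dual of $\Psi^n_{n-1}$. This is precisely what is needed for the iterated Andreief reduction to close cleanly: the extra augmented row at each step is automatically absorbed into the biorthogonal dual basis at the next level, rather than generating additional off-diagonal Gram contributions that would inflate the summation range in \eqref{eq:general_formula_of_kernel}. Verifying carefully that this bookkeeping succeeds — that is, that the virtual rows propagate through the chain contributing exactly one biorthogonal basis vector at each level, and that the final sum therefore runs from $k=1$ to $n_2$ — is the technical heart of the argument, and without Assumption A the resulting kernel would have to be corrected by additional rank-one terms coming from the mismatched virtual rows.
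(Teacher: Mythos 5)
The paper never proves Lemma \ref{lem:BFPS} itself: it is imported from \cite{Borodin-Ferrari-Prahofer-Sasamoto07} with only the remark that the discrete proof carries over to continuous variables with little change, and your sketch follows exactly that standard Eynard--Mehta/Andreief biorthogonalization route, so you are taking the same approach as the source the paper relies on. One caution: the ``raw'' kernel in your step (ii) is already written with the $n_2 \times n_2$ Gram matrix pairing $\Psi^{n_2}_{n_2-i}$ against the level-$n_2$ basis, whereas the general theorem actually produces an $N \times N$ inverse built from the full chain at level $N$; the collapse from the latter to the biorthogonal pairing inside $V_{n_2}$ with the sum truncated at $k = n_2$ is precisely what Assumption A must deliver, so as stated your step (ii) presupposes part of what your step (iii) sets out to verify---which you essentially acknowledge by calling it the technical heart of the argument.
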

\begin{rmk}
  Our version of the lemma is slightly different from that in \cite[Lemma 3.4]{Borodin-Ferrari-Prahofer-Sasamoto07}, in the sense that they consider discrete random variables but we consider continuous ones supported on $I$. The proof in their paper can be used for Lemma \ref{lem:BFPS} with little change.
\end{rmk}

We first compute the correlation kernel under the assumption that $a_1 > a_2 > \dots > a_N$ by the application of Lemma \ref{lem:BFPS} with $I = (-\infty, \infty)$. We need the following identity that if $\phi_n(x,y)$ is given by \eqref{phi_n:GUE}, $\phi_n(\lambda^{(n)}_{n+1}, x) = 1$ and $\Psi^N_{N-j}(x)$ is given by \eqref{eq:Psi_GUE}, then
\begin{equation} \label{eq:evaluation_of_phi^n1n2}
  \phi^{(n_1, n_2)}(x,y) = \frac{e^{a_{n_1}x - a_{n_2}y}}{2\pi i} \oint_{\Gamma_a} \frac{e^{(y-x)z}}{\prod^{n_2}_{j = n_1+1} (z - a_j)} dz \id_{x<y} \id_{n_1 < n_2},
\end{equation}
where $\Gamma_a$ is a large enough contour enclosing all poles $a_{n_1+1}, \dots, a_{n_2}$, and \footnote{In the evaluation of $\Psi^{(n)}_0(x)$ in \eqref{eq:evaluation_of_Psi^n_j} and below, we take the notational convention that $\prod^0_{j=1}( \cdot ) = 1$.}
\begin{equation} \label{eq:evaluation_of_Psi^n_j}
  \Psi^n_j(x) = 
   \begin{cases} 
      {\displaystyle
        \begin{gathered}[b]
          \frac{e^{a_n x}}{\sqrt{2\pi} i} \int_{C+i\realR \uparrow} e^{\frac{s^2}{2} - xs} \prod^j_{k=1} (s- a_{n-k+1}) ds = \\
          e^{-\frac{x^2}{2} + a_n x} P_{(a_n, a_{n-1}, \dots, a_{n-j+1})}(x),
        \end{gathered}
        } & j = 0, 1, \dots, n-1, \\
      {\displaystyle \frac{e^{a_n x}}{\sqrt{2\pi} i} \int_{C+i\realR \uparrow} e^{\frac{s^2}{2} - xs} \frac{ds}{\prod^{-j}_{k=1} (s-a_{n+k})},} & j = -1, -2, \dots, n-N,
    \end{cases}
\end{equation}
where $C$ is a large enough real number such that $C + i\realR \uparrow$ lies to the right all possible poles of the integrand, and 
\begin{equation} \label{eq:evaluation_of_phi*phi}
  \phi_j*\phi^{(j+1,n)}(\lambda^{(j)}_{j+1}, x) = \left( \prod^n_{k=j+2} \frac{1}{a_{j+1} - a_k} \right) e^{(a_{j+1}-a_n)x}, \quad j = 0, 1, \dots, n-2.
\end{equation}

Note that from \eqref{eq:basis_of_V_n} and \eqref{eq:evaluation_of_phi*phi}, the vector space $V_n$ is spanned by $e^{(a_1 - a_n)x}, e^{(a_2 - a_n)x}, \dots, e^{(a_n - a_n)x}$, and $\Phi^n_j(x)$ are vectors in $V_n$ defined by the orthonormality \eqref{eq:orthonormality_of_Psi_and_Phi}. Since $\Psi^n_j(x)$ are defined by multiple Hermite polynomials of type II, $\Phi^n_j(x)$ are hence defined by multiple Hermite polynomials of type I. Using Proposition \ref{prop:mult_Hermite} in Appendix \ref{sec:appendix_MOPS}, we have
\begin{equation} \label{eq:expression_of_Phi^n_j}
  \Phi^n_j(x) = e^{\frac{x^2}{2}-a_n(x)}Q_{(a_n, a_{n-2}, \dots, a_{n-j})}(x) = \frac{e^{-a_n x}}{\sqrt{2\pi}2\pi i} \oint_{\Gamma_a} e^{-\frac{t^2}{2} + xt} \frac{dt}{\prod^j_{k=0} (t-a_{n-k})},
\end{equation}
where $Q_{(a_n, a_{n-1}, \dotsc, a_{n-j})}(x)$ is the multiple Hermite polynomial of type I.

It is clear that the Assumption A is satisfied, as both functions $\phi_n(\lambda^{(n)}_{n+1}, x)$ and $\Phi^{(n+1)}_0(x)$ in \eqref{eq:formula_of_Assumption_A} are constants. Therefore the correlation kernel is given by  \eqref{eq:general_formula_of_kernel} with $\phi^{(n_1, n_2)}(x,y)$ expressed in \eqref{eq:evaluation_of_phi^n1n2}, $\Psi^{n_1}_{n_1-k}(x)$ expressed in \eqref{eq:evaluation_of_Psi^n_j} and $\Phi^{n_2}_{n_2-k}(y)$ expressed in \eqref{eq:expression_of_Phi^n_j}.

Note that the correlation kernel $K(n_1, x; n_2, y)$ is  analytic in $a_1, \dots, a_N$, by \eqref{eq:general_formula_of_kernel}. By analytic continuation we can remove the restriction $a_1 > a_2 > \dots > a_N$.

\begin{proof}[Proof of \eqref{eq:evaluation_of_phi^n1n2}, \eqref{eq:evaluation_of_Psi^n_j} and \eqref{eq:evaluation_of_phi*phi}]
  We prove \eqref{eq:evaluation_of_phi^n1n2} by induction. If $n_1 = n_2 - 1$, it is clear that \eqref{eq:evaluation_of_phi^n1n2} holds. If it holds for $n_1 = n_2 - k$, $k \geq 1$, then for $n_1 = n_2 - k - 1$,

 \begin{equation}
    \begin{split}
       \phi^{(n_1, n_2)}(x,y)  = {}& \phi_{n_1} * \phi^{(n_1+1, n_2)}(x,y) \\
       = {}& \int^{\infty}_{-\infty} e^{(a_{n_1} - a_{n_1+1})x} \frac{e^{a_{n_1+1}w - a_{n_2}y}}{2\pi i} \oint_{\Gamma_a} \frac{e^{(y-w)z}}{\prod^{n_2}_{n_1+2} (z - a_j)} dz \id_{x<w<y} dw \\
      = {}& \frac{e^{a_{n_1}x - a_{n_2}y}}{2\pi i} \oint_{\Gamma_a} \frac{e^{yz - a_{n_1+1}x}}{\prod^{n_2}_{j=n_1+2}(z-a_j)} \int^y_x e^{(a_{n_1+1}-z)w} dw dz \id_{x<y} \\
      = {}& \frac{e^{a_{n_1}x - a_{n_2}y}}{2\pi i} \oint_{\Gamma_a} \frac{e^{(y-x)z} - e^{(y-x)a_{n_1+1}}}{\prod^{n_2}_{n_1+1} (z - a_j)} dz \id_{x<y} \\
      = {}& \frac{e^{a_{n_1}x - a_{n_2}y}}{2\pi i} \left( \oint_{\Gamma_a} \frac{e^{(y-x)z}}{\prod^{n_2}_{n_1+1} (z - a_j)} dz + e^{(y-x)a_{n_1+1}} \oint_{\Gamma_a} \frac{1}{\prod^{n_2}_{n_1+1} (z - a_j)} dz \right) \id_{x<y} \\
      = {}& \frac{e^{a_{n_1}x - a_{n_2}y}}{2\pi i} \oint_{\Gamma_a} \frac{e^{(y-x)z}}{\prod^{n_2}_{n_1+1} (z - a_j)} dz \id_{x<y} \id_{n_1 < n_2},
    \end{split}
  \end{equation}
  where in the last step we use the vanishing of the contour integral $\oint_{\Gamma_a} \frac{1}{\prod^{n_2}_{n_1+1} (z - a_j)} dz$, which can be seen by deforming $\Gamma_a$ to a contour about $\infty$, and using $n_2 - n_1 \geq 2$.
 
  The proof of \eqref{eq:evaluation_of_Psi^n_j} is based on the contour integral formula \eqref{eq:mult_Hermite_type_II} of multiple Hermite polynomials of type II in Appendix \ref{sec:appendix_MOPS}. From \eqref{eq:defn_of_Psi^n_j}, \eqref{eq:evaluation_of_phi^n1n2}, \eqref{eq:Psi_GUE} and \eqref{eq:mult_Hermite_type_II}, we have, where $C + i\realR \uparrow$ is to the right of $\Gamma_a$, that
  \begin{equation} \label{eq:proof_of_Psi_GUE}
    \begin{split}
      \Psi^n_{n-j}(x) = {}& \int^{\infty}_x \frac{e^{a_nx - a_Ny}}{2\pi i} \oint_{\Gamma_a} \frac{e^{(y-x)z}}{\prod^N_{k=n+1}(z-a_k)} dz \frac{e^{-\frac{y^2}{2} + a_Ny}}{\sqrt{2\pi}i} \int_{C +  i\realR \uparrow} e^{\frac{(s-y)^2}{2}} \prod^N_{l=j+1} (s-a_l) ds dy \\
      = {}& \frac{e^{a_nx}}{-(2\pi)^{3/2}} \int_{C +  i\realR \uparrow} ds \oint_{\Gamma_a} dz \frac{e^{-xz}}{\prod^N_{k=n+1}(z-a_k)} e^{\frac{s^2}{2}} \prod^N_{l=j+1}(s-a_l) \int^{\infty}_x e^{(z-s)y} dy \\
      = {}& \frac{e^{a_nx}}{-(2\pi)^{3/2}} \int_{C + i\realR \uparrow} ds e^{\frac{s^2}{2} - sx} \prod^N_{l=j+1}(s-a_l) \oint_{\Gamma_a} dz \frac{1}{(s-z) \prod^N_{k=n+1}(z-a_k)} \\
      = {}& \frac{e^{a_nx}}{\sqrt{2\pi}i} \int_{C + i \realR \uparrow} ds e^{\frac{s^2}{2} - sx} \frac{\prod^N_{l=j+1}(s-a_l)}{\prod^N_{k=n+1}(s-a_k)},
    \end{split}
  \end{equation}
  and from \eqref{eq:proof_of_Psi_GUE}, \eqref{eq:evaluation_of_Psi^n_j} is easy to obtain.

  The proof of \eqref{eq:evaluation_of_phi*phi} is straightforward, at least if $a_1, \dotsc, a_n$ are in strictly descending order. Using $\phi_n(\lambda^{(n)}_{n+1}, x) = 1$ and \eqref{eq:evaluation_of_phi^n1n2}, we conclude that
  \begin{equation}
    \begin{split}
      \phi_j*\phi^{(j+1,n)}(\lambda^{(j)}_{j+1}, x) = {}& \int^x_{-\infty} \frac{e^{a_{j+1}y - a_nx}}{2\pi i} \oint_{\Gamma_a} \frac{e^{(x-y)z}}{\prod^n_{k=j+2}(z-a_k)} dz dy \\
      = {}& \frac{e^{-a_nx}}{2\pi i} \oint_{\Gamma_a} dz \frac{e^{xz}}{\prod^n_{k=j+2}(z-a_k)} \int^x_{-\infty} e^{(a_{j+1} - z)y} dy \\
      = {}& \frac{e^{(a_{j+1} - a_n)x}}{2\pi i} \oint_{\Gamma_a} \frac{-1}{\prod^n_{k=j+1}(z-a_k)} dz \\
      = {}& \left( \prod^n_{k=j+2} \frac{1}{a_{j+1} - a_k} \right) e^{(a_{j+1}-a_n)x}.
    \end{split}
  \end{equation}
In the above derivation we assumed that the contour $\Gamma_a$ encloses $a_{j+2}, a_{j+3}, \dots, a_n$ but lies to the left of $a_{j+1}$, which enables us to do the $y$ integration and finally deform $\Gamma_a$ to a large contour about $\infty$, picking up the residue just at $z = a_{j+1}$. The argument above relies on the assumption that $a_{j+1} > a_{j+2} > \dots > a_n$, but it is clear that \eqref{eq:evaluation_of_phi*phi} holds when the assumption is removed, due to the analytic continuation.
\end{proof}

\subsubsection{Double contour integral formula of the correlation kernel}

From \eqref{eq:proof_of_Psi_GUE} and \eqref{eq:expression_of_Phi^n_j},
\begin{multline} \label{eq:expansion_of_sum_Psi*Phi_GUE}
  \sum^{n_2}_{j=1}\Psi^{n_1}_{n_1-j}(x) \Phi^{n_2}_{n_2-j}(y) = \\
  \frac{e^{a_{n_1}x - a_{n_2}y}}{(2\pi i)^2} \int_{C + i\realR \uparrow} ds \oint_{\Gamma_a} dt e^{\frac{s^2}{2} - \frac{t^2}{2} -xs + yt} \frac{\prod^N_{k = n_2+1} (t - a_k)}{\prod^N_{l = n_1+1} (s - a_l)} \sum^{n_2}_{j=1} \frac{\prod^N_{k = j+1} (s - a_k)}{\prod^N_{l = j} (t - a_l)}.
\end{multline}
Using the identity that for any $j < N$
\begin{equation} \label{eq:telescope_1}
  \begin{split}
    (s - t) \frac{\prod^N_{k=j+1}(s-a_k)}{\prod^N_{l=j}(t-a_l)} = {}& (s - a_j) \frac{\prod^N_{k=j+1}(s-a_k)}{\prod^N_{l=j}(t-a_l)} - (t - a_j) \frac{\prod^N_{k=j+1}(s-a_k)}{\prod^N_{l=j}(t-a_l)} \\
    = {}& \prod^N_{k=j} \left( \frac{s - a_k}{t - a_k} \right) - \prod^N_{k=j+1} \left( \frac{s - a_k}{t - a_k} \right)
  \end{split}
\end{equation}
and the telescoping trick, we find
\begin{equation} \label{eq:telescope_identity}
  \sum^{n_2}_{j=1} \frac{\prod^N_{k = j+1} (s - a_k)}{\prod^N_{l = j} (t - a_l)} = \frac{1}{s-t} \prod^N_{k=1} \left( \frac{s - a_k}{t - a_k} \right) - \frac{1}{s-t} \prod^N_{k=n_2+1} \left( \frac{s - a_k}{t - a_k} \right).
\end{equation}
Substituting \eqref{eq:telescope_identity} into \eqref{eq:expansion_of_sum_Psi*Phi_GUE}, we obtain, letting the contour $C + i\realR \uparrow$ of $s$ be to the right of $\Gamma_a$,
\begin{equation} \label{eq:kernel_of_GUE_ext_2nd_part}
  \sum^{n_2}_{k=1}\Psi^{n_1}_{n_1-k}(x) \Phi^{n_2}_{n_2-k}(y) = \frac{e^{a_{n_1}x - a_{n_2}y}}{(2\pi i)^2} \int_{C + i\realR \uparrow} ds \oint_{\Gamma_a} dt e^{\frac{s^2}{2} - \frac{t^2}{2} -xs + yt} \frac{\prod^{n_1}_{k = 1} (s - a_k)}{\prod^{n_2}_{l = 1} (t - a_l)} \frac{1}{s-t}.
\end{equation}
Hence we prove Theorem \ref{thm:kernel}\ref{enu:thm:kernel_GUE} from \eqref{eq:evaluation_of_phi^n1n2}, \eqref{eq:kernel_of_GUE_ext_2nd_part} and \eqref{eq:general_formula_of_kernel} in Lemma \ref{lem:BFPS}, and the fact that we may conjugate any Fredholm determinant without changing its value.
  
\section{Joint distribution of eigenvalues of minors in multiple
LUE} \label{sec:multiple_LUE}

\subsection{Transition probability of the Markov chain $\lambda^{(n)}$} \label{subsec:transition_prob_LUE}

Analogous to Lemma \ref{lem:GUE_w_ext} in Section \ref{sec:GUE_w_ext}, we need the following technical lemma in this subsection. In the statement of the lemma and later in this section, we denote $l_1, l_2, \dots$ be nondecreasing integers such that $l_n \geq n$, and denote the nonnegative integers $\alpha_n = l_n - n$.

\begin{lem} \label{lem:LUE_w_ext}
  Suppose $X'$ is an $l_{n-1} \times (n-1)$ fixed rectangular matrix with distinct singular values $\mu = (\sqrt{\mu_1}, \dots, \sqrt{\mu_{n-1}})$ where $\mu = (\mu_1, \dots, \mu_{n-1})$ is assumed to be $0 < \mu_1 < \dots < \mu_{n-1}$. Let $X$ be the $l_n \times n$ rectangular random matrix defined by
  \begin{itemize}
  \item 
    The upper-left $l_{n-1} \times (n-1)$ block of $X$ is equal to $X'$.
  \item
    Denote the $l_n$ dimensional column vector $\xvec := (x_{1,n}, \dots, x_{l_n,n})^{\perp}$, the last column of $X$. Then components of $\xvec$ are independent complex random variables in standard normal distribution, \ie, $p(\Re x_{i,n}) = \normal(0, 1/2)$ and $p(\Im x_{i,n}) = \normal(0, 1/2)$.
  \item
    All entries in the lower-left $(l_n - l_{n-1}) \times (n-1)$ block of $X$ are $0$.
    \end{itemize}
  Denote the singular values of $X$ by $\sqrt{\lambda_1}, \dots, \sqrt{\lambda_n}$ where $\lambda = (\lambda_1, \dots, \lambda_n)$ is assumed to be $0 \leq \lambda_1 \leq \dots \leq \lambda_n$. Then the joint distribution of $\lambda$ is
  \begin{equation} \label{eq:prob_distr_LUE_lemma}
    p_{\mu}(\lambda) = C^{-1} \Delta_n(\lambda) \left( \prod^n_{i=1} \lambda^{\alpha_n}_i e^{-\lambda_i} \right) \id_{\mu \preceq \lambda}, \quad \textnormal{where} \quad C = \alpha_n ! \Delta_{n-1}(\mu) \prod^{n-1}_{i=1} \mu^{\alpha_i+1}_i e^{-\mu_i}.
 \end{equation}
\end{lem}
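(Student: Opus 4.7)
My plan is to mirror the Baryshnikov-style proof of Lemma~\ref{lem:GUE_w_ext}, with one genuinely new ingredient---a Gamma-distributed residual---accounting for the rectangular shape and the singular-value setup. First, I will exploit the unitary invariance of the \iid\ complex Gaussian last column of $X$. Taking the SVD $X'=P\Sigma_{0}Q^{*}$ with $P\in\U(l_{n-1})$, $Q\in\U(n-1)$ and $\Sigma_{0}$ the $l_{n-1}\times(n-1)$ rectangular diagonal matrix with entries $\sqrt{\mu_{1}},\dotsc,\sqrt{\mu_{n-1}}$, conjugation of $S=X^{*}X$ by $\diag(Q,1)$ preserves eigenvalues; simultaneously, left-multiplying $X$ by $\diag(P^{*},I_{l_{n}-l_{n-1}})$ rotates the top $l_{n-1}$ entries of $\xvec$ into another \iid\ standard complex Gaussian vector, so the distribution is preserved. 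After this reduction $S$ takes the tractable form
\[
S=\begin{pmatrix}\diag(\mu) & \tilde{\yvec}\\ \tilde{\yvec}^{*} & s_{nn}\end{pmatrix},\qquad \tilde{y}_{j}=\sqrt{\mu_{j}}\,z_{j},\qquad s_{nn}=\sum_{k=1}^{l_{n}}\lvert z_{k}\rvert^{2},
\]
with $z_{1},\dotsc,z_{l_{n}}$ \iid\ standard complex normal.

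Next, I will carry out the characteristic-polynomial analysis in exact parallel with the GUE case. Expanding $\det(zI-S)$ along the last row and column yields the same rational identity as \eqref{eq:char_poly_in_GUE_w_ext:1}:
\[
\frac{p_{S}(z)}{p_{S'}(z)}=z-s_{nn}-\sum_{j=1}^{n-1}\frac{\lvert\tilde{y}_{j}\rvert^{2}}{z-\mu_{j}}.
\]
Positivity of $\lvert\tilde{y}_{j}\rvert^{2}$ and the sign-change argument \eqref{eq:limiting_behavior_of_p_tiledeH/p_H'} give the strict interlacing $\mu\prec\lambda$ almost surely. Matching residues at $z=\mu_{j}$ and the subleading coefficient at $z=\infty$ produces the inverse map $\lvert\tilde{y}_{j}\rvert^{2}=-p_{S}(\mu_{j})/p'_{S'}(\mu_{j})$ and $s_{nn}=\sigma_{1}(\lambda)-\sigma_{1}(\mu)$, so the change of variables $(\lvert\tilde{y}_{1}\rvert^{2},\dotsc,\lvert\tilde{y}_{n-1}\rvert^{2},s_{nn})\leftrightarrow\lambda$ is a bijection on the interlacing region, and the Cauchy-determinant computation of \eqref{Jacobian_in_GUE} again yields Jacobian $\Delta_{n}(\lambda)/\Delta_{n-1}(\mu)$.

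The final and most delicate step is the density of the auxiliary variables, which is where the multiple-Laguerre problem genuinely departs from the GUE case and the parameter $\alpha_{n}$ enters. Since $\lvert z_{j}\rvert^{2}\sim\mathrm{Exp}(1)$, the variable $\lvert\tilde{y}_{j}\rvert^{2}=\mu_{j}\lvert z_{j}\rvert^{2}$ is exponential with mean $\mu_{j}$, while the residual $\hat{s}:=s_{nn}-\sum_{j=1}^{n-1}\lvert\tilde{y}_{j}\rvert^{2}/\mu_{j}=\sum_{k=n}^{l_{n}}\lvert z_{k}\rvert^{2}$ is a sum of exactly $l_{n}-n+1=\alpha_{n}+1$ independent $\mathrm{Exp}(1)$ variables, hence $\mathrm{Gamma}(\alpha_{n}+1,1)$-distributed with density $\hat{s}^{\alpha_{n}}e^{-\hat{s}}/\alpha_{n}!$. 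Evaluating the rational identity for $p_{S}/p_{S'}$ at $z=0$ gives the crucial simplification $\hat{s}=\prod_{i}\lambda_{i}/\prod_{j}\mu_{j}$; substituting this into the product density of $(\lvert\tilde{y}_{j}\rvert^{2},\hat{s})$, combining the exponentials via $\sum_{j}\lvert\tilde{y}_{j}\rvert^{2}/\mu_{j}+\hat{s}=\sigma_{1}(\lambda)-\sigma_{1}(\mu)$, and multiplying by the Jacobian, everything telescopes into the claimed formula \eqref{eq:prob_distr_LUE_lemma}. The one genuine obstacle is correctly tracking which of the $l_{n}$ independent exponential summands in $s_{nn}$ are absorbed by the $\lvert\tilde{y}_{j}\rvert^{2}$'s and which constitute the residual $\hat{s}$: the resulting shape parameter $\alpha_{n}+1$ is exactly what produces the $\prod_{i}\lambda_{i}^{\alpha_{n}}$ factor in the answer.
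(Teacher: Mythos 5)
Your proof is correct and follows essentially the same route as the paper's: SVD reduction of $X'$ to put $S=X^*X$ in arrowhead form with an \iid\ Gaussian last column, inversion of the map via residues of $p_S/p_{S'}$ at the $\mu_j$ and at $\infty$, the Cauchy-determinant Jacobian, and the key new ingredient of the $\mathrm{Gamma}(\alpha_n+1,1)$ residual identified with $\prod_i\lambda_i/\prod_j\mu_j$ by evaluating the rational identity at $z=0$. The only cosmetic difference is your choice of coordinates $(\lvert\tilde y_j\rvert^2,s_{nn})$, which keeps the identity in the literal GUE form, whereas the paper rescales to $\xi_j=\lvert\tilde y_j\rvert^2/\mu_j$ and $\xi_n=\hat s$ (whence the extra $\prod_j\mu_j$ in its Jacobian \eqref{Jacobian_in_LUE}); this unimodular change of variables leads to the same final density \eqref{eq:prob_distr_LUE_lemma}.
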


\begin{proof}
  For the convenience of the proof, we denote $S = X^*X$ and $S' = (X')^*X'$, and note that the eigenvalues of $S$ and $S'$ are $\lambda_1, \dots, \lambda_n$ and $\mu_1, \dots, \mu_{n-1}$ respectively.
  
  Performing the singular value decomposition, we find the unitary matrices $U \in \U(l_{n-1})$ and $V \in \U(n-1)$ such that
  \begin{equation} \label{eq:singular_value_decomposition_of_X_m-1}
    X' = U
    \begin{pmatrix}
      \sqrt{\mu_1} & 0 & \cdots & 0 \\
      0 & \sqrt{\mu_2} & & 0 \\
      \vdots & & \ddots & \vdots \\
      0 & \cdots & \cdots & \sqrt{\mu_{n-1}} \\
      0 & \cdots & \cdots & 0 \\
      \vdots & & & \vdots \\
      0 & \cdots & \cdots & 0
    \end{pmatrix}
    V^*.
  \end{equation}
  Let $\tilde{U} = U \oplus I_{(l_n - l_{n-1}) \times (l_n - l_{n-1})} \in \U(l_n)$ and $\tilde{V} = V \oplus I_{1 \times 1} \in \U(n)$, we then have
  \begin{equation}
    X = \tilde{U} \tilde{X} \tilde{V}^*,
  \end{equation}
  where
  \begin{equation} \label{eq:partially_diagonalized_tilde_X}
    \tilde{X} =
    \begin{pmatrix}
      \sqrt{\mu_1} & 0 & \cdots & 0 & \tilde{x}_{1,n} \\
      0 & \sqrt{\mu_2} & & 0 & \vdots \\
      \vdots & & \ddots & \vdots & \vdots \\
      0 & \cdots & \cdots & \sqrt{\mu_{n-1}} & \tilde{x}_{n-1,n} \\
      0 & \cdots & \cdots & 0 & \tilde{x}_{n,n} \\
      \vdots & & & \vdots & \vdots \\
      0 & \cdots & \cdots & 0 & \tilde{x}_{l_n, n}
    \end{pmatrix},
  \end{equation}
  and in the last column of $\tilde{X}$ all components are in \iid\ standard complex normal distribution. Below we use the fact that the eigenvalue distribution of $S$ is the same of that of
  \begin{equation}
    \tilde{S} := \tilde{X}^* \tilde{X} =
    \begin{pmatrix}
      \mu_1 & \cdots & 0 & \sqrt{\mu_1}\tilde{x}_{1,n} \\
      \vdots & \ddots & \vdots & \vdots \\
      0 & \cdots & \mu_{n-1} & \sqrt{\mu_{n-1}}\tilde{x}_{n-1,n} \\
      \sqrt{\mu_1}\overline{\tilde{x}_{1,n}} & \cdots & \sqrt{\mu_{n-1}}\overline{\tilde{x}_{n-1,n}} & \sum^{l_n}_{k=n} \lvert \tilde{x}_{k,n} \rvert^2
    \end{pmatrix}.
  \end{equation}
  Now we consider the relation between the characteristic polynomials $p_S(z)$ of $S$ (that is the same as $p_{\tilde{S}}(z)$ of $\tilde{S}$) and $p_{S'}(z)$ of $S'$, analogous to \eqref{eq:char_poly_in_GUE_w_ext:1} and \eqref{eq:char_poly_in_GUE_w_ext:2}. By direct calculation, we find that 
  \begin{align} 
    \frac{p_S(z)}{p_{S'}(z)} = {}& z - \xi_n - \sum^{n-1}_{i=1} \frac{z}{z  - \mu_i}\xi_i, \label{eq:char_poly_in_LUE_w_ext:1} \\
    \frac{p_S(z)}{p_{S'}(z)} = {}& z - (\sigma_1(\lambda) - \sigma_1(\mu)) - \frac{1}{z} (\sigma_1(\lambda) \sigma_1(\mu) + \sigma_2(\mu) -\sigma_2(\lambda) - \sigma^2_1(\mu)) + \bigO(\frac{1}{z^2}), \label{eq:char_poly_in_LUE_w_ext:2}
  \end{align}
  where $\sigma_1, \sigma_2$ are defined the same as in \eqref{eq:char_poly_in_GUE_w_ext:2}, and in \eqref{eq:char_poly_in_LUE_w_ext:1}
  \begin{equation} \label{eq:defn_of_xi_j_LUE}
    \xi_i =
    \begin{cases}
      \lvert \tilde{x}_{i,n} \rvert^2 & i = 1, \dots, n-1, \\
      \sum^{l_n}_{l=n} \lvert \tilde{x}_{l,n} \rvert^2 & i=n.
    \end{cases}
  \end{equation}
  
  Noticing the limiting behavior of $p_S(z) / p_{S'}(z)$ analogous to \eqref{eq:limiting_behavior_of_p_tiledeH/p_H'} yields the strict interlacing condition $\mu \prec \lambda$ when $\xi_1, \dots, \xi_{n-1}$ are all positive, that is,
  \begin{equation} \label{eq:interlacing_of_mu_and_lambda_LUE}
    0 < \lambda_1 < \mu_1 < \lambda_2 < \dots < \mu_{n-1} < \lambda_n.
  \end{equation}
  On the other hand, given $\lambda$ satisfying \eqref{eq:interlacing_of_mu_and_lambda_LUE}, there is a unique array of $\xi_1, \dots, \xi_{n-1} \in \realR_+, \xi_n \in \realR$. To see that, by the calculations of residues like in \eqref{eq:expression_of_|h|^2} and \eqref{eq:expression_of_h_nn}, we have 
  \begin{align}
    \xi_i = {}& -\frac{p_S(\mu_i)}{\mu_i p'_{S'}(\mu_i)} = - \res_{z = \mu_i} \frac{\prod^{n}_{k=1} (z-\lambda_k)}{z \prod^{n-1}_{k=1} (z-\mu_k)}, \quad i = 1, \dots, n-1, \label{eq:expression_of_xi_j} \\
    \xi_n = {}& \sigma_1(\lambda) - \sigma_1(\mu) - \sum^{n-1}_{i=1} \xi_i 
    \begin{aligned}[t]
      {}= {}& \res_{z = \infty} \frac{\prod^{n}_{k=1} (z-\lambda_k)}{z \prod^{n-1}_{k=1} (z-\mu_k)} + \sum^{n-1}_{i=1} \res_{z = \mu_i} \frac{\prod^{n}_{k=1} (z-\lambda_k)}{z \prod^{n-1}_{k=1} (z-\mu_k)} \\
      = {}&  -\res_{z = 0} \frac{\prod^{n}_{k=1} (z-\lambda_k)}{z \prod^{n-1}_{k=1} (z-\mu_k)} = \frac{\prod^n_{i=1} \lambda_i}{\prod^{n-1}_{i=1} \mu_i}.
    \end{aligned} \label{eq:expression_of_xi_n}
 \end{align}
  Then we can express $p(\lambda)$ analogous to \eqref{eq:relations_between_densities}
  \begin{equation} \label{eq:relations_between_densities_LUE}
    p(\lambda) = p(\xi_1, \dots, \xi_n) \left\lvert \frac{\partial(\xi_1, \dots, \xi_n)}{\partial(\lambda)} \right\rvert.
  \end{equation}
  Like \eqref{eq:derivatives_of_h_lambda_GUE},
  \begin{equation}
    \frac{\partial \xi_j}{\partial \lambda_i} = \frac{\xi_j}{\lambda_i - \mu_j}, \quad \text{for $j = 1, \dots, n-1$, and} \quad \frac{\partial \xi_n}{\partial \lambda_i} = 1 - \sum^{n-1}_{j=1} \frac{\partial \xi_j}{\partial \lambda_i},
  \end{equation}
  and like \eqref{Jacobian_in_GUE},
  \begin{equation} \label{Jacobian_in_LUE}
    \frac{\partial (\xi_1, \dots, \xi_n)}{\partial (\lambda_1, \dots, \lambda_n)} = \prod^{n-1}_{j=1} \xi_j \det(C) = (-1)^{n-1} \frac{\Delta_n(\lambda)}{\Delta_{n-1}(\mu) \prod^{n-1}_{j=1} \mu_j},
  \end{equation}
  where the $n \times n$ matrix $C$ is defined in \eqref{eq:defn_of_the_matrix_C}, and in the last identity of \eqref{Jacobian_in_LUE} we use the formula \eqref{eq:expression_of_xi_j}. 

  On the other hand, by the definition \eqref{eq:defn_of_xi_j_LUE} of $\xi_1, \dots, \xi_n$, they are independent, $2\xi_n$ is $\chi^2_{2(\alpha_n+1)}$ in distribution and $2\xi_1, \dots, 2\xi_{n-1}$ are all $\chi^2_2$ in distribution, and so by expressions \eqref{eq:expression_of_xi_j} and \eqref{eq:expression_of_xi_n}, (note $\alpha_n = l_n - n$)
  \begin{equation} \label{eq:joint_pdf_of_xi_LUE}
    p(\xi_1, \dots, \xi_n) = e^{-\sum^{n-1}_{i=1} \xi_i} \frac{\xi^{\alpha_n}_n}{\alpha_n!}e^{-\xi_n} = \frac{1}{\alpha_n!} \left( \frac{\prod^n_{j=1} \lambda_j}{\prod^{n-1}_{k=1} \mu_k} \right)^{\alpha_n} e^{ \left( \sum^{n-1}_{k=1} \mu_k - \sum^n_{j=1} \lambda_j \right)}. 
  \end{equation}
  Substituting \eqref{Jacobian_in_LUE} and \eqref{eq:joint_pdf_of_xi_LUE} in to \eqref{eq:relations_between_densities_LUE}, we prove Lemma \ref{lem:LUE_w_ext}.
\end{proof}

\begin{proof}[Proof of Theorem \ref{thm:Markov} (multiple Laguerre case)]
  Recall in the proof in the GUE with external source case, the Markov property of the process relies on the fact that the distribution of the eigenvalues of $H$ in Lemma \ref{lem:GUE_w_ext} depends on only the eigenvalues of $H'$, but not its eigenvectors. In Lemma \ref{lem:LUE_w_ext}, we also have that the distribution of the eigenvalue of $S = X^* X$ depends only on the eigenvalues of $S' = (X')^* X'$, but not its eigenvectors. 

  Like in the proof in the GUE with external source case, suppose for all $j$, $\mu^{(j)}_1, \dotsc, \mu^{(j)}_j$ are distinct real numbers in increasing order, and define $I^j_{\epsilon}(\mu^{(j)})$ as in \eqref{eq:defn_of_box}. Then analogous to \eqref{eq:new_integral_Markov_GUE},
  \begin{equation}
    P(\lambda^{(n)} \mid \lambda^{(j)} \in I^j_{\epsilon}(\mu^{(j)}), \ j = 1, \dotsc, n-1) = \int P(\lambda^{(n)} \mid S_{n-1} = H_{n-1}) f_{\epsilon}(H_{n-1}) dH_{n-1},
  \end{equation}
  where the distribution function $f_{\epsilon}(H_{n-1})$ is defined in the same way as that in the proof in the GUE with external source case. Then after the same argument, we derive the limiting identity analogous to \eqref{eq:limiting_identity_for_Markov}
  \begin{equation} \label{eq:multi_conditional_LUE}
    P(\lambda^{(n)} \mid \lambda^{(j)} = \mu^{(j)}, \ j = 1, \dotsc, n-1) = P_{\mu^{(n-1)}}(\lambda^{(n)}),
  \end{equation}
  and we prove Theorem \ref{thm:Markov} in the multiple Laguerre case.
\end{proof}

\subsection{Correlation kernel of the Markov chain $\lambda^{(n)}$} \label{subsec:reproducing_kernel_LUE}

We give the proof of Theorem \ref{thm:kernel}\ref{enu:thm:kernel_Laguerre} when $\alpha_1, \dotsc, \alpha_N$ are distinct. Note that the kernel depends on $\alpha_i$ in an analytical way, when some $\alpha_i$ are identical, we obtain the kernel by analytic continuation.

Analogous to \eqref{eq:alternative_jpdf_mGUE}, we write the joint probability density formula \eqref{eq:eqivalent_jpdf} of the positive random variables $\lambda^{(n)}_i$ into
\begin{equation} \label{eq:alternative_jpdf_LUE}
  P(\lambda^{(1)}, \dots, \lambda^{(N)}) = \frac{1}{\prod^N_{n=1} \alpha_n!} \left( \prod^{N-1}_{n=1} \det(\phi_n(\lambda^{(n)}_i, \lambda^{(n+1)}_j))^{n+1}_{i,j=1} \right) \det(\Psi^N_{N-i}(\lambda^{(N)}_j))^N_{i,j=1},
\end{equation}
where the function $\phi_n(\lambda^{(n)}_i, \lambda^{(n + 1)}_j)$ ($i \neq n + 1$) has explicit formula
  \begin{equation} \label{phi_n:LUE}
    \phi_n(x, y) = x^{\alpha_n - \alpha_{n + 1} - 1} \id_{x < y},
  \end{equation}
  and
\begin{equation} \label{eq:expr_of_P_N_LUE}
  \Psi^N_j(x) = x^{\alpha_N}e^{-x} P_{(\alpha_N, \alpha_{N-1}, \dots, \alpha_{N-j+1})}(x) = \frac{x^{\alpha_N}}{2\pi i} \oint_{\Sigma} \frac{\Gamma(z+1) \prod^j_{k=1} (z-\alpha_{N-k+1})}{x^{z+1}} dz.
\end{equation}
Here $P_{(\alpha_N, \alpha_{N-1}, \dots, \alpha_{N-j+1})}(x)$ is the $j$-th degree (monic) multiple Laguerre polynomial of the first kind, type II (see \eqref{eq:contour_integral_of_P_Laguerre} in Appendix \ref{sec:appendix_MOPS}). Note that in this section, the functions $\phi_n$, $\Psi^n_j$ and later $\Phi^n_j$ are all defined on positive variables.

We apply Lemma \ref{lem:BFPS} in Section \ref{subsec:reproducing_kernel_mGUE} with $I = [0, \infty)$. Analogous to \eqref{eq:evaluation_of_phi^n1n2}, we have by a similar argument that if $n_1 < n_2$ 
\begin{equation} \label{eq:evaluation_of_phi^n1n2_LUE}
  \phi^{(n_1, n_2)}(x,y) := \phi_{n_1}*(\phi_{n_1+1}*(\dotsb *\phi_{n_2-1}))(x,y) = \frac{x^{-1}}{2\pi i} \oint_{\Gamma_{\alpha}} \frac{x^{\alpha_{n_1}-z} y^{z-\alpha_{n_2}}}{\prod^{n_2}_{k=n_1+1} (z-\alpha_k)} dz \id_{x<y} \id_{n_1 < n_2},
\end{equation}
where $\Gamma_{\alpha}$ is a large enough contour enclosing all poles $\alpha_{n_1+1}, \dots, \alpha_{n_2}$, and similar to \eqref{eq:evaluation_of_Psi^n_j}
\begin{equation} \label{eq:evaluation_of_Psi^n_j_LUE}
  \Psi^n_j(x) =
  \begin{cases}
    \begin{gathered}[b]
      \frac{x^{\alpha_n}}{2\pi i} \oint_{\Sigma} \frac{\Gamma(z+1) \prod^j_{k=1} (z - \alpha_{n-k+1})}{x^{z+1}} dz \\
      = x^{\alpha_n}e^{-x} P_{(\alpha_n, \dots, \alpha_{n-j+1})}(x),
    \end{gathered} & j = 0, 1, \dots, n-1, \\
    {\displaystyle \frac{x^{\alpha_n}}{2\pi i} \oint_{\Sigma} \frac{\Gamma(z+1)}{x^{z+1} \prod^{-j}_{k=1} (z - \alpha_{n+k})} dz,} & j = -1, -2, \dots, n-N,
  \end{cases}
\end{equation}
where the contour $\Sigma$ is the deformed Hankel contour from $-\infty$ to $-\infty$ that encloses all possible poles of the integrand, $z = -1, -2, \dots$ and $z = \alpha_{n+1}, \dots, \alpha_N$ counterclockwise. The proof of \eqref{eq:evaluation_of_Psi^n_j_LUE} will be given in the end of this subsection.

Also similar to \eqref{eq:evaluation_of_phi*phi}, we have
\begin{equation} \label{eq:func_forming_basis_V_n_LUE}
  \phi_j*\phi^{(j+1,n)}(\lambda^{(j)}_{j+1}, x) = \left( \prod^n_{k=j+2} \frac{1}{\alpha_{j+1} - \alpha_k} \right) x^{\alpha_{j+1}-\alpha_n}, \quad j = 0, 1, \dots, n-2.
\end{equation}
Note that the vector space $V_n$ is spanned by $x^{\alpha_1 - \alpha_n}, x^{\alpha_2 - \alpha_n}, \dots, x^{\alpha_n - \alpha_n}$. Hence by the orthogonality \eqref{eq:orthonormality_of_Psi_and_Phi}, we have similar to \eqref{eq:expression_of_Phi^n_j} (using \eqref{eq:orthogonality_of_Laguerre_kind1_type1}, \eqref{eq:orthogonality_of_Laguerre_kind1_type1:2} and \eqref{eq:contour_integral_of_Q_Laguerre})
\begin{equation} \label{eq:expression_of_Phi^n_j_LUE}
  \Phi^n_j(x) = x^{-\alpha_n} e^x Q_{(\alpha_n, \alpha_{n-1}, \dots, \alpha_{n-j})}(x) = \frac{x^{-\alpha_n}}{2\pi i} \oint_{\Gamma_{\alpha}} \frac{x^z}{\Gamma(z+1) \prod^j_{k=0} (z - \alpha_{n-k})} dz,
\end{equation}
where $Q_{(\alpha_n, \alpha_{n-1}, \dots, \alpha_{n-j})}(x)$ is the multiple Laguerre polynomial of the first kind, type I (see Appendix \ref{sec:appendix_MOPS}), and the contour $\Gamma_{\alpha}$ encloses the poles $\alpha_n, \dotsc, \alpha_{n-j}$.

It is clear again that the Assumption A is satisfied, and the correlation kernel is given by \eqref{eq:general_formula_of_kernel} with $\phi^{(n_1, n_2)}(x,y)$ expressed in \eqref{eq:evaluation_of_phi^n1n2_LUE}, $\Psi^{n_1}_{n_1-k}(x)$ in \eqref{eq:evaluation_of_Psi^n_j_LUE} and $\Phi^{n_2}_{n_2-k}(y)$ in \eqref{eq:expression_of_Phi^n_j_LUE}.

Next we express the kernel in the double contour integral form. From the formulas \eqref{eq:evaluation_of_Psi^n_j_LUE}, \eqref{eq:expression_of_Phi^n_j_LUE},  we compute, analogous to  \eqref{eq:expansion_of_sum_Psi*Phi_GUE}
\begin{multline} \label{eq:long_sum_form_of_the_kernel_LUE}
  \sum^{n_2}_{k=1} \Psi^{n_1}_{n_1-k}(x) \Phi^{n_2}_{n_2-k}(y) = \\
  \frac{x^{\alpha_{n_1}} y^{-\alpha_{n_2}}}{(2\pi i)^2} \oint_{\Sigma}dz \oint_{\Gamma_{\alpha}}dw \frac{y^w \Gamma(z+1)}{x^{z+1} \Gamma(w+1)} \frac{\prod^N_{l=n_2+1} (w-\alpha_l)}{\prod^N_{j=n_1+1} (z-\alpha_j)} \sum^{n_2}_{k=1} \frac{\prod^N_{j=k+1} (z-\alpha_j)}{\prod^N_{l=k} (w-\alpha_l)},
\end{multline}
where $\Gamma_{\alpha}$ encloses the poles $\alpha_1, \dotsc, \alpha_{n_2}$, and $\Sigma$, the deformed Hankel contour, encloses all the poles $-1, -2, \dotsc$ and $\Gamma_{\alpha}$. Using the telescope trick \eqref{eq:telescope_1} and \eqref{eq:telescope_identity}, we have
\begin{multline} \label{eq:sum_Psi_Phi_Laguerre}
  \sum^{n_2}_{k=1} \Psi^{n_1}_{n_1-k}(x) \Phi^{n_2}_{n_2-k}(y) = \\
  \frac{x^{\alpha_{n_1}} y^{-\alpha_{n_2}}}{(2\pi i)^2} \oint_{\Sigma}dz \oint_{\Gamma_{\alpha}}dw \frac{x^{-z-1}y^w \Gamma(z+1)}{(z-w) \Gamma(w+1)} \left( \frac{\prod^{n_1}_{j=1} (z-\alpha_j)}{\prod^{n_2}_{l=1} (w-\alpha_l)} - \frac{\prod^N_{j=n_2+1} (z-\alpha_j)}{\prod^N_{l=n_1+1} (z-\alpha_l)} \right).
\end{multline}
Note that
\begin{equation} \label{eq:the_2nd_part_in_telescope_vanishes_Laguerre}
  \begin{split}
    & \oint_{\Gamma_{\alpha}}dw \frac{x^{-z-1}y^w \Gamma(z+1)}{(z-w) \Gamma(w+1)} \frac{\prod^N_{j=n_2+1} (z-\alpha_j)}{\prod^N_{l=n_1+1} (z-\alpha_l)} \\
    ={}& \frac{\prod^N_{j=n_2+1} (z-\alpha_j)}{\prod^N_{l=n_1+1} (z-\alpha_l)} x^{-z-1} \Gamma(z+1) \oint_{\Gamma_{\alpha}} \frac{y^w}{\Gamma(w+1)} \frac{dw}{(z-w)} \\
    ={}& \frac{\prod^N_{j=n_2+1} (z-\alpha_j)}{\prod^N_{l=n_1+1} (z-\alpha_l)} x^{-z-1} \Gamma(z+1) \cdot 0 = 0.
  \end{split}
\end{equation}
Thus \eqref{eq:sum_Psi_Phi_Laguerre} can be simplified as
\begin{equation} \label{eq:simplified_formula_of_sum_Psi_Phi_Laguerre}
  \sum^{n_2}_{k=1} \Psi^{n_1}_{n_1-k}(x) \Phi^{n_2}_{n_2-k}(y) = \frac{x^{\alpha_{n_1}} y^{-\alpha_{n_2}}}{(2\pi i)^2} \oint_{\Sigma}dz \oint_{\Gamma_{\alpha}}dw \frac{x^{-z-1}y^w \Gamma(z+1)}{(z-w) \Gamma(w+1)} \frac{\prod^{n_1}_{j=1} (z-\alpha_j)}{\prod^{n_2}_{l=1} (w-\alpha_l)}.
\end{equation}
Hence the formulas \eqref{eq:simplified_formula_of_sum_Psi_Phi_Laguerre} for $\sum^{n_2}_{k=1} \Psi^{n_1}_{n_1-k}(x) \Phi^{n_2}_{n_2-k}(y)$ and \eqref{eq:evaluation_of_phi^n1n2_LUE} for $\phi^{(n_1, n_2)}(x,y)$ and \eqref{eq:general_formula_of_kernel} yield the double contour representation of the correlation kernel
\begin{multline}
  K(n_1,x;n_2,y) = \frac{x^{\alpha_{n_1}}}{y^{\alpha_{n_2}}} \left[ \frac{-1}{2\pi i} \oint_{\Gamma_{\alpha}} \frac{x^{-w-1}y^w}{\prod^{n_2}_{l=n_1+1} (w-\alpha_l)} dw \id_{x > y} \id_{n_2 > n_1} \right. \\
  + \left. \frac{1}{(2\pi i)^2} \oint_{\Sigma}dz \oint_{\Gamma_{\alpha}}dw \frac{x^{-z-1}y^w \Gamma(z+1)}{(z-w) \Gamma(w+1)} \frac{\prod^{n_1}_{j=1} (z-\alpha_j)}{\prod^{n_2}_{l=1} (w-\alpha_l)} \right],
\end{multline}
and yield Theorem \ref{thm:kernel}\ref{enu:thm:kernel_Laguerre} after conjugating out $x^{\alpha_{n_1}}/y^{\alpha_{n_2}}$, upon proving \eqref{eq:evaluation_of_Psi^n_j_LUE}.

\begin{proof}[Proof of \eqref{eq:evaluation_of_Psi^n_j_LUE}]
  From \eqref{eq:expr_of_P_N_LUE}, \eqref{eq:evaluation_of_phi^n1n2_LUE} and \eqref{eq:defn_of_Psi^n_j}, we have
\begin{equation}
  \begin{split}
    \Psi^n_j(x) = {}& \int^{\infty}_0 \phi^{(n, N)}(x,y) \Psi^N_{N-n+j}(y) dy \\
    = {}& \frac{1}{(2\pi i)^2} \int^{\infty}_x \oint_{\Gamma_{\alpha}} \frac{x^{\alpha_n - z - 1}y^z}{\prod^N_{l=n+1} (z - \alpha_l)} dz \oint_{\Sigma'} \frac{\Gamma(w+1) \prod^N_{k=n-j+1} (w - \alpha_k)}{y^{w+1}} dw dy.
    %= {}& x^{\alpha_n} (\PartI - \PartII),
  \end{split}
\end{equation}

Here the contour $\Gamma_{\alpha}$ is defined as in \eqref{eq:evaluation_of_phi^n1n2_LUE}, and the contour $\Sigma'$ is a deformed Hankel contour, such that they satisfy that for all $z \in \Gamma_{\alpha}$ and $w \in \Sigma'$, $\Re w < \Re z$. This property will be used in \eqref{eq:defn_of_partII_Laguerre} to make $\int^x_0 y^{z-w-1} dy$ well defined. Then using the decomposition $\int^{\infty}_x = \int^{\infty}_0 - \int^x_0$, we express $\Psi^n_j(x) = x^{\alpha_n} (\PartI - \PartII)$, such that 
\begin{equation} \label{eq:partI_in_Psi_LUE}
  \begin{split}
    \PartI = {}& \int^{\infty}_0 \frac{1}{(2\pi i)^2} \oint_{\Gamma_{\alpha}} \frac{x^{-z-1} y^z}{\prod^N_{l=n+1} (z - \alpha_l)} dz \oint_{\Sigma'} \frac{\Gamma(w+1) \prod^N_{k=n-j+1} (w - \alpha_k)}{y^{w+1}} dw dy \\
    = {}& \frac{1}{2\pi i} \oint_{\Gamma_{\alpha}} \frac{x^{-z-1}}{\prod^N_{l=n+1} (z - \alpha_l)} \int^{\infty}_0 y^z e^{-y} P_{(\alpha_N, \alpha_{N-1}, \dots, \alpha_{n-j+1})}(y) dy dz,
  \end{split}
\end{equation}
where the second identity is the consequence of \eqref{eq:expr_of_P_N_LUE}, and
\begin{equation} \label{eq:defn_of_partII_Laguerre}
  \begin{split}
    \PartII = {}& \int^x_0 \frac{1}{(2\pi i)^2} \oint_{\Gamma_{\alpha}} \frac{x^{-z-1} y^z}{\prod^N_{l=n+1} (z - \alpha_l)} dz \oint_{\Sigma'} \frac{\Gamma(w+1) \prod^N_{k=n-j+1} (w - \alpha_k)}{y^{w+1}} dw dy \\
    = {}& \frac{1}{(2\pi i)^2} \oint_{\Gamma_{\alpha}} dz \oint_{\Sigma'} dw x^{-z-1}\Gamma(w+1) \frac{\prod^N_{k=n-j+1} (w - \alpha_k)}{\prod^N_{l=n+1} (z - \alpha_l)} \int^x_0 y^{z-w-1} dy \\
  = {}& \frac{1}{(2\pi i)^2} \oint_{\Gamma_{\alpha}} dz \oint_{\Sigma'} dw \frac{x^{-w-1}\Gamma(w+1)}{z-w} \frac{\prod^N_{k=n-j+1} (w - \alpha_k)}{\prod^N_{l=n+1} (z - \alpha_l)}.
  \end{split}
\end{equation}
Note that because $P_{(\alpha_N, \alpha_{N-1}, \dots, \alpha_{n-j+1})}(y)$ is a monic polynomial of degree $N - n + j$, the integral with respect to $y$ in \eqref{eq:partI_in_Psi_LUE} is $\Gamma(z+1)$ times a monic polynomial in $z$ of degree $N - n + j$; because of the orthogonal property of $P_{(\alpha_N, \alpha_{N-1}, \dots, \alpha_{n-j+1})}(y)$, the integral vanishes as $z = \alpha_N, \dotsc, \alpha_{n-j+1}$. Hence we find that 
\begin{equation} \label{eq:simplified_Part_I_LUE}
  \PartI = \frac{1}{2\pi i} \oint_{\Gamma_{\alpha}} \frac{\Gamma(z+1)}{x^{z+1}} \frac{\prod^N_{k=n-j+1} (z-\alpha_k)}{\prod^N_{l=n+1} (z - \alpha_l)} dz.
\end{equation}
In $\PartII$, if we integrate $z$ first, by calculation of residues we have 
\begin{equation} \label{eq:simplified_part_II_LUE}
  \begin{split}
    \PartII ={}& \frac{1}{2\pi i} \oint_{\Sigma'} x^{-w-1} \Gamma(w+1) \prod^N_{k=n-j+1} (w - \alpha_k) \left( \sum^N_{l = n+1} \res_{z = \alpha_l} \frac{1}{(z - w) \prod^N_{l=n+1} (z - \alpha_l)} \right) \\
    ={}& \frac{1}{2\pi i} \oint_{\Sigma'} x^{-w-1} \Gamma(w+1) \prod^N_{k=n-j+1} (w - \alpha_k) \left(-\res_{z = w} \frac{1}{(z - w) \prod^N_{l=n+1} (z - \alpha_l)} \right) \\
    ={}& \frac{-1}{2\pi i} \oint_{\Sigma'} \frac{\Gamma(w+1)}{x^{w+1}} \frac{\prod^N_{k=n-j+1} (w - \alpha_k)}{\prod^N_{l=n+1} (w - \alpha_l)} dw,
  \end{split}
\end{equation}
where we use the identity $(\res_{z=w} + \sum^N_{l = n+1} \res_{z = \alpha_l}) (z - w)^{-1} \prod^N_{l=n+1} (z - \alpha_l)^{-1} = 0$. Since as integration contours, $\Gamma_{\alpha} + \Sigma' = \Sigma$ where $\Sigma$ is the contour in \eqref{eq:evaluation_of_Psi^n_j_LUE}, we have
\begin{equation}
  \PartI - \PartII = \frac{1}{2\pi i} \oint_{\Sigma} \frac{\Gamma(w+1)}{x^{w+1}} \frac{\prod^N_{k=n-j+1} (w - \alpha_k)}{\prod^N_{l=n+1} (w - \alpha_l)} dw,
\end{equation}
and we prove \eqref{eq:evaluation_of_Psi^n_j_LUE}.
\end{proof}

\section{Joint distribution of eigenvalues of minor quotients in
multiple JUE}

\subsection{Transition probability of the Markov chain $\lambda^{(n)}$} \label{subsec:transition_prob_JUE}

Analogous to Lemma \ref{lem:GUE_w_ext} in Section \ref{sec:GUE_w_ext} and Lemma \ref{lem:LUE_w_ext} in Section \ref{sec:multiple_LUE}, we need the following technical lemma. Like in Lemma \ref{lem:LUE_w_ext}, $l_1, l_2, \dots$ are nondecreasing integers with $l_n \geq n$, and $\alpha_n = l_n - n$.
\begin{lem} \label{lem:JUE_w_ext}
  Let $X'$ be an $l_{n-1} \times (n-1)$ fixed rectangular matrix and $Y'$ be an $M' \times (n-1)$ fixed rectangular matrix with $M' \geq n$ such that $R' = ((X')^* X' + (Y')^* Y')^{-1} (X')^* X'$ has eigenvalues $\tilde{\mu} = (\tilde{\mu}_1, \dots, \tilde{\mu}_{n-1})$ in increasing order. Let $X$ and $Y$ be $l_n \times n$ and $M' \times n$ rectangular random matrices such that
  \begin{itemize}
  \item
    The upper-left $l_{n-1} \times (n-1)$ block of $X$ and the left $M' \times (n-1)$ block of $Y$ are fixed and are equal to $X'$ and $Y'$.
  \item
    Denote the $l_n$ dimensional column vector $\xvec := (x_{1,n}, \dots, x_{l_n,n})^{\perp}$, the last column of $X$, and the $M'$ dimensional column vector $\yvec := (y_{1,n}, \dots, y_{M',n})^{\perp}$, the last column of $Y$. Then components of $\xvec$ and $\yvec$ are independent complex random variables in standard normal distribution, \ie, $p(\Re x_{i,n}) = p(\Re y_{i,n}) = \normal(0, 1/2)$ and $p(\Im x_{i,n}) = p(\Im y_{i,n}) = \normal(0, 1/2)$.
  \item
    All entries in the lower-left $(l_n - l_{n-1}) \times (n-1)$ block of $X$ are $0$.
  \end{itemize}
  Denote the eigenvalues of $R = (X^* X + Y^* Y)^{-1} X^* X$ by $\tilde{\lambda} = (\tilde{\lambda}_1, \dots, \tilde{\lambda}_n)$ where $0 \leq \tilde{\lambda}_1 \leq \dots \leq \tilde{\lambda}_n \leq 1$. Then the joint distribution of $\tilde{\lambda}$ is
  \begin{equation} \label{eq:trans_prob_JUE}
    p_{\tilde{\mu}}(\tilde{\lambda}) = \frac{(M' + \alpha_n)!}{\alpha_n! (M'-n)!} \frac{\Delta_n(\tilde{\lambda}) \prod^n_{i=1} \tilde{\lambda}^{\alpha_n}_i (1-\tilde{\lambda}_i)^{M'-n}}{\Delta_{n-1}(\tilde{\mu}) \prod^{n-1}_{i=1} \tilde{\mu}^{\alpha_n+1}_i (1-\tilde{\mu}_i)^{M'-n+1}} \id_{\tilde{\mu} \preceq \tilde{\lambda}}.
  \end{equation}
\end{lem}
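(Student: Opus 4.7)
The strategy mirrors Lemmas~\ref{lem:GUE_w_ext} and~\ref{lem:LUE_w_ext}, the new feature being that two random matrices $X$ and $Y$ must be handled simultaneously. Rather than attempting a common unitary diagonalization of $(X')^*X'$ and $(Y')^*Y'$ (generally impossible), I would work in the (non-unitary) generalized eigenvector basis $\{u_i\}_{i=1}^{n-1}$ of the pencil $((X')^*X', (Y')^*Y')$, normalized so that $u_i^*(Y')^*Y' u_j = \delta_{ij}$ and $u_i^*(X')^*X' u_j = \eta_i \delta_{ij}$ with $\eta_i := \tilde\mu_i/(1-\tilde\mu_i)$. Define the scalars $a_i := \xvec_{\mathrm{top}}^* X' u_i$ and $b_i := \yvec^* Y' u_i$, where $\xvec_{\mathrm{top}}$ is the first $l_{n-1}$ components of $\xvec$. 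A direct covariance computation using $E[\xvec \xvec^*] = I$, $E[\yvec \yvec^*] = I$, and the normalization of the $u_i$ shows that $(a_i, b_j)_{i,j}$ are jointly independent complex Gaussians with $E|a_i|^2 = \eta_i$ and $E|b_i|^2 = 1$.

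Next, a Schur-complement calculation with respect to the upper-left $(n-1) \times (n-1)$ block of $S - \zeta T$, combined with the resolvent expansion $((X')^*X' - \zeta (Y')^*Y')^{-1} = \sum_i u_i u_i^*/(\eta_i - \zeta)$, yields
\begin{equation*}
  \frac{\det(S - \zeta T)}{\det((X')^*X' - \zeta(Y')^*Y')} = \bigl(|\xvec|^2 - \zeta|\yvec|^2\bigr) - \sum_{i=1}^{n-1} \frac{|a_i - \zeta b_i|^2}{\eta_i - \zeta},
\end{equation*}
where $S = X^*X$, $T = Y^*Y$. This rational function has simple zeros at the generalized eigenvalues $\zeta_k = \tilde\lambda_k/(1 - \tilde\lambda_k)$ and simple poles at $\eta_i$; sign tracking at the poles and at $\zeta = 0, \infty$ gives, almost surely, the strict interlacing $\tilde\mu \prec \tilde\lambda$ in $(0,1)$. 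Analogously to~\eqref{eq:expression_of_xi_j}--\eqref{eq:expression_of_xi_n}, I would introduce the residue variables $\xi_i := \res_{\zeta = \eta_i} = |a_i - \eta_i b_i|^2$ together with the two leading coefficients $\xi_x := \det(S)/\det((X')^*X') = |\xvec|^2 - \sum_i |a_i|^2/\eta_i$ and $\xi_y := \det(T)/\det((Y')^*Y') = |\yvec|^2 - \sum_i |b_i|^2$. Once $\xi_y$ is fixed, the entire rational function is determined by $(\tilde\lambda_1, \dots, \tilde\lambda_n)$, so $\xi_i$ and $\xi_x$ become explicit rational expressions in $(\tilde\lambda, \tilde\mu, \xi_y)$ built from $\prod_k(\eta_i - \zeta_k)$ and $\prod_{j \neq i}(\eta_i - \eta_j)$; computing the Jacobian of the smooth map $(\tilde\lambda, \xi_y) \mapsto (\xi_1, \dots, \xi_{n-1}, \xi_x, \xi_y)$ via a Cauchy-type determinant evaluation yields, as in~\eqref{Jacobian_in_LUE}, the factor $\Delta_n(\tilde\lambda)/\Delta_{n-1}(\tilde\mu)$ together with explicit powers of $\tilde\mu_i$ and $1 - \tilde\mu_i$.

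Finally, by standard Gaussian integration, $\xi_x$ and $\xi_y$ are independent gamma variables with parameters $\alpha_n + 1$ and $M' - n + 1$ (coming from the norms of the components of $\xvec, \yvec$ orthogonal to the spans of $\{X' u_i\}$ and $\{Y' u_i\}$), while the orthogonal unitary change of basis on each pair $(a_i, b_i)$ that disentangles $|a_i - \eta_i b_i|^2$ from its complement shows that each $\xi_i$ is independently exponentially distributed with rate $1/[\eta_i(1 + \eta_i)]$. Substituting $\eta_i = \tilde\mu_i/(1-\tilde\mu_i)$, combining with the Jacobian, and integrating out $\xi_y$ against the combined $\xi_y^{n + \alpha_n + (M'-n)}e^{-\xi_y A(\tilde\lambda, \tilde\mu)}$ factor produces a Gamma normalization $(M'+\alpha_n)!$ and assembles into exactly~\eqref{eq:trans_prob_JUE}. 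The main obstacle is the bookkeeping in this last step: tracking which powers of $\tilde\lambda_i$ and $1 - \tilde\lambda_i$ originate from $\xi_x^{\alpha_n}$, which from $\xi_y^{M' - n}$, and which from the Jacobian, and verifying that they reassemble into the Jacobi-specific factor $\tilde\lambda_i^{\alpha_n}(1-\tilde\lambda_i)^{M'-n}/\tilde\mu_i^{\alpha_n+1}(1-\tilde\mu_i)^{M'-n+1}$.
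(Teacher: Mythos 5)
Your proposal is correct, and the computation it outlines does close; it shares the paper's overall skeleton but implements the key reduction differently. Both arguments pass from $R',R$ to the pencil variables $\eta_i=\tilde{\mu}_i/(1-\tilde{\mu}_i)$, $\zeta_k=\tilde{\lambda}_k/(1-\tilde{\lambda}_k)$ (the paper works with $T'=((Y')^*Y')^{-1}(X')^*X'$ and $T=(Y^*Y)^{-1}X^*X$), produce a rational function with simple poles at the old eigenvalues and zeros at the new ones, read off strict interlacing from sign changes, compute a Cauchy-type Jacobian as in \eqref{Jacobian_in_LUE}, and finish with a gamma integral over one auxiliary positive variable, the decisive cancellation being an evaluation of the rational function at $-1$ (the paper's residue identity following \eqref{eq:jpdf_of_zeta}). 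Where you differ: the paper reaches its rational function through QR decompositions of $X',Y'$ and an SVD of $CB^{-1}$, obtaining explicit independent variables $w_i,\xi^2,\eta^2$ but residue variables $\zeta_i=\lvert w_i\rvert^2/\eta^2$ that are \emph{not} independent, so it must condition on $\eta^2=r$ and integrate; you instead take the Schur complement of $S-\zeta T$ in the generalized eigenbasis of $((X')^*X',(Y')^*Y')$ and choose coordinates $(\xi_1,\dotsc,\xi_{n-1},\xi_x,\xi_y)$ that are exactly independent (exponential with mean $\eta_i(1+\eta_i)$, Gammas of shapes $\alpha_n+1$ and $M'-n+1$), integrating out $\xi_y$ at the end. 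The two parametrizations are in exact correspondence: your $\xi_y$ is the paper's $\eta^2$, your $\xi_x$ its $\xi^2$, and $\xi_i=\eta_i\lvert w_i\rvert^2$, so your route buys transparent independence and avoids the chain of unitaries $U_1,\dotsc,U_9,V_1,V_2$, at the cost of a non-unitary basis and the covariance check $E\lvert a_i\rvert^2=\eta_i$, $E\lvert b_i\rvert^2=1$ (which is fine since $\xvec,\yvec$ have identity covariance and are circularly symmetric). When you write it out, two points deserve explicit verification: first, the Schur-complement function is indeed a function of $(\xi_i,\xi_x,\xi_y)$ alone --- the cross terms $\Re(\overline{a}_ib_i)$ in its linear part cancel once you fix the value at $\zeta=0$ to be $\xi_x$ --- and the Jacobian of $(\zeta_1,\dotsc,\zeta_n,\xi_y)\mapsto(\xi_1,\dotsc,\xi_{n-1},\xi_x,\xi_y)$ evaluates to $\xi_y^{\,n}\Delta_n(\zeta)/\Delta_{n-1}(\eta)$, after which the exponent in the $\xi_y$-integral is $\xi_y\prod_k(1+\zeta_k)/\prod_i(1+\eta_i)$ (the value of the function at $\zeta=-1$ divided by $\xi_y$), reproducing \eqref{eq:trans_prob_lemma_JUE} and hence \eqref{eq:trans_prob_JUE} after the substitution back to the tilde variables; second, the phrase about an ``orthogonal unitary change of basis on each pair $(a_i,b_i)$'' is loose because the two variances differ, but it is also unnecessary: $a_i-\eta_ib_i$ is directly a complex Gaussian of variance $\eta_i(1+\eta_i)$, and independence across $i$ and from $\xi_x,\xi_y$ follows from the orthogonal decomposition of $\xvec$ and $\yvec$ into the spans of $\{X'u_i\}$, $\{Y'u_i\}$ and their complements.
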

\begin{proof}
  Instead of $R'$, $R$, $\mu$ and $\lambda$, we consider $T' = ((Y')^* Y')^{-1} (X')^* X'$ that has distinct eigenvalues $\mu = (\mu_1, \dots, \mu_{n-1})$, where $0 < \mu_1 < \dots < \mu_{n-1}$, and $T = (Y^* Y)^{-1} X^* X$ that has eigenvalues $\lambda = (\lambda_1, \dots, \lambda_n)$ where $0 \leq \lambda_1 \leq \dots \leq \lambda_n$. Below we prove
  \begin{equation} \label{eq:trans_prob_lemma_JUE}
    p_{\mu}(\lambda) = \frac{(M' + \alpha_n)!}{\alpha_n! (M'-n)!} \frac{\Delta_n(\lambda) \prod^n_{i=1} \lambda^{\alpha_n}_i (1+\lambda_i)^{-(M'+\alpha_n+1)}}{\Delta_{n-1}(\mu) \prod^{n-1}_{i=1} \mu^{\alpha_n+1}_i (1+\mu_i)^{-(M'+\alpha_n)}} \id_{\mu \preceq \lambda},
  \end{equation}
  and \eqref{eq:trans_prob_JUE} is a direct consequence of substituting $\lambda_i = \tilde{\lambda}_i/(1-\tilde{\lambda}_i)$ and $\mu_i = \tilde{\mu}_i / (1 - \tilde{\mu}_i)$ into the probability measure defined by \eqref{eq:trans_prob_lemma_JUE}.
  
  From the assumption of the lemma, $(X')^*X'$, $(Y')^* Y'$ and $(X')^*X' + (Y')^* Y'$ are invertible, and from the randomness of $\xvec$ and $\yvec$, $X^* X$, $Y^* Y$ and $X^* X + Y^* Y$ are almost surely invertible. Below we assume the invertibility of them.
  
  By QR decomposition of $X'$, we have $U_1 \in \U(l_{n-1})$ such that
  \begin{equation} \label{eq:QR_of_X'}
    X' = U_1
    \begin{pmatrix}
      C \\
      0_{(l_{n-1} - n+1) \times (n-1)}
    \end{pmatrix},
  \end{equation}
  where $C$ is an $(n-1) \times (n-1)$ upper-triangular matrix. Let $U_2 = U_1 \oplus I_{l_n - l_{n-1}} \in \U(l_n)$, we have
  \begin{equation} \label{eq:tilde_x_1_and_2}
    X = U_2
    \begin{pmatrix}
      C & \tilde{\xvec}_1 \\
      0_{(l_n - n+1) \times (n-1)} & \tilde{\xvec}_2
    \end{pmatrix}, 
  \end{equation}
  where $\tilde{\xvec}_1 = (\tilde{x}_1, \dots, \tilde{x}_{n-1})^{\perp}$ and $\tilde{\xvec}_2 = (\tilde{x}_n, \dots, \tilde{x}_{l_n})^{\perp}$ are vectors of dimensions $n-1$ and $l_n-n+1$ respectively, such that if we concatenate them into a $l_n$ dimensional vector $\tilde{\xvec} = (\tilde{x}_1, \dots, \tilde{x}_{l_n})$, then $U_2 \tilde{\xvec} = \xvec$. We choose an $U_3 \in \U(l_n - l_{n-1})$ such that 
  \begin{equation}
    U^{-1}_3\tilde{\xvec}_2 = (\xi, 0, \dots, 0)^{\perp}, \quad \text{where} \quad \xi = \sqrt{\sum^{l_n}_{k=n} \lvert \tilde{x}_{k,n} \rvert^2},
  \end{equation}
  and denote $U_4 = U_2(I_{n-1} \oplus U_3)$, then we have
  \begin{equation} \label{eq:X_and_tilde_X}
    X = U_4
    \begin{pmatrix}
       \tilde{X} \\
      0_{(l_n-n) \times n}
    \end{pmatrix},
    \quad \text{where} \quad \tilde{X} =
    \begin{pmatrix}
      C & \tilde{\xvec}_1 \\
      0_{1 \times (n-1)} & \xi
    \end{pmatrix}.
  \end{equation}
  On the other hand, by the QR decomposition of the $M' \times (n-1)$ matrix $Y'$, we have $U_5 \in \U(M')$ such that
  \begin{equation} \label{eq:QR_of_Y'}
    Y' = U_5
    \begin{pmatrix}
      B \\
      0_{(M'-n+1) \times (n-1)}
    \end{pmatrix},
  \end{equation}
  where $B$ is an $(n-1) \times (n-1)$ upper-triangular matrix. Then we have
  \begin{equation}
    Y = U_5
    \begin{pmatrix}
      B & \tilde{\yvec}_1 \\
      0_{(M'-n+1) \times (n-1)} & \tilde{\yvec}_2
    \end{pmatrix},
  \end{equation}
  where analogous to $\tilde{\xvec}_1$ and $\tilde{\xvec}_2$ in \eqref{eq:tilde_x_1_and_2}, $\tilde{\yvec}_1 = (\tilde{y}_1, \dots, \tilde{y}_{n-1})^{\perp}$ and $\tilde{\yvec}_2 = (\tilde{y}_n, \dots, \tilde{y}_{M'})^{\perp}$ are vectors of dimensions $n-1$ and $M'-n+1$ respectively, and if we concatenate them into $\tilde{\yvec} = (\tilde{y}_1, \dots, \tilde{y}_{M'})$, then $\tilde{\yvec} = U^{-1}_5 \yvec$. We choose an $U_6 \in \U(M'-n+1)$ such that
  \begin{equation}
    U^{-1}_6 \tilde{\yvec}_2 = (\eta, 0, \dots, 0), \quad \text{where} \quad \eta = \sqrt{\sum^{M'}_{k=n} \lvert \tilde{y}_{k,n} \rvert^2}.
  \end{equation}
  Analogous to $U_4$ in \eqref{eq:X_and_tilde_X}, let $U_7 = U_5(I_{n-1} \oplus U_6)$, we have
  \begin{equation} \label{eq:Y_and_tilde_Y}
    Y = U_7
    \begin{pmatrix}
      \tilde{Y} \\
      0_{(M'-n) \times n}
    \end{pmatrix},
    \quad \text{where} \quad \tilde{Y} =
    \begin{pmatrix}
      B & \tilde{\yvec}_1 \\
      0_{1 \times (n-1)} & \eta
    \end{pmatrix}.
  \end{equation}

  From \eqref{eq:QR_of_X'}, \eqref{eq:X_and_tilde_X}, \eqref{eq:QR_of_Y'}, \eqref{eq:Y_and_tilde_Y}, we have
  \begin{align}
    (X')^* X' = {}& C^* C, & (Y')^* Y' = {}& B^* B, & T' = {}& (B^* B)^{-1} C^* C, \\
    X^* X = {}& \tilde{X}^* \tilde{X}, & Y^* Y = {}& \tilde{Y}^* \tilde{Y}, & T = {}& (\tilde{Y}^* \tilde{Y})^{-1} \tilde{X}^* \tilde{X}.
  \end{align}
  From the invertibility of $(Y')^* Y'$ and $Y^* Y$, we find that $B$ and $\tilde{Y}$ are invertible, and then $T'$ and $T$ are similar to
  \begin{equation}
    \tilde{T}' = (CB^{-1})^* (CB^{-1}), \quad \tilde{T} = (\tilde{X} \tilde{Y}^{-1})^* (\tilde{X} \tilde{Y}^{-1})
  \end{equation}
  respectively, which implies the relation between characteristic polynomials, $p_{T'}(z) = p_{\tilde{T}'}(z)$ and $p_{T}(z) = p_{\tilde{T}}(z)$.
  
  We have
  \begin{equation}
    \tilde{X} \tilde{Y}^{-1} =
    \begin{pmatrix}
      CB^{-1} & \eta^{-1} (\tilde{\xvec}_1 - CB^{-1} \tilde{\yvec}_1) \\
      0_{1 \times (n-1)} & \xi \eta^{-1}
    \end{pmatrix}.
  \end{equation}
  Since $T'$ and hence $\tilde{T}' = (CB^{-1})^* (CB^{-1})$ has eigenvalues $\mu_1, \dots, \mu_{n-1}$, we have the singular value decomposition that for $U_8, V_1 \in \U(n-1)$,
  \begin{equation}
    CB^{-1} = U_8 D V^*_1, \quad \text{where} \quad D = \diag(\sqrt{\mu_1}, \dots, \sqrt{\mu_{n-1}}).
  \end{equation}
  Let $U_9 = U_8 \oplus I_1 \in \U(n)$ and $V_2 = V_1 \oplus I_1 \in \U(n)$, we have
  \begin{equation}
    \tilde{X}\tilde{Y}^{-1} = U_9
    \begin{pmatrix}
      D & \eta^{-1} \wvec \\
      0_{1 \times (n-1)} & \xi\eta^{-1}
    \end{pmatrix}
    V^*_2, \quad \text{where} \quad \wvec = U^{-1}_8 \tilde{\xvec}_1 - DV^*_1 \tilde{\yvec}_1.
  \end{equation}

  We consider the relationship between the characteristic polynomials of $T$ and $T'$ (remembering $p_{T}(z) = p_{\tilde{T}}(z)$), in the same way as \eqref{eq:partially_diagonalized_tilde_X}--\eqref{eq:defn_of_xi_j_LUE}, and find
  \begin{align}
    \frac{p_T(z)}{p_{T'}(z)} = {}& z - \zeta_n - \sum^{n-1}_{i=1} \frac{z}{z  - \mu_i}\zeta_i, \label{eq:char_poly_in_JUE_w_ext:1} \\
    \frac{p_T(z)}{p_{T'}(z)} = {}& z - (\sigma_1(\lambda) - \sigma_1(\mu)) - \frac{1}{z} (\sigma_1(\lambda) \sigma_1(\mu) + \sigma_2(\mu) -\sigma_2(\lambda) - \sigma^2_1(\mu)) + \bigO(\frac{1}{z^2}), \label{eq:char_poly_in_JUE_w_ext:2}
  \end{align}
  where $\sigma_1, \sigma_2$ in \eqref{eq:char_poly_in_JUE_w_ext:2} are defined the same as in \eqref{eq:char_poly_in_GUE_w_ext:2} and \eqref{eq:char_poly_in_LUE_w_ext:2}, and $\zeta_i$ in \eqref{eq:char_poly_in_JUE_w_ext:1} are defined similarly to $\xi_i$ in \eqref{eq:char_poly_in_LUE_w_ext:1}
  \begin{equation} \label{eq:defn_of_xi_j_JUE}
    \zeta_i =
    \begin{cases}
      \lvert w_i \rvert^2 / \eta^2 & i = 1, \dots, n-1, \\
      \xi^2 / \eta^2 & i=n.
    \end{cases}
  \end{equation}
From \eqref{eq:char_poly_in_JUE_w_ext:1} we find that the eigenvalues of $T$ and $T'$ interlace, that is, $\mu \preceq \lambda$. Like \eqref{eq:interlacing_of_mu_and_lambda_LUE}--\eqref{eq:expression_of_xi_n} in Section \ref{subsec:transition_prob_LUE} that there is a homeomorphism between $\lambda$ that satisfies the strict interlacing condition \eqref{eq:interlacing_of_mu_and_lambda_LUE} and $\zeta_1, \dots, \zeta_n \in \realR_+$, and
\begin{align}
    \zeta_i = {}& -\frac{p_T(\mu_i)}{\mu_i p'_{T'}(\mu_i)} = - \res_{z = \mu_i} \frac{\prod^{n}_{k=1} (z-\lambda_k)}{z \prod^{n-1}_{k=1} (z-\mu_k)}, \quad i = 1, \dots, n-1, \label{eq:expression_of_zeta_j} \\
    \zeta_n = {}& \sigma_1(\lambda) - \sigma_1(\mu) - \sum^{n-1}_{i=1} \zeta_i = -\res_{z = 0} \frac{\prod^{n}_{k=1} (z-\lambda_k)}{z \prod^{n-1}_{k=1} (z-\mu_k)} = \frac{\prod^n_{i=1} \lambda_i}{\prod^{n-1}_{i=1} \mu_i}. \label{eq:expression_of_zeta_n}
 \end{align}
 Hence like \eqref{Jacobian_in_LUE}, we have
\begin{equation} \label{Jacobian_in_JUE}
    \frac{\partial (\zeta_1, \dots, \zeta_n)}{\partial (\lambda_1, \dots, \lambda_n)} = (-1)^{n-1} \frac{\Delta_n(\lambda)}{\Delta_{n-1}(\mu) \prod^{n-1}_{j=1} \mu_j}.
  \end{equation}
 
  We note that the random variables $\zeta_1, \dots, \zeta_n$ are not independent. However, from the definition, components of $\tilde{\xvec}$ and $\tilde{\yvec}$, and $\eta$ and $\xi$ are independent, such that $\tilde{x}_{i,n}$ and $\tilde{y}_{i,n}$ are in standard complex normal distribution. Thus $w_i$ are independent normal distribution such that $\Re w_i$ and $\Im w_i$ are independent random variables in $\normal(0, \frac{1}{2} (1+\mu_i))$ distribution, $\sqrt{2}\eta$ is in $\chi_{2(M'-n+1)}$ distribution and $\sqrt{2}\xi$ is in $\chi_{2(\alpha_n+1)}$ distribution. Hence $\lvert w_1 \rvert^2, \dots, \lvert w_{n-1} \rvert^2, \eta^2, \xi^2$ are independent, and their distribution functions are
  \begin{gather}
    p_{\lvert w_i \rvert^2}(x) =  \frac{1}{1+\mu_k} e^{-\frac{x}{1+\mu_k}}, \quad \text{for} \quad i = 1, \dots, n-1, \\
    p_{\eta^2}(x) = \frac{1}{(M' - n)!}x^{M' - n}e^{-x}, \quad p_{\xi^2}(x) = \frac{1}{\alpha_n!}x^{\alpha_n}e^{-x}.
  \end{gather}
  By the relation \eqref{eq:defn_of_xi_j_JUE} between $\zeta_i$ and $\lvert w_1 \rvert^2, \dots, \lvert w_{n-1} \rvert^2, \eta^2, \xi^2$, we compute the Jacobian
  \begin{equation}
    \frac{\partial(\zeta_1, \dotsc, \zeta_n, \eta^2)}{\partial(\lvert w_1 \rvert^2, \dotsc, \lvert w_{n-1} \rvert^2, \xi^2, \eta^2)} = (\eta^2)^n,
  \end{equation}
  we have that
  \begin{equation} \label{eq:jpdf_of_zeta}
    \begin{split}
      p(\zeta_1, \dots, \zeta_n) = {}& \int^{\infty}_0 p(\zeta_1, \dotsc, \zeta_n, \eta^2 = r) dr \\
      = {}& \int^{\infty}_0 r^n p_{\eta^2}(r) \left( \prod^{n-1}_{i=1} p_{\lvert w_i \rvert^2}(r\zeta_i) \right) p_{\xi^2}(r\zeta_n) dr \\
      = {}& \frac{(M' + \alpha_n)!}{\alpha_n! (M'-n)! \prod^{n-1}_{i=1} (1+\mu_i)} \frac{\zeta^{\alpha_n}_n}{\left( 1 + \sum^{n-1}_{i=1} \frac{\zeta_i}{1+\mu_i} + \zeta_n \right)^{M' + \alpha_n + 1}} \\
      = {}&  \frac{(M' + \alpha_n)!}{\alpha_n! (M'-n)!} \frac{\prod^n_{i=1} \lambda^{\alpha_n}_i (1+\lambda_i)^{-(M'+\alpha_n+1)}}{\prod^{n-1}_{i=1} \mu^{\alpha_n}_i (1+\mu_i)^{-(M'+\alpha_n)}}.
    \end{split}
  \end{equation}
  Here we use in the last step identity \eqref{eq:expression_of_zeta_n} and the following identity which is a consequence of \eqref{eq:expression_of_zeta_j} and \eqref{eq:expression_of_zeta_n}:
  \begin{equation}
    \begin{split}
      1 + \sum^{n-1}_{i=1} \frac{\zeta_i}{1+\mu_i} + \zeta_n = {}& 1 - \sum^{n-1}_{i=1} \res_{z = \mu_i} \frac{\prod^{n}_{k=1} (z-\lambda_k)}{z(z+1) \prod^{n-1}_{k=1} (z-\mu_k)} - \res_{z = 0} \frac{\prod^{n}_{k=1} (z-\lambda_k)}{z(z+1) \prod^{n-1}_{k=1} (z-\mu_k)} \\
      = {}& \res_{z = -1} \frac{\prod^{n}_{k=1} (z-\lambda_k)}{z(z+1) \prod^{n-1}_{k=1} (z-\mu_k)} \\
      = {}& \frac{\prod^n_{i=1} (1 + \lambda_i)}{\prod^{n-1}_{i=1} (1 + \mu_i)}.
    \end{split}
  \end{equation}
  Analogous to \eqref{eq:relations_between_densities_LUE}, we prove \eqref{eq:trans_prob_lemma_JUE} from \eqref{Jacobian_in_JUE} and \eqref{eq:jpdf_of_zeta}.
\end{proof}

\begin{proof}[Proof of Theorem \ref{thm:Markov} (Jacobi-\Pineiro\ case)]
  Like in the proofs in the GUE with external source and the multiple Laguerre cases, the Markov property of the process relies on that the distribution of the eigenvalues of $R$ in Lemma \ref{lem:JUE_w_ext} depends on the eigenvalues of $R'$, but not its eigenvectors. By the result of Lemma \ref{lem:JUE_w_ext}, we have like \eqref{eq:limiting_identity_for_Markov} and \eqref{eq:multi_conditional_LUE} that
  \begin{equation} \label{eq:multi_conditional_JUE}
    p(\lambda^{(n)} \mid \lambda^{(i)} = \mu^{(i)}, \ i = 1, \dotsc, n-1) = p_{\mu^{(n-1)}}(\lambda^{(n)}), 
  \end{equation}
  where the distribution function $p_{\mu^{(n-1)}}$ in \eqref{eq:multi_conditional_JUE} is defined in \eqref{eq:trans_prob_JUE}. Thus we prove the Markov property, while \eqref{eq:trans_prob_JUE} yields \eqref{eq:transition_general} in the Jacobi-\Pineiro\ case, and hence we finish the proof.
\end{proof}

\subsection{Correlation kernel of the Markov chain $\lambda^{(n)}$} \label{subsec:reproducing_kernel_JUE}

Like in Section \ref{subsec:reproducing_kernel_LUE}, we prove Theorem \ref{thm:kernel}\ref{enu:thm:kernel_Jacobi} for distinct $\alpha_1, \dotsc, \alpha_N$, and obtain the general case by analytic continuation.

Analogous to \eqref{eq:alternative_jpdf_mGUE} and \eqref{eq:alternative_jpdf_LUE}, we write the joint probability density formula \eqref{eq:eqivalent_jpdf} of the random variables $\lambda^{(n)}_i$ whose range is $[0,1]$ as follows:
\begin{multline}
  P(\lambda^{(1)}, \dotsc, \lambda^{(N)}) = \\
  \prod^N_{n=1} \frac{(M'-N+n+\alpha_{N-n+1})!}{\alpha_n!}  \left( \prod^{N-1}_{n=1} \det(\phi_n(\lambda^{(n)}_i, \lambda^{(n+1)}_j))^{n+1}_{i,j=1} \right) \det(\Psi^N_{N-i}(\lambda^{(N)}_j))^N_{i,j=1},
\end{multline}
where the function $\phi_n(\lambda^{(n)}_i, \lambda^{(n + 1)}_j)$ ($i \neq n + 1$) has explicit formula
  \begin{equation} \label{phi_n:JUE}
    \phi_n(x, y) = x^{\alpha_n - \alpha_{n + 1} - 1} \id_{x < y},
  \end{equation}
  and
\begin{equation} \label{eq:P_N_in_JUE}
  \begin{split}
    \Psi^N_j(x) ={}& \frac{\prod^j_{k=1} (M'-N+j+1+\alpha_{N-k+1})}{(M'-N+j)!} x^{\alpha_N}(1-x)^{M'-N} P_{(\alpha_N, \alpha_{N-1}, \dots, \alpha_{N-j+1}; M'-N)}(x) \\
    ={}& \frac{x^{\alpha_N}}{2\pi i} \oint_{\Sigma} \frac{x^{-z-1} \Gamma(z+1) \prod^j_{k=1}(z-\alpha_{N-k+1})}{\Gamma(z+j+2+M'-N)} dz.
  \end{split}
\end{equation}
Here $P_{(\alpha_N, \alpha_{N-1}, \dots, \alpha_{N-j+1}; M'-N)}(x)$ is the $j$-th degree (monic) Jacobi-\Pineiro\ polynomial of type II, and we use the contour integral representation \eqref{eq:contour_integral_of_P} in Appendix \ref{sec:appendix_MOPS}, and the contour $\Sigma$ encloses the poles $z = -1, -2, \dotsc, -(j+1+M'-N)$. Note that in this section, the functions $\phi_n$, $\Psi^n_j$ and later $\Phi^n_j$ are all defined on variables in $(0,1)$.

We apply Lemma \ref{lem:BFPS} in Section \ref{subsec:reproducing_kernel_mGUE} with $I = [0, 1]$. Since our $\phi_n(x,y)$ defined in \eqref{phi_n:JUE} is very similar to the $\phi_n(x,y)$ defined in \eqref{phi_n:JUE}, with only the domain different, we have similar to \eqref{eq:evaluation_of_phi^n1n2_LUE} that
\begin{equation} \label{eq:evaluation_of_phi^n1n2_JUE}
  \phi^{(n_1, n_2)}(x,y) = \frac{x^{-1}}{2\pi i} \oint_{\Gamma_{\alpha}} \frac{x^{\alpha_{n_1}-z} y^{z-\alpha_{n_2}}}{\prod^{n_2}_{k=n_1+1} (z-\alpha_k)} dz \id_{x<y} \id_{n_1 < n_2}.
\end{equation}
where the contour $\Gamma_{\alpha}$ is the same as the $\Gamma_{\alpha}$ in \eqref{eq:evaluation_of_phi^n1n2_LUE}. Then similar to \eqref{eq:evaluation_of_Psi^n_j} and \eqref{eq:evaluation_of_Psi^n_j_LUE},
\begin{equation} \label{eq:evaluation_of_Psi^n_j_JUE}
  \Psi^n_j(x) =
  \begin{cases}
    \begin{aligned}[b]
      \frac{x^{\alpha_n}}{2\pi i} & \oint_{\Sigma} \frac{x^{-z-1} \Gamma(z+1) \prod^j_{k=1} (z - \alpha_{n-k+1})}{\Gamma(z+j+2+M'-n)} dz \\
      ={}& \frac{\prod^j_{k=1}(j+1+M'-n+\alpha_{n-k+1})}{(M'-n+j)!} \\
      & \times x^{\alpha_n}(1-x)^{M'-n} P_{(\alpha_n, \dots, \alpha_{n-j+1}; M'-n)}(x),
    \end{aligned} & j = 0, 1, \dots, n-1, \\
    {\displaystyle \frac{x^{\alpha_n}}{2\pi i} \oint_{\Sigma} \frac{x^{-z-1} \Gamma(z+1)}{\Gamma(z+j+2+M'-n) \prod^{-j}_{k=1} (z - \alpha_{n+k})} dz,} & j = -1, -2, \dots, n-N,
  \end{cases}
\end{equation}
where the contours $\Sigma$ encloses all poles of the integrand. The proof of \eqref{eq:evaluation_of_Psi^n_j_JUE} will be given in the end of this subsection.

Like \eqref{eq:func_forming_basis_V_n_LUE}, we have
\begin{equation} \label{eq:func_forming_basis_V_n_JUE}
  \phi_j*\phi^{(j+1,n)}(\lambda^{(j)}_{j+1}, x) = \left( \prod^n_{k=j+2} \frac{1}{\alpha_{j+1} - \alpha_k} \right) x^{\alpha_{j+1}-\alpha_n}, \quad j = 0, 1, \dots, n-2.
\end{equation}
Then the vector space $V_n$ is also spanned by $x^{\alpha_1-\alpha_n}, x^{\alpha_2-\alpha_n}, \dotsc, x^{\alpha_n-\alpha_n}$. Hence like \eqref{eq:expression_of_Phi^n_j} and \eqref{eq:expression_of_Phi^n_j_LUE}, by the orthogonality \eqref{eq:orthonormality_of_Psi_and_Phi} and \eqref{eq:vanishing_defn_of_Q}, \eqref{eq:non-vanishing_defn_of_Q} and \eqref{eq:contour_integral_of_Q}, we have  
\begin{equation} \label{eq:expression_of_Phi^n_j_JUE}
  \begin{split}
    \Phi^n_j(x) ={}& \frac{(M'-n+j)!}{\prod^j_{k=1} (j+1+M'-n+\alpha_{n-k+1})} x^{-\alpha_n}(1-x)^{n-M'} Q_{(\alpha_n, \alpha_{n-1}, \dotsc, \alpha_{n-j}; M'-n)}(x) \\
    ={}& (j+1+M'-n+\alpha_{n-j}) \frac{x^{-\alpha_n}}{2\pi i} \oint_{\Gamma_{\alpha}} \frac{x^z \Gamma(z+j+1+M'-n)}{\Gamma(z+1) \prod^j_{k=0}(z-\alpha_{n-k})} dz,
  \end{split}
\end{equation}
where $Q_{(\alpha_n, \alpha_{n-1}, \dotsc, \alpha_{n-j}; M'-n)}(x)$ is the Jacobi-\Pineiro\ polynomial of type I (see Appendix \ref{sec:appendix_MOPS}), and the contour $\Gamma_{\alpha}$ is the same as the $\Gamma_{\alpha}$ in \eqref{eq:expression_of_Phi^n_j_LUE}.

It is clear that the Assumption A is satisfied, and the correlation kernel is given by \eqref{eq:general_formula_of_kernel} with $\phi^{(n_1, n_2)}(x,y)$ expressed in \eqref{eq:evaluation_of_phi^n1n2_JUE}, $\Psi^{n_1}_{n_1-k}(x)$ in \eqref{eq:evaluation_of_Psi^n_j_JUE} and $\Phi^{n_2}_{n_2-k}(y)$ in \eqref{eq:expression_of_Phi^n_j_JUE}. To express the kernel in the double contour integral form, we write like \eqref{eq:expansion_of_sum_Psi*Phi_GUE} and \eqref{eq:long_sum_form_of_the_kernel_LUE} that
\begin{equation} \label{eq:long_sum_form_of_the_kernel_JUE}
  \begin{split}
     \sum^{n_2}_{k=1} \Psi^{n_1}_{n_1-k}(x) \Phi^{n_2}_{n_2-k}(y) ={}& \frac{x^{\alpha_{n_1}} y^{-\alpha_{n_2}}}{(2\pi i)^2} \oint_{\Sigma}dz \oint_{\Gamma_{\alpha}}dw \frac{y^w \Gamma(z+1)}{x^{z+1} \Gamma(w+1)} \frac{\prod^N_{l=n_2+1} (w-\alpha_l)}{\prod^N_{j=n_1+1} (z-\alpha_j)} \\
     & \times \sum^{n_2}_{k=1} (M'-k+1+\alpha_k) \frac{\prod^N_{j=k+1} (z-\alpha_j)}{\prod^N_{l=k} (w-\alpha_l)} \frac{\Gamma(w+M'-k+1)}{\Gamma(z+M'-k+2)} \\
     ={}& \frac{x^{\alpha_{n_1}} y^{-\alpha_{n_2}}}{(2\pi i)^2} \oint_{\Sigma}dz \oint_{\Gamma_{\alpha}}dw \frac{y^w \Gamma(z+1) \Gamma(w+M'+1)}{x^{z+1} \Gamma(w+1) \Gamma(z+M'+1)} \frac{\prod^N_{l=n_2+1} (w-\alpha_l)}{\prod^N_{j=n_1+1} (z-\alpha_j)} \\
     & \times  \sum^{n_2}_{k=1} (M'-k+1+\alpha_k) \frac{\prod^N_{j=k+1} (z-\alpha_j)}{\prod^N_{l=k} (w-\alpha_l)} \frac{\prod^{k-1}_{j'=1} (z+M'-j'+1)}{\prod^k_{l'=1} (w+M'-l'+1)},
  \end{split}
\end{equation}
where the contour $\Sigma$ enclose the poles $-1, -2, \dotsc, -M'$ and the contour $\Gamma_{\alpha}$ (Compare it with the $\Sigma$ in \eqref{eq:sum_Psi_Phi_Laguerre}). By the telescope trick like \eqref{eq:telescope_1} and \eqref{eq:telescope_identity}, writing $(z - w)(M'-k+1+\alpha_k) = (z-\alpha_k)(w+M'-k+1) - (w-\alpha_k)(z+M'-k+1)$, we have
\begin{multline}
  (z-w) \sum^{n_2}_{k=1} (M'-k+1+\alpha_k) \frac{\prod^N_{j=k+1} (z-\alpha_j)}{\prod^N_{l=k} (w-\alpha_l)} \frac{\prod^{k-1}_{j'=1} (z+M'-j'+1)}{\prod^k_{l'=1} (w+M'-l'+1)} = \\
  \prod^N_{j=1} \frac{z-\alpha_j}{w-\alpha_j} - \prod^N_{j=n_2+1} \frac{z-\alpha_j}{w-\alpha_j} \prod^{n_2}_{l=1} \frac{z+M'-l+1}{w+M'-l+1},
\end{multline}
and then
\begin{multline} \label{eq:formula_of_Psi_Phi_telescope}
  \sum^{n_2}_{k=1} \Psi^{n_1}_{n_1-k}(x) \Phi^{n_2}_{n_2-k}(y) = \frac{x^{\alpha_{n_1}} y^{-\alpha_{n_2}}}{(2\pi i)^2} \left[ \oint_{\Sigma}dz \oint_{\Gamma_{\alpha}}dw \frac{x^{-z-1}y^w}{z-w} \prod^{M'}_{k=1} \frac{w+k}{z+k} \frac{\prod^{n_1}_{j=1} (z-\alpha_j)}{\prod^{n_2}_{l=1} (w-\alpha_l)}
  \vphantom{\prod^{M'-n_2}_{k=1} \frac{w+k}{z+k} \frac{\prod^{n_1}_{j=1} (z-\alpha_j)}{\prod^{n_2}_{j=1} (z-\alpha_j)}} \right. \\
  - \left. \oint_{\Sigma}dz \oint_{\Gamma_{\alpha}}dw \frac{x^{-z-1}y^w}{z-w} \prod^{M'-n_2}_{k=1} \frac{w+k}{z+k} \frac{\prod^{n_1}_{j=1} (z-\alpha_j)}{\prod^{n_2}_{j=1} (z-\alpha_j)} \right]
\end{multline}
Like \eqref{eq:the_2nd_part_in_telescope_vanishes_Laguerre}, by integrating over $\Gamma_{\alpha}$ we have
\begin{equation} \label{eq:partII_n1>=n_2_JUE}
  \oint_{\Sigma}dz \oint_{\Gamma_{\alpha}}dw \frac{x^{-z-1}y^w}{z-w} \prod^{M'-n_2}_{k=1} \frac{w+k}{z+k} \frac{\prod^{n_1}_{j=1} (z-\alpha_j)}{\prod^{n_2}_{j=1} (z-\alpha_j)} = \oint_{\Sigma} 0 \cdot dz = 0.
\end{equation}
Hence the formulas \eqref{eq:formula_of_Psi_Phi_telescope} for $\sum^{n_2}_{k=1} \Psi^{n_1}_{n_1-k}(x) \Phi^{n_2}_{n_2-k}(y)$ and \eqref{eq:partII_n1>=n_2_JUE} and \eqref{eq:evaluation_of_phi^n1n2_JUE} for $\phi^{(n_1, n_2)}(x,y)$ yield the double contour representation of the correlation kernel
\begin{multline}
  K(n_1,x;n_2,y) = \frac{x^{\alpha_{n_1}}}{y^{\alpha_{n_2}}} \left[ \frac{-1}{2\pi i} \oint_{\Gamma_{\alpha}} \frac{x^{-w-1}y^w}{\prod^{n_2}_{l=n_1+1} (w-\alpha_l)} dw \id_{x < y} \id_{n_1 <  n_2} \right. \\
  + \left. \frac{1}{(2\pi i)^2} \oint_{\Sigma}dz \oint_{\Gamma_{\alpha}}dw \frac{x^{-z-1}y^w}{(z-w)} \prod^{M'}_{k=1} \frac{w+k}{z+k} \frac{\prod^{n_1}_{j=1} (z-\alpha_j)}{\prod^{n_2}_{l=1} (w-\alpha_l)} \right],
\end{multline}
which yields Theorem \ref{thm:kernel}\ref{enu:thm:kernel_Jacobi} after conjugating out $x^{\alpha_{n_1}} / y^{\alpha_{n_2}}$.

\begin{proof}[Proof of \eqref{eq:evaluation_of_Psi^n_j_JUE}]
  This proof is parallel to that of \eqref{eq:evaluation_of_Psi^n_j_LUE}. From \eqref{eq:P_N_in_JUE}, \eqref{eq:evaluation_of_phi^n1n2_JUE} and \eqref{eq:defn_of_Psi^n_j}, we have
  \begin{equation}
    \begin{split}
      \Psi^n_j(x) = {}& \int^1_0 \phi^{(n, N)}(x,y) \Psi^N_{N-n+j}(y) dy \\
      = {}& \frac{1}{(2\pi i)^2} \int^1_x \oint_{\Gamma_{\alpha}} \frac{x^{\alpha_n - z - 1}y^z}{\prod^N_{l=n+1} (z - \alpha_l)} dz \oint_{\Sigma'} \frac{y^{-w-1} \Gamma(w+1) \prod^N_{k=n-j+1} (w - \alpha_k)}{\Gamma(w + j + 2 + M' - n)} dw dy.
    \end{split}
  \end{equation}
  Here the contour $\Gamma_{\alpha}$ is defined as in \eqref{eq:evaluation_of_phi^n1n2_JUE}, and the contour $\Sigma'$ encloses the poles $-1, \dotsc, -(j + 1+ M' - n)$ such that $\Re w < \Re z$ for all $w \in \Sigma'$ and $z \in \Gamma_{\alpha}$. Then using the decomposition $\int^1_x = \int^1_0 - \int^x_0$, we express $\Psi^n_j(x) = x^{\alpha_n} (\PartI - \PartII)$, such that  
\begin{equation} \label{eq:partI_in_Psi_JUE}
  \begin{split}
    \PartI = {}& \int^1_0 \frac{1}{(2\pi i)^2} \oint_{\Gamma_{\alpha}} \frac{x^{-z-1} y^z}{\prod^N_{l=n+1} (z - \alpha_l)} dz \oint_{\Sigma'} \frac{y^{-w-1} \Gamma(w+1) \prod^N_{k=n-j+1} (w - \alpha_k)}{\Gamma(w + j + 2 + M' - n)} dw dy \\
    = {}&\frac{\prod^{N-n+j}_{k=1} (M'-n+j+1+\alpha_{N-k+1})}{(M'-n+j)!} \\
    & \times \frac{1}{2\pi i} \oint_{\Gamma_{\alpha}} \frac{x^{-z-1}}{\prod^N_{l=n+1} (z - \alpha_l)} \int^1_0 y^z (1 - y)^{M' - N} P_{(\alpha_N, \alpha_{N-1}, \dots, \alpha_{n-j+1}; M' - N)}(y) dy dz,
  \end{split}
\end{equation}
where the second identity is the consequence of \eqref{eq:P_N_in_JUE}, and
\begin{equation} \label{eq:defn_of_partII_Jacobi}
  \begin{split}
    \PartII = {}& \int^x_0 \frac{1}{(2\pi i)^2} \oint_{\Gamma_{\alpha}} \frac{x^{-z-1} y^z}{\prod^N_{l=n+1} (z - \alpha_l)} dz \oint_{\Sigma'} \frac{y^{-w-1} \Gamma(w+1) \prod^N_{k=n-j+1} (w - \alpha_k)}{\Gamma(w + j + 2 + M' - n)} dw dy \\
    = {}& \frac{1}{(2\pi i)^2} \oint_{\Gamma_{\alpha}} dz \oint_{\Sigma'} dw \frac{x^{-z-1} \Gamma(w+1)}{\Gamma(w + j + 2 + M' - n)} \frac{\prod^N_{k=n-j+1} (w - \alpha_k)}{\prod^N_{l=n+1} (z - \alpha_l)} \int^x_0 y^{z-w-1} dy \\
  = {}& \frac{1}{(2\pi i)^2} \oint_{\Gamma_{\alpha}} dz \oint_{\Sigma'} dw \frac{x^{-w-1}\Gamma(w+1)}{\Gamma(w + j + 2 + M' - n)} \frac{\prod^N_{k=n-j+1} (w - \alpha_k)}{\prod^N_{l=n+1} (z - \alpha_l)} \frac{1}{z-w}.
  \end{split}
\end{equation}
Note that because $P_{(\alpha_N, \alpha_{N-1}, \dots, \alpha_{n-j+1}; M' - N)}(y)$ is a monic polynomial of degree $N - n + j$,
\begin{equation}
  \int^1_0 y^z (1 - y)^{M' - N} P_{(\alpha_N, \alpha_{N-1}, \dots, \alpha_{n-j+1}; M' - N)}(y) dy = \frac{p(z) \Gamma(z + 1)}{\Gamma(z + j + 2 + M' - n)},
\end{equation}
where $p(z)$ is a polynomial of degree $M' - n + j$, and the residue of $p(z) \Gamma(z + 1)/\Gamma(z + M' - n + j + 2)$ at $-(j + 1 + M' - n)$ is $1$. On the other hand, because of the orthogonal property of $P_{(\alpha_N, \alpha_{N-1}, \dots, \alpha_{n-j+1}; M' - N)}(y)$, the integral vanishes as $z = \alpha_N, \dotsc, \alpha_{n-j+1}$. Hence we find that similar to \eqref{eq:simplified_Part_I_LUE}
\begin{equation}
  \PartI = \frac{1}{2\pi i} \oint_{\Gamma_{\alpha}} \frac{x^{-z-1}\Gamma(z+1)}{\Gamma(z + j + 2 + M' - n)} \frac{\prod^N_{k=n-j+1} (z-\alpha_k)}{\prod^N_{l=n+1} (z - \alpha_l)} dz.
\end{equation}
In $\PartII$, if we integrate $z$ first, by calculation of residues we have like \eqref{eq:simplified_part_II_LUE}
\begin{equation}
  \begin{split}
    \PartII ={}& \frac{1}{2\pi i} \oint_{\Sigma'} \frac{x^{-w-1} \Gamma(w+1)}{\Gamma(w + j + 2 + M' - n)} \prod^N_{k=n-j+1} (w - \alpha_k) \left( \sum^N_{l = n+1} \res_{z = \alpha_l} \frac{1}{(z - w) \prod^N_{l=n+1} (z - \alpha_l)} \right) \\
    ={}& \frac{1}{2\pi i} \oint_{\Sigma'} \frac{x^{-w-1} \Gamma(w+1)}{\Gamma(w + j + 2 + M' - n)} \prod^N_{k=n-j+1} (w - \alpha_k) \left(-\res_{z = w} \frac{1}{(z - w) \prod^N_{l=n+1} (z - \alpha_l)} \right) \\
    ={}& \frac{-1}{2\pi i} \oint_{\Sigma'} \frac{x^{-w - 1}\Gamma(w+1)}{\Gamma(w + j + 2 + M' - n)} \frac{\prod^N_{k=n-j+1} (w - \alpha_k)}{\prod^N_{l=n+1} (w - \alpha_l)} dw,
  \end{split}
\end{equation}
Since as integration contours, $\Gamma_{\alpha} + \Sigma' = \Sigma$ where $\Sigma$ is the contour in \eqref{eq:evaluation_of_Psi^n_j_JUE}, we have
\begin{equation}
  \PartI - \PartII = \frac{1}{2\pi i} \oint_{\Sigma} \frac{x^{-w - 1}\Gamma(w+1)}{\Gamma(w + j + 2 + M' - n)} \frac{\prod^N_{k=n-j+1} (w - \alpha_k)}{\prod^N_{l=n+1} (w - \alpha_l)} dw,
\end{equation}
and we prove \eqref{eq:evaluation_of_Psi^n_j_JUE}.
\end{proof}
\section{Asymptotic Pearcey process} \label{sec:Pearcey}

\subsection{Proof of Theorem \ref{thm:Pearcey_kernel}}

For the multiple Laguerre minor process defined by parameters $\alpha_i$ specified in \eqref{eq:alpha_i_trapezoidal}, the correlation kernel for the distribution of the eigenvalues of $S_{n_1}, S_{n_2}$, the minor of $X^* X$, is given by (suppose $n_1, n_2 > n$)
\begin{multline} \label{eq:kernel_with_2_alphas_Laguerre}
  K(n_1, x; n_2, y) = \frac{-1}{2\pi i} \oint_{\Gamma_{\lfloor na \rfloor, \lfloor nb \rfloor}} \frac{x^{-w - 1} y^w}{(w - \lfloor nb \rfloor)^{n_2 - n_1}} dw \id_{x < y} \id_{n_1 < n_2} \\
  + \frac{1}{(2\pi i)^2} \oint_{\Sigma_{\out}} dz \oint_{\Gamma_{\lfloor na \rfloor, \lfloor nb \rfloor}} dw \frac{x^{-z - 1} y^w \Gamma(z + 1)}{(z - w) \Gamma(w + 1)} \left( \frac{z - \lfloor na \rfloor}{w - \lfloor na \rfloor} \right)^n \frac{(z - \lfloor nb \rfloor)^{n_1 - n}}{(w - \lfloor nb \rfloor)^{n_2 - n}},
\end{multline}
where $\Gamma_{\lfloor na \rfloor, \lfloor nb \rfloor}$ is a contour enclosing the poles $\lfloor na \rfloor$ and $\lfloor nb \rfloor$, and $\Sigma_{\out}$ is a deformed Hankel contour as the $\Sigma$ in \eqref{eq:compact_integral_formula_of_K_LUE} that encloses $\Gamma_{\lfloor na \rfloor, \lfloor nb \rfloor}$.

For the saddle point analysis later in this section, we need to deform the contour $\Gamma_{\lfloor na \rfloor, \lfloor nb \rfloor}$ into the sum of two disjoint contour $\Gamma_{\lfloor na \rfloor}$ and $\Gamma_{\lfloor nb \rfloor}$ that enclose $\lfloor na \rfloor$ and $\lfloor nb \rfloor$ respectively. Then we use the contour $\Sigma_{\midd}$ instead of $\Sigma_{\out}$, where $\Sigma_{\midd}$ is, a deformed Hankel contour similar to $\Sigma_{\out}$ enclosing $\Gamma_{\lfloor na \rfloor}$ but not $\Gamma_{\lfloor nb \rfloor}$.

A simple calculation of residue yields
\begin{multline} \label{eq:kernel_with_Sigma_mid}
  \frac{1}{(2\pi i)^2} \oint_{\Sigma_{\out} - \Sigma_{\midd}} dz \oint_{\Gamma_{\lfloor na \rfloor} \cup \Gamma_{\lfloor nb \rfloor}} dw \frac{x^{-z - 1} y^w \Gamma(z + 1)}{(z - w) \Gamma(w + 1)} \left( \frac{z - \lfloor na \rfloor}{w - \lfloor na \rfloor} \right)^n \frac{(z - \lfloor nb \rfloor)^{n_1 - n}}{(w - \lfloor nb \rfloor)^{n_2 - n}} \\
  = \frac{1}{2\pi i} \oint_{\Gamma_{\lfloor nb \rfloor}} \frac{x^{-w - 1} y^w}{(w - \lfloor nb \rfloor)^{n_2 - n_1}} dw.
\end{multline}

From \eqref{eq:kernel_with_2_alphas_Laguerre} and \eqref{eq:kernel_with_Sigma_mid}, we have that
\begin{equation} \label{eq:kernel_degenerate_without_scaling_Laguerre}
  K(n_1, x; n_2, y) = K^{(1)}(n_1, x; n_2, y) + K^{(2)}(n_1, x; n_2, y),
\end{equation}
where
\begin{align}
  K^{(1)}(n_1, x; n_2, y) = {}&
  \begin{cases}
    {\displaystyle \frac{-1}{2\pi i} \oint_{\Gamma_{\lfloor na \rfloor}} \frac{x^{-w - 1} y^w}{(w - \lfloor nb \rfloor)^{n_2 - n_1}} dw \id_{n_1 < n_2}} = 0 & \text{if $x < y$}, \\
    {\displaystyle \frac{1}{2\pi i} \oint_{\Gamma_{\lfloor nb \rfloor}} \frac{x^{-w - 1} y^w}{(w - \lfloor nb \rfloor)^{n_2 - n_1}} dw \id_{n_1 < n_2}} & \text{if $x \geq y$},
  \end{cases} \\
  K^{(2)}(n_1, x; n_2, y) = {}& \frac{1}{(2\pi i)^2} \oint_{\Sigma_{\midd}} dz \oint_{\Gamma_{\lfloor na \rfloor} \cup \Gamma_{\lfloor nb \rfloor}} dw \frac{x^{-z - 1} y^w \Gamma(z + 1)}{(z - w) \Gamma(w + 1)} \left( \frac{z - \lfloor na \rfloor}{w - \lfloor nb \rfloor} \right)^n \frac{(z - \lfloor nb \rfloor)^{n_1 - n}}{(w - \lfloor nb \rfloor)^{n_2 - n}}.
\end{align}
Although generically $na$ and $nb$ are not both integers, below we use $na$ and $nb$ in place of $\lfloor na \rfloor$ and $\lfloor nb \rfloor$ as though they are integers.  Readers can verify that it does not affect the asymptotic result.

For the limiting Pearcey kernel to appear, we consider the scaling that $n_1$ and $n_2$ are $2n + \bigO(n^{1/2})$, and $x$ and $y$ are $nc + \bigO(n^{1/4})$ where
 $c$ is defined in \eqref{eq:defn_of_c}. After the change of scaling \eqref{eq:rescaling_of_n_1_n_2_x_y_z_w}, we write the kernel of the minor eigenvalue process as (assuming $n_1 = n(2 + c_1 n^{-\frac{1}{2}} s), n_2 = n(2 + c_1 n^{-\frac{1}{2}} t)$)
\begin{equation} \label{eq:kernel_rescaled_and_split}
  K(n_1, x; n_2, y) dy = \frac{c_3 n^{\frac{1}{4}} (-n)^{c_1 n^{\frac{1}{2}} (s - t)}}{(1 + c_3 n^{-\frac{3}{4}} u) c^{-c_1 n^{-\frac{1}{2}} (s - t)}} (K^{(1)}(s, u; t, v) + K^{(2)}(s, u; t, v)) dv,
\end{equation}
where by \eqref{eq:kernel_degenerate_without_scaling_Laguerre} (noting that we change variables $z \mapsto nz, w \mapsto nw$)
\begin{align}
   K^{(1)}(s, u; t, v) = {}& \left\{
  \begin{aligned}
   & {\displaystyle \frac{1}{2 \pi i} \oint_{\Gamma_a} \left( \frac{1 - c_3 n^{-\frac{3}{4}} v}{1 - c_3 n^{-\frac{3}{4}} u} \right)^{nw} [c^w_2 (b - w)]^{c_1 n^{\frac{1}{2}} (s - t)} dw \id_{s < t}}, \\
   & \hphantom{\displaystyle \frac{-1}{2 \pi i} \oint_{\Gamma_a}} \text{if $c^{-c_1 n^{-\frac{1}{2}} s}_2 (1 + c_3 n^{-\frac{3}{4}} u) < c^{-c_1 n^{-\frac{1}{2}} t}_2 (1 + c_3 n^{-\frac{3}{4}} v)$,} \\
   & {\displaystyle \frac{-1}{2 \pi i} \oint_{\Gamma_b} \left( \frac{1 - c_3 n^{-\frac{3}{4}} v}{1 - c_3 n^{-\frac{3}{4}} u} \right)^{nw} [c^w_2 (b - w)]^{c_1 n^{\frac{1}{2}} (s - t)} dw \id_{s < t}}, \\
   & \hphantom{\displaystyle \frac{-1}{2 \pi i} \oint_{\Gamma_a}} \text{if $c^{-c_1 n^{-\frac{1}{2}} s}_2 (1 + c_3 n^{-\frac{3}{4}} u) \geq c^{-c_1 n^{-\frac{1}{2}} t}_2 (1 + c_3 n^{-\frac{3}{4}} v)$,} \\
  \end{aligned} \right.
  \label{eq:rescaled_K_Laguerre:1} \\
  K^{(2)}(s, u; t, v) ={}& \frac{1}{(2 \pi i)^2} \oint_{\Sigma} dz \oint_{\Gamma_a \cup \Gamma_b} dw \frac{\left( 1 - c_3 n^{-\frac{3}{4}} v \right)^{nw}}{\left( 1 - c_3 n^{-\frac{3}{4}} u \right)^{nz}} \frac{[c^z_2 (b - z)]^{c_1 n^{\frac{1}{2}} s}}{[c^w_2 (b - w)]^{c_1 n^{\frac{1}{2}} t}} \frac{e^{F_n(z)}}{e^{F_n(w)}} \frac{1}{(w - z)}. \label{eq:rescaled_K_Laguerre:2} 
\end{align}
Here $\Gamma_a$ and $\Gamma_b$ are contours enclosing $a$ and $b$ respectively, $\Sigma$ is a deformed Hankel contour that encloses $\Gamma_a$ but not $\Gamma_b$, and
\begin{equation}
  F_n(z) = -\log(cn)n z + \log \Gamma(nz + 1) + n\log(z - a) + n\log (b - z).
\end{equation}

Now we apply the saddle point method to the double contour integral $K^{(2)}(s, u; t, v)$ in \eqref{eq:rescaled_K_Laguerre:2}. Note that by Stirling's formula, for any complex number $z$ satisfying $\lvert z \rvert > \epsilon$ and $\lvert \arg z \rvert < \pi - \epsilon$,
\begin{equation} \label{eq:relation_between_F_n_and_F}
  F_n(z) = nF(z) + \frac{1}{2} \log(\pi nz) + \bigO(n^{-1}),
\end{equation}
where (taking the principal branch of logarithm)
\begin{equation} \label{eq:formula_of_F}
  F(z) = z (\log z - \log c - 1) + \log (z - a) + \log(z - b).
\end{equation}

By the definition formulas \eqref{eq:defn_of_x_0} and \eqref{eq:defn_of_c}, we see that the derivatives
\begin{align}
  F'(z) ={}& \log z - \log c + \frac{1}{z - a} + \frac{1}{z - b}, \label{eq:x_0_zero_of_F'} \\
  F''(z) ={}& \frac{1}{z} - \frac{1}{(z - a)^2} - \frac{1}{(z - b)^2}, \label{eq:x_0_zero_of_F''} \\
  F'''(z) ={}& -\frac{1}{z^2} + \frac{2}{(z - a)^3} + \frac{2}{(z - b)^3} \label{eq:x_0_zero_of_F'''}
\end{align}
vanish simultaneously at $z = x_0$. Thus around $x_0$, $F(z) = F(x_0) + \frac{1}{24} F^{(4)}(x_0)(z - x_0)^4 + \bigO((z - x_0)^5)$. We also have that $F^{(4)}(x_0) < 0$. To see this, we denote $u_0 = x_0(x_0 - b)^{-1}$ and $v_0 = x_0(x_0 - a)^{-1}$. Then the fact that $x_0$ is the zero of both \eqref{eq:x_0_zero_of_F''} and \eqref{eq:x_0_zero_of_F'''} implies that
\begin{equation} \label{eq:relation_between_u_and_v}
  u_0^2 + v_0^2 = x_0 = 2(u_0^3 + v_0^3).
\end{equation}
Let
\begin{equation} \label{eq:defn_of_r}
r = u_0/v_0.
\end{equation}

Then \eqref{eq:relation_between_u_and_v} yields
\begin{equation} \label{eq:expression_of_u_v_x_0}
  u_0 = \frac{r(1 + r^2)}{2(1 + r^3)}, \quad v_0 = \frac{1 + r^2}{2(1 + r^3)}, \quad x_0 = \frac{(1 + r^2)^3}{4(1 + r^3)^2},
\end{equation}
and further
\begin{equation} \label{eq:expression_of_a_b_in_r}
  \begin{gathered}
    x_0 - a = \frac{x_0}{v_0} = \frac{(1 + r^2)^2}{2(1 + r^3)}, \quad x_0 - b = \frac{x_0}{u_0} = \frac{(1 + r^2)^2}{2r(1 + r^3)}, \\
    a = \frac{(1 + r^2)^2}{4(1 + r^3)^2} (r^2 - 2r^3 - 1), \quad b = \frac{(1 + r^2)^2}{4r(1 + r^3)^2} (r - r^3 - 2).
  \end{gathered}
\end{equation}
By the relation $a < x_0 < b$, we have $u_0 < 0$ and $v_0 > 0$, and then $r = u_0/v_0 < 0$; from the relation $x_0 > 0$ and its expression in $r$ in \eqref{eq:expression_of_u_v_x_0}, we find that $r > -1$; and further from the property $a > 0$ and the expression of $a$ in \eqref{eq:expression_of_a_b_in_r}, we find that $r \in (-1, r_0 \approx -0.657)$. Now we can express $F^{(4)}(x_0)$ as
\begin{equation}
  F^{(4)}(x_0) = \frac{2}{x^3_0} - \frac{6}{(x_0 - a)^4} - \frac{6}{(x_0 - a)^4} = \frac{16(1 + r^3)^4}{(1 + r^2)^9} (r^6 - 3r^4 + 8r^3 - 3r^2 + 1) < 0,
\end{equation}
where the inequality holds for $r \in (-1, -0.4) \supset (-1, r_0)$ by numerical computation.

From the formula \eqref{eq:expression_of_a_b_in_r}, we see that if any one of the three points $a, x_0, b$ is fixed, then $r$ is determined and so are the other two points. As $r \to r_0$, $a \to 0$, $x_0 \to 1 + r^2_0$ and $b \to (1 - r^{-1}_0)(1 + r^2_0)$. When $r$ decreases, $a, x_0, b$ all increase. If $r$ is close to $-1$ such that $r = -1 + \epsilon$, then
\begin{equation}
  x_0 = \frac{2}{9}\epsilon^{-2} + \bigO(\epsilon^{-1}), \quad x_0 - a = \frac{2}{3}\epsilon^{-1} + \bigO(1), \quad b - x_0 = \frac{2}{3}\epsilon^{-1} + \bigO(1).
\end{equation}

To apply the steepest-descent method to \eqref{eq:rescaled_K_Laguerre:2}, we need the contours $ \Gamma_a, \Gamma_b, \Sigma$ to satisfy the following conditions

\paragraph{Conditions satisfied by contours $\Gamma_a, \Gamma_b, \Sigma$}

\begin{enumerate}
\item
  The contours are of the shapes largely depicted in Figure \ref{fig:Sigma_and_Gamma}.
\item
  The point $x_0$ is the maximum of $\Re F(x)$ for $z \in \Sigma$ and the minimum for $z \in \Gamma_a$ and $z \in \Gamma_b$. As $z$ moves to $\infty$ along any direction of $\Sigma$, $\Re F(x) \to -\infty$.
\item
  Denoting the contours $\Sigma$, $\Gamma_b$ and $\Gamma_a$ near $x_0$ by $\Sigma^{\ess}$, $\Gamma^{\ess}_b$ and $\Gamma^{\ess}_a$ respectively, such that (see Figure \ref{fig:essential_parts})
  \begin{equation}
    X^{\ess} = \{ z \in X \mid \lvert z - x_0 \rvert < c^{-1}_3 n^{-\frac{1}{5}} \}, \quad \text{where $X$ stands for $\Sigma$, $\Gamma_b$ or $\Gamma_a$,}
  \end{equation}
  we require that the shape of $X^{\ess}$ is depicted as in Figure \ref{fig:essential_parts}.
\end{enumerate}
 Below we are going to construct contours as described above.

\begin{figure}[ht]
  \begin{minipage}[t]{0.45\linewidth}
    \centering
    \includegraphics{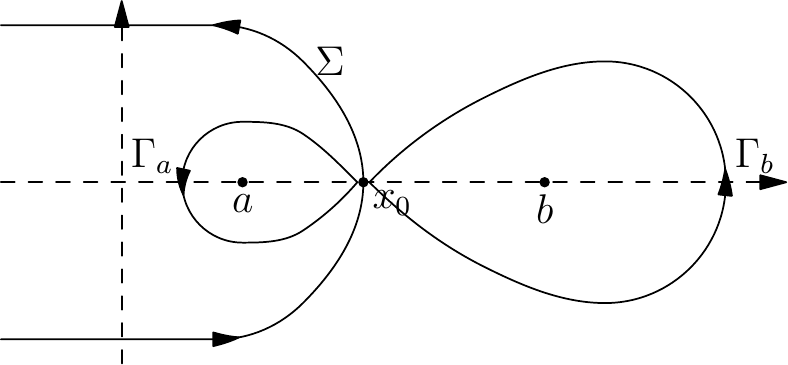}
    \caption{The schematic shape of $\Sigma$ and $\Gamma_{a, b} = \Gamma_a \cup \Gamma_b$.}
    \label{fig:Sigma_and_Gamma}
  \end{minipage}
  \hspace{\stretch{1}}
  \begin{minipage}[t]{0.45\linewidth}
    \centering
    \includegraphics{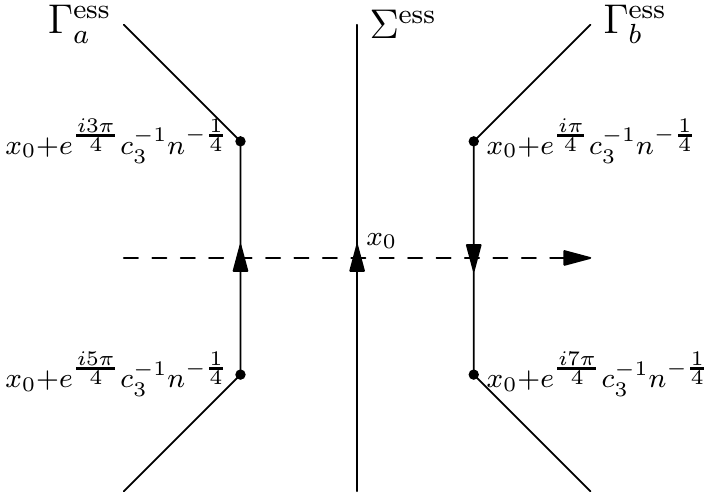}
    \caption{The central part of $\Gamma^{\ess}_a$, $\Sigma^{\ess}$ and $\Gamma^{\ess}_b$ around $x_0$ that are the left, the middle and the right respectively. The contours extend to the ends having distance $c^{-1}_3 n^{-\frac{1}{5}}$ to $x_0$.}
    \label{fig:essential_parts}
  \end{minipage}
\end{figure}

The explicit construction of the contours turns out to be tricky. We relegate the construction to Appendix \ref{sec:constr_of_contours}, and assume the properties above for the contours in the remaining part of this section.

Below we do the steepest-descent analysis of \eqref{eq:rescaled_K_Laguerre:2} around $x_0$. First we specify the values of the constants $c_1, c_2, c_3$ appearing in \eqref{eq:rescaling_of_n_1_n_2_x_y_z_w} as
\begin{equation} \label{eq:formulas_of_c_123}
  c_2 = e^{\frac{1}{b - x_0}}, \quad c_3 = 6^{-\frac{1}{4}} F^{(4)}(x_0)^{\frac{1}{4}}, \quad c_1 = (b - x_0)^2 c^2_3.
\end{equation}
For the asymptotic analysis, we write
\begin{equation} \label{eq:K^2_divided_into_ess_and_remainder}
  K^{(2)}(s, u; t, v) = K^{(2)}_{\ess}(s, u; t, v) + K^{(2)}_{\remainder}(s, u; t, v),
\end{equation}
where $K^{(2)}_{\ess}(s, u; t, v)$ is expressed by the integral formula on the right-hand side of \eqref{eq:rescaled_K_Laguerre:2} with the contours $\Gamma_a$, $\Gamma_b$ and $\Sigma$ replaced by $\Gamma^{\ess}_a$, $\Gamma^{\ess}_b$ and $\Sigma^{\ess}$ respectively, and $K^{(2)}_{\remainder}(s, u; t, v)$ is expressed by the same integral formula with $(z, w) \in \Sigma \times (\Gamma_a \cup \Gamma_b) \setminus \Sigma^{\ess} \times (\Gamma^{\ess}_a \cup \Gamma^{\ess}_b)$. 

Taking the substitution
\begin{equation} \label{eq:change_of_variable_w_z_again}
  w \mapsto x_0 + c^{-1}_3 n^{-\frac{1}{4}} w, \quad z \mapsto x_0 + c^{-1}_3 n^{-\frac{1}{4}} z,
\end{equation}
such that the contours $\Sigma^{\ess}$ and $\Gamma^{\ess}_a$, $\Gamma^{\ess}_b$ are scaled into $\tilde{\Sigma}$ and $\tilde{\Gamma}_a$, $\tilde{\Gamma}_b$ respectively (see Figure \ref{fig:essential_parts_scaled}), we write
\begin{equation} \label{eq:change_of_variable_K^2_ess}
  K^{(2)}_{\ess}(s, u; t, v) = c^{-1}_3 n^{-\frac{1}{4}} \left( \frac{1 - c_3 n^{-\frac{3}{4}} v}{1 - c_3 n^{-\frac{3}{4}} u} \right)^{n x_0} [c^{x_0}_2 (b - x_0)]^{c_1 n^{\frac{1}{2}} (s - t)} \tilde{K}^{(2)}_{\ess}(s, u; t, v)),
\end{equation}
where
\begin{multline} \label{eq:double_countour_defn_of_K^2_ess}
  \tilde{K}^{(2)}_{\ess}(s, u; t, v) = \frac{1}{(2 \pi i)} \oint_{\tilde{\Sigma}} dz \oint_{\tilde{\Gamma}} dw \frac{(1 - c_3 n^{-\frac{3}{4}} v)^{c^{-1}_3 n^{\frac{3}{4}} w}}{(1 - c_3 n^{-\frac{3}{4}} u)^{c^{-1}_3 n^{\frac{3}{4}} z}} \\
  \frac{\left[ c^{c^{-1}_3 n^{-\frac{1}{4}} z}_2 \left( 1 - \frac{1}{b - x_0} c^{-1}_3 n^{-\frac{1}{4}} z \right) \right]^{c_1 n^{\frac{1}{2}} s}}{\left[ c^{c^{-1}_3 n^{-\frac{1}{4}} w}_2 \left( 1 - \frac{1}{b - x_0} c^{-1}_3 n^{-\frac{1}{4}} w \right) \right]^{c_1 n^{\frac{1}{2}} t}} \frac{e^{F_n(x_0 + c^{-1}_3 n^{-\frac{1}{4}} z)}}{e^{F_n(x_0 + c^{-1}_3 n^{-\frac{1}{4}} w)}} \frac{1}{w - z}.
\end{multline}

For $\lvert w \rvert \leq n^{\frac{1}{20}}$, we have
\begin{equation} \label{eq:approx_leant_term_Pearcey}
  \left( 1 - c_3 n^{-\frac{3}{4}} v \right)^{c^{-1}_3 n^{\frac{3}{4}} w} = e^{-vw} e^{\bigO(n^{-\frac{11}{20}} w)}
\end{equation}
and
\begin{equation} \label{eq:approx_quadartic_term_Pearcey}
  \begin{split}
    \left[ c^{c^{-1}_3 n^{-\frac{1}{4}} w}_2 \left( 1 - \frac{1}{b - x_0} c^{-1}_3 n^{-\frac{1}{4}} w \right) \right]^{c_1 n^{\frac{1}{2}} t} ={}& \left[ 1 - \frac{1}{2} c^{-1}_1 n^{-\frac{1}{2}} w^2 + \bigO(n^{-\frac{3}{4}} w^3) \right]^{c_1 n^{\frac{1}{2}} t} \\
    ={}& e^{-\frac{w^2}{2} t} e^{\bigO(n^{-\frac{1}{4}} w^3)}.
  \end{split}
\end{equation}

Using approximations \eqref{eq:approx_leant_term_Pearcey} and \eqref{eq:approx_quadartic_term_Pearcey} for $w \in \tilde{\Gamma}$ and $z \in \tilde{\Sigma}$, and noting that $F_n(x_0 + c^{-1}_3 n^{-\frac{1}{4}} w)$ is approximated by $nF(x_0 + c^{-1}_3 n^{-\frac{1}{4}} w)$ as in \eqref{eq:relation_between_F_n_and_F} and
\begin{equation}
  \begin{split}
    F(x_0 + c^{-1}_3 n^{-\frac{1}{4}} w) ={}& F(x_0) + \frac{1}{24} F^{(4)}(x_0)(c^{-1}_3 n^{-\frac{1}{4}} z)^4 + \bigO((c^{-1}_3 n^{-\frac{1}{4}} z)^5) \\
    ={}& F(x_0) + \frac{1}{4n} z^4 + \bigO(n^{-\frac{5}{4}} z^5),
  \end{split}
\end{equation}
we have
\begin{equation} \label{eq:final_approx_of_K^2}
  \begin{split}
    \tilde{K}^{(2)}_{\ess}(s, u; t, v) ={}& \frac{1}{(2 \pi i)} \oint_{\tilde{\Sigma}} dz \oint_{\tilde{\Gamma}} dw \frac{e^{uz}}{e^{vw}} \frac{e^{-\frac{s z^2}{2}}}{e^{-\frac{t w^2}{2}}} \frac{e^{\frac{z^4}{4}}}{e^{\frac{w^4}{4}}} \frac{1}{z - w} \frac{e^{\bigO(n^{-\frac{11}{20}} w)}}{e^{\bigO(n^{-\frac{11}{20}} z)}} \frac{e^{\bigO(n^{-\frac{1}{4}} z^3)}}{e^{\bigO(n^{-\frac{1}{4}} w^3)}} \frac{e^{\bigO(n^{-\frac{1}{4} z^5})}}{e^{\bigO(n^{-\frac{1}{4} w^5})}} \\
  ={}& \frac{1}{(2 \pi i)} \oint_{\tilde{\Sigma}^{\infty}} dz \oint_{\tilde{\Gamma}^{\infty}} dw  \frac{e^{-\frac{w^2}{4} + \frac{t w^2}{2} - vw}}{e^{-\frac{z^4}{4} + \frac{s z^2}{2} - ut}} \frac{1}{w - z} (1 + \bigO(n^{-\frac{1}{5}})).
  \end{split}
\end{equation}

Similar to \eqref{eq:change_of_variable_K^2_ess}, we take the change of variable \eqref{eq:change_of_variable_w_z_again} and write
\begin{equation} \label{eq:transform_of_K^2_remainder}
  K^{(2)}_{\remainder}(s, u; t, v) = c_3 n^{\frac{1}{4}} \left( \frac{1 - c_3 n^{-\frac{3}{4}} v}{1 - c_3 n^{-\frac{3}{4}} u} \right)^{n x_0} [c^{x_0}_2 (b - x_0)]^{c_1 n^{\frac{1}{2}} (s - t)} \tilde{K}^{(2)}_{\remainder}(s, u; t, v)),
\end{equation}
where $\tilde{K}^{(2)}_{\remainder}(s, u; t, v))$ is defined by a double contour integral formula like $\tilde{K}^{(2)}_{\ess}(s, u; t, v))$ in \eqref{eq:double_countour_defn_of_K^2_ess}, with the same integrand, but the integral is taken on the contour $(z, w) \in c_3 n^{\frac{1}{4}} (\Sigma - x_0) \times c_3 n^{\frac{1}{4}} (\Gamma_{a, b} - x_0) \setminus \tilde{\Sigma} \times \tilde{\Gamma}$. Recall that $\Re F(z)$ attains its unique minimum at $x_0$ on $\Sigma$ and attains its unique maximum at $\Gamma_{a, b}$. It is not difficult to check that on for $(z, w)$ on the contour for $\tilde{K}^{(2)}_{\remainder}(s, u; t, v)$, the factor $e^{F_n(x_0 + c^{-1}_3 n^{-\frac{1}{4}} z)}/e^{F_n(x_0 + c^{-1}_3 n^{-\frac{1}{4}} w)}$ dominates the other terms of the integrand and make double contour integral $\tilde{K}^{(2)}_{\remainder}(s, u; t, v)$ vanishing as $n \to \infty$. 

At last we consider $K^{(1)}(s, u; t, v)$ in \eqref{eq:rescaled_K_Laguerre:1}, the other part of the kernel $K(s, u; t, v)$ in \eqref{eq:kernel_rescaled_and_split}. Note that if $s \geq t$, then $K^{(1)}(s, u; t, v)$ vanishes, and if $s < t$, since $c_1 > 0$ and $c_2 > 1$ in \eqref{eq:formulas_of_c_123}, we have $c^{-c_1 n^{-\frac{1}{2}} s}_2 (1 + c_3 n^{-\frac{3}{4}} u) \geq c^{-c_1 n^{-\frac{1}{2}} t}_2 (1 + c_3 n^{-\frac{3}{4}} v)$ for all $u, v \in \realR$ and $n$ large enough. Below we assume the condition $s < t$ and $n$ is large enough so that
\begin{equation}
  K^{(1)}(s, u; t, v) = \frac{-1}{2 \pi i} \oint_{\Gamma^*_b} \left( \frac{1 - c_3 n^{-\frac{3}{4}} v}{1 - c_3 n^{-\frac{3}{4}} u} \right)^{nw} [c^w_2 (b - w)]^{c_1 n^{\frac{1}{2}} (s - t)} dw.
\end{equation}
Here the contour $\Gamma^*_b$, like $\Gamma_b$ in \eqref{eq:rescaled_K_Laguerre:1}, encloses the pole $b$. We use a different notation, because we want to deform the contour into a square whose left side is through $x_0$, as in Figure \ref{fig:Gamma_b_star}, so that it is of a different shape from the $\Gamma_b$ for the asymptotic analysis of $K^{(2)}(s, u; t, v)$. Note that $\Sigma_{\ess}$ is part of $\Gamma_b$. It straightforward to check by explicit calculation that the absolute value of the integrand attains its unique maximum on the contour $\Gamma^*_b$ at $x_0$. Then we write similar to \eqref{eq:K^2_divided_into_ess_and_remainder}, \eqref{eq:change_of_variable_K^2_ess} and \eqref{eq:transform_of_K^2_remainder}
\begin{figure}[ht]
  \centering
  \includegraphics{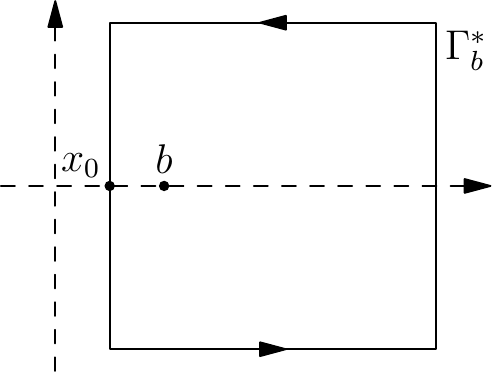}
  \caption{The square contour $\Gamma^*_b$ around $b$ whose left side is through $x_0$.}
  \label{fig:Gamma_b_star}
\end{figure}
\begin{equation} \label{eq:K^1_rescaled_and_distilled}
  \begin{split}
    K^{(1)}(s, u; t, v) = {}& K^{(1)}_{\ess}(s, u; t, v) + K^{(1)}_{\remainder}(s, u; t, v) \\
    = {}& c^{-1}_3 n^{-\frac{1}{4}} \left( \frac{1 - c_3 n^{-\frac{3}{4}} v}{1 - c_3 n^{-\frac{3}{4}} u} \right)^{n x_0} [c^{x_0}_2 (b - x_0)]^{c_1 n^{\frac{1}{2}} (s - t)} \\
    & \times \left(\tilde{K}^{(1)}_{\ess}(s, u; t, v) + \tilde{K}^{(1)}_{\remainder}(s, u; t, v) \right),
  \end{split}
\end{equation}
where (taking the change of variable $w \mapsto x_0 + c^{-1}_3 n^{-\frac{1}{4}} w$ as in \eqref{eq:change_of_variable_w_z_again})
\begin{equation}
  \tilde{K}^{(1)}_{\ess}(s, u; t, v) = \frac{-1}{2 \pi i} \oint_{\Sigma^{\ess}} \left( \frac{1 - c_3 n^{-\frac{3}{4}} v}{1 - c_3 n^{-\frac{3}{4}} u} \right)^{c^{-1}_3 n^{\frac{3}{4}} w} \left[ c^{c^{-1}_3 n^{-\frac{1}{4}} w}_2 \left( 1 - \frac{1}{b - x_0} c^{-1}_3 n^{-\frac{1}{4}} w \right) \right]^{c_1 n^{\frac{1}{2} (s - t)}} dw,
\end{equation}
and $\tilde{K}^{(1)}_{\remainder}(s, u; t, v)$ is a contour integral with the same integrand as $\tilde{K}^{(1)}_{\ess}(s, u; t, v)$ with the integral contour changed into $\Gamma_b \setminus \Sigma^{\ess}$. 

Using the approximations \eqref{eq:approx_leant_term_Pearcey} and \eqref{eq:approx_quadartic_term_Pearcey}, we have (if $s < t$ and $n$ is large enough)
\begin{equation} \label{eq:K^1_final_approx}
  \begin{split}
    \tilde{K}^{(1)}_{\ess}(s, u; t, v) ={}& \frac{1}{2 \pi i} \oint_{\tilde{\Gamma}^*_b} e^{(v - u)w - \frac{w^2}{2}(s - t)} e^{\bigO(n^{-\frac{11}{20}} w)} e^{\bigO(n^{-\frac{1}{4}} w^3)} dw \\
    ={}& \frac{1}{2 \pi i} \int^{i\infty}_{-i\infty} e^{(v - u)w - \frac{w^2}{2}(s - t)} dw (1 + \bigO(n^{-\frac{1}{4}})) \\
    ={}& \frac{1}{\sqrt{2\pi (t - s)}} e^{-\frac{(v - u)^2}{2(t - s)}} (1 + \bigO(n^{-\frac{1}{4}})).
  \end{split}
\end{equation}
The estimate of $\tilde{K}^{(1)}_{\remainder}(s, u; t, v))$ is similar to $\tilde{K}^{(2)}_{\remainder}(s, u; t, v))$, and we can show that it vanishes exponentially as $n \to \infty$.

Collecting the results in \eqref{eq:kernel_rescaled_and_split}, \eqref{eq:K^2_divided_into_ess_and_remainder}, \eqref{eq:change_of_variable_K^2_ess}, \eqref{eq:final_approx_of_K^2}, \eqref{eq:K^1_rescaled_and_distilled}, \eqref{eq:K^1_final_approx} and the vanishing properties of $\tilde{K}^{(2)}_{\remainder}(s, u; t, v)$, $\tilde{K}^{(1)}_{\remainder}(s, u; t, v)$ and $\tilde{K}^{(1)}_{\ess}(s, u; t, v)$ when $s > t$, we prove that as $s, u, t, v$ are fixed and $n \to \infty$, 
\begin{equation}
  \lim_{n \to \infty} (1 + c_3 n^{-\frac{3}{4}} u) c^{-c_1 n^{-\frac{1}{2}} (s - t)} \frac{f_n(s, u)}{f_n(t, v)} K(n_1, x; n_2, y) dy = K^P(s, u; t, v) dv.
\end{equation}
with the conjugation function
\begin{equation} \label{eq:conjugation_Pearcey}
  f_n(s, u) = (1 + c_3 n^{-\frac{3}{4}} u)^{n x_0} [c^{x_0}_2 (x_0 - b)n]^{-c_1 n^{\frac{1}{2}}s}.
\end{equation}
Since we have
\begin{equation}
  \lim_{n \to \infty} (1 + c_3 n^{-\frac{3}{4}} u) c^{-c_1 n^{-\frac{1}{2}} (s - t)} = 1 \quad \text{as $n \to \infty$ and $u$ is fixed,}
\end{equation}
and by using the change of variables \eqref{eq:rescaling_of_n_1_n_2_x_y_z_w} reversely
\begin{equation}
  f_n(s, u) = x^{nx_0} ((x_0 - b)n)^{2n - n_1},
\end{equation}
we prove Theorem \ref{thm:Pearcey_kernel}.

\subsection{Proof of Theorem \ref{thm:Pearcey_added}}

From Theorem \ref{thm:Schur_corrsp}, the sequences
\begin{equation} \label{eq:sequence_added}
  \nu^{(1)}, \dotsc, \nu^{(n)}, \dotsc, \nu^{(N)}, \quad \text{with} \quad \nu^{(n)} = (\nu^{(n)}_1, \dotsc, \nu^{(n)}_n)
\end{equation}
have the same joint distribution as the spectra of the consecutive minors of a $\GUE(N)$ matrix with external source, namely
\begin{equation}
  (M + A)_N, \quad \text{with} \quad M_N = \GUE(N) \quad \text{and} \quad A_N = \diag(\underbrace{\kappa_1, \dotsc, \kappa_1}_{r_1}, \underbrace{\kappa_2, \dotsc, \kappa_2}_{N - r_2}).
\end{equation}
The sequence \eqref{eq:sequence_added} forms an inhomogeneous Markov chain in discrete time given by \eqref{eq:transition_general}, with correlation kernel given by \eqref{eq:compact_integral_formula_of_K}. In \cite{Adler-Orantin-van_Moerbeke10}, we studied $n$ non-intersecting Brownian paths $(x_1(t), \dotsc, x_n(t))$ leaving from the origin at time $t = 0$, with $n - \lfloor pn \rfloor$ particles forced to the point $b\sqrt{n}$ and $\lfloor pn \rfloor$ particles forced to $a\sqrt{n}$ at time $t = 1$. In Theorem 1.1 of \cite{Adler-Orantin-van_Moerbeke10}, it was shown that a bifurcation (cusp) appears at the point $(x_0\sqrt{n}, t_0)$, when $n \to \infty$, and that a Pearcey process arises in the scale $c_0 \mu n^{-1/4}$, with
\begin{equation}
  \begin{split}
    q & \text{ specified by the equation } p = \frac{1}{1 + q^3}, \\
    t_0 {}& = \left( 1 + 2(a - b)^2 \frac{q^2 - q + 1}{(q + 1)^2} \right)^{-1}, \\
    x_0 {}& = \frac{(2a - b)q + (2b - a)}{q + 1} t_0, \\
    c_0 \mu {}& = \sqrt{\frac{t_0(1 - t_0)}{2}} \left( \frac{q^2 - q + 1}{q} \right)^{\frac{1}{4}}.
  \end{split}
\end{equation}
Referring to the notation of Corrollary \ref{cor:polymer_added}, we set
\begin{equation}
  b\sqrt{n} = \kappa_1 = -\sqrt{n}, \quad a\sqrt{n} = \kappa_2 = \sqrt{n}, \quad (1 - p)n = r_1 = \frac{n}{2}, \quad pn = n - r_1 = \frac{n}{2},
\end{equation}
that is, $a = -b = 1$, $p = \frac{1}{2}$, $q = 1$, yielding $x_0 = 0$, $t_0 = \frac{1}{3}$, $c_0 \mu = \frac{1}{3}$. For these values, one checks that
\begin{equation}
  \lim_{n \to \infty} \Prob^{(b\sqrt{n}, a\sqrt{n})}_n \left( \text{all } x_j(t_0) \in x_0\sqrt{n} + \frac{c_0 \mu}{n^{\frac{1}{4}}} E^c \right) = \Prob^P(\mathcal{P}(0) \cap E = \emptyset),
\end{equation}
where $\mathcal{P}(t)$ is the Pearcey process.\footnote{The value $t_0 = \frac{1}{3}$ also derives from the correspondence (see Box 1 in \cite{Adler-Orantin-van_Moerbeke10}) of the Brownian motion problem with the GUE-matrix with external source $M + A$, where the diagonal matrix $A_t = \diag(\kappa_1, \dotsc, \kappa_1, \kappa_2, \dotsc, \kappa_2) \sqrt{2t/(1 - t)}$. One notices $\sqrt{2t/(1 - t)} = 1$ only when $t = \frac{1}{3}$.}

\section{Interpretation in percolation model and Schur process} \label{sec:Schur_process}

%\footnote{In this section we show the interpretation of the random matrix minor processes in the last passage percolation model, or equivalently, in the Schur process. The results in this section are not totally new. The interpretation of the Wishart minor model is well known \cite{Dieker-Warren09}, and the interpretation of the GUE with external source minor model is a natural limit of the former. While the interpretation of the multiple Jacobi minor model is new, it is a generalization of the result of Forrester and Nagao \cite{Forrester-Nagao11} for the standard Jacobi minor model. The multiple Laguerre minor model (of the first kind) is a limit of the multiple Jacobi one.}

In this section we prove Theorem \ref{thm:Schur_corrsp} for the three minor processes that we study. We also give the proof for the Wishart minor process, for completeness as well as a preparation of the GUE with external source case.

Recall the RSK correspondence that gives a bijection between $m \times n$ nonnegative integer matrices $X = [x_{ij}]$ and a pair of semi-standard tableaux. The shape of the pair of Young tableaux, which is a Young diagram, has a bijective relation to the maximal length of $l$ non-intersecting up-right paths $L^{(l)}(m, n)$ defined in \eqref{eq:defn_of_L^l(N,n)} with the weight on each site $(i,j)$ equal to the entry $x_{ij}$ of $X$. A Young diagram can be represented by a sequence of integers in descending order, and only the nonzero terms are meaningful. To be concrete, suppose the shape of the Young diagram is $\kappa^{(m, n)} = (\kappa^{(m, n)}_1, \kappa^{(m, n)}_2, \dotsc)$, then 
\begin{equation} \label{eq:relation_between_mu_and_L}
  \sum^l_{i=1} \kappa^{(m, n)}_i = L^{(l)}(m,n).
\end{equation}
Note that since the Young diagram $\kappa^{(m, n)}$ comes from the $m \times n$ matrix $X$ by RSK correspondence, $\kappa^{(m, n)}_i = 0$ for all $i > \min(m, n)$.

Below we fix $m$ and assume $1 \leq n \leq m$. By definition, $\kappa^{(m, n)}$ has infinitely many component, but since the components $\kappa^{(m, n)}_k = 0$ for $k > n$, we abuse the notation and consider $\kappa^{(m, n)}$ as an $n$-variate vector, as a discrete counterpart of the $\mu^{(m, n)}$ defined in \eqref{eq:defn_mu^mn}. From the relation \eqref{eq:relation_between_mu_and_L} to the last-passage paths, we have the interlacing property $\kappa^{(m, n-1)} \preceq \kappa^{(m, n)}$, that is, (note that as $i$ increases, our $\kappa^{(m, n)}_i$ are decreasing but $\mu^{(m, n)}_i$ are increasing)
\begin{equation} \label{eq:basic_interlacing_condition}
  0 \leq \kappa^{(m, n)}_n \leq \kappa^{(m, n - 1)}_{n - 1} \leq \kappa^{(m, n)}_{n-1} \leq \kappa^{(m, n-1)}_{n-2} \leq \kappa^{(m, n)}_{n-2} \leq \dots \leq \kappa^{(m, n)}_2 \leq \kappa^{(m, n-1)}_1 \leq \kappa^{(m, n)}_1 < \infty.
\end{equation}
In other words, $\{ \kappa^{(m, n)}_k \mid 1 \leq k \leq n \leq N \}$ form a Gelfand-Tsetlin pattern.

Let $s_1, \dotsc, s_m; t_1, \dotsc, t_n$ be positive parameters such that $s_i t_j < 1$ for all $i = 1, \dotsc, m, j = 1, \dotsc, n$ and let $w(i,j)$ be an independent random variable in geometric distribution with parameter $s_i t_j$, so that
\begin{equation} \label{eq:random_variable_w(i,j)}
  \Prob(w(i,j) = k) = (1 - s_i t_j)(s_i t_j)^k.
\end{equation}
If the entries $x_{ij}$ of the $m \times n$ matrix $X$ are random variables $w(i,j)$ defined in \eqref{eq:random_variable_w(i,j)}, the shape of the corresponding pair of the Young tableaux, which is a random Young diagram, follows the Schur measure \cite{Okounkov01}, \ie, the probability that the shape is $\kappa$ is
\begin{equation} \label{eq:Schur_measure_formula}
  P(\kappa) = \prod^m_{i=1} \prod^n_{j=1} (1-s_i t_j) s_{\kappa}(s_1, \dots, s_m) s_{\kappa}(t_1, \dots, t_n).
\end{equation}
Equivalently, if the distribution of the weight $w(i,j)$ at each $(i,j)$ site of the $m \times n$ square lattice is defined by \eqref{eq:random_variable_w(i,j)}, the maximal length of $l$ non-intersecting up-right paths $L^{(l)}(m,n)$ becomes a random variable and is determined by \eqref{eq:Schur_measure_formula} via \eqref{eq:relation_between_mu_and_L}. Note that the maximal length $L^{(l)}(m,n)$ is well defined if the weight $x_{ij}$ are real numbers instead of integers. Then we can define $\kappa^{(m, n)}_i$ conversely by \eqref{eq:relation_between_mu_and_L}. In that case $\kappa^{(m, n)} = (\kappa^{(m, n)}_1, \kappa^{(m, n)}_2, \dotsc)$ is a sequence of real numbers in descending order, but may not be interpreted as a Young diagram (\cf\ $\mu^{(m, n)}$ defined in \eqref{eq:defn_mu^mn} and Remark \ref{rmk:geometric_RSK}.)

If we fix $m$ and let $n$ vary between $1$ and $N$, we have the joint probability distribution of $\kappa^{(m, 1)}, \kappa^{(m, 2)}, \dots, \kappa^{(N, m)}$ as follows.
\begin{prop}[\cite{Dieker-Warren09}] \label{prop:Markov_Schur}
  In the $m \times N$ rectangular lattice $\{ (i,j) \mid  i = 1, \dotsc, m,\ j = 1, \dotsc, N \}$ where $N \leq m$, let the weight at each site be independent random variables given by \eqref{eq:random_variable_w(i,j)}. If $n$ runs from $1$ to $N$, and is regarded as the discrete time, the random Young diagram $\kappa^{(m, n)}$ evolves as an inhomogeneous Markov chain with transition probability
  \begin{multline} \label{eq:evolution_of_GT}
    P_{n-1,n}(\kappa^{(m, n-1)}, \kappa^{(m, n)}) = \\
    \prod^m_{i=1} (1 - s_i t_n) \frac{S_{\kappa^{(m, n)}}(s_1, \dots, s_m)}{S_{\kappa^{(m, n-1)}}(s_1, \dots, s_m)} t^{\sum^n_{i=1}\kappa^{(m, n)}_i - \sum^{n-1}_{i=1}\kappa^{(m, n-1)}_i}_n \id_{\kappa^{(m, n-1)} \preceq \kappa^{(m, n)}},
  \end{multline}
  and the initial state $\kappa^0 = (0)$. 
\end{prop}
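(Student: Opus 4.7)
The plan is to derive the joint law of the sequence $(\kappa^{(m,1)},\dotsc,\kappa^{(m,N)})$ directly from the RSK correspondence applied to $X=[x_{ij}]$, marginalize using the skew-Cauchy identity for Schur functions, and then read off the transition probability as a ratio of consecutive marginals.

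First, apply RSK to the $m\times N$ matrix $X$ with $x_{ij}$ independent geometric of parameter $s_it_j$. This produces a pair $(P,Q)$ of semi-standard tableaux of common shape $\kappa^{(m,N)}$, with $P$-entries in $\{1,\dotsc,m\}$ and $Q$-entries in $\{1,\dotsc,N\}$. A classical property of RSK is that if one runs the insertion on the first $n$ columns of $X$, the resulting shape is exactly $\kappa^{(m,n)}$; equivalently, the cells of $Q$ containing the entry $n$ form the horizontal strip $\kappa^{(m,n)}/\kappa^{(m,n-1)}$. Since RSK is weight-preserving, $\prod_{ij}(s_it_j)^{x_{ij}}=\prod_is_i^{c_i(P)}\prod_jt_j^{c_j(Q)}$, where $c_i(P)$ counts the $i$'s in $P$. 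Summing over $P$'s of shape $\kappa^{(m,N)}$ produces the Schur function $s_{\kappa^{(m,N)}}(s_1,\dotsc,s_m)$, and because the number of $k$'s in $Q$ equals $|\kappa^{(m,k)}|-|\kappa^{(m,k-1)}|$, one obtains the joint density
\begin{equation}
\Prob(\kappa^{(m,1)},\dotsc,\kappa^{(m,N)})=\prod_{i=1}^{m}\prod_{j=1}^{N}(1-s_it_j)\,s_{\kappa^{(m,N)}}(s_1,\dotsc,s_m)\prod_{k=1}^{N}t_k^{|\kappa^{(m,k)}|-|\kappa^{(m,k-1)}|}\id_{\kappa^{(m,k-1)}\preceq\kappa^{(m,k)}}.
\end{equation}

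Next, for $n<N$, marginalize over $\kappa^{(m,n+1)},\dotsc,\kappa^{(m,N)}$ using the branching expansion
\begin{equation}
s_{\nu/\lambda}(t_{n+1},\dotsc,t_N)=\sum_{\lambda=\mu^{(n)}\preceq\dotsb\preceq\mu^{(N)}=\nu}\prod_{k=n+1}^{N}t_k^{|\mu^{(k)}|-|\mu^{(k-1)}|}
\end{equation}
together with the skew-Cauchy identity
\begin{equation}
\sum_{\nu}s_\nu(s_1,\dotsc,s_m)\,s_{\nu/\lambda}(t_{n+1},\dotsc,t_N)=s_\lambda(s_1,\dotsc,s_m)\prod_{i=1}^{m}\prod_{j=n+1}^{N}(1-s_it_j)^{-1}.
\end{equation}
This collapses the trailing factors and yields
\begin{equation}
\Prob(\kappa^{(m,1)},\dotsc,\kappa^{(m,n)})=\prod_{i=1}^{m}\prod_{j=1}^{n}(1-s_it_j)\,s_{\kappa^{(m,n)}}(s_1,\dotsc,s_m)\prod_{k=1}^{n}t_k^{|\kappa^{(m,k)}|-|\kappa^{(m,k-1)}|}\id_{\kappa^{(m,k-1)}\preceq\kappa^{(m,k)}}.
\end{equation}
The resulting formula has the same shape for every $n$, so the conditional law of $\kappa^{(m,n)}$ given the past depends only on $\kappa^{(m,n-1)}$, establishing the Markov property.

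Taking the ratio of the marginals for $n$ and $n-1$, everything cancels except $\prod_{i=1}^m(1-s_it_n)$, the Schur-function quotient $s_{\kappa^{(m,n)}}/s_{\kappa^{(m,n-1)}}$, the factor $t_n^{|\kappa^{(m,n)}|-|\kappa^{(m,n-1)}|}$, and the interlacing indicator; this is precisely \eqref{eq:evolution_of_GT}. The initial condition $\kappa^{(m,0)}=(0)$ is built in (no columns have been processed). The only non-trivial ingredient is the column-by-column RSK interpretation of the intermediate shapes $\kappa^{(m,n)}$; everything else is bookkeeping with standard Schur identities. Accordingly the main obstacle is expository: one must unpack the RSK insertion carefully enough to certify that the horizontal-strip condition and the exponent of $t_n$ appear exactly as claimed.
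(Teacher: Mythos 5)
Your argument is correct. Note, however, that the paper does not prove Proposition \ref{prop:Markov_Schur} at all: it is quoted from \cite{Dieker-Warren09} (with the special case $q_j=1$ attributed to \cite{O'Connell03}), so there is no in-paper proof to compare against. What you wrote is essentially the standard Schur-measure derivation used in those references: (i) apply RSK to the geometric-weight matrix and use the restriction property of the recording tableau, together with Greene's theorem (which is exactly the identification the paper itself takes as the definition of $\kappa^{(m,n)}$ via \eqref{eq:relation_between_mu_and_L}), to identify the horizontal strips $\kappa^{(m,n)}/\kappa^{(m,n-1)}$ with the entries $n$ of $Q$; (ii) sum over the insertion tableau to produce $s_{\kappa^{(m,N)}}(s_1,\dotsc,s_m)$ and obtain the joint law of the whole Gelfand--Tsetlin pattern; (iii) marginalize with the branching rule and the skew-Cauchy identity, which makes the marginals have the same self-similar form for every $n$, so the conditional law depends only on $\kappa^{(m,n-1)}$ and the ratio of consecutive marginals gives exactly \eqref{eq:evolution_of_GT} (the normalization $\sum_{\nu}s_\nu(s)\,s_{\nu/\lambda}(t_n)=s_\lambda(s)\prod_i(1-s_it_n)^{-1}$ is the single-variable case of the same identity). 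The only ingredients that genuinely need care are the two RSK facts in step (i), which are classical, so your proof is complete up to citing them; the convergence of the skew-Cauchy sum is guaranteed by the standing assumption $s_it_j<1$.
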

In the special case that $q_j = 1$ for all $j$, Proposition \ref{prop:Markov_Schur} is proved in \cite{O'Connell03}. See also \cite{Forrester-Nagao11}.

Choosing special values of $p_i, q_j$ and taking limit, Proposition \ref{prop:Markov_Schur} yields the four cases of Theorem \ref{thm:Schur_corrsp}. Below we show the detail of taking limits case by case.

\paragraph{Wishart}

Let the $m = M$ in Proposition \ref{prop:Markov_Schur}. Set
\begin{equation} \label{eq:a_i_b_j_Wishart_limit}
  p_i = 1, \quad q_j = 1 + a_j/L, \quad \textnormal{for $i = 1, \dotsc, M$, $j = 1, \dotsc, N$, where $a_j < 0$},
\end{equation}
we have
\begin{equation} \label{eq:from_geometrix_to_exponential_Wishart}
  \lim_{L \to \infty} \frac{w(i,j)}{L} = u(i,j),
\end{equation}
where $u(i,j)$ are independent random variables in exponential distribution with parameter $-a_j$, as in the continuous percolation model in the Wishart case of Theorem \ref{thm:Schur_corrsp}.

Note that in this special case \eqref{eq:evolution_of_GT} can be simplified by the evaluation of Schur polynomials \cite[Section 6.1, Exercise 6]{Fulton97}
\begin{equation}
  s_{\kappa}(\underbrace{1, \dots, 1}_M) = \frac{\Delta_M(1-\kappa_1, 2-\kappa_2, \dots, M - \kappa_M)}{\Delta_M(1, 2, \dots, M)},
\end{equation}
if $\kappa_{M+1} = \kappa_{M+2} = \dots = 0$ \footnote{where $\Delta_M(x_1, \dots, x_M) = \prod^M_{i=1} \prod^M_{j=i+1} (x_j-x_i)$}. If we take the limit $L \to \infty$ in \eqref{eq:evolution_of_GT} and consider $\kappa^{(M, n)}_i = \lfloor Lx^{(M, n)}_i \rfloor$ for fixed values of $x^{(M, n)}_i$, we find that the transition probability \eqref{eq:evolution_of_GT} in Proposition \ref{prop:Markov_Schur} satisfies ($\lfloor Lx^{(M, n)} \rfloor = (\lfloor Lx^{(M, n)}_1 \rfloor, \dotsc, \lfloor Lx^{(M, n)}_n \rfloor)$)
\begin{multline} \label{eq:transition_prob_Laguerre_Schur}
  \lim_{L \to \infty} L^n P_{n-1,n}(\lfloor Lx^{(M,n-1)} \rfloor, \lfloor Lx^{(M, n)} \rfloor) = \\
  \frac{(-a_n)^M}{(M-n)!} \frac{\Delta_n(x^{(M, n)}_n, \dots, x^{(M, n)}_1)}{\Delta_{n-1}(x^{(M, n-1)}_{n-1}, \dots, x^{(M, n-1)}_1)} \frac{\prod^n_{i=1} (x^{(M, n)}_i)^{M-n} e^{a_nx^{(M, n)}_i}}{\prod^{n-1}_{i=1} (x^{(M, n-1)}_i)^{M-n+1} e^{a_nx^{(M, n-1)}_i}} \id_{x^{(M, n-1)} \preceq x^{(M, n)}}
\end{multline}
and
\begin{equation} \label{eq:initial_prob_Laguerre_Schur}
  \lim_{L \to \infty} L P(\lfloor Lx^{(M,1)}_1 \rfloor) = (-a_1)^M((M-1)!)^{-1} (x^{(M, 1)}_1)^{M-1}e^{a_1 x^{(M, 1)}_1} \id_{x^{(M, 1)}_1 \geq 0}.
\end{equation}
Now if we consider random variables $\mu^{(M, n)}_{L, i}$ defined by $\kappa^{(M, n)}_i$ as
\begin{equation}
  \kappa^{(M, n)}_i = L\mu^{(M, n)}_{L,i}, \quad \textnormal{for $1 \leq i \leq n \leq N$,}
\end{equation}
then as $L \to \infty$, $\mu^{(M, n)}_{L, i}$ converges in distribution to $\mu^{(M, n)}_i$ that is defined in the continuous percolation model by \eqref{eq:defn_mu^mn} where $u(i,j)$ are independent random variables in exponential distribution with parameter $-a_j$. Then $\mu^{(M, n)}$ constitute an inhomogeneous Markov chain as $n$ runs from $1$ to $N$, and the transition probability $P_{n - 1, n}(\mu^{(M, n - 1)}, \mu^{(M, n)}$ is $\lim_{L \to \infty} L^n P_{n-1,n}(\lfloor L\mu^{(M,n-1)} \rfloor, \lfloor L\mu^{(M, n)} \rfloor)$
given in \eqref{eq:transition_prob_Laguerre_Schur}, and the distribution of $\mu^{(M, 1)}_1$ is $\lim_{L \to \infty} L P(\lfloor \mu^{(M,1)}_1 \rfloor)$ given in \eqref{eq:initial_prob_Laguerre_Schur}. We see that the transition probability of $\mu^{(M, n)}$ coincide with the transitional probability \eqref{eq:transition_general} in the Wishart case, and the probability distribution of $\mu^{(M, 1)}_1$ is the same as the $p_1(\mu^{(M, 1)}_{1})$ in Table \ref{table:thm}. Thus we prove the Wishart case of Theorem \ref{thm:Schur_corrsp}. %This transition density is a special case of that of the generalized Wishart ensemble proved in \cite[Lemma 4.1]{Dieker-Warren09}. 

\begin{rmk} \label{rmk:geometric_RSK}
  The quantity $\mu^{(M, n)}_i$ that come from the limit RSK correspondence can also be associated to the continuous RSK correspondence. See \cite{Kirillov01}, \cite{Noumi-Yamada04} and \cite[Appendix A]{Forrester-Rains05} for more details.
\end{rmk}

\paragraph{GUE with external source}

The classical transition between the Laguerre and Gaussian weights gives that if $n$ is finite and $x = M + \sqrt{M}\xi$,
\begin{equation}
  \lim_{M \to \infty} e^{M-a\sqrt{M}} M^{-M+n} e^{-(1 - a/\sqrt{M})x} x^{M-n} = e^{-\frac{\xi^2}{2} + a\xi}.
\end{equation}
Suppose in the $M \times N$ rectangular lattice $\{ (i,j) \mid i = 1, \dots, M, j = 1, \dots, N \}$ ($N \leq M$), the weight at each site is an independent random variable $u(i,j)$ in exponential distribution with parameter $1 - a_j/\sqrt{M}$. Let $\mu^{(M, 1)}, \dotsc, \mu^{(M, N)}$ be defined by \eqref{eq:defn_mu^mn} and for $1 \leq i \leq n \leq N$, we define the random variables $\nu^{(M, n)} = (\nu^{(M, n)}_1, \dotsc, \nu^{(M, n)}_n)$ by
\begin{equation} \label{eq:GUE_percolation_change_of_variables}
  \mu^{(M, n)}_i = M + \sqrt{M} \nu^{(M, n)}_i,
\end{equation}
such that $\sum^l_{i=1} \nu^{(M, n)}_i = M^{-1/2} (L^{(l)}(M,n) - lM)$. From the property of $\mu^{(M, n)}$, we have that as $n$ runs from $1$ to $N$, the sequences $\nu^{(M, n)} = (\nu^{(M, n)}_1, \dots, \nu^{(M, n)}_n)$ evolves as an inhomogeneous Markov chain. Moreover, as $M \to \infty$, $\nu^{(M, n)}$ have a joint limit distribution. To compute the limit, consider the transition probability $P_{n - 1, n}(\mu^{(M, n - 1)}, \mu^{(M, n)})$ \eqref{eq:transition_general} in the Wishart case of the last-passage percolation model with parameters specified by $p_i = 0, q_j = 1 - a_j/\sqrt{M}$, we have, substituting $\mu^{(M, n)}$ into $\nu^{(M, n)}$ by \eqref{eq:GUE_percolation_change_of_variables} and assuming $\nu^{(M, n)} = \nu^{(n)}$ for all $M$,
\begin{multline} \label{eq:limiting_transition_prob_GUE_percolation}
  \lim_{M \to \infty} M^{n/2} P_{n-1,n}(\mu^{(M, n-1)},\mu^{(M, n)}) \\
  = \frac{e^{-\frac{a^2_n}{2}}}{\sqrt{2\pi}} \frac{\Delta_n(\nu^{(n)}_n, \dots, \nu^{(n)}_1)}{\Delta_{n-1}(\nu^{(n-1)}_{n-1}, \dots, \nu^{(n-1)}_1)} \frac{\prod^n_{i=1} e^{-\frac{(\nu^{(n)}_i)^2}{2} + a_n \nu^{(n)}_i}}{\prod^{n-1}_{i=1} e^{-\frac{(\nu^{(n-1)}_i)^2}{2} + a_n \nu^{(n-1)}_i}} \id_{\nu^{(n-1)} \preceq \nu^{(n)}}
\end{multline}
and
\begin{equation} \label{eq:initial_transition_prob_GUE_percolation}
  \lim_{M \to \infty} \sqrt{M} P(\mu^{(M, 1)}_1) = (2\pi)^{-1/2}e^{-a^2_1/2} e^{-\frac{1}{2}(\nu^{(1)}_1 - a_1)^2}.
\end{equation}
A simple scaling argument yields that as $M \to \infty$, the limiting transition probability from $\nu^{(M, n - 1)}$ to $\nu^{(M, n)}$ has the density function $\lim_{M \to \infty} M^{n/2} P_{n-1,n}(\mu^{(M, n-1)},\mu^{(M, n)})$, and the limiting distribution of $\nu^{(M, 1)}_1$ is $\lim_{M \to \infty} \sqrt{M} P(\mu^{(M, 1)}_1)$. We see that the limiting transition probability of $\nu^{(M, n)}$ coincide with the transitional probability \eqref{eq:transition_general} in the GUE with external source minor process and the limiting distribution of $\nu^{(M, 1)}_1$ is the same as the $p_1(z)$ given in Table \ref{table:thm}, and so we prove the GUE with external source case of Theorem \ref{thm:Schur_corrsp}.

  \paragraph{Jacobi-\Pineiro}
Let the $m = M'$ in Proposition \ref{prop:Markov_Schur}. Set 
\begin{equation} \label{Jacobi_sequence_of_weights}
  p_i = t^{i-1}, \quad q_j = t^{\alpha_j + 1}, \quad \textnormal{for $i = 1, \dotsc, M'$, $j = 1, \dotsc, N$,}
\end{equation}
where $t \in (0,1)$ and $\alpha_j \geq 0$. By the Jacobi-Trudi formula of Schur polynomials, for any Young diagram $\kappa = (\kappa_1, \kappa_2, \dots)$, if $\kappa_{M'+1} = \kappa_{M'+2} = \dots = 0$, then \cite[Section 6.1, Exercise 5]{Fulton97}
\begin{equation}
 s_{\kappa}(1, t, t^2 \dots, t^{M' - 1}) = \frac{\det(t^{(i-1) (\kappa_j+M'-j)})^{M'}_{i,j=1}}{\det(t^{(i-1)(M'-j)})^{M'}_{i,j=1}} = \frac{\Delta_{M'}(t^{\kappa_1+M'-1}, t^{\kappa_2+M'-2}, \dots, t^{\kappa_{M'}})}{\Delta_{M'}(t^{M'-1}, t^{M'-2}, \dots, 1)}.
\end{equation}
Thus with our special choice of $p_i, q_j$ \eqref{Jacobi_sequence_of_weights}, the transition probability \eqref{eq:evolution_of_GT} becomes
\begin{multline} \label{eq:transition_prob_JUE}
  P_{n-1,n}(\kappa^{(M', n-1)}, \kappa^{(M', n)}) = \prod^{M'}_{i=1} (1 - t^{\alpha_n+i}) \frac{\Delta_{M'}(t^{\kappa^{(M', n)}_1+M'-1}, t^{\kappa^{(M', n)}_2+M'-2}, \dots, t^{\kappa^{(M', n)}_{M'}})}{\Delta_{M'}(t^{\kappa^{(M', n-1)}_1+M'-1}, t^{\kappa^{(M', n-1)}_2+M'-2}, \dots, t^{\kappa^{(M, n-1)}_{M'}})} \\
  \times t^{(\alpha_n + 1)( \sum^n_{i=1} \kappa^{(M', n)}_i - \sum^{n - 1}_{i=1} \kappa^{(M', n - 1)}_i)} \id_{\kappa^{(M', n-1)} \preceq \kappa^{(M', n)}},
\end{multline}
where we assume $\kappa^{(M', n)}_i = \kappa^{(M', n - 1)}_j = 0$ for $i > n, j > n - 1$. Note that $\kappa^{(M', n)}_i +M'-i = M' - 1$ (\resp\ $\kappa^{(M', n-1)}_j +M'-j = M' - j$)  if $i > n$ (\resp\ $j > n-1$).

Now we assume
\begin{equation} \label{eq:t_depend_on_L}
  t = e^{-1/L},
\end{equation}
Similar to \eqref{eq:transition_prob_Laguerre_Schur}, if we take the limit $L \to \infty$ in \eqref{eq:evolution_of_GT} and consider $\kappa^{(M', n)}_i = \lfloor Lx^{(M', n)}_i \rfloor$, using the identity

\begin{equation}
  \lim_{L \to \infty} t^{\kappa^{(M', n)}_i+M'-i} = e^{-x^{(M', n)}_i}, \quad \text{for $1 \leq i \leq n \leq N$,}
\end{equation}
we find that the transition probability \eqref{eq:evolution_of_GT} in Proposition \ref{prop:Markov_Schur} (\eqref{eq:transition_prob_JUE} in our special case) satisfies
\begin{multline} \label{eq:asymp_of_Jacobi_process}
  \lim_{L \to \infty} L^n P_{n-1,n}(\lfloor Lx^{(M', n-1)} \rfloor, \lfloor Lx^{(M', n)} \rfloor) = \frac{1}{(M'-n)!} \prod^{M'}_{i=1} (\alpha_n + i) \\
  \times \frac{\Delta_n(e^{-x^{(M', n)}_1}, \dots, e^{-x^{(M', n)}_n})}{\Delta_{n-1}(e^{-x^{(M', n-1)}_1}, \dots, e^{-x^{(M', n-1)}_{n-1}})} \frac{\prod^n_{i=1} (1 - e^{-x^{(M', n)}_i})^{M'-n} e^{-(\alpha_n + 1) x^{(M', n)}_i}}{\prod^{n-1}_{i=1} (1 - e^{-x^{(M', n-1)}_i})^{M'-n+1} e^{-(\alpha_n + 1) x^{(M', n-1)}_i}}.
\end{multline}
Also we have
\begin{equation} \label{eq:asymp_of_Jacobi_process_initial}
  \lim_{L \to \infty} L P(\lfloor x^{(M',1)}_1 \rfloor) = ((M'-1)!)^{-1} \prod^{M'}_{i=1} (\alpha_1 + i) (1 -e^{x^{(M', 1)}_1})^{M'-1} e^{-(\alpha_1 + 1)x^{(M', 1)}_1}.
\end{equation}
On the other hand, since $t$ depends on $L$ by \eqref{eq:t_depend_on_L} similar to \eqref{eq:from_geometrix_to_exponential_Wishart},
\begin{equation}
 \lim_{L \to \infty} \frac{w(i,j)}{L} = u(i,j),
\end{equation}
by \eqref{Jacobi_sequence_of_weights} is a random variable in exponential distribution with parameter $\alpha_j + i$. Then like in the proof of the Wishart case of Theorem \ref{thm:Schur_corrsp}, we consider random variables $\mu^{(M', n)}_{L, i}$ defined by $\kappa^{(M', n)}_i$ as
\begin{equation}
  \kappa^{(M', n)}_i = L\mu^{(M', n)}_{L,i}, \quad \textnormal{for $1 \leq i \leq n \leq N$.}
\end{equation}
As $L \to \infty$, $\mu^{(M', n)}_{L, i}$ converges in distribution to $\mu^{(M', n)}_i$ that is defined in the continuous percolation model by \eqref{eq:defn_mu^mn} where $u(i,j)$ are independent random variables in exponential distribution with parameter $a_j + i$. Then $\mu^{(M', n)}$ constitute an inhomogeneous Markov chain as $n$ runs from $1$ to $N$, and the transition probability $P_{n - 1, n}(\mu^{(M', n - 1)}, \mu^{(M', n)}$ is $\lim_{L \to \infty} L^n P_{n-1,n}(\lfloor L\mu^{(M',n-1)} \rfloor, \lfloor L\mu^{(M', n)} \rfloor)$ given in \eqref{eq:asymp_of_Jacobi_process}, and the distribution of $\mu^{(M', 1)}_1$ is $\lim_{L \to \infty} L P(\lfloor \mu^{(M',1)}_1 \rfloor)$ given in \eqref{eq:asymp_of_Jacobi_process_initial}. To prove the  Jacobi-\Pineiro\ case of Theorem \ref{thm:Schur_corrsp}, we need one more step of change of variables, such that
\begin{equation}
  \nu^{(M', n)}_i = e^{-\mu^{(M', n)}_i}, \quad \textnormal{for $1 \leq i \leq n \leq N$.}
\end{equation}
Noting that $\nu^{(M', n)}$ constitute an inhomogeneous Markov chain and
\begin{align}
  P_{n - 1, n}(\nu^{(M', n - 1)}, \nu^{(M', n)}) = {}& P_{n - 1, n}(\mu^{(M', n - 1)}, \mu^{(M', n)}) \prod^n_{i = 1} \nu^{(M', n)}_i, \\
  P(\nu^{(M', 1)}_1) = {}& P(\mu^{(M', 1)}_1) \nu^{(M', 1)}_1,
\end{align}
we check that the transition probability of $\nu^{(M', n)}$ coincides with the transition probability \eqref{eq:transition_general} in the Jacobi-\Pineiro\ case, and the probability distribution of $\mu^{(M',1)}_1$ is the same as the $p_1(\mu^{(M', 1)}_1)$ in Table \ref{table:thm}, thus finishing the proof in that case.

\paragraph{Multiple Laguerre limit}

The classical transition between the Jacobi and Laguerre weights gives that if $\xi$ is finite and $x = \xi/M'$,
\begin{equation}
  \lim_{M' \to \infty} (M')^{\alpha} x^{\alpha}(1 - x)^{M'} = \xi^{\alpha}e^{-\xi}.
\end{equation}
Then similar to the derivation of the GUE with external source case from the Wishart case, we find the proof of the multiple Laguerre case of Theorem \ref{thm:Schur_corrsp} from that of the Jacobi-\Pineiro\ case.

Suppose in the $M' \times N$ rectangular lattice $\{ (i, j) \mid i = 1, \dotsc, M', j = 1, \dotsc, N \}$ ($N \leq M$), the weight at each site is an independent random variable $u(i, j)$ in exponential distribution with parameter $\alpha_j + i$. Let $\mu^{(M', 1)}, \dotsc, \mu^{(M', N)}$ be defined by \eqref{eq:defn_mu^mn} and for $1 \leq i \leq n \leq N$, we define the random variables  $\tilde{\nu}^{(M', n)} = (\tilde{\nu}^{(M', n)}_1, \dotsc, \tilde{\nu}^{(M', n)}_n)$ and $\nu^{(M', n)} = (\nu^{(M', n)}_1, \dotsc, \nu^{(M', n)}_n)$ by
\begin{equation} \label{eq:relation_between_mu_and_nu_JUE_later}
  \tilde{\nu}^{(M', n)}_i = e^{- \mu^{(M', n)}_i}, \quad \nu^{(M', n)}_i = M' \tilde{\nu}^{(M', n)}_i = M' e^{- \mu^{(M', n)}_i},
\end{equation}
such that $\sum^l_{i=1} \log \nu^{(M', n)}_i = - L^{(l)}(M',n) + l\log
M'$. From the property of $\mu^{(M', n)}$, we have that as $n$ runs
from $1$ to $N$, the sequences  $\tilde{\nu}^{(M', n)} =
(\tilde{\nu}^{(M', n)}_1, \dotsc, \tilde{\nu}^{(M', n)}_n)$ and
$\nu^{(M', n)} = (\nu^{(M', n)}_n, \dotsc, \nu^{(M', n)}_n)$ evolves
as an inhomogeneous Markov chain. Moreover, as $M' \to \infty$,
$\nu^{(M', n)}$ have a joint limit distribution. To compute the joint
limit distribution of $\nu^{(M', n)}$, we note that from the result
obtained above in the Jacobi-\Pineiro\ limit, the distribution
function of $\tilde{\nu}^{(M', n)}$ is given by
\eqref{eq:transition_general} in the Jacobi-\Pineiro\ case, with
$\mu^{(M', n - 1)}, \mu^{(M', n)}$ substituted by $\tilde{\nu}^{(M', n
  - 1)}, \tilde{\nu}^{(M', n)}$. Similar to
\eqref{eq:limiting_transition_prob_GUE_percolation}, if we fix
$\nu^{(M', n)} = \nu^{(n)}$ for all $M'$ and take $M' \to \infty$, then 
\begin{multline} \label{eq:limiting_trans_prpb_LUE_perco}
    \lim_{M' \to \infty} (M')^{-n} P_{n-1,n}(\tilde{\nu}^{(M', n-1)}, \tilde{\nu}^{(M', n)}) = \\
    \frac{1}{\alpha_n !} \frac{\Delta_n(\nu^{(n)}_1, \dots, \nu^{(n)}_n)}{\Delta_{n-1}(\nu^{(n-1)}_1, \dots, \lambda^{(n-1)}_{n-1})} \frac{\prod^n_{i=1} (\nu^{(n)}_i)^{\alpha_n} e^{-\nu^{(n)}_i}}{\prod^{n-1}_{i=1} (\nu^{(n-1)}_i)^{(\alpha_n + 1)} e^{-\nu^{(n-1)}_i}} \id_{\nu^{(n-1)} \preceq \nu^{(n)}}
 \end{multline}
 and similar to \eqref{eq:initial_transition_prob_GUE_percolation}
 \begin{equation}
   \lim_{M' \to \infty} (M')^{-1} P(\tilde{\nu}^{(M', 1)}_1) = (\alpha_1 !)^{-1} (\nu^1_1)^{\alpha_1} e^{-\nu^{(1)}_1} \id_{\nu^{(1)}_1 \geq 0}.
 \end{equation}
 Compare the limiting transition probability \eqref{eq:limiting_trans_prpb_LUE_perco} with the transition probability \eqref{eq:transition_general} in the multiple Laguerre minor process and $\lim_{M' \to \infty} (M')^{-1} P(\tilde{\nu}^{(M', 1)}_1)$ with $p_1(z)$ given in Table \ref{table:thm}, we prove the multiple Laguerre case of Theorem \ref{thm:Schur_corrsp}.
 
\paragraph{Proof of Corollary \ref{cor:polymer_added}}

The statement is based on adapting the arguments of \cite{Glynn-Whitt91}, which uses Donsker's invariance principle and induction and on the following central limit lemma for exponentlally distributed variables:
\begin{lem}
  Let the \iid\ exponentially distributed random variables $X_i$ have mean and standard deviation $\mu = \sigma = (1 - \kappa/\sqrt{M})^{-1}$. Then, we have the following convergence in distribution to a standard Brownian motion $B(t)$:
  \begin{equation}
    \lim_{M \to \infty} \frac{1}{\sqrt{M}} \left( \sum^{\lfloor Mt \rfloor}_{i = 1} X^{(l)}_i - \lfloor Mt \rfloor \right) = B(t) + \kappa t.
  \end{equation}
\end{lem}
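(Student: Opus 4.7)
The plan is to reduce this to the classical Donsker invariance principle for an $M$-independent sequence via a simple affine rescaling. Observe that $X_i$ has the same distribution as $\mu_M Y_i$, where $\mu_M := (1 - \kappa/\sqrt{M})^{-1}$ and the $Y_i$ are \iid\ rate-$1$ exponentials (mean $1$, variance $1$, moments uniform in $M$). Writing the partial sum in terms of the $Y_i$ and separating off the deterministic mean gives the decomposition
\[
  \frac{1}{\sqrt{M}} \left( \sum_{i=1}^{\lfloor Mt \rfloor} X_i - \lfloor Mt \rfloor \right) = \frac{\mu_M}{\sqrt{M}} \sum_{i=1}^{\lfloor Mt \rfloor} (Y_i - 1) + \frac{\lfloor Mt \rfloor (\mu_M - 1)}{\sqrt{M}}.
\]

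First I would handle the deterministic drift term. Since $\mu_M - 1 = \kappa M^{-1/2}(1 - \kappa M^{-1/2})^{-1}$, one computes
\[
  \frac{\lfloor Mt \rfloor (\mu_M - 1)}{\sqrt{M}} = \frac{\kappa \lfloor Mt \rfloor}{M(1 - \kappa/\sqrt{M})} \to \kappa t
\]
uniformly on compact time intervals. Next I would apply the standard Donsker theorem to the $M$-independent centered family $\{Y_i - 1\}$, obtaining weak convergence on $D([0,T])$ of $M^{-1/2}\sum_{i=1}^{\lfloor Mt \rfloor}(Y_i - 1)$ to a standard Brownian motion $B(t)$. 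Multiplying by the deterministic scalar $\mu_M \to 1$ and invoking Slutsky's lemma, the stochastic term converges weakly to $B(t)$; combining with the drift gives $B(t) + \kappa t$, as required.

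The main technical subtlety is mild: one must keep the two $M$-dependent consequences of the rate change properly separated, namely the variance multiplier $\mu_M$ (which converges to $1$ and contributes nothing in the limit) and the mean shift $\mu_M - 1$ (which tends to $0$ at rate $M^{-1/2}$, precisely balancing the $M^{-1/2}$ normalization to produce a nontrivial deterministic drift $\kappa t$). Once this decomposition is in place, no triangular-array central limit theorem is needed since the reference variables $Y_i$ do not depend on $M$, and the result drops out of Donsker plus Slutsky.
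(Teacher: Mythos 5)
Your argument is correct, and it supplies exactly the proof the paper leaves implicit: the lemma is stated without proof at the end of Section \ref{sec:Schur_process}, with only an appeal to Donsker's invariance principle in the surrounding sketch for Corollary \ref{cor:polymer_added}. Your decomposition $X_i \stackrel{d}{=} \mu_M Y_i$ with rate-$1$ exponentials $Y_i$, followed by Donsker for the $M$-independent centered sums, Slutsky for the factor $\mu_M \to 1$, and the elementary computation $\lfloor Mt\rfloor(\mu_M-1)/\sqrt{M} \to \kappa t$, is precisely the standard route the authors are invoking, so there is nothing to add.
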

\appendix

\section{Contour Integral Representations of multiple Laguerre polynomials of the first kind, and Jacobi-\Pineiro\ Polynomials} \label{sec:appendix_MOPS}

The multiple orthogonal polynomials are multiple weight generalizations of orthogonal polynomials, such that the single weight that defines the orthogonality is replaced by a sequence of weights. Among the orthogonal polynomials, the Hermite, Laguerre and Jacobi polynomials are the most well known, and are called \emph{very classical polynomials}. The three kinds of very classical orthogonal polynomials have various multiple weight generalizations. With our scope being restricted to the so called AT systems, there are four corresponding \emph{very classical multiple orthogonal polynomials}. To be concrete, the Hermite polynomials give rise to the multiple Hermite polynomials, the Laguerre polynomials have two generalizations, namely the multiple Laguerre polynomials of the first and second kinds, and the multiple weight generalization of the Jacobi polynomials are called Jacobi-\Pineiro\ polynomials. The algebraic properties of these four kinds of multiple orthogonal polynomials are summarized in \cite{Coussement-Van_Assche01}, and the relation among them is illustrated in the diagram in the beginning of Section 3 of \cite{Coussement-Van_Assche01}.

The multiple Hermite polynomials and the multiple Laguerre polynomials of the first kind (simply called the multiple Laguerre polynomials in some random matrix theory literature) are closely related to the random matrix models called GUE with external source and LUE with external source (better known as complex Wishart ensemble) respectively \cite{Bleher-Kuijlaars05}. The random matrix models corresponding to the other two kinds of very classical multiple orthogonal polynomials, as far as we know, are first studied in this paper. For the sake of asymptotic analysis, it is desired to have contour integral formulas for these multiple orthogonal polynomials. To our limited knowledge, these contour integral formulas have not been written down explicitly in literature. We state and prove the formulas in this appendix. 

For comparison and reference, we first state the contour integral formulas of multiple Hermite polynomials and multiple Laguerre polynomials of the second kind, both obtained in \cite{Bleher-Kuijlaars05}. Unlike orthogonal polynomials with respect to a single weight, there are type I and type II multiple orthogonal polynomials with respect to a sequence of multiple weights. First we consider multiple Hermite polynomials. Let $a_1, \dotsc, a_n$ be a sequence or distinct real constants. The $n$-th multiple Hermite polynomial of type II, denoted as $P_{(a_1, \dotsc, a_n)}(x)$, is an $n$-th degree monic polynomial satisfying the orthogonality condition with respect to the multiple weights $e^{-\frac{x^2}{2} + a_1 x}, \dotsc, e^{-\frac{x^2}{2} + a_n x}$
\begin{equation}
  \int^{\infty}_{-\infty} P_{(a_1, \dotsc, a_n)}(x) e^{-\frac{x^2}{2} + a_k x} dx = 0, \quad k = 1, \dotsc, n.
\end{equation}
The $(n - 1)$-th multiple Hermite polynomial of type I, denoted as $Q_{(a_1, \dotsc, a_n)}(x)$, is not a polynomial (see Remark \ref{rmk:nomenclature_of_type_I_MOP} below) but the linear combination of the weights $e^{-\frac{x^2}{2} + a_1 x}, \dotsc, e^{-\frac{x^2}{2} + a_n x}$ satisfying the orthogonality condition 
\begin{gather}
  \int^{\infty}_{-\infty} Q_{(a_1, \dots, a_n)}(x) x^k dx = 0, \quad k = 0, \dotsc, n-2, \\
  \int^{\infty}_{-\infty} Q_{(a_1, \dots, a_n)}(x) P_{(a_1, \dotsc, a_{n - 1})}(x) dx = 1.
\end{gather}
There are contour integral formulas for the multiple Hermite polynomials of both types I and II.
\begin{prop}[\cite{Bleher-Kuijlaars05} Theorems 2.1 and 2.3] \label{prop:mult_Hermite}
  \begin{equation} \label{eq:mult_Hermite_type_II}
    P_{(a_1, \dotsc, a_n)}(x) = \frac{1}{\sqrt{2\pi} i} \int_{C + i\realR \uparrow} e^{\frac{1}{2}(s - x)^2} \prod^n_{k = 1} (s - a_k) ds,
  \end{equation}
  where the contour $C + i\realR \uparrow$ is the upward vertical line $\{ z = C + iy \mid y \in \realR \}$, and
  \begin{equation}
    Q_{(a_1, \dotsc, a_n)}(x) = \frac{1}{\sqrt{2\pi}2\pi i} \oint_{\Gamma_a} e^{-\frac{1}{2}(t - x)^2} \prod^n_{k = 1} \frac{1}{t - a_k} dt,
  \end{equation}
  where $\Gamma_a$ encloses all poles $a_1, \dotsc, a_n$ counterclockwise.
\end{prop}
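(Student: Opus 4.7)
My plan is to verify each contour formula by checking the defining properties of the corresponding multiple orthogonal polynomial. Recall that $P_{(a_1,\dotsc,a_n)}$ is characterized uniquely as the monic polynomial of degree $n$ satisfying the $n$ orthogonality conditions $\int_\realR P(x) e^{-x^2/2 + a_k x}\,dx = 0$ for $k = 1,\dotsc,n$, while $Q_{(a_1,\dotsc,a_n)}$ is a linear combination of the $n$ weights $e^{-x^2/2 + a_l x}$ satisfying $\int Q(x) x^k\,dx = 0$ for $0 \le k \le n-2$ and the normalization against $P_{(a_1,\dotsc,a_{n-1})}$.

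For the type II formula, I would first substitute $s = u + x$ (and then deform the resulting vertical contour to $u \in i\realR$, legitimate by Cauchy's theorem because the integrand is entire in $u$ and controlled by $|u|^n e^{-(\Im u)^2/2}$ on vertical lines), obtaining $P(x) = (\sqrt{2\pi}\,i)^{-1}\int_{i\realR} e^{u^2/2}\prod_l(u + x - a_l)\,du$. Expanding $\prod_l(u + x - a_l) = \sum_{j=0}^n e_{n-j}(x - a_1,\dotsc,x - a_n)\,u^j$ and evaluating the Gaussian moments $(\sqrt{2\pi}\,i)^{-1}\int_{i\realR} u^j e^{u^2/2}\,du$ (which vanish for odd $j$ and equal $(-1)^{j/2}(j-1)!!$ for even $j$), the $j = 0$ term yields $\prod_l(x - a_l) = x^n + O(x^{n-1})$, confirming monicity of degree $n$. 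For orthogonality, I would write $\int P(x) e^{-x^2/2 + a_k x}\,dx = e^{a_k^2/2}\int P(y + a_k) e^{-y^2/2}\,dy$ via the substitution $y = x - a_k$, then deform the $s$-contour by $t = s - a_k$ onto $i\realR$; the factor $s - a_k = t$ splits off from the product, leaving $t\prod_{l \ne k}(t + a_k - a_l)$. Parameterizing $t = iy'$ shows that $P(y + a_k) e^{-y^2/2}$ is (up to an explicit constant) the Fourier transform of the Schwartz function $h(y') := y'\prod_{l \ne k}(iy' + a_k - a_l) e^{-y'^2/2}$; Fourier inversion then gives $\int P(y + a_k) e^{-y^2/2}\,dy \propto h(0) = 0$, since $h$ carries the explicit factor $y'$.

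For the type I formula, I would exploit the compactness of $\Gamma_a$: summing the residues at $t = a_l$ yields $Q(x) = \sum_{l = 1}^n \frac{e^{-a_l^2/2}}{\sqrt{2\pi}\prod_{m \ne l}(a_l - a_m)} e^{-x^2/2 + a_l x}$, exhibiting $Q$ as a linear combination of the $n$ weights. For the moment conditions, Gaussian decay of $e^{-(t - x)^2/2}$ in $x$ uniformly for $t \in \Gamma_a$ would justify swapping orders, and the identity $\int_\realR e^{-(t - x)^2/2} x^k\,dx = \sqrt{2\pi}\,p_k(t)$ with $p_k$ monic of degree $k$ reduces $\int Q(x) x^k\,dx$ to $(2\pi i)^{-1}\oint_{\Gamma_a} p_k(t)/\prod_l(t - a_l)\,dt$; for $k \le n - 2$ the integrand is $O(|t|^{-2})$ at infinity, so deforming $\Gamma_a$ to a large circle gives zero. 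For the normalization against $P_{(a_1,\dotsc,a_{n-1})}$ (monic of degree $n - 1$), the same argument produces an integrand asymptotic to $1/t$ at infinity with leading coefficient $1$; its residue at infinity is $-1$, delivering $(2\pi i)^{-1}\oint_{\Gamma_a} = 1$.

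The main technical obstacle is the interchange of integration orders in the type II orthogonality check: naive Fubini between the vertical contour and the $x$-integral produces a divergent inner integral. The key maneuver is to deform the contour to $s \in a_k + i\realR$, which exposes the vanishing factor $s - a_k$; the Fourier (or delta-function) character of the $x$-integration is then annihilated by this factor via $y'\,\delta(y') = 0$, which I would package cleanly through Fourier inversion on a Schwartz function to avoid handling distributions directly. All remaining steps are routine residue calculus or contour deformation controlled by Gaussian or polynomial decay.
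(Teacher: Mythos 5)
Your verification is correct, but it follows a different route from the paper, which offers no proof of Proposition \ref{prop:mult_Hermite} at all: it simply quotes Theorems 2.1 and 2.3 of \cite{Bleher-Kuijlaars05} (in the generic case of distinct $a_k$, with coinciding parameters recovered by a limit, as noted in Remark \ref{rmk:multiple_Hermite}). Your argument is close in spirit to what the paper does for its own analogues, Theorems \ref{thm:Laguerre} and \ref{thm:main_thm} in Appendix \ref{sec:appendix_MOPS}: there, too, the type I formula is identified as a combination of the weights by summing residues at the poles, and the moment and normalization conditions are reduced, after an interchange of integrals, to a single contour integral killed by $\bigO(t^{-2})$ decay or evaluated by the residue at infinity --- exactly your treatment of $Q_{(a_1,\dotsc,a_n)}$. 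The genuinely different ingredient is your handling of the type II orthogonality: shifting the $s$-contour to $a_k+i\realR$ to expose the factor $s-a_k$ and then packaging the divergent $x$-integration as Fourier inversion of a Schwartz function vanishing at the origin. The paper's appendix proofs avoid this issue because their weights live on $[0,\infty)$ or $[0,1]$, so they can integrate over a finite cutoff and use explicit Gamma/Beta identities; your Fourier-inversion maneuver is the natural substitute on the whole line and is sound (no Fubini is needed, since you only evaluate $\int\hat h$ with $h$ Schwartz). Two minor points worth making explicit: you implicitly use that the defining conditions determine $P_{(a_1,\dotsc,a_n)}$ and $Q_{(a_1,\dotsc,a_n)}$ uniquely, i.e.\ that the multiple Hermite system with distinct $a_k$ is perfect (an AT system) --- the same tacit assumption underlies the paper's appendix proofs --- and your residue expansion of $Q$ presupposes simple poles, i.e.\ distinct $a_l$, which matches the paper's hypotheses, the degenerate case following by analytic continuation in the $a_l$.
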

\begin{rmk} \label{rmk:nomenclature_of_type_I_MOP}
  Our definition of the multiple orthogonal polynomials of type I is an abuse of language, but it is essentially equivalent to the standard definition. (Our $Q_{(a_1, \dotsc, a_n)}(x)$ corresponds to the $Q_{\vec{n}}(x)$ in Formula (1.6) in \cite{Bleher-Kuijlaars05}.) This remark also applies to other multiple orthogonal polynomials of type I defined below.
\end{rmk}
\begin{rmk} \label{rmk:multiple_Hermite}
  Our Proposition \ref{prop:mult_Hermite} is only the generic case of the theorems in \cite{Bleher-Kuijlaars05} in the sense that our $P_{(a_1, \dotsc, a_n)}$ and $Q_{(a_1, \dotsc, a_n)}$ are the $P_{\vec{n}}$ and $Q_{\vec{n}}$ with $\vec{n} = (1, \dotsc, 1)$ in \cite{Bleher-Kuijlaars05}. to recover the degenerate cases, it suffices to take the limit that some $a_k$ become identical.
\end{rmk}

The multiple Laguerre polynomials are defined similarly. Let $p > -1$ be a real constant and $a_1, \dotsc, a_n < 0$ be a sequence of distinct negative real constants. The $n$-th multiple Laguerre polynomial of the second kind, type II with respect to the multiple weights $x^p e^{a_1 x}, \dotsc, x^p e^{a_n x}$, denoted as $P_{(a_1, \dotsc, a_n; p)}(x)$, is an $n$-th degree monic polynomial satisfying the orthogonality condition with respect to the multiple weights
\begin{equation}
  \int^{\infty}_0 P_{(a_1, \dotsc, a_n; p)}(x) x^p e^{a_k x} dx = 0, \quad k = 1, \dotsc, n.
\end{equation}
The $(n - 1)$-th multiple Laguerre polynomial of the second kind, type I, denoted as $Q_{(a_1, \dotsc, a_n; p)}(x)$, is the linear combination of $x^p e^{a_1 x}, \dotsc, x^p e^{a_n x}$ satisfying the orthogonality condition
\begin{gather}
  \int^{\infty}_0 Q_{(a_1, \dots, a_n; p)}(x) x^k dx = 0, \quad k = 0, \dotsc, n-2, \\
  \int^{\infty}_0 Q_{(a_1, \dots, a_n; p)}(x) P_{(a_1, \dotsc, a_{n - 1}; p)}(x) dx = 1.
\end{gather}
There are contour integral formulas for the multiple Laguerre polynomials of the second kind, both types I and II.
\begin{prop}[\cite{Bleher-Kuijlaars05} Theorems 3.1 and 3.2] \label{prop:multiple_Laguerre_2nd_kind}
  Suppose $p$ is a nonnegative integer. For $x \in (0, \infty)$, 
  \begin{equation} \label{eq:mLaguerre_2nd_I}
    P_{(a_1, \dotsc, a_n; p)}(x) = (-1)^p \frac{\Gamma(n + p + 1)}{\prod^n_{k = 1} a_k} \frac{x^{-p}}{2\pi i} \oint_{\Sigma} e^{-xs} s^{-n - p - 1} \prod^n_{k = 1} (s - a_k) ds,
  \end{equation}
  where $\Sigma$ is a contour enclosing $0$ counterclockwise, and
  \begin{equation} \label{eq:mLaguerre_2nd_II}
    Q_{(a_1, \dots, a_n; p)}(x) = (-1)^p \frac{\prod^n_{k = 1} a_k}{\Gamma(n + p)} \frac{x^p}{2 \pi i} \oint_{\Gamma_a} e^{xt} t^{n + p - 1} \prod^n_{k = 1} \frac{1}{t - a_k} dt,
  \end{equation}
  where $\Gamma_a$ is a contour enclosing poles $a_1, \dotsc, a_n$ counterclockwise.
\end{prop}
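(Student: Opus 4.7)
My plan is to prove each of the two formulas by checking directly that the right-hand side satisfies the characterizing properties that define the respective multiple orthogonal polynomial, namely (a) the correct algebraic form (a monic polynomial of the right degree for $P$, a linear combination of the prescribed weights for $Q$) and (b) the list of orthogonality relations together with the normalization. Both defining conditions specify each object uniquely once we know an existence statement, so this verification suffices.

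For the type II polynomial $P_{(a_1,\dotsc,a_n;p)}(x)$, I would first verify the algebraic form. The contour $\Sigma$ encloses only the pole at $s=0$, so the integral is a residue at $0$. Writing $\prod_{k=1}^n(s-a_k)=\sum_{j=0}^n c_j s^j$ with $c_n=1$ and $c_0=(-1)^n\prod_k a_k$, and expanding $e^{-xs}=\sum_{m\geq 0}(-x)^m s^m/m!$, the coefficient of $s^{-1}$ in $e^{-xs} s^{-n-p-1}\prod_k(s-a_k)$ equals $\sum_{j=0}^n c_j(-x)^{n+p-j}/(n+p-j)!$. Multiplying by $x^{-p}$ turns this into a polynomial of degree $n$ in $x$; tracking the $j=0$ term against the prefactor $(-1)^p(n+p)!/\prod_k a_k$ shows the leading coefficient is $1$. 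For orthogonality I would use Fubini to interchange the $x$-integration against $x^p e^{a_k x}$ with the contour integral (which is justified by choosing $\Sigma$ to lie strictly in the half-plane $\Re s > a_k$): the inner integral evaluates to $1/(s-a_k)$, cancelling the factor $(s-a_k)$ in the numerator, leaving $\oint_\Sigma s^{-n-p-1}\prod_{j\neq k}(s-a_j)\,ds$, which vanishes because the integrand has only a pole at $s=0$ and the numerator is a polynomial of degree $n-1<n+p$.

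For the type I polynomial $Q_{(a_1,\dotsc,a_n;p)}(x)$, I would first expand the integral as a sum of residues at the simple poles $t=a_k$ enclosed by $\Gamma_a$, obtaining
\begin{equation*}
  Q_{(a_1,\dotsc,a_n;p)}(x) = (-1)^p \frac{\prod_j a_j}{\Gamma(n+p)} x^p \sum_{k=1}^n \frac{a_k^{n+p-1}}{\prod_{j\neq k}(a_k-a_j)}\, e^{a_k x},
\end{equation*}
which displays it as a linear combination of the weights $x^p e^{a_k x}$. The orthogonality $\int_0^\infty Q(x)x^\ell dx=0$ for $\ell=0,\dotsc,n-2$ can be checked by keeping the contour $\Gamma_a$ in $\Re t<0$, swapping integrals, evaluating $\int_0^\infty x^{p+\ell}e^{xt}dx=\Gamma(p+\ell+1)/(-t)^{p+\ell+1}$, and then observing that the resulting contour integral equals $\frac{1}{2\pi i}\oint_{\Gamma_a} t^{n-\ell-2}/\prod_k(t-a_k)\,dt$, whose integrand is $O(t^{-2})$ at infinity for $0\leq\ell\leq n-2$; deforming $\Gamma_a$ to a large circle then shows the integral vanishes.

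The remaining step---and in my view the main technical obstacle---is the normalization $\int_0^\infty Q_{(a_1,\dotsc,a_n;p)}(x)\,P_{(a_1,\dotsc,a_{n-1};p)}(x)\,dx=1$. My approach here would be to write both factors in their contour-integral form, producing a double integral over $\Gamma_a\times\Sigma$ with variables $t$ and $s$. After interchanging orders (using $\Re s>0>\Re t$ to make the $x$-integral of $e^{(t-s)x}$ converge to $1/(s-t)$), one is left with
\begin{equation*}
  \frac{-(n+p-1)!}{\Gamma(n+p)} \frac{a_n}{(2\pi i)^2}\oint_{\Sigma}\oint_{\Gamma_a} \frac{t^{n+p-1}\prod_{k=1}^{n-1}(s-a_k)}{s^{n+p}\prod_{k=1}^n(t-a_k)}\frac{dt\,ds}{s-t}.
\end{equation*}
Doing the inner $t$-integral by residues at the $a_k$ (plus the pole at $t=s$, which contributes zero when $\Sigma$ is inside $\Gamma_a$, or is compensated appropriately if we deform), then collapsing the $s$-integral, should give exactly $1$. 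The delicate point is bookkeeping of contour deformations so that Fubini applies and no spurious residues are missed; in practice it is cleanest to use the reproducing-kernel identity coming from the telescoping trick already exploited in the main text (see \eqref{eq:telescope_1}--\eqref{eq:telescope_identity}), which reduces the normalization to a single-variable residue computation.
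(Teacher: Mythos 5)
The paper does not actually prove this proposition: it is quoted from Bleher and Kuijlaars \cite{Bleher-Kuijlaars05} (Theorems 3.1 and 3.2), up to a change of variables, and only the analogues for the multiple Laguerre polynomials of the first kind and the Jacobi--\Pineiro\ polynomials are proved in the appendix. Your strategy --- verify that the right-hand sides have the correct algebraic form, satisfy the orthogonality/normalization conditions, and invoke uniqueness for the AT system --- is exactly the strategy the paper uses for those analogues, so the route is appropriate. Your treatment of \eqref{eq:mLaguerre_2nd_I} is complete and correct (residue at $s=0$ gives a monic degree-$n$ polynomial; the moment against $x^p e^{a_k x}$ collapses to a residue of $s^{-n-p-1}\prod_{j\neq k}(s-a_j)$, which vanishes), and your verification of the vanishing moments of the right-hand side of \eqref{eq:mLaguerre_2nd_II} is also sound.

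The genuine gap is the normalization $\int_0^\infty Q_{(a_1,\dotsc,a_n;p)}(x)P_{(a_1,\dotsc,a_{n-1};p)}(x)\,dx=1$, which you flag as the main obstacle but do not carry out, and the sketch goes wrong precisely where the signs live. First, the condition ``$\Re s>0>\Re t$'' is impossible, since $\Sigma$ must wind around $0$; what Fubini requires is $\max_{t\in\Gamma_a}\Re t<\min_{s\in\Sigma}\Re s$ (e.g.\ $\Sigma$ a circle of radius $r$ and $\Gamma_a\subset\{\Re t<-r'\}$ with $r<r'<\min_k\lvert a_k\rvert$), and this forces the point $t=s$ to lie \emph{outside} $\Gamma_a$; your parenthetical ``contributes zero when $\Sigma$ is inside $\Gamma_a$'' has the geometry backwards. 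Second, the straightforward interchange gives the prefactor $+a_n$ (the two factors $(-1)^p$ and the factor $\Gamma(n+p)$ cancel), not the $-a_n$ in your displayed formula. Third, if you finish the computation, the answer is not $1$: doing the $s$-integral first, only $s=0$ is inside $\Sigma$ and, by the sum-of-residues argument, $\res_{s=0}\frac{\prod_{k=1}^{n-1}(s-a_k)}{s^{n+p}(s-t)}=-\frac{\prod_{k=1}^{n-1}(t-a_k)}{t^{n+p}}$, so the double integral reduces to $-\frac{1}{2\pi i}\oint_{\Gamma_a}\frac{dt}{t(t-a_n)}=-\frac{1}{a_n}$ and the full integral equals $a_n\cdot(-1/a_n)=-1$. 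One sees the same thing directly for $n=1$, $p=0$: the right-hand side of \eqref{eq:mLaguerre_2nd_II} is $a_1e^{a_1x}$, whose integral against $P_{(\,;0)}=1$ is $-1$. So with the paper's normalization and counterclockwise contours the stated formula for $Q$ is off by a factor $-1$; your two sign slips would cancel and ``confirm'' the stated constant, which is exactly the danger of leaving this step as a sketch. Finally, the proposed fallback via the telescoping identity \eqref{eq:telescope_1}--\eqref{eq:telescope_identity} does not help here: that identity collapses a \emph{sum} over $k$ of ratios of products, whereas the normalization involves a single product $Q_nP_{n-1}$, so there is nothing to telescope and the residue bookkeeping above is unavoidable.
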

Note that our contour integral formulas \eqref{eq:mLaguerre_2nd_I} and \eqref{eq:mLaguerre_2nd_II} are different from \cite[Formulas (3.5) and (3.10)]{Bleher-Kuijlaars05} by change of variables. An analogue of Remark \ref{rmk:multiple_Hermite} applies to Proposition \ref{prop:multiple_Laguerre_2nd_kind}. 

\begin{rmk}
  Although the multiple Laguerre polynomials are well defined for all real $p > -1$, the statement and the proof given in \cite{Bleher-Kuijlaars05} of the proposition are only valid for integer $p$. When $p$ is not an integer, the expression \eqref{eq:mLaguerre_2nd_I} is valid if we let $\Sigma$ be the deformed Hankel contour, but there is no simple way to generalize the contour integral formula in \eqref{eq:mLaguerre_2nd_II} for noninteger $p$.
\end{rmk}

%\bigskip

Now we state the result for multiple Laguerre polynomials of the first kind. Let $\alpha_1, \dots, \alpha_n > -1$ be a sequence of distinct real constants. The $n$-th multiple Laguerre polynomial of the first kind, type II, denoted as $P_{(\alpha_1, \dots, \alpha_n)}(x)$, is an $n$-th degree monic polynomial satisfying orthogonality condition
\begin{equation} \label{eq:orthogonal_relation_Laguerre}
  \int^{\infty}_0 P_{(\alpha_1, \dots, \alpha_n)}(x) x^{\alpha_k}e^{-x} dx = 0, \quad k = 1, \dots, n.
\end{equation}
The $(n-1)$-th multiple Laguerre polynomial of the first kind of type I is the linear combination of $x^{\alpha_1}e^{-x}, \dots, x^{\alpha_n}e^{-x}$ satisfying the orthogonality conditions
\begin{gather}
  \int^{\infty}_0 Q_{(\alpha_1, \dots, \alpha_n)}(x) x^k dx = 0, \quad k = 0, \dots, n-2, \label{eq:orthogonality_of_Laguerre_kind1_type1} \\
  \int^{\infty}_0 Q_{(\alpha_1, \dots, \alpha_n)}(x) P_{(\alpha_1, \dots, \alpha_{n-1})}(x) dx = 1. \label{eq:orthogonality_of_Laguerre_kind1_type1:2}
\end{gather}

There are contour integral formulas for the multiple Laguerre polynomials of the first kind, both types I and II.
\begin{thm} \label{thm:Laguerre}
  For $x \in (0, \infty)$,
  \begin{equation} \label{eq:contour_integral_of_P_Laguerre}
    P_{(\alpha_1, \dots, \alpha_n)}(x) = \frac{e^x}{2\pi i} \oint_{\Sigma} \frac{\Gamma(z+1) \prod^n_{k=1} (z - \alpha_k)}{x^{z+1}} dz,
  \end{equation}
  where the contour $\Sigma$ is the deformed Hankel contour, and
  \begin{equation} \label{eq:contour_integral_of_Q_Laguerre}
    Q_{(\alpha_1, \dots, \alpha_n)}(x) = \frac{e^{-x}}{2\pi i} \oint_{\Gamma_{\alpha}} \frac{x^z}{\Gamma(z+1) \prod^n_{k=1} (z - \alpha_k)} dz,
  \end{equation}
  where the contour $\Gamma_{\alpha}$ encloses the poles $\alpha_1, \dotsc, \alpha_n$ counterclockwise.
\end{thm}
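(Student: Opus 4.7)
The plan is to handle the type~II polynomial $P_{(\alpha_1,\dotsc,\alpha_n)}$ via Mellin transforms, and the type~I polynomial $Q_{(\alpha_1,\dotsc,\alpha_n)}$ by direct residue calculation together with a Lagrange interpolation identity.

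For part~(i), I would first characterize the Mellin transform of $P(x)e^{-x}$. Writing the defining monic polynomial as $P(x)=\sum_{j=0}^{n}c_j x^j$ with $c_n=1$, and using $\int_0^{\infty}x^{z+j}e^{-x}\,dx=\Gamma(z+1)(z+1)(z+2)\cdots(z+j)$, one obtains
\begin{equation*}
  \int_0^{\infty}P(x)e^{-x}x^z\,dx=\Gamma(z+1)\,\tilde{Q}(z),
\end{equation*}
where $\tilde{Q}(z)$ is a polynomial in $z$ of degree $n$ whose leading coefficient matches that of $P$, namely $1$. The orthogonality relations \eqref{eq:orthogonal_relation_Laguerre} are exactly the conditions $\tilde{Q}(\alpha_k)=0$ for $k=1,\dotsc,n$, so $\tilde{Q}(z)=\prod_{k=1}^{n}(z-\alpha_k)$. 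Mellin inversion on a vertical line $c+i\realR$ with $c>-1$ then yields an integral representation of $P(x)e^{-x}$, after which I would deform the vertical contour onto the deformed Hankel contour $\Sigma$. Multiplying through by $e^x$ gives \eqref{eq:contour_integral_of_P_Laguerre}.

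For part~(ii), the contour $\Gamma_\alpha$ encloses only the simple poles $z=\alpha_1,\dotsc,\alpha_n$, since by hypothesis the $\alpha_k$ are nonnegative integers and hence disjoint from the zeros $z=-1,-2,\dotsc$ of $1/\Gamma(z+1)$. Collecting residues gives
\begin{equation*}
  \frac{e^{-x}}{2\pi i}\oint_{\Gamma_\alpha}\frac{x^z\,dz}{\Gamma(z+1)\prod_{k=1}^{n}(z-\alpha_k)}=e^{-x}\sum_{k=1}^{n}\frac{x^{\alpha_k}}{\Gamma(\alpha_k+1)\prod_{j\neq k}(\alpha_k-\alpha_j)},
\end{equation*}
which has the required functional form $\sum_k B_k x^{\alpha_k}e^{-x}$. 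To verify the orthogonality conditions I would integrate this expression against $x^m$ for $0\leq m\leq n-1$; using $\Gamma(\alpha_k+m+1)/\Gamma(\alpha_k+1)=f_m(\alpha_k)$ with $f_m(z)=(z+1)(z+2)\cdots(z+m)$, the integral reduces to $\sum_{k=1}^{n} f_m(\alpha_k)/\prod_{j\neq k}(\alpha_k-\alpha_j)$. The standard partial-fraction identity---equivalently, reading off the coefficient of $z^{n-1}$ in the Lagrange interpolating polynomial of $f_m$ at $\alpha_1,\dotsc,\alpha_n$---gives $0$ when $\deg f_m=m\leq n-2$ and $1$ when $m=n-1$. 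This yields \eqref{eq:orthogonality_of_Laguerre_kind1_type1} directly, and the normalization \eqref{eq:orthogonality_of_Laguerre_kind1_type1:2} follows at once because $P_{(\alpha_1,\dotsc,\alpha_{n-1})}$ is monic of degree $n-1$ and its lower-degree terms integrate against $Q$ to zero.

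The main technical step I expect to be delicate is the contour deformation in part~(i): the Mellin vertical line must be shifted past the infinite sequence of poles $z=-1,-2,\dotsc$ of the integrand to reach the Hankel contour $\Sigma$. I would carry this out by a limiting argument with a family of large rectangles whose left edges recede to $-\infty$ along heights bounded away from the real axis (matching the shape of $\Sigma$) and whose top and bottom horizontal edges lie at fixed distance from $\realR$; the rapid decay of $\Gamma(z+1)$ in vertical strips away from the negative real axis guarantees that these connecting edges vanish in the limit, so that the vertical-line and Hankel integrals agree. Once this deformation is justified, both contour integral formulas in the theorem follow immediately.
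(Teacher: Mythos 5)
Your proposal is correct, but for the type~II formula \eqref{eq:contour_integral_of_P_Laguerre} it takes a genuinely different route from the paper. You \emph{derive} the formula: the defining relations \eqref{eq:orthogonal_relation_Laguerre} force the Mellin transform $\int_0^\infty P(x)e^{-x}x^z\,dx=\Gamma(z+1)\prod_{k=1}^n(z-\alpha_k)$ (distinctness of the $\alpha_k$ pins down the monic degree-$n$ factor), and Mellin inversion plus shifting the vertical line onto the Hankel contour yields \eqref{eq:contour_integral_of_P_Laguerre}. The paper instead \emph{verifies}: it expands $\Gamma(z+1)\prod_k(z-\alpha_k)=\sum_{k=0}^n c_k\Gamma(z+k+1)$, uses the identity $\frac{1}{2\pi i}\oint_\Sigma\Gamma(z+k+1)x^{-z-1}dz=x^ke^{-x}$ to see that the right-hand side of \eqref{eq:contour_integral_of_P_Laguerre} is a monic polynomial of degree $n$, and checks orthogonality by integrating over $[0,M]$ and letting $M\to\infty$; that route implicitly invokes uniqueness of the monic type~II polynomial, which your derivation does not need. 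The price of your route is that the analytic burden sits entirely in the inversion/deformation step, and there your justification is slightly misstated: the vanishing of the connecting pieces as the contour recedes leftward is not a ``vertical strip'' estimate but the super-exponential decay of $\Gamma(z+1)$ as $\Re z\to-\infty$ along horizontal lines at fixed nonzero height, coming from the reflection formula --- exactly the content of Lemma \ref{lem:Gamma_ratio}\ref{enu:lem:Gamma_ratio_a} --- combined with the usual $e^{-\pi|\Im z|/2}$ decay on the horizontal connectors at large imaginary part; with that estimate cited, the deformation is routine. For the type~I formula \eqref{eq:contour_integral_of_Q_Laguerre} your argument is essentially the paper's in different clothing: your explicit residue sum followed by the Lagrange/divided-difference identity evaluates exactly the same quantity the paper obtains by interchanging the integrals and pushing $\Gamma_\alpha$ to a large circle to read off the behaviour at infinity of $F_m(z)=\prod_{j=1}^m(z+j)\big/\prod_{j=1}^n(z-\alpha_j)$. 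One small correction: the theorem assumes only that $\alpha_1,\dotsc,\alpha_n>-1$ are distinct reals, not nonnegative integers; your residue computation goes through unchanged, since $1/\Gamma(z+1)$ is entire and so the only poles inside $\Gamma_\alpha$ are the $\alpha_k$ regardless, with $\Gamma(\alpha_k+1)\neq 0$ because $\alpha_k>-1$.
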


At last we state the contour integral formula for Jacobi-\Pineiro\ polynomials. Let $\beta > -1$ be a real constant, and $\alpha_1, \alpha_2, \dots, \alpha_n > -1$ be distinct real constants. The $n$-th Jacobi-\Pineiro\ polynomial of type II $P_{(\alpha_1, \dots, \alpha_n; \beta)}(x)$ is an $n$-th degree monic polynomial satisfying the orthogonality condition
\begin{equation} \label{eq:orthogonal_with_P}
  \int^1_0 P_{(\alpha_1, \dots, \alpha_n; \beta)}(x) x^{\alpha_k}(1-x)^{\beta} dx = 0, \quad k = 1, \dots, n.
\end{equation}
The $(n-1)$-th Jacobi-\Pineiro\ polynomial of type I, denoted as $Q_{(\alpha_1, \dots, \alpha_n; \beta)}(x)$, is the linear combination of $x^{\alpha_k}(1-x)^{\beta}$, $k = 1, \dots, n$, that satisfies the orthogonal conditions
\begin{gather}
  \int^1_0 Q_{(\alpha_1, \dots, \alpha_n; \beta)}(x) x^k dx = 0, \quad k = 0, \dots, n-2, \label{eq:vanishing_defn_of_Q} \\
  \int^1_0 Q_{(\alpha_1, \dots, \alpha_n; \beta)}(x) P_{(\alpha_1, \dots, \alpha_{n-1}; \beta)}(x) dx = 1. \label{eq:non-vanishing_defn_of_Q}
\end{gather}

There are contour integral formulas for the Jacobi-\Pineiro\ polynomials of both types I and II.
\begin{thm} \label{thm:main_thm}
  For $x \in (0, 1)$,
  \begin{equation} \label{eq:contour_integral_of_P}
    P_{(\alpha_1, \dots, \alpha_n; \beta)}(x) = \frac{\Gamma(n+1+\beta)}{\prod^n_{k=1}(n+1+\beta+\alpha_k)} \frac{(1-x)^{-\beta}}{2\pi i} \oint_{\Sigma} \frac{x^{-z-1} \Gamma(z+1) \prod^n_{k=1} (z-\alpha_k)}{\Gamma(z+n+2+\beta)} dz,
  \end{equation}
  where the contour $\Sigma$ is the deformed Hankel contour, and
  \begin{equation} \label{eq:contour_integral_of_Q}
    Q_{(\alpha_1, \dots, \alpha_n; \beta)}(x) = \frac{\prod^n_{k=1} (n+\beta+\alpha_k)}{\Gamma(n+\beta)} \frac{(1-x)^{\beta}}{2\pi i} \oint_{\Gamma_{\alpha}} \frac{x^z \Gamma(z+n+\beta)}{\Gamma(z+1) \prod^n_{k=1}(z-\alpha_k)} dz,
  \end{equation}
  where the contour $\Gamma_{\alpha}$ encloses the poles $\alpha_1, \dotsc, \alpha_n$ counterclockwise, but does not enclose other poles, if there are any.

  We note that if $\beta \in \intZ$, the contour $\Sigma$ can be a closed contour since the integrand has only finitely many poles, and $\Gamma_{\alpha}$ can be any contour enclosing $\alpha_1, \dotsc, \alpha_n$ since the integrand has no other poles.
  
    In the case $n = 1$ and $\beta + \alpha_1 = -1$, on the right-hand side of \eqref{eq:contour_integral_of_Q} the constant factor vanishes while the contour integral blows up. However, \eqref{eq:contour_integral_of_Q} still holds via \lHopital's rule as $\alpha_1 \to \beta - 1$.
\end{thm}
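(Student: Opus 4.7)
The plan is to prove both formulas in Theorem \ref{thm:main_thm} by verifying that the right-hand sides satisfy the defining conditions of the Jacobi-\Pineiro\ polynomials and invoking uniqueness. For the type II formula \eqref{eq:contour_integral_of_P}, denote the RHS by $\tilde P(x)$. The least delicate step, orthogonality against $x^{\alpha_j}(1-x)^\beta$, works for arbitrary $\beta > -1$: substitute $\tilde P$ into $\int_0^1 \tilde P(x)\, x^{\alpha_j}(1-x)^\beta\,dx$, interchange the order of integration (the $(1-x)^\beta$ from the weight cancels the $(1-x)^{-\beta}$ prefactor of $\tilde P$), and evaluate the inner $x$-integral as $\int_0^1 x^{\alpha_j - z - 1}\,dx = (\alpha_j - z)^{-1}$, valid as long as $\Sigma$ is chosen inside $\{\Re z < \min_j \alpha_j\}$. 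The factor $(z - \alpha_j)/(\alpha_j - z) = -1$ cancels the $(z-\alpha_j)$ in the numerator, leaving a contour integral whose integrand $\Gamma(z+1)\prod_{k \neq j}(z-\alpha_k)/\Gamma(z+n+2+\beta)$ is holomorphic off the Hankel contour and, by Stirling, decays like $|z|^{-2-\beta}$; applying the residue theorem to an expanding circle forces the sum of residues, hence the contour integral itself, to vanish.

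To show $\tilde P(x)$ is a monic polynomial of degree $n$, I would use analytic continuation in $\beta$ from the integer case. For $\beta \in \intZ_{\geq 0}$ the ratio $\Gamma(z+1)/\Gamma(z+n+2+\beta)$ reduces to $1/[(z+1)(z+2)\cdots(z+n+1+\beta)]$, so the integrand has only finitely many simple poles and an explicit residue summation gives a polynomial $Q(x)$ in $x$ of degree $n+\beta$. The crucial claim is that $(1-x)^\beta$ divides $Q(x)$, equivalent to $Q^{(i)}(1) = 0$ for $i = 0, 1, \dots, \beta-1$; after rewriting, each such derivative has the form $\sum_{j=0}^{n+\beta}(-1)^j \binom{n+\beta}{j} q_i(j)$ with $q_i(j)$ a polynomial in $j$ of degree $n+i \leq n+\beta-1$, and this is an $(n+\beta)$-th forward difference of a polynomial of strictly lower degree, hence zero. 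Dividing out $(1-x)^\beta$ exhibits $\tilde P$ as a polynomial of degree $n$, and reading off the coefficient of $x^{n+\beta}$ in $Q$ and combining with the prefactor in \eqref{eq:contour_integral_of_P} yields leading coefficient $1$. Both sides of \eqref{eq:contour_integral_of_P} are analytic in $\beta$ on $\{\Re \beta > -1\}$ and agree on the positive integers, so the identity principle gives equality throughout.

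For the type I formula \eqref{eq:contour_integral_of_Q}, let $\tilde Q(x)$ denote the RHS. Collapsing $\Gamma_\alpha$ onto the poles $\alpha_k$ immediately yields $\tilde Q(x) = (1-x)^\beta \sum_k c_k x^{\alpha_k}$, the correct algebraic form. For $\int_0^1 \tilde Q(x) x^m \,dx = 0$ with $m = 0, \dots, n-2$, substitute and evaluate the $x$-integral as $\Gamma(z+m+1)\Gamma(\beta+1)/\Gamma(z+m+\beta+2)$; the ratios $\Gamma(z+m+1)/\Gamma(z+1) = (z+1)\cdots(z+m)$ and $\Gamma(z+n+\beta)/\Gamma(z+m+\beta+2) = (z+m+\beta+2)\cdots(z+n+\beta-1)$ collapse to polynomials (using $\Gamma(a+k)/\Gamma(a) = (a)_k$), leaving an integrand that is a rational function with numerator of degree $n-2$ over denominator $\prod_k(z-\alpha_k)$, which decays as $|z|^{-2}$ at infinity and has no poles outside $\Gamma_\alpha$, so the integral vanishes by closing at infinity. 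For the normalization $\int_0^1 \tilde Q \cdot P_{(\alpha_1, \dots, \alpha_{n-1}; \beta)}\,dx = 1$, the just-proved orthogonality reduces it to $\int_0^1 \tilde Q(x) x^{n-1}\,dx$; the same Beta-function computation now produces an integrand $(z+1)\cdots(z+n-1)/[(z+n+\beta)\prod_k(z-\alpha_k)]$ with $|z|^{-2}$ decay that has one simple pole at $z = -n-\beta$ outside $\Gamma_\alpha$, so the sum of residues inside $\Gamma_\alpha$ equals minus the residue at $-n-\beta$; a direct evaluation of that single residue combines with the normalization prefactor to give exactly $1$.

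The principal obstacle is the polynomial nature of $\tilde P(x)$ for non-integer $\beta$: the Hankel contour picks up infinitely many residues of $\Gamma(z+1)$, producing an honest power series in $x$, and one needs this to collapse to a polynomial of degree $n$ after multiplication by $(1-x)^{-\beta}$. Passing through $\beta \in \intZ_{\geq 0}$ and using the finite-difference vanishing is the cleanest route; a direct hypergeometric transformation argument is possible but far less transparent. The degenerate case $n=1$, $\beta + \alpha_1 = -1$ of \eqref{eq:contour_integral_of_Q} must be handled separately as a \lHopital-type limit, since both the vanishing prefactor and the divergent residue at the coalescing pole conspire to produce a finite value.
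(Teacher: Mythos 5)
Your treatment of the type I formula \eqref{eq:contour_integral_of_Q} and of the orthogonality of the right-hand side of \eqref{eq:contour_integral_of_P} is essentially the paper's own argument: collapse $\Gamma_\alpha$ onto the poles to get the right linear span, reduce the moment conditions to Beta integrals, and observe that the resulting rational integrands decay like $|z|^{-2}$ so that either the integral vanishes or (for the normalization) it collapses to the single residue at $z=-n-\beta$; likewise the cancellation of $(z-\alpha_m)$ against $\int_0^1 x^{\alpha_m-z-1}dx$ followed by an $\bigO(|z|^{-(2+\beta)})$ estimate is exactly what the paper does, except that the decay estimate near the negative real axis requires the reflection-formula bound of Lemma \ref{lem:Gamma_ratio} rather than a bare appeal to Stirling. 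These parts are fine.

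The genuine gap is in your argument that the right-hand side of \eqref{eq:contour_integral_of_P} is a monic polynomial of degree $n$ for non-integer $\beta$. Your integer-$\beta$ computation (finite residue sum, divisibility by $(1-x)^\beta$ via vanishing of an $(n+\beta)$-th forward difference of a polynomial of degree at most $n+\beta-1$) is correct, but the concluding step "both sides are analytic in $\beta$ and agree on the positive integers, so the identity principle gives equality" fails: the positive integers have no accumulation point in $\{\Re\beta>-1\}$, so the identity theorem gives nothing. As stated, two analytic functions can agree on $\intZ_{>0}$ and differ elsewhere; to push the argument through you would need either Carlson's theorem (which requires verifying exponential-type and boundedness hypotheses in $\beta$ that you do not address, along with analyticity in $\beta$ of the orthogonally-defined left-hand side, which you only assert), or the observation that each Taylor coefficient of $(1-x)^{-\beta}\oint(\cdots)$ of degree $>n$ is $1/\Gamma(\beta)$ times a rational function of $\beta$, so that vanishing at infinitely many integers does force identical vanishing. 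The paper avoids the issue entirely: it proves, directly for every real $\beta>-1$, the identity \eqref{eq:expansion_of_monomial_(1-x)^beta}, namely $\frac{1}{2\pi i}\oint_\Sigma x^{-z-1}\Gamma(z+k+1)/\Gamma(z+k+2+\beta)\,dz=x^k(1-x)^\beta/\Gamma(1+\beta)$, by summing residues and matching the binomial series, and then expands the integrand of \eqref{eq:contour_integral_of_P} in these building blocks via \eqref{eq:linear_expansion_of_contour_rep_of_Q}, reading off monicity from the coefficient $c_n$. Either repair the continuation step along the lines above or adopt this direct decomposition; without one of these, the non-integer case of \eqref{eq:contour_integral_of_P} is not proved.
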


%\subsection{Proofs of theorems}

Before the proof of Theorems \ref{thm:Laguerre} and \ref{thm:main_thm}, we recall the following results about gamma function.

\begin{lem} \label{lem:Gamma_ratio}
  Let $C$ and $\epsilon$ be positive constants, and suppose $\Re z < 0$, $\lvert \Im z \rvert < C$.
  \begin{enumerate}[label=(\alph{*})]
  \item \label{enu:lem:Gamma_ratio_a}
    If $\lvert z + k \rvert > \epsilon$ for all $k = 0, 1, 2, \dots$, we have
    \begin{equation}
      \Gamma(z) = x^{-x-\frac{1}{2}}e^x \bigO(1),
    \end{equation}
    and the $\bigO(1)$ factor is bounded uniformly in $z$.
  \item \label{enu:lem:Gamma_ratio_b}
    If $a, b \in \realR$ and $\lvert z + a + k \rvert > \epsilon$, $\lvert z + b + k \rvert > \epsilon$ for all $k = 0, 1, 2, \dots$, we have
    \begin{equation}
      \frac{\Gamma(z+a)}{\Gamma(z+b)} = x^{a-b}  \bigO(1),
    \end{equation}
    and the $\bigO(1)$ factor is bounded uniformly in $z$.
  \end{enumerate}
\end{lem}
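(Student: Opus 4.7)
The plan is to reduce both parts to the classical Stirling asymptotic via the reflection formula $\Gamma(z)\Gamma(1-z) = \pi/\sin(\pi z)$. Under the hypothesis $\Re z < 0$ and $|\Im z| < C$, the variable $w := 1 - z$ satisfies $\Re w > 1$ and $|\Im w| < C$, so $w$ lies in a half-plane where the uniform Stirling estimate
\[
\Gamma(w) = \sqrt{2\pi}\, w^{w-1/2} e^{-w}\bigl(1 + \bigO(|w|^{-1})\bigr)
\]
applies. This immediately gives an asymptotic handle on $\Gamma(1-z)$.

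For part \ref{enu:lem:Gamma_ratio_a}, I would write $\Gamma(z) = \pi/[\sin(\pi z)\,\Gamma(1-z)]$ and estimate each factor. Stirling applied to $\Gamma(1-z)$ produces the factor $(1-z)^{1/2-z}e^{z-1}$, which matches the magnitude claimed in the statement after a routine algebraic rearrangement (turning shifts such as $1-z$ into the base $z$ costs only bounded multiplicative factors when $|\Im z|<C$). The upper bound $|\sin(\pi z)| \leq e^{\pi C}$ is immediate from $|\Im z|<C$. For the lower bound, the zeros of $\sin(\pi z)$ are exactly the integers; $z$ is automatically at distance $\geq 1$ from every positive integer (since $\Re z < 0$), and by hypothesis at distance $\geq \epsilon$ from every non-positive integer. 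Invoking the periodicity $|\sin\pi(z+1)|=|\sin\pi z|$, one reduces to a compact subset of the strip $|\Im z|<C$ with $\epsilon$-neighborhoods of the integers removed, where continuity yields $|\sin(\pi z)| \geq c(\epsilon, C) > 0$.

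For part \ref{enu:lem:Gamma_ratio_b}, the plan is to apply the asymptotic for $\Gamma$ separately to numerator and denominator, then take the ratio. Depending on the sign of $\Re(z+a)$, one either invokes Stirling directly (when $\Re(z+a)\geq 0$) or uses part \ref{enu:lem:Gamma_ratio_a} with $z$ replaced by $z+a$ (when $\Re(z+a)<0$); the separation hypotheses $|z+a+k|>\epsilon$ and $|z+b+k|>\epsilon$ supply exactly the pole-avoidance needed for each factor. In the ratio, the exponentials $e^{-(z+a)}$ and $e^{-(z+b)}$ combine into the constant $e^{a-b}$, which is absorbed into the $\bigO(1)$, and the power factors simplify via $\log(z+a) = \log z + a/z + \bigO(|z|^{-2})$ to give $z^{a-b}(1 + \bigO(|z|^{-1}))$.

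The main obstacle is securing uniformity in the moderate-$|z|$ regime where Stirling has not yet taken hold: there one relies entirely on the explicit $\epsilon$-separation from the poles of $\Gamma$ and $\sin(\pi z)$ together with continuity. The cleanest remedy is to fix a large radius $R = R(\epsilon, C, a, b)$, handle $|z| \leq R$ by compactness (on the truncated strip minus $\epsilon$-neighborhoods of the offending points), and handle $|z| > R$ by the Stirling asymptotic; matching the two bounds on the overlap yields the uniform $\bigO(1)$ asserted in the statement.
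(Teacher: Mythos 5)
Your proposal is correct and follows essentially the same route as the paper: apply the Euler reflection formula to pass from $\Gamma(z)$ (resp.\ $\Gamma(z+a)$, $\Gamma(z+b)$) to $\Gamma(1-z)$ with $\Re(1-z)>1$, then invoke the uniform Stirling asymptotic, the $\epsilon$-separation hypothesis serving only to keep $\sin(\pi z)$ away from its zeros. The extra details you supply (the periodicity-plus-compactness lower bound on $\lvert\sin(\pi z)\rvert$, the case split on the sign of $\Re(z+a)$, and the large-radius/compactness matching) merely flesh out steps the paper leaves implicit in its ``the proof of (b) is similar.''
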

\begin{proof}
  Applying the Euler reflection formula (see \cite[Theorem 1.2.1]{Andrews-Askey-Roy99})
  \begin{equation}
    \Gamma(z) = \frac{\pi}{\sin(\pi z) \Gamma(1-z)},
  \end{equation}
  we can derive the desired asymptotics of $\Gamma(z)$ (\resp\ $\Gamma(z+a)$ and $\Gamma(z+b)$) from those of $\Gamma(1-z)$ (\resp\ $\Gamma(1-z-a)$ and $\Gamma(1-z-b)$). Note that $\Re z < 0$ implies $\Re(1-z) > 1$. Apply the asymptotic formula (\cite[Corollary 1.4.3]{Andrews-Askey-Roy99})
  \begin{equation}
    \Gamma(w) \sim \sqrt{2\pi}w^{w-\frac{1}{2}}e^{-w}, \quad \text{for $\lvert \arg w \rvert \leq \pi - \delta$, where $\delta \in (0, \pi)$.}
  \end{equation}
  we obtain the asymptotics of $\Gamma(1-z)$, and hence prove \ref{lem:Gamma_ratio}\ref{enu:lem:Gamma_ratio_a}. The proof of \ref{lem:Gamma_ratio}\ref{enu:lem:Gamma_ratio_b} is similar.
\end{proof}

In the proof of the theorems, we use the contour $\Sigma_{a, b}$ to realize the deformed Hankel contour $\Sigma$, defined as
\begin{equation} \label{eq:concrete_defn_of_Sigma}
  \Sigma_{a, b} = \Sigma^1_{a, b} \cup \Sigma^2_{a, b} \cup \Sigma^3_{a, b}, \quad \text{where} \quad
  \left\{
    \begin{aligned}
      \Sigma^1_{a, b} = {}& \{ z = t - ai \mid -\infty < t \leq b \}, \\
      \Sigma^2_{a, b} = {}& \{ z = b + it \mid -a \leq t \leq a \}, \\
      \Sigma^3_{a, b} = {}& \{ z = ai - t \mid -a \leq t < \infty \}.
    \end{aligned}
  \right.
\end{equation}

\paragraph{Proof of Theorem \ref{thm:Laguerre}}

First we prove \eqref{eq:contour_integral_of_Q_Laguerre}. A simple residue calculation shows that
\begin{equation}
  \res_{z = \alpha_k} \frac{x^z}{\Gamma(z+1) \prod^n_{k=1} (z - \alpha_k)} dz
\end{equation}
is a constant multiple of $x^{\alpha_k}$. Thus the right-hand side of \eqref{eq:contour_integral_of_Q_Laguerre} is a linear combination of $x^{\alpha_k}e^{-x}$, $k = 1, \dots, n$.

On the other hand, for any $m \in \intZ_+$,
\begin{equation} \label{eq:inner_product_of_Q_with_x^m_Laguerre}
  \begin{aligned}
    & \int^{\infty}_0 \left( \frac{e^{-x}}{2\pi i} \oint_{\Gamma_{\alpha}} \frac{x^z}{\Gamma(z+1) \prod^n_{j=1} (z - \alpha_j)} dz \right) x^m dx \\
    = {}& \frac{1}{2\pi i} \oint_{\Gamma_{\alpha}} \left( \int^{\infty}_0 x^{z+m} e^{-x} dx \right) \frac{1}{\Gamma(z+1) \prod^n_{j=1} (z - \alpha_j)} dz \\
    = {}& \frac{1}{2\pi i} \oint_{\Gamma_{\alpha}} \frac{\Gamma(z+m+1)}{\Gamma(z+1) \prod^n_{j=1} (z - \alpha_j)} dz \\
    = {}& \frac{1}{2\pi i} \oint_{\Gamma_{\alpha}} F_m(z) dz, \quad \text{where} \quad F_m(x) = \frac{\prod^m_{j=1} (z+j)}{\prod^n_{j=1} (z - \alpha_j)}.
  \end{aligned}
\end{equation}
For $m = 0, 1, \dotsc, n - 2$, $F_m(z) = \bigO(z^{m - n}) = \bigO(z^{-2})$ as $z \to \infty$. Deforming $\Gamma_{\alpha}$ into a large enough circle cntered at the origin, we see that $\frac{1}{2\pi i} \oint_{\Gamma_{\alpha}} F_m(z) dz$ vanishes, and prove that the right-hand side of \eqref{eq:contour_integral_of_Q_Laguerre} satisfies \eqref{eq:orthogonality_of_Laguerre_kind1_type1}. If $m = n - 1$, we have $F_m(z) = z^{-1} + \bigO(z^{-2})$. With $\Gamma_{\alpha}$ deformed into a large circle centered at the origin, we see that $\frac{1}{2\pi i} \oint_{\Gamma_{\alpha}} F_m(z) dz = 1$. This result, together with the vanishing result obtained above for $m = 0, \dotsc, n - 2$, yields that the right-hand side of \eqref{eq:contour_integral_of_Q_Laguerre} satisfies \eqref{eq:orthogonality_of_Laguerre_kind1_type1:2}.

%\bigskip

Next we prove \eqref{eq:contour_integral_of_P_Laguerre}. To show that the contour integral over $\Sigma$ in \eqref{eq:contour_integral_of_P_Laguerre} is well defined, we apply Lemma \ref{lem:Gamma_ratio}\ref{enu:lem:Gamma_ratio_a}, and find that for $x \neq 0$, the integrand of the contour integral in \eqref{eq:contour_integral_of_P_Laguerre} decays rapidly as $z$ moves along the deformed Hankel contour and $\Re z \to -\infty$. Thus $Q_{(\alpha_1, \dots, \alpha_n)}(x)$ is pointwise defined for $x \neq 0$.

Since $\Gamma(x)$ has poles $z = 0, -1, -2, \dots$, the poles of the integrand in \eqref{eq:contour_integral_of_P_Laguerre} are $z = -1, -2, \dots$. At the pole $z = -j$, we denote the residue
\begin{equation}
  r_j = \res_{z=-j} \frac{\Gamma(z+1) \prod^n_{k=1} (z-\alpha_k)}{x^{z+1}} = \frac{(-1)^n \prod^n_{k=1} (j + \alpha_k)}{(j-1)!} (-x)^{j-1}.
\end{equation}
Thus we can formally apply the residue theorem, and express the contour integral over $\Sigma$ in \eqref{eq:contour_integral_of_P_Laguerre} as $\sum^{\infty}_{j = 1} r_j$, a power series in $x$. To make the argument rigorous, we write
\begin{equation} \label{eq:formula_of_residue_r_j}
  \frac{1}{2\pi i} \oint_{\Sigma} \frac{\Gamma(z+1) \prod^n_{k=1} (z-\alpha_k)}{x^{z+1}} dz = \frac{1}{2\pi i} \oint_{\Sigma_{1, -k - \frac{1}{2}}} \frac{\Gamma(z+1) \prod^n_{k=1} (z-\alpha_k)}{x^{z+1}} dz + \sum^k_{j=1} r_j,
\end{equation}
where $\Sigma_{1, -k - \frac{1}{2}}$ is defined in \eqref{eq:concrete_defn_of_Sigma}. If $x \in (0, \infty)$,  by Lemma \ref{lem:Gamma_ratio}\ref{enu:lem:Gamma_ratio_a}, we have that as $k \to \infty$, the contour integral over $\Sigma_{1, -k - \frac{1}{2}}$ vanishes for any $x \neq 0$. From \eqref{eq:formula_of_residue_r_j}, we find that $\sum^{\infty}_{j=1} r_j$ converges for any $x \in \compC$. Thus the contour integral over $\Sigma$ in \eqref{eq:contour_integral_of_P_Laguerre} is a power series in $x$ for all $x \in (0, \infty)$.

To show that the right-hand side of \eqref{eq:contour_integral_of_P_Laguerre} is a polynomial in $x$ of degree $n-1$, we apply the identity that for $k \in \intZ$,
\begin{equation} \label{eq:contour_integral_equal_to_exp_x^k}
  \frac{1}{2\pi i} \oint_{\Sigma} \frac{\Gamma(z+k+1)}{x^{z+1}} dz = x^k e^{-x}.
\end{equation}
To prove \eqref{eq:contour_integral_equal_to_exp_x^k}, we note that the contour integral in \eqref{eq:contour_integral_equal_to_exp_x^k} is similar in form to that in \eqref{eq:contour_integral_of_P_Laguerre}.  By arguments like above, we express the contour integral in \eqref{eq:contour_integral_equal_to_exp_x^k} in power series and have
\begin{equation} \label{eq:proof_of_lemma:eval_contour_a}
  \frac{1}{2\pi i} \oint_{\Sigma} \frac{\Gamma(z+k+1)}{x^{z+1}} dz =  \sum^{\infty}_{j=0} \res_{z = -j-k-1} \frac{\Gamma(z+k+1)}{x^{z+1}} = \sum^{\infty}_{j=0} \frac{(-1)^j x^{j+k}}{j!} = x^ke^{-x}.
\end{equation}

From the recurrence formula of the gamma function, we find that there are coefficients $c_0, \dots, c_n$ such that
\begin{equation} \label{eq:constants_c_k_Gamma_func}
  \frac{\Gamma(z+1) \prod^n_{k=1} (z-\alpha_k)}{x^{z+1}} = \sum^n_{k=0} c_k \frac{\Gamma(z+k+1)}{x^{z+1}}, \quad \text{where} \quad c_n = 1.
\end{equation}
Thus
\begin{equation}
  \frac{e^x}{2\pi i} \oint_{\Sigma} \frac{\Gamma(z+1) \prod^n_{j=1} (z - \alpha_k)}{x^{z+1}} dz = \sum^n_{k=0} c_k x^k
\end{equation}
is a monic polynomial of degree $n$.

We need to show that the right-hand side of \eqref{eq:contour_integral_of_P_Laguerre} satisfies the orthogonality relation \eqref{eq:orthogonal_relation_Laguerre}. Given any $M > 0$, we have for any $m = 1, \dots, n$
\begin{equation} \label{eq:divide_the_Laguerre_orthogonal_into_several}
  \begin{split}
    & \int^M_0 \left( \frac{1}{2\pi i} \oint_{\Sigma} \frac{\Gamma(z+1) \prod^n_{k=1} (z-\alpha_k)}{x^{z+1}} dz \right) x^{\alpha_m} dx \\
    = {}& \frac{1}{2\pi i} \oint_{\Sigma} \left( \int^M_0 x^{\alpha_m-z-1} dx \right) \Gamma(z+1) \prod^n_{k=1} (z-\alpha_k) dz \\
    = {}& \frac{-1}{2\pi i} \oint_{\Sigma} M^{\alpha_m-z} \Gamma(z+1) \prod_{\substack{k=1, \dots, n \\ k \neq m}} (z-\alpha_k) dz \\
    = {}& \sum^{n-1}_{j=1} \frac{c'_j}{2\pi i} \oint_{\Sigma} M^{\alpha_m-z} \Gamma(z+j) dz,
  \end{split}
\end{equation}
where the constants $c'_1, \dotsc, c'_{n - 1}$ are similar to the constants $c_1, \dotsc, c_n$ in \eqref{eq:constants_c_k_Gamma_func} such that $-\Gamma(z+1) \prod_{\substack{k=1, \dots, n \\ k \neq m}} (z-\alpha_k) = \sum^{n-1}_{j=1} c'_j \Gamma(z+j)$.
By \eqref{eq:contour_integral_equal_to_exp_x^k}, we have
\begin{equation} \label{eq:evaluation_for_m_and_M}
  \frac{1}{2\pi i} \oint_{\Sigma} M^{\alpha_m-z} \Gamma(z+j) = M^{\alpha_m+j} e^{-M}.
\end{equation}
Substituting \eqref{eq:evaluation_for_m_and_M} into \eqref{eq:divide_the_Laguerre_orthogonal_into_several}, we verify the orthogonality condition by
\begin{equation} \label{eq:M_to_infty_vanishing}
  \lim_{M \to \infty} \int^M_0 \left( \oint_{\Sigma} \frac{\Gamma(z+1) \prod^n_{k=1} (z-\alpha_k)}{x^{z+1}} dz \right) x^{\alpha_m} dx = 0.
\end{equation}

\paragraph{Proof of Theorem \ref{thm:main_thm}}

First we prove \eqref{eq:contour_integral_of_Q}. Without loss of generality, we assume $\beta + \alpha_1 \neq 1$ if $n=1$. A simple residue calculation shows that
\begin{equation}
  \res_{z=\alpha_k} \frac{x^z \Gamma(z+n+\beta)}{\Gamma(z+1) \prod^n_{k=1} (x-\alpha_k)}
\end{equation}
is a constant multiple of $x^{\alpha_k}$. Thus the right-hand side of \eqref{eq:contour_integral_of_Q} is the linear combination of $x^{\alpha_k}(1-x)^{\beta}$, $k = 1, \dots, n$.

To prove the orthogonality condition \eqref{eq:vanishing_defn_of_Q}, we note that for any $m = 0, 1, \dots, n-2$,
\begin{equation} \label{eq:inner_product_of_Q_with_x^m}
  \begin{aligned}
    & \int^1_0 \left( \frac{(1-x)^{\beta}}{2\pi i} \oint_{\Gamma_{\alpha}} \frac{x^z \Gamma(z+n+\beta)}{\Gamma(z+1) \prod^n_{k=1}(z-\alpha_k)} dz \right) x^m dx \\
    = {}& \frac{1}{2 \pi i} \oint_{\Gamma_{\alpha}} \left( \int^1_0 x^{z+m}(1-x)^{\beta} dx \right) \frac{\Gamma(z+n+\beta)}{\Gamma(z+1) \prod^n_{k=1}(z-\alpha_k)} dz \\
    = {}& \frac{\Gamma(\beta+1)}{2 \pi i}  \oint_{\Gamma_{\alpha}} \frac{\Gamma(z+m+1) \Gamma(z+n+\beta)}{\Gamma(z+m+2+\beta) \Gamma(z+1)} \frac{dz}{\prod^n_{k=1}(z-\alpha_k)} \\
    = {}& \frac{\Gamma(\beta+1)}{2 \pi i}  \oint_{\Gamma_{\alpha}} G_m(z) dz, \qquad \text{where} \quad G_m(z) = \frac{\prod^m_{j=1} (z+j) \prod^{n-1}_{l=m+2} (z+l+\beta)}{\prod^n_{k=1}(z-\alpha_k)}.
  \end{aligned}
\end{equation}
Similar to \eqref{eq:inner_product_of_Q_with_x^m_Laguerre}, for $m = 0, 1, \dotsc, n - 2$, $G_m(z) = \bigO(z^{m - n}) = \bigO(z^{-2})$ as $z \to \infty$. By deforming $\Gamma_{\alpha}$ into a large circle, we have that $\frac{1}{2\pi i} \oint_{\Gamma_{\alpha}} G_m(z) dz$ vanishes, and prove that the right-hand side of \eqref{eq:contour_integral_of_Q} satisfies \eqref{eq:vanishing_defn_of_Q}.

To verify the orthogonality condition \eqref{eq:non-vanishing_defn_of_Q}, we have similar to \eqref{eq:inner_product_of_Q_with_x^m} 
\begin{multline} \label{eq:inner_of_Q_and_x^n-1}
    \int^1_0 \left( \frac{(1-x)^{\beta}}{2\pi i} \oint_{\Gamma_{\alpha}} \frac{x^z \Gamma(z+n+\beta)}{\Gamma(z+1) \prod^n_{k=1}(z-\alpha_k)} dz \right) x^{n-1} dx \\
    = \Gamma(\beta+1) \oint_{\Gamma_{\alpha}} G_{n - 1}(z) dz, \quad \text{where} \quad G_{n - 1}(z) = \frac{\prod^{n-1}_{j=1} (z+j)}{(z+n+\beta) \prod^n_{k=1}(z-\alpha_k)}.
\end{multline}
If we replace the contour $\Gamma_{\alpha}$ into a large circle $\tilde{\Gamma}$, by the estimate $G_{n - 1}(z) = \bigO(z^{-2})$ we have
\begin{equation} \label{eq:vanishing_of_Q_inner_x^n-1_big_circle}
  \frac{1}{2\pi i} \oint_{\tilde{\Gamma}} G_{n - 1}(z) dz = 0,
\end{equation}
Note that the contour $\tilde{\Gamma}$ encloses one more pole $z = -(n+\beta)$ than the contour $\Gamma_{\alpha}$. We have by \eqref{eq:vanishing_of_Q_inner_x^n-1_big_circle} that
\begin{equation} \label{eq:residue_formula_of_Q_inner_x^n-1}
  \frac{1}{2\pi i} \oint_{\Gamma_{\alpha}} G_{n - 1} dz = -\res_{z = -(n+\beta)} \frac{\prod^{n-1}_{j=1} (z+j)}{(z+n+\beta) \prod^n_{k=1}(z-\alpha_k)}  = \frac{\prod^{n-1}_{j=1} (j+\beta)}{\prod^n_{k=1} (n+\beta+\alpha_k)}.
\end{equation}
By \eqref{eq:inner_of_Q_and_x^n-1}, \eqref{eq:residue_formula_of_Q_inner_x^n-1} and the vanishing of $\frac{1}{2\pi i} \oint_{\Gamma_{\alpha}} G_m(z) dz$ for $m = 0, 1, \dotsc, n - 1$, we verify that the right-hand side of \eqref{eq:contour_integral_of_Q} satisfies \eqref{eq:non-vanishing_defn_of_Q}.

%\bigskip

Next we prove \eqref{eq:contour_integral_of_P}. To show that the contour integral over $\Sigma$ in \eqref{eq:contour_integral_of_P} is well defined, we apply Lemma \ref{lem:Gamma_ratio}\ref{enu:lem:Gamma_ratio_b}, and find that for $x \neq 0$, we see that the integrand in \eqref{eq:contour_integral_of_P} decays rapidly as $z$ moves along the deformed Hankel contour and $\Re z \to -\infty$. Thus $Q_{(\alpha_1, \dots, \alpha_n; \beta)}(x)$ is pointwise defined for $x \neq 0$.

Since $\Gamma(z)$ has no zero and has poles at $z = 0, -1, -2, \dots$, the poles of the integrand in \eqref{eq:contour_integral_of_P} are $z = -1, -2, \dots$. At the pole $z = -j$, we denote the residue
\begin{equation} \label{eq:formula_of_residue_R_j}
  \begin{split}
    R_j = {}& \res_{z=-j} \frac{x^{-z-1} \Gamma(z+1) \prod^n_{k=1} (z-\alpha_k)}{\Gamma(z+n+2+\beta)} \\
    = {}& \res_{z=-j} \frac{x^{-z-1} \Gamma(z+j+1) \prod^n_{k=1} (z-\alpha_k)}{\Gamma(z+n+2+\beta) \prod^j_{l=1} (z+l)} \\
    = {}& \frac{(-1)^{n-j}}{\Gamma(n+2+\beta)} \frac{\prod^n_{k=1} (j + \alpha_k) \prod^j_{l=1} (n-l+2 + \beta)}{(j-1)!} x^{j-1}.
  \end{split}
\end{equation}
Thus we can formally apply the residue theorem, and express the contour integral over $\Sigma$ in \eqref{eq:contour_integral_of_P} as $\sum^{\infty}_{j = 1} R_j$, a power series in $x$. To make the argument rigorous, we mimick the argument for the contour integral in \eqref{eq:contour_integral_of_P_Laguerre} and write
\begin{equation}
  \frac{1}{2\pi i} \oint_{\Sigma} \frac{x^{-z-1} \Gamma(z+1) \prod^n_{k=1} (z-\alpha_k)}{\Gamma(z+n+2+\beta)} dz = \frac{1}{2\pi i} \oint_{\Sigma_{1, -k - \frac{1}{2}}} \frac{x^{-z-1} \Gamma(z+1) \prod^n_{k=1} (z-\alpha_k)}{\Gamma(z+n+2+\beta)} dz + \sum^k_{j=1} R_j,
\end{equation}
where $\Sigma_{1, -k - \frac{1}{2}}$ is the same as in \eqref{eq:formula_of_residue_r_j}. If $x \in [0,1]$, by Lemma \ref{lem:Gamma_ratio}\ref{enu:lem:Gamma_ratio_b}, we have that as $k \to \infty$, the contour integral over $\Sigma_{1, -k - \frac{1}{2}}$ vanishes for any $x \neq 0$, and from \eqref{eq:formula_of_residue_r_j}, we find that $\sum^{\infty}_{j=1} r_j$ converges for any $x \in \compC$. Thus the contour integral over $\Sigma$ in \eqref{eq:contour_integral_of_P_Laguerre} is a power series in $x$ for all $x \in (0, 1)$.

To show that the right-hand side of \eqref{eq:contour_integral_of_P} is a polynomial in $x$ of degree $n-1$, we apply the identity that for $0 \leq x \leq 1$ and $k \in \intZ$,
\begin{equation} \label{eq:expansion_of_monomial_(1-x)^beta}
  \frac{1}{2\pi i} \oint_{\Sigma} \frac{x^{-z-1} \Gamma(z+k+1)}{\Gamma(z+k+2+\beta)} dz = \frac{1}{\Gamma(1+\beta)} x^k(1-x)^{\beta}.
\end{equation}
The proof of \eqref{eq:expansion_of_monomial_(1-x)^beta} is similar to that of \eqref{eq:contour_integral_equal_to_exp_x^k}, Noting the similarity between the contour integral in \eqref{eq:expansion_of_monomial_(1-x)^beta} and that in \eqref{eq:contour_integral_of_P} by arguments like above we have
\begin{equation} \label{eq:expansion_of_monomial_(1-x)^beta_again}
  \begin{split}
    \frac{1}{2\pi i} \oint_{\Sigma} \frac{x^{-z-1} \Gamma(z+k+1)}{\Gamma(z+k+2+\beta)} dz = {}& \sum^{\infty}_{j=0} \res_{z=-j-k-1} \frac{x^{-z-1} \Gamma(z+j+k+2)}{\Gamma(z+k+2+\beta) \prod^{j+1}_{l=1} (z+k+l)} \\
    = {}& \sum^{\infty}_{j=0} \frac{(-1)^jx^{j+k}}{\Gamma(-j+2+\beta) j!} \\
    = {}& \sum^{\infty}_{j=0} \frac{\prod^{j-1}_{l=0}(\beta-l)}{\Gamma(1+\beta) j!}(-1)^jx^{j+k}.
  \end{split}
\end{equation}
Comparing \eqref{eq:expansion_of_monomial_(1-x)^beta_again} with the expansion
\begin{equation}
  x^k(1-x)^{\beta} = \sum^{\infty}_{j=0} \frac{\prod^{j-1}_{l=0}(\beta-l)}{j!}(-1)^jx^{j+k},
\end{equation}
we prove \eqref{eq:expansion_of_monomial_(1-x)^beta}.

Similar to \eqref{eq:constants_c_k_Gamma_func}, there are coefficients $c_0, c_1, \dots, c_n$ such that
\begin{equation} \label{eq:linear_expansion_of_contour_rep_of_Q}
  \frac{\Gamma(z+1) \prod^n_{k=1} (z-\alpha_k)}{\Gamma(z+n+2+\beta)} = \sum^n_{k=0} c_k \frac{\Gamma(z+k+1)}{\Gamma(z+k+2+\beta)}, \quad \text{where} \quad c_n = \frac{\prod^n_{k=1}(n+1+\beta+\alpha_k)}{\prod^n_{k=1} (k+\beta)}.
\end{equation}
 In \eqref{eq:linear_expansion_of_contour_rep_of_Q}, the value of $c_n$ is obtained, when $\beta \not\in \intZ$, by substituting $z = -(n+1+\beta)$, so that \eqref{eq:linear_expansion_of_contour_rep_of_Q} becomes
\begin{equation} \label{eq:linear_expansion_of_contour_Q_specialization}
  \Gamma(-n-\beta) \prod^n_{k=1} (-n-1-\beta-\alpha_k) = c_n \Gamma(-\beta),
\end{equation}
and further by \lHopital's rule when $\beta \in \intZ$. Summarizing the results obtained, we have
\begin{equation}
  \frac{(1-x)^{-\beta}}{2\pi i} \oint_{\Sigma} \frac{x^{-z-1} \Gamma(z+1) \prod^n_{k=1} (z-\alpha_k)}{\Gamma(z+n+2+\beta)} dz = \sum^n_{k=0} \frac{c_k}{\Gamma(1+\beta)} x^k,
\end{equation}
and hence prove that the right-hand side of \eqref{eq:contour_integral_of_P} is a monic polynomial of degree $n$.

We need to show that the orthogonal relation \eqref{eq:orthogonal_with_P} holds. For $m = 1, \dotsc, n$,
\begin{equation} \label{eq:contour_integral_for_orth_of_P}
  \begin{split}
    & \int^1_0 \left( \oint_{\Sigma} \frac{x^{-z-1} \Gamma(z+1) \prod^n_{k=1} (z-\alpha_k)}{\Gamma(z+n+2+\beta)} dz \right) x^{\alpha_m} dx \\
    = {}& \lim_{\epsilon \to 0_+} \int^{1-\epsilon}_{\epsilon} \left( \oint_{\Sigma} \frac{x^{-z-1} \Gamma(z+1) \prod^n_{k=1} (z-\alpha_k)}{\Gamma(z+n+2+\beta)} dz \right) x^{\alpha_m} dx \\
    = {}& \oint_{\Sigma} \left( \int^{1-\epsilon}_{\epsilon} x^{-z-1}x^{\alpha_m} dx \right) \frac{\Gamma(z+1) \prod^n_{k=1} (z-\alpha_k)}{\Gamma(z+n+2+\beta)} dz \\
    = {}& - \oint_{\Sigma} \frac{\Gamma(z+1) \prod_{\substack{k=1, \dots, n \\ k \neq m}} (z-\alpha_k)}{\Gamma(z+n+2+\beta)} dz.
  \end{split}
\end{equation}
From the formula \eqref{eq:contour_integral_of_P}, we see that if we can show the last contour integral in \eqref{eq:contour_integral_for_orth_of_P} vanishes, then the proof is done. To see that, we take $a = b = C$, a large positive number, in the definition of the contour $\Sigma$ in \eqref{eq:concrete_defn_of_Sigma}. By Lemma \ref{lem:Gamma_ratio}\ref{enu:lem:Gamma_ratio_b}, the integrand in the last contour integral in \eqref{eq:contour_integral_for_orth_of_P} is $\bigO(\lvert z \rvert^{-(2+\beta)})$. Thus the contour integral itself should be $\bigO(C^{-(1+\beta)})$. Let $C \to \infty$, we prove that the contour integral vanishes.

\section{Construction of contours} \label{sec:constr_of_contours}

First we construct $\Sigma$ of Figure \ref{fig:Sigma_and_Gamma}. Let $\Sigma$ be of the shape (see Figure \ref{fig:real_Sigma})
\begin{equation} \label{eq:explicit_formula_of_Sigma}
  \Sigma := \Sigma_1 \cup \Sigma_2 \cup \Sigma_3, \quad \text{where} \quad
  \begin{cases}
    \Sigma_1 := x_0 + (1 - i)\sigma + t, & t \leq -\sigma, \\
    \Sigma_2 := x_0 + it, & -\sigma \leq t \leq \sigma, \\
    \Sigma_3 := x_0 + (1 + i)\sigma - t, & t \geq \sigma.
  \end{cases}
\end{equation}
Note that for a large enough $\sigma$, we have the following

\paragraph{Conditions for $\sigma$}

\begin{enumerate}
\item
  For $z \in \Sigma_1 \cup \Sigma_3$, $\Re F(z)$ decreases as $z$ moves to the left.
\item
  For $t \in (\sigma, \infty) \cup (-\infty, -\sigma)$, $\Re F(it) < \Re F(x_0)$.
\end{enumerate}
We assume the parameter $\sigma$ in \eqref{eq:explicit_formula_of_Sigma} satisfy the two conditions above.

To show that $\Re F(z)$ attains its maximum on $\Sigma$ at $x_0$, it then suffices to show that $\Re F(z)$ decreases as $z$ moves along $\Sigma_2$ upward (\resp\ downward) from $x_0$ to $x_0 \pm i\sigma$. We need only to consider the case $z \in \Sigma_2 \cap \compC_+$ due to the symmetry of $\Re F(z)$ about the real axis. To this end, since $F'(x_0) = F''(x_0) = 0$, it suffices to show that for all $t > 0$,
\begin{equation}
  \begin{split}
    \frac{\partial^2 (\Re F(x_0 + it))}{\partial t^2} = -\Re F''(x_0 + it) ={}& -\Re \left[ \frac{1}{x_0 + it} - \frac{1}{(x_0 - a + it)^2} - \frac{1}{x_0 - b + it)^2} \right] \\
    ={}& -\frac{x_0}{x^2_0 + t^2} + \frac{(x_0 - a)^2 - t^2}{((x_0 - a)^2 + t^2)^2} + \frac{(x_0 - b)^2 - t^2}{((x_0 - b)^2 + t^2)^2} \\
    < {}& 0.
  \end{split}
\end{equation}
Using the expressions of $x_0$, $x_0 - a$ and $x_0 - b$ in \eqref{eq:expression_of_u_v_x_0} and \eqref{eq:expression_of_a_b_in_r} and taking the substitution $t = \frac{(1 + r^2)^2}{2(1 + r^3)}s$, we have
\begin{equation}
  \Re F''(x_0 + it) = \frac{4(1 + r^3)^2}{(1 + r^2)^3} G(s), 
\end{equation}
where
\begin{equation}
  G(s) = \frac{(1 + r^2)^3}{(1 + r^2)^2 + 4(1 + r^3)^2 s^2} - \frac{1 - s^2}{(1 + s^2)^2} - r^2 \frac{1 - r^2 s^2}{(1 + r^2 s^2)^2}.
\end{equation}
We need to show $G(s) \geq 0$ when $s > 0$. Since $G(0) = 0$, it suffices to show $G'(s) > 0$ for $s > 0$. Writing
\begin{multline}
  G'(s) = \frac{s^2}{[(1 + r^2)^2 + 4(1 + r^3)^2 s^2](1 + s^2)^2} \\
  \times \left[ (3(1 + r^2)^2 - 4(1 + r^3)^2) + ((1 + r^2)^2 + 4(1 + r^3)^2)s^2 \right].
\end{multline}
We check that the coefficient
\begin{equation}
  3(1 + r^2)^2 - 4(1 + r^3)^2 = (-2r^3 + \sqrt{3}r^2 + \sqrt{3} - 2)(\sqrt{3}(1 + r^2) + 2(1 + r^2))
\end{equation}
is positive for $r \in (-1, r_0)$, hence $G'(s) > 0$ for $s > 0$ given that $r \in (-1, r_0)$. Therefore our construction of $\Sigma$ is valid.

Next we want to construct $\Gamma_b$ and $\Gamma_a$ of Figure \ref{fig:Sigma_and_Gamma} such that $\Re F(z)$ attains its minimum on $\Gamma_b$ (\resp\ $\Gamma_a$) near $x_0$. Since $F(\overline{z}) = \overline{F(z)}$, we consider only the parts of $\Gamma_b$ and $\Gamma_a$ on the upper half plane, and let the lower parts of $\Gamma_b$ and $\Gamma_a$ be obtained by reflection. We construct first $\Gamma^{\prelim}_b$ (\resp\ $\Gamma^{\prelim}_a$) as an approximation of $\Gamma_b \cap \compC_+$ (\resp\ $\Gamma_a \cap \compC_+$), and then deform $\Gamma^{\prelim}_b$ (\resp\ $\Gamma^{\prelim}_a$) locally around $x_0$ to obtain $\Gamma_b \cap \compC_+$ (\resp\ $\Gamma_a \cap \compC_+$).

In our construction of the contours, we need a technical result that $F'(z)$ has no zeros in the first quadrant (except for $x_0$ on the boundary). To see it, we consider the contour integral $\frac{1}{2\pi i} \oint_{C_{\quadrant}} F''(z)/F'(z) dz$, where $C_{\quadrant}$ is a quarter-circle contour in the first quadrant, centered at $0$ and locally deformed at $a$, $b$ and $x_0$ to exclude the singularities, see Figure \ref{fig:C_quad}. The value of this contour integral is the number of zeros of $F'(z)$ enclosed by $C_{\quadrant}$. By direct calculation, we see that the contour integral vanishes as the radius of $C_{\quadrant}$ approaches $\infty$, which implies that $F'(z)$ has no zero in the first quadrant.
\begin{figure}[htb]
  \begin{minipage}[t]{0.45\linewidth}
  \centering
  \includegraphics{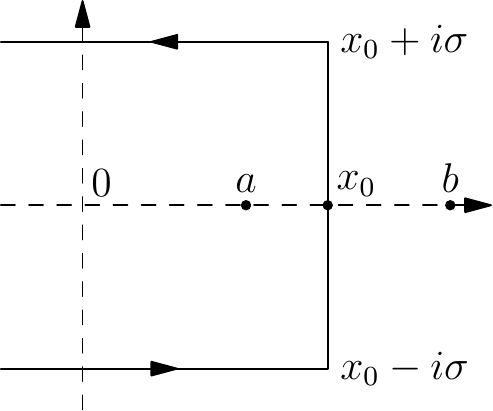}
  \caption{The actual shape of $\Sigma$.}
  \label{fig:real_Sigma}
  \end{minipage}
  \hspace{\stretch{1}}
  \begin{minipage}[t]{0.45\linewidth}
    \centering
    \includegraphics{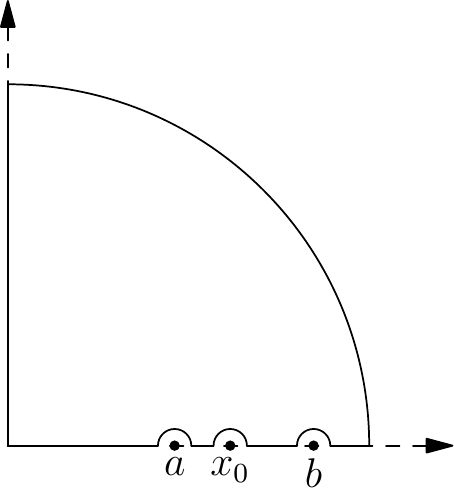}
    \caption{The contour $C_{\quadrant}$.}
    \label{fig:C_quad}
  \end{minipage}
\end{figure}

First we construct $\Gamma^{\prelim}_b$. By the local property of $F(z)$ around $x_0$, we have that the direction $\frac{\pi}{4}$ is a steepest-ascent direction of $\Re F(z)$ at $x_0$. we construct the gradient flow line of the vector field $\nabla\Re F(z)$ on the complex plane, (with the complex plane identified with $\realR^2$,) starting from $x_0$ with initial direction $\frac{\pi}{4}$. The value of $\Re F(z)$ increases as $z$ moves along the flow line. Consider the region
% \begin{figure}[h]
%   \centering
%   \includegraphics{}
%   \caption{The flow line $\Gamma_B$.}
%   \label{fig:Gamma_B}
% \end{figure}
\begin{equation}
  B = \{ z \in \compC_+ \mid 0 < \Im z < N_1,\ 0 < \Re z < N_2  \text{ and $z$ is to the right of $\Sigma$,} \} 
\end{equation}
Here we choose $N_1$ and $N_2$ such that they satisfy the following

\paragraph{Conditions for $N_1$ and $N_2$}

\begin{enumerate}
\item \label{enu:condition_of_N_1_N_2:1}
  As $z$ moves along the horizontal line $\realR + iN_1$ to the right, $\Re F(z)$ is increasing.
\item \label{enu:condition_of_N_1_N_2:2}
  For $z$ in the vertical line between $N_2$ and $N_2 + iN_1$, $\Re F(z) > \Re F(x_0)$.
\end{enumerate}
It is not difficult to check that if $N_1$ is large enough, then Condition \ref{enu:condition_of_N_1_N_2:1} is satisfied, and for any fixed $N_1$, if $N_2$ is large enough, then Condition \ref{enu:condition_of_N_1_N_2:2} is satisfied. So the region $B$ is well defined. We denote by $\Gamma_B$ the part of the gradient flow line in $B$ and connected to $x_0$. Since along the gradient flow line $\Re F(z)$ is eventually unbounded, $\Gamma_B$ has to hit the boundary of $B$. The value of $\Re F(z)$ on $\Sigma$ is less than $\Re F(x_0)$, so $\Gamma_B$ cannot hit $\Sigma$. For the same reason it cannot hit the line segment between $x_0$ and $b$, or the imaginary axis. Then there are only two cases to consider:
\begin{enumerate}
\item 
  $\Gamma_B$ hits the ray $(b, +\infty)$: We define $\Gamma^{\prelim}_b$ as $\Gamma_B$, as shown in Figure \ref{fig:small_Gamma_b}.
\item 
  $\Gamma_B$ hits the top or the right side of $B$: We define $\Gamma^{\prelim}_b$ as the combination of $\Gamma_B$ and the right side (and possibly also the top) of $B$ between its intersection with $\Gamma_B$ and its intersection with the real axis, as shown in Figure \ref{fig:big_Gamma_b}.
\end{enumerate}
\begin{figure}[h]
  \begin{minipage}[t]{0.45\linewidth}
    \centering
    \includegraphics{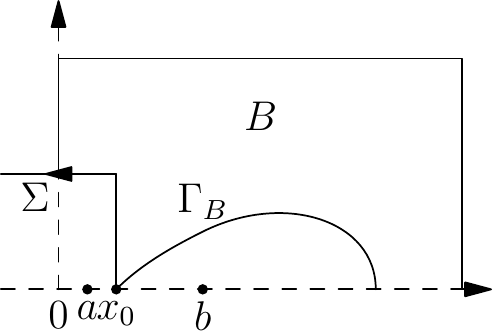}
    \caption{One possible construction of $\Gamma^{\prelim}_b$ when $\Gamma_B$ hits the bottom of $B$.}
    \label{fig:small_Gamma_b}
  \end{minipage}
  \hspace{\stretch{1}}
  \begin{minipage}[t]{0.45\linewidth}
    \centering
    \includegraphics{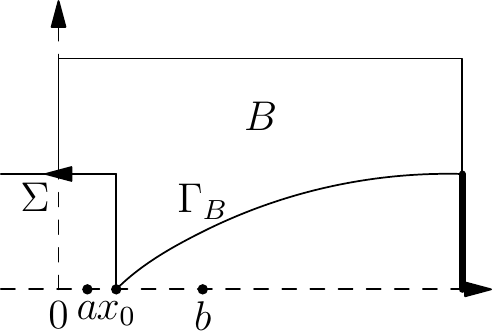}
    \caption{Another possible construction of $\Gamma^{\prelim}_b$ when $\Gamma_B$ hits the right side of $B$, and then $\Gamma^{\prelim}_b$ is the union of $\Gamma_B$ and the part of the right side of $B$ (in boldface) between $\Gamma_B$ and the real axis.}
    \label{fig:big_Gamma_b}
  \end{minipage}
\end{figure}
It is clear that in either case, $\Re F(z)$ has $x_0$ as the unique minimum on $\Gamma^{\prelim}_b$.

The construction of $\Gamma^{\prelim}_a$ is done in a similar way. Denote
\begin{equation}
  A = \{ z \in \compC_+ \mid \text{$z$ is to the left of $\Sigma$ and $0 < \Im z < \sigma$} \},
\end{equation}
and construct the gradient flow line of $\nabla \Re F(z)$ starting from $x_0$ with initial steepest ascent direction $\frac{3\pi}{4}$. We denote the part of the flow line in $A$ and connected to $x_0$ by $\Gamma_A$. By the same argument as for $\Gamma_B$, we have the following two possible cases:
\begin{enumerate}
\item \label{enu:Gamma_A:1}
  $\Gamma_A$ hits $(0, a)$: we define $\Gamma^{\prelim}_a$ as $\Gamma_A$, as shown in Figure \ref{fig:big_Gamma_a}.
\item \label{enu:Gamma_A:2}
  $\Gamma_A$ hits the left side of $A$, \ie, the vertical line between $0$ and $i\sigma$: we define $\Gamma^{\prelim}_a$ as the union of $\Gamma_A$ and the vertical line between $0$ and the intersection of $\partial A$ and $\Gamma_A$, as shown in Figure \ref{fig:small_Gamma_a}.
\end{enumerate}
\begin{figure}[h]
  \begin{minipage}[t]{0.45\linewidth}
    \centering
    \includegraphics{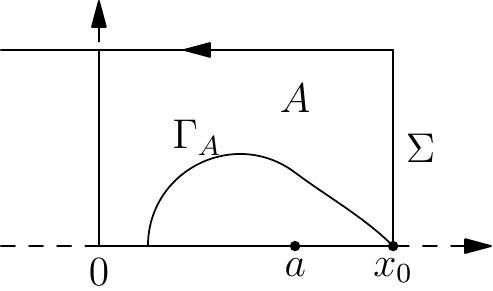}
    \caption{One possible construction of $\Gamma^{\prelim}_a$ when $\Gamma_A$ hits $(0, a)$.}
    \label{fig:big_Gamma_a}
  \end{minipage}
  \hspace{\stretch{1}}
  \begin{minipage}[t]{0.45\linewidth}
    \centering
    \includegraphics{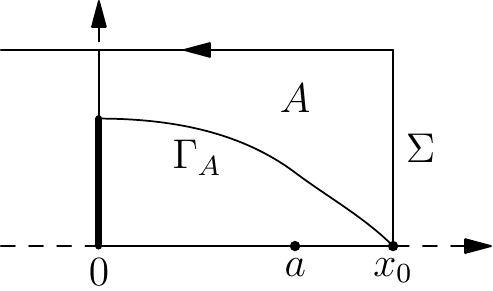}
    \caption{Another possible construction of $\Gamma^{\prelim}_a$ when $\Gamma_A$ hits the left side of $B$, and then $\Gamma^{\prelim}_a$ is the union of $\Gamma_B$ and the vertical line (in boldface) between $\Gamma_A$ and the real axis.}
    \label{fig:small_Gamma_a}
  \end{minipage}
\end{figure}
Note that in case \ref{enu:Gamma_A:1}, $\Re F(z)$ has $x_0$ as the unique minimum on $\Gamma^{\prelim}_a$ due to the property of the flow line $\Gamma_A$, but in case \ref{enu:Gamma_A:2}, this is not always true. If $a$ is big enough, or equivalently the parameter $r \in (-1, r_0)$ defined in \eqref{eq:defn_of_r} is away from $r_0$, we have that for $z = it$ ($t > 0$) on the positive imaginary axis
\begin{equation} \label{eq:ineq_depending_on_a}
  \frac{\partial(\Re F(it))}{\partial t} = -\Im F'(it) = \frac{1}{a} \frac{1}{\frac{a}{t} + \frac{t}{a}} + \frac{1}{b} \frac{1}{\frac{b}{t} + \frac{t}{b}} - \frac{\pi}{2} < 0,
\end{equation}
and then $\Re F(z)$ is decreasing as $z = it$ moves from $0$ upward along the imaginary axis. Hence the definition of $\Gamma^{\prelim}_a$ is valid if \eqref{eq:ineq_depending_on_a} holds for all $t > 0$. Numerical result shows that \eqref{eq:ineq_depending_on_a} holds if $a > 0.3436$, or equivalently $r < -0.7150$.

\begin{rmk} \label{rmk:large_enough_a}
  Although more sophisticated construction of $\Gamma^{\prelim}_a$ allow smaller $a$, there is an essential difficulty if we let $a \to 0^+$. Since we require $\Gamma_a$ (and so $\Gamma^{\prelim}_a$) not to intersect with $(-\infty, 0)$, there is $z_0 \in \Gamma^{\prelim}_a \cap (0, a)$. We need $\Re F(z_0) > \Re F(x_0)$. However, it is not difficult to show that if $a$ is very close to $0$, or equivalently, $r$ is very close to $r_0$,
  \begin{equation} \label{eq:lower_bound_of_a}
    \Re F(x) < \Re F(x_0), \quad \text{for all $x \in (0, a)$.}
  \end{equation}
  and then the construction of $\Gamma^{\prelim}_a$ is impossible. Numerical result shows that \eqref{eq:lower_bound_of_a} holds if $a < 0.1184$, or equivalently $r > -0.6827$. To consider the case that $a$ is very close to $0$, we need to allow $\Gamma_a$ to cross $(-\infty, 0)$, and radically change the contour integral formula. We do not pursue it in this paper.
\end{rmk}

The contours $\Gamma^{\prelim}_a$ and $\Gamma^{\prelim}_b$ with their reflections in the lower half plane are still not satisfactory contours, because
\begin{enumerate*}[label*=(\arabic*)]
  \item
    they intersect with $\Sigma$ at $x_0$,
  \item
    they are not described by explicit formulas around $x_0$, the point around which we do steepest-descent analysis.
\end{enumerate*}
To solve these defects, we first choose a small enough $\epsilon > 0$ and deform the part of $\Gamma^{\prelim}_a$ (\resp\ $\Gamma^{\prelim}_b$) that is within distance $\epsilon$ to $x_0$ into the straight line segment between $x_0$ to $x_0 + e^{\frac{3\pi i}{4}}\epsilon$ (\resp\ between $x_0$ to $x_0 + e^{\frac{\pi i}{4}}\epsilon$) such that $\Re F(z)$ still increases as $z$ moves along the contours away from $x_0$. Then we deform the line segment between $x_0$ and $x_0 + e^{\frac{3\pi i}{4}} c^{-1}_3n^{-\frac{1}{4}}$ (\resp\ the line segment between $x_0$ and $x_0 + e^{\frac{\pi i}{4}} c^{-1}_3n^{-\frac{1}{4}}$) into the vertical line segment between $x_0 - \frac{1}{\sqrt{2}} c^{-1}n^{-\frac{1}{4}}$ and $x_0 + e^{\frac{3\pi i}{4}} c^{-1}_3n^{-\frac{1}{4}}$ (\resp\ the vertical line segment between $x_0 + \frac{1}{\sqrt{2}} c^{-1}_3n^{-\frac{1}{4}}$ and $x_0 + e^{\frac{\pi i}{4}} c^{-1}_3n^{-\frac{1}{4}}$. Then we obtain $\Gamma_a \cap \compC_+$ (\resp\ $\Gamma_b \cap \compC_+$) and the final contour $\Gamma_a$ (\resp\ $\Gamma_b$ is obtained by reflection about the real axis.

\bibliographystyle{abbrv}
\bibliography{bibliography.bib}

\end{document}